\newcommand{\suchthat}{\ | \ }
\newcommand{\marked}{\mathbb{M}}
\newcommand{\orbset}{\mathbb{O}}
\newcommand{\surf}{(\Sigma,\marked,\orbset)}
\newcommand{\KQ}[1]{\Bbbk \langle #1\rangle}
\newcommand{\moduss}[1]{\operatorname{\modu_{ #1-ss}}}
\newcommand{\stau}[1]{\operatorname{s\tau-tilt}{#1}}
\newcommand{\rstau}[1]{\operatorname{rs\tau-tilt}{#1}}
\newcommand{\ab}[1]{\operatorname{Ab}({#1})}
\theoremstyle{remark}
\title[Gentle algebras from surfaces with orbifold points I]{Gentle algebras arising from surfaces with orbifold points of order 3, Part I: scattering diagrams}
\author{Daniel Labardini-Fragoso}
\address{Daniel Labardini-Fragoso\newline
Instituto de Matem\'aticas, UNAM, Mexico, and\newline
Mathematisches Institut, Universit\"at zu K\"oln, Germany}
\email{labardini@im.unam.mx, dlabardi@uni-koeln.de}
\author{Lang Mou}
\address{Lang Mou\newline
Department of Pure Mathematics and Mathematical Statistics,\newline
University of Cambridge, CB3 0WB, United Kingdom}
\email{lmou.math@gmail.com}
\date{}
\begin{document}
\maketitle

\begin{abstract}
To each triangulation of any surface with marked points on the boundary and orbifold points of order three, we associate a quiver (with loops) with potential whose Jacobian algebra is finite dimensional and gentle. We study the stability scattering diagrams of such gentle algebras and use them to prove that the Caldero--Chapoton map defines a bijection between $\tau$-rigid pairs and cluster monomials of the generalized cluster algebra associated to the surface by Chekhov and Shapiro.
\end{abstract}

\tableofcontents

\section{Introduction}

The interplay between Fomin--Zelevinsky theory of cluster algebras \cite{fomin2002cluster} and Teichm\"uller theory of surfaces with marked points originated in the works of Fock--Goncharov \cite{fock2007dual}, Fomin--Shapiro--Thurston \cite{fomin2008cluster}, Gekhtman--Shapiro--Vainshtein \cite{gekhtman2005cluster} and Penner \cite{penner1987decorated}. By allowing the cluster exchange relations to be dictated by polynomials more general than just binomials --thus defining \emph{generalized cluster algebras}, Chekhov--Shapiro \cite{chekhov2011teichmller} have extended this interplay to surfaces with orbifold points. Generalized cluster algebras thus play the role of coordinate rings of decorated Teichm\"uller spaces of such orbifolds, with each triangulation providing a full coordinatization via \emph{lambda lengths}, and with Chekhov--Shapiro's polynomials describing the change of coordinates corresponding to a flip of triangulations.

Since the invention of cluster algebras, numerous authors have successfully developed representation-theoretic approaches, that have resulted, for instance, in expressions of cluster variables as Caldero--Chapoton functions of quiver representations \cite{caldero2006cluster, caldero2006triangulated, derksen2010quivers, geiss2018quivers, palu2008cluster, plamondon2011cluster, demonet2011categorification}.
For generalized cluster algebras, however, so far there have been only a couple of works providing such representation-theoretic expressions for cluster variables, cf. \cite{paquette2019group,labardini2019on}.

In \cite{labardini2019on}, Labardini--Velasco associate quivers with potential to the triangulations of a polygon with one orbifold point of order 3, and prove that the cluster monomials in the (coefficient-free) generalized cluster algebra of the latter are Caldero--Chapoton functions of $\tau$-rigid pairs over the Jacobian algebras of those QPs. In the present note, we extend this to the framework of surfaces with marked points on the boundary and orbifold points of order 3, thus providing a vast generalization of Labardini--Velasco's~work. 

A bit more precisely, our construction possesses a few special features. Namely, the Jacobian algebras of the QPs we define are finite-dimensional and gentle, and the underlying quiver has loops arising naturally from the orbifold points. Furthermore, the representation theories of the QPs associated to triangulations related by a flip are closely related through adjoint pairs of reflection functors, which we construct for these QPs. These functors allow us to show that the Bridgeland stability scattering diagrams \cite{bridgeland2016scattering} of the QPs satisfy what Gross--Hacking--Keel--Kontsevich \cite{gross2018canonical} call \emph{mutation invariance}. This fact leads to a cluster-like wall-crossing structure in the real vector space of stability conditions, whose walls we decorate with a representation-theoretic incarnation of Chekhov--Shapiro's generalized cluster transformations. As an upshot, we obtain our main result, namely, that for surfaces with marked points on the boundary and orbifold points of order $3$, the generalized cluster monomials are precisely the Caldero--Chapoton functions of reachable $\tau$-rigid pairs, see \Cref{thm: main theorem} below.

It would be interesting to find out how our work is related to Paquette--Schiffler's approach \cite{paquette2019group}, which provides certain generalized cluster exchange relations in \emph{orbit cluster algebras} arising from quivers with potential with admissible group actions. Our methods are direct, and different from those in \cite{paquette2019group}, in the sense that we do not resort to covers of surfaces, quivers or algebras. 

Let us give a more detailed summary of our results.

\subsection{Generalized cluster algebras}

In \Cref{section: generalized cluster algebra} we recall some of the basic facts surrounding Chekhov--Shapiro's generalized cluster algebras. One starts with certain cluster exchange data which includes a skew-symmetrizable matrix $B$, a tuple of positive divisors of the columns of $B$ and a set of exchange polynomials. This data is used to define the notion of \emph{generalized seed mutation}. The main difference with Fomin--Zelevinsky's seed mutation is that now the exchange polynomials are allowed to have more than just two terms. The \emph{generalized cluster algebra} $\mathcal{A}(B)$ is defined to be the subalgebra generated inside the rational function field by all the clusters in the seeds obtained by applying all possible finite sequences of generalized seed mutations to a given initial seed.

Afterwards, in \Cref{section: surfaces} we describe how the triangulations $\kappa$ of surfaces with marked points and orbifold points $\surf$ give rise to generalized cluster algebras. Each triangulation $\kappa$ has an associated skew-symmetrizable matrix $B(\kappa)$ with rows and columns labeled by the arcs in $\kappa$. The essential observation is that triangulations $\kappa$ and $\mu_k(\kappa)$ related by a flip at $k\in \kappa$ have matrices related by the corresponding matrix mutation of Fomin--Zelevinsky.
The exchange polynomials are binomials or trinomials, depending on whether the corresponding arc is \emph{pending} or not. The associated generalized cluster algebra $\mathcal{A}\surf=\mathcal{A}(B(\kappa))$ essentially depends only on the surface with marked points and orbifold points, not on the triangulation $\kappa$. As mentioned above, $\mathcal{A}\surf$ plays a relevant geometric role as coordinate ring of the decorated Teichm\"uller space of $\surf$.

\subsection{Gentle algebras associated to orbifolds}
Let $\mathbb S = \surf$ be a surface $\Sigma$ with non-empty boundary, a set of marked points $\mathbb M\subset \partial \Sigma$ and a set of interior points $\mathbb O\subset \Sigma\setminus \partial \Sigma$ which we call \emph{orbifold points of order 3}. A \emph{triangulation} of $\mathbb S$ is a maximal collection $\kappa$  of compatible arcs with endpoints in $\mathbb M$ up to isotopy relative to $\mathbb M \cup \mathbb O$. To such a triangulation $\kappa$, we associate a quiver (with loops) with potential $(Q(\kappa),S(\kappa))$ whose Jacobian algebra $\mathcal P(\kappa)$ is finite dimensional and gentle; see \Cref{def: quiver with potential of a triangulation}. In the case without any orbifold points, this construction gives a quiver with potential whose flip/mutation dynamics was established in \cite{labardini2009quivers}, and whose representation theory was studied in \cite{assem2010gentle}.

\subsection{Stability conditions and $\tau$-tilting theory}
We show that the category $\modu \mathcal P(\kappa)$ of finite dimensional modules over $\mathcal P(\kappa)$ is informative for understanding $\mathcal A(\kappa) = \mathcal A(B(\kappa))$ in the sense of \Cref{thm: main theorem}. The connection is made through scattering diagrams or wall-crossing structures originated in \cite{gross2011real} and \cite{kontsevich2006affine, kontsevich2014wall}. We study Bridgeland's stability scattering diagram \cite{bridgeland2016scattering} of the algebra $\mathcal P(\kappa)$, which in the case without orbifold points are shown in \cite{mou2019scattering} to have the same cluster structure as the \emph{cluster scattering diagram} of Gross, Hacking, Keel and Kontsevich \cite{gross2018canonical}. We study the following function
\[
\Phi_\kappa \colon \mathbb R^n \rightarrow \mathcal W(\mathcal P(\kappa)),\quad \theta\mapsto \moduss{\theta} \mathcal P(\kappa),
\]
which maps a King stability condition $\theta$ to the wide subcategory of $\theta$-semistable modules over $\mathcal P(\kappa)$. The structure of the level sets of this function reveals the cluster structure of $\mathcal A(\kappa)$. In particular, the so-called $\mathbf g$-vector fan of $\mathcal A(\kappa)$ (whose rays are in bijection with cluster variables) can be seen from the support of $\Phi_\kappa$; see \Cref{thm: g-vectors are rays of stability chambers}.

We develop BGP-type \cite{bernstein1973coxeter} reflection functors $F_k^\pm \colon \modu \mathcal P(\kappa)\rightarrow \modu \mathcal P(\mu_k(\kappa))$ to relate $\Phi_\kappa$ with $\Phi_{\mu_k(\kappa)}$ where $k$ does not have to be a sink or source. This key tool enables explicit investigations on $\Phi_\kappa$ as well as any $\Phi_\sigma$ for $\sigma$ obtained by a sequence of flips from $\kappa$. In the case without orbifold points, these functors can be viewed as the degree zero cohomology of Keller--Yang's derived equivalences \cite{keller2011derived} which so far are only available for quivers without loops. Via the correspondence between $\tau$-tilting theory and stability conditions of Br\"ustle--Smith--Treffinger \cite{brustle2019wall}, the $\tau$-tilting theory of $\mathcal P(\kappa)$ can be related to the cluster structure of $\mathcal A(\kappa)$ through $\Phi_\kappa$. Our main result is the following.

\begin{theorem}[{\Cref{thm: cc function}, \Cref{thm: cc function induces bijection and isomorphism}}]\label{thm: main theorem}
Sending a $\tau$-rigid pair $\mathcal M$ to its Caldero--Chapoton function $CC(\mathcal M)$ gives a bijection between 
\[
\{\text{reachable indecomposable $\tau$-rigid pairs of $\mathcal P(\kappa)$}\} \longleftrightarrow \{\text{cluster variables of $\mathcal A(\kappa)$}\}
\]
which induces an isomorphism
\[
\mathbf E(\rstau{\mathcal P(\kappa)}) \cong \mathbf E(\mathcal A(\kappa))
\]
between the mutation graph of reachable support $\tau$-tilting pairs of $\modu \mathcal P(\kappa)$ and the exchange graph of unlabeled seeds of $\mathcal A(\kappa)$.
\end{theorem}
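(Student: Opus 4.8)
The plan is to obtain the theorem as the confluence of three circles of ideas: (i) the scattering-diagram comparison between $\mathcal P(\kappa)$ and the generalized cluster algebra $\mathcal A(\kappa)$, (ii) the mutation-invariance of stability scattering diagrams established via the reflection functors $F_k^\pm$, and (iii) the dictionary between $\tau$-tilting theory and walls/chambers of these diagrams. Concretely, I would first use $\Phi_\kappa$ to identify the support of the stability scattering diagram $\mathcal D(\mathcal P(\kappa))$ in $\Reals^n$ with the $\mathbf g$-vector fan of $\mathcal A(\kappa)$ (this is \Cref{thm: g-vectors are rays of stability chambers}); the key point here is that a ray is a $\mathbf g$-vector of a cluster variable if and only if it is the ray spanned by the $\mathbf g$-vector of an indecomposable $\tau$-rigid pair lying on the support of $\Phi_\kappa$, and this is exactly the reachable locus once mutation invariance is in place.

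Second, I would set up the Caldero--Chapoton map $CC$ on $\tau$-rigid pairs and show, by induction on the length of a mutation sequence, that $CC(\mathcal M)$ is a cluster variable whenever $\mathcal M$ is reachable. The base case is the initial triangulation, where the indecomposable $\tau$-rigid pairs are the $(0,P_i)$ and the simples/projectives giving the initial cluster variables (the pending-arc case contributes the trinomial exchange relations of Chekhov--Shapiro). For the inductive step I would transport a $\tau$-rigid pair across a flip using $F_k^\pm\colon \modu\mathcal P(\kappa)\to\modu\mathcal P(\mu_k(\kappa))$, check that these functors send $\tau$-rigid pairs to $\tau$-rigid pairs and are compatible with the change of $\mathbf g$-vectors under matrix mutation, and verify that under $CC$ this matches precisely the generalized (binomial/trinomial) exchange relation of $\mathcal A(\kappa)$. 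Injectivity of $\mathcal M\mapsto CC(\mathcal M)$ follows because distinct reachable indecomposable $\tau$-rigid pairs have distinct $\mathbf g$-vectors (they span distinct rays of the $\mathbf g$-vector fan), and $CC(\mathcal M)$ has a well-defined $\mathbf g$-vector recovering that of $\mathcal M$; surjectivity follows because every cluster variable sits on some ray of the $\mathbf g$-vector fan, which by step one is spanned by the $\mathbf g$-vector of a reachable indecomposable $\tau$-rigid pair.

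Third, to upgrade the bijection on variables to the graph isomorphism $\mathbf E(\rstau{\mathcal P(\kappa)})\cong\mathbf E(\mathcal A(\kappa))$, I would argue that support $\tau$-tilting pairs correspond to maximal-dimensional chambers of $\mathcal D(\mathcal P(\kappa))$ (via Br\"ustle--Smith--Treffinger), that reachable such chambers correspond to clusters of $\mathcal A(\kappa)$ under the scattering-diagram comparison, and that a wall between two adjacent chambers corresponds on one side to mutation of support $\tau$-tilting pairs and on the other to seed mutation in $\mathcal A(\kappa)$ --- the reflection functors $F_k^\pm$ being exactly what makes these two wall-crossings the same. One then checks the vertices and edges match, which gives the isomorphism of graphs, and passing to unlabeled seeds accounts for the choice of labeling.

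The main obstacle I expect is the inductive step in the second part, specifically proving that the reflection functors $F_k^\pm$ behave correctly on $\tau$-rigid pairs at a \emph{pending} arc: there the exchange polynomial is a trinomial rather than a binomial, so one must show that $CC$ applied to the transported pair reproduces all three terms of Chekhov--Shapiro's relation. This requires a precise understanding of how $F_k^\pm$ interacts with the loop at the orbifold point and with the grading by $\mathbf g$-vectors, and is where the gentleness of $\mathcal P(\kappa)$ and the explicit description of $(Q(\kappa),S(\kappa))$ are essential. A secondary subtlety is checking that "reachability" is preserved in both directions --- i.e. that every chamber of the scattering diagram reachable by wall-crossings corresponds to a seed reachable by mutations, and conversely --- which again rests on mutation invariance of the scattering diagram.
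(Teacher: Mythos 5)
Your overall plan---identify the $\mathbf g$-vector fan with the support of $\Phi_\kappa$, show $CC(\mathcal M)$ is a cluster variable for reachable $\mathcal M$, and then lift to the graph isomorphism via Br\"{u}stle--Smith--Treffinger---matches the paper's architecture, and your first and third steps are essentially the paper's. But your second step, the heart of the proof, diverges from the paper's route and contains a gap. You propose to establish $CC(\mathcal M_{i;t}) = x_{i;t}$ by induction on flips, transporting $\tau$-rigid pairs across a flip using $F_k^\pm$, ``checking that these functors send $\tau$-rigid pairs to $\tau$-rigid pairs,'' and verifying the exchange relation directly. The paper never uses $F_k^\pm$ in this way, and there is good reason: $F_k^\pm$ are used only to compare semistable objects and wide subcategories (\Cref{cor: stability after mutation}, \Cref{prop: stability after mutation}), and nothing in the paper asserts that $F_k^\pm$ preserves $\tau$-rigidity or is compatible with AIR-mutation. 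In fact the authors explicitly defer the construction of the DWZ-type mutations of decorated representations --- the correct gadget for transporting $\tau$-rigid pairs across a flip --- to the sequel \cite{LMpart2}. Your inductive step as stated therefore rests on an unproved (and in the generality you need, nontrivial) compatibility.

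What the paper does instead is replace your direct transport by the motivic Hall algebra machinery, which you have entirely omitted. Concretely: the equality $F_{\mathcal M_{i;t}} = F_{i;t}$ is obtained by computing $\mathfrak p_t(x^{\mathbf g_{i;t}})$ in two ways. On the cluster side, \Cref{cor: f-polynomial from stability} recursively identifies $\mathfrak p_t(x^{\mathbf g_{i;t}}) = x^{\mathbf g_{i;t}} F_{i;t}(y)$ using the chamber structure of the stability scattering diagram and the wall-crossing polynomials $1+y^d$ and $1+y^d+y^{2d}$ from \Cref{thm: cluster complex structure} (this is where the pending-arc trinomial actually enters, not through $F_k^\pm$). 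On the representation-theoretic side, \Cref{lemma: hall algebra identity} is a Nagao-type identity in the motivic Hall algebra whose integration (via \Cref{thm: integration map}) yields $\mathfrak p_t(x^{\mathbf g_{i;t}}) = x^{\mathbf g_{i;t}} F_{M_{i;t}}(y)$. Bridgeland's framed wall-crossing formula (\Cref{thm: bridgeland wall crossing}) is what makes the trinomial computation at a pending arc tractable; this is precisely the ``precise understanding of how the loop interacts with the grading'' that you flagged as the obstacle, but the resolution in the paper is an Euler-characteristic count of framed moduli of $\Bbbk[\varepsilon]/\varepsilon^2$-modules, not a reflection-functor computation.

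Your injectivity argument --- distinct $\mathbf g$-vectors, recover the $\mathbf g$-vector from $CC(\mathcal M)$ --- is close to the paper's but glosses over the point that makes this work: \Cref{lemma: cc function of a module is not initial} proves that $CC_\kappa(\mathcal M)$ for $\mathcal M$ not of the form $(0,P)$ differs from every initial variable, by showing every Laurent monomial in its expansion has a negative exponent (using the AIR formula $\langle\mathbf g(M),\mathbf d\rangle = -\hom(M,N)+\hom(N,\tau M)$). Combined with $\mathbf g$-vector injectivity (AIR) and the change-of-reference-seed trick via \Cref{eq: isomorphism stau graphs}, this gives injectivity. The remaining pieces (surjectivity from \Cref{thm: cc function}, the covering-of-graphs argument in \Cref{prop: cc induces surjective map} promoted to an isomorphism) you have correctly outlined.

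In short: steps one and three of your plan are fine, but step two should be replaced by the Hall-algebra/scattering-diagram computation; the reflection functors $F_k^\pm$ earn their keep upstream, in establishing the chamber structure of $\Phi_\kappa$, and cannot be used to carry $\tau$-rigid modules across a flip without further work.
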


\begin{remark}
Sota Asai has informed us that Fu--Geng--Liu--Zhou \cite{fu2021support} recently proved that for all finite-dimensional gentle algebras, every indecomposable $\tau$-rigid pair is reachable. Thus, the adjective ``reachable'' can be removed from the statement of \Cref{thm: main theorem} above.
\end{remark}

\subsection{Motivic Hall algebras and Caldero--Chapoton functions}
To prove \Cref{thm: main theorem}, we make heavy use of Bridgeland's Hall algebra scattering diagram as a function $\phi_\kappa^{\mathrm{Hall}}\colon \mathbb R^n \rightarrow \hat H(\modu \mathcal P(\kappa))$. It can be seen as turning the category $\Phi_\kappa(\theta) = \moduss{\theta} \mathcal P(\kappa) $ into an invertible element $\phi_\kappa^{\mathrm{Hall}}(\theta) = 1_{\mathrm{ss}}(\theta)$ in the motivic Hall algebra $\hat H(\modu \mathcal P(\kappa))$; see \Cref{section: scattering diagram}. Another important ingredient is Joyce's integration map (\Cref{thm: integration map}) which turns $1_{\mathrm{ss}}(\theta)$ further into a formal exponential of a Poisson derivation. The integration map exists simply because our algebra $\mathcal P(\kappa)$ is the Jacobian algebra of a quiver with potential. After applying the integration map, we obtain a function $\phi_\kappa\colon \mathbb R^n \rightarrow \hat G$ with values in Poisson automorphisms some of which precisely describe generalized cluster transformations. The function $\phi_\kappa$ is what we call the stability scattering diagram of $\mathcal P(\kappa)$.

The above machinery makes it practical to express a non-initial cluster variable $x_{i;t} \in \mathcal A(\kappa)$ in terms of representation theoretic information, in fact as the Caldero--Chapoton function of some indecomposable $\tau$-rigid module $M_{i;t}$. Our calculation is largely motivated by Nagao's work \cite{nagao2013donaldson} in the skew-symmetric case of ordinary cluster algebras. However the characterizations of the representation theoretic object corresponding to a cluster variable are very different as we rely on $\tau$-tilting theory \cite{adachi2014tau} rather than Keller--Yang's derived equivalences \cite{keller2011derived}.

\subsection{Structure of the paper}
In \Cref{section: generalized cluster algebra}, we review the definitions of cluster algebras and their generalizations. In \Cref{section: surfaces} and \Cref{section: gentle algebra}, we explain the construction of gentle algebras $\mathcal P(\kappa)$ associated to surfaces with orbifold points. In \Cref{section: reflection functor} and \Cref{section: stability}, we define reflection functors and use them to study stability conditions of $\mathcal P(\kappa)$, realizing cluster chamber structures. In \Cref{section: hall algebras}, we review the motivic Hall algebras of quivers with relations. In \Cref{section: scattering diagram}, \Cref{section: tau tilting}, and \Cref{section: caldero chapoton}, we study the scattering diagrams associated to $\mathcal P(\kappa)$, relate them with the $\tau$-tilting theory of $\modu \mathcal P(\kappa)$ and prove our main theorem \Cref{thm: main theorem}. In Section \ref{section:example} we present in an explicit example in affine type $\widetilde{C}_2$, how the Caldero-Chapoton functions of support $\tau$-tilting pairs related by an AIR-mutation satisfy a generalized exchange relation as stated in Theorem \ref{thm: main theorem} (see Theorem \ref{thm: cc function induces bijection and isomorphism} too). In the final \Cref{section: final}, we summarize and outline the plan of the sequel \cite{LMpart2}.

\section*{Acknowledgements}

DLF received support from UNAM's \emph{Dirección General de Asuntos del Personal Académico} through its \emph{Programa de Apoyos para la Superación del Personal Académico}, and from  UNAM's Physics Institute via a \emph{Cátedra Marcos Moshinsky}. LM~is supported by the Royal Society through the Newton International Fellowship NIF\textbackslash R1\textbackslash 201849.

The last part of the paper was completed during a visit of both authors to Sibylle Schroll at the Mathematisches Institut of the Universität zu Köln. We are grateful for the great working conditions and the hospitality. 

The authors would like to thank the Isaac Newton Institute for Mathematical
Sciences, Cambridge, for support and hospitality during the programme Cluster
Algebras and Representation Theory where work on this paper was undertaken. This
work was supported by EPSRC grant no EP/K032208/1. We thank the Organizers of this programme, Karin Baur, Bethany Marsh, Ralf Schiffler and Sibylle Schroll for a wonderful working atmosphere.

Part of this project was carried out during the junior trimester program
New Trends in Representation Theory, held at the Hausdorff Research Institute for Mathematics, Bonn, and organized by Gustavo Jasso. We are grateful for the support and hospitality received. 

\section{Generalized cluster algebras}\label{section: generalized cluster algebra}
Here, we review Chekhov--Shapiro's definition of generalized cluster algebras \cite{chekhov2011teichmller}.
We only introduce the coefficient-free case which will suffice for our purposes.

\begin{definition} Let $\mathcal{F}$ be the field of rational functions in $n$ indeterminates with coefficients in $\mathbb{C}$.
	A~\emph{labeled seed} in $\mathcal{F}$ is a pair  $(\mathbf x, B)$ consisting of:
	\begin{itemize}
		\item an ordered $n$-tuple $\mathbf x = (x_1, \dots, x_n)$ of elements of $\mathcal{F}$ algebraically independent over $\mathbb{C}$ that generate $\mathcal{F}$ as a field;
		\item a skew-symmetrizable matrix $B\in \operatorname{Mat}_{n\times n}(\mathbb Z)$, i.e.,  a matrix for which there exists a diagonal matrix $D = \operatorname{diag}(d_1,\ldots,d_n)\in \operatorname{Mat}_{n\times n}(\mathbb Z)$ with positive diagonal entries such that $DB + (DB)^T = 0$.
	\end{itemize}
	The tuple $\mathbf{x}$ is called the \emph{labeled cluster} of the labeled seed $(\mathbf{x},B)$, the elements $x_1,\ldots,x_n$, are the \emph{cluster variables} of the labeled seeds.
\end{definition}

To define Chekhov--Shapiro's generalized mutation rule of labeled seeds, we first fix an $n$-tuple $(r_1, \dots, r_n)$ of positive integers such that for each $j$, the integer $r_j$ divides the $j$-th column of $B$, i.e., $b_{ij}/r_j\in \mathbb Z$ for any $i$ and $j$. This property of $(r_1, \dots, r_n)$ is preserved under Fomin--Zelevinsky's matrix mutation. Denote the matrix $\overline{B} = (\overline{b}_{ij})\coloneqq (b_{ij}/r_j)$. For each $i$ fix also a polynomial
\[
\theta_i(u,v) = \sum_{\ell = 0}^{r_i} c_{i,\ell} u^{\ell} v^{r_i - \ell} \in \mathbb C[u,v]
\]
which is palindromic ($\theta_i(u,v) = \theta_i(v,u)$) and monic $c_{i,0} = c_{i,r_i} = 1$.

\begin{definition}\label{def:generalized-labeled-seed-mutation} For each $k\in\{1,\ldots,n\}$, the \emph{generalized mutation} of $(\mathbf{x},B)$ in direction $k$ with respect to $(r_1, \dots, r_n)$ and $(\theta_1,\ldots,\theta_n)$ is the labeled seed $\mu_k(\mathbf{x},B)=((x_1,\ldots,x_{k-1},x_k',x_{k+1},\ldots,x_n),\mu_k(B))$, where
$\mu_k(B)$ is the $k^{\operatorname{th}}$ Fomin--Zelevinsky matrix mutation of $B$, and
\begin{equation}\label{eq:generalized-cluster-mutation}
x_kx'_k=\sum_{\ell=0}^{r_k}c_{k,\ell}\left(\prod_{j:b_{jk}>0} x_j^{b_{jk}/r_k} \right)^{\ell}\left(\prod_{j:b_{jk}<0} x_j^{-b_{jk}/r_k}\right)^{r_k-\ell}
\end{equation}
\end{definition}

Notice that the right hand side of \eqref{eq:generalized-cluster-mutation} is the evaluation $\theta_k\left(\prod_{j=1}^n x_j^{[\overline{b}_{jk}]_+},\prod_{j=1}^n x_j^{[-\overline{b}_{jk}]_+} \right)$.

\begin{definition} The (coefficient-free) \emph{generalized cluster algebra} associated to the \emph{initial seed} $(\mathbf{x},B)$ with respect to the fixed data $(r_1, \dots, r_n)$ and $(\theta_1,\ldots,\theta_n)$ is the subring $\mathcal{A}(\mathbf{x},B)$ of $\mathcal{F}$ generated by the set of all cluster variables that appear in the labeled seeds that can be obtained from $(\mathbf{x},B)$ by applying arbitrary finite sequences of generalized cluster mutations.
\end{definition}

If one takes $r_1=\ldots=r_n=1$ and $\theta_1(u,v)=\ldots=\theta_n(u,v)=u+v$, then \eqref{eq:generalized-cluster-mutation} gives the usual cluster mutation of Fomin--Zelevinsky \cite{fomin2002cluster} and $\mathcal{A}(\mathbf{x},B)$ is the usual (coefficient-free) cluster algebra.

We say that two labeled seeds $(\mathbf x', B')$ and $(\mathbf x, B)$ are \emph{mutation equivalent}, and write $(\mathbf x', B') \sim (\mathbf x, B)$, if they can be obtained by a finite sequence of mutations from each other.

In \cite{fomin2007cluster}, Fomin and Zelevinsky introduce the notions of \emph{$\mathbf{g}$-vector} and \emph{$F$-polynomial} in cluster algebras. Nakanishi \cite{nakanishi2015structure} has extended these notions to generalized cluster algebras. Let $\mathbb{T}_n$ be the rooted $n$-regular tree. We denote its root as $t_0$, and assume that each edge of $\mathbb{T}_n$ has been labeled with a number from $\{1,\ldots,n\}$, in such a way that whenever two distinct edges are incident at a vertex of $\mathbb{T}_n$, their labels are different. This allows us to assign to each vertex $t\in\mathbb{T}_n$ a labeled seed $((x_{1;t}^{B;t_0},\ldots,x_{n;t}^{B;t_0}),B(t))$ in such a way that $((x_{1;t_0}^{B;t_0},\ldots,x_{n;t_0}^{B;t_0}),B(t_0))\coloneqq(\mathbf{x},B)$, and for each edge $t\frac{k}{\qquad}t'$, the labeled seeds attached to $t$ and $t'$ are related by the $k^{\operatorname{th}}$ generalized mutation.

Taking $(r_1,\ldots,r_n)$ and $(\theta_1,\ldots,\theta_n)$ as fully fixed data, and $B$ as our initial matrix (attached to the root $t_0$ of $\mathbb{T}_n$), to each vertex $t$ of $\mathbb{T}_n$ we associate $n$-tuples $(\mathbf{c}_{1;t}^{B;t_0},\ldots,\mathbf{c}_{n;t}^{B;t_0})$, $(\mathbf{g}_{1;t}^{B;t_0},\ldots,\mathbf{g}_{n;t}^{B;t_0})$, of vectors in $\mathbb{Z}^n$, and an $n$-tuple $(F_{1;t}^{B;t_0},\ldots,F_{n;t}^{B;t_0})$ of elements of the rational function field $\mathbb{Q}(y_1,\ldots,y_n)$. Whenever it is clear which matrix $B$ is being attached to $t_0$, we will take the liberty of possibly omitting the superscript $B;t_0$ from the notation. These tuples are defined recursively as follows. For each $i=1,\ldots,n$,
\begin{center}
\begin{tabular}{cccc}
$\mathbf{c}_{i;t_0}^{B;t_0}\coloneqq\mathbf{e}_i$ &  $\mathbf{g}_{i;t_0}^{B;t_0}\coloneqq\mathbf{e}_i$ & and & $F_{i;t_0}^{B;t_0}\coloneqq 1$
\end{tabular}
\end{center}
($\mathbf{e}_i$ is the $i^{\operatorname{th}}$ standard basis vector). Furthermore, whenever we have an edge $t\frac{k}{\quad\quad}t'$ in $\mathbb{T}_n$, we have
\[
\mathbf c^{B;t_0}_{i;t'} = \begin{cases}
-\mathbf c^{B;t_0}_{i;t} \quad & i = k\\
\mathbf c^{B;t_0}_{i;t} + (r_k/r_i)b_{ki}(t) [-\mathbf c^{B;t_0}_{k;t}]_+ \quad &i\neq k,\ b^\mathbf k_{ik}>0\\
\mathbf c^{B;t_0}_{i;t} + (r_k/r_i)b_{ki}(t) [+\mathbf c^{B;t_0}_{k;t}]_+ \quad &i\neq k,\ b^\mathbf k_{ik}\leq 0\\
\end{cases}
\]
\[
\mathbf g^{B;t_0}_{i;t'} = \begin{cases}
\mathbf g^{B;t_0}_{i;t} \quad & i \neq k\\
-\mathbf g^{B;t_0}_{k;t_0} + \sum\limits_{j = 1}^n[b_{jk}(t)]_+ \mathbf g^{B;t_0}_{j;t} - \sum\limits_{j =1}^n (r_k/r_j)[c^{B;t_0}_{k,j;t}]_+ \mathbf b_{j}(t) \quad & i = k
\end{cases}
\]
and\[
F_{k;t'}\cdot F_{k;t} = \theta_k\left(\prod_i y_i^{[c^{B,t_0}_{k,i;t}]_+} F_{i;t}^{[\overline{b}_{ik}(t)]_+},\ \prod_i y_i^{[-c^{B;t_0}_{k,i;t}]_+} F_{i;t}^{[-\overline{b}_{ik}(t)]_+}  \right).
\]
where for $t\in\mathbb{T}_n$, $b_{ki}(t)$ is the $(k,i)$-th entry of the matrix $B(t)$, $\mathbf b_j(t)$ denotes the $j$-th column of $B(t)$, and $c^{B;t_0}_{k,j;t}$ is the $j^{\operatorname{th}}$ entry of $\mathbf c^{B;t_0}_{k;t}$.

For the next theorem, we set
\begin{equation}\label{eq:generalized-yhat}
\hat y_j \coloneqq \prod_{i = 1}^n x_i^{\overline{b}_{ij}}
\qquad \text{and} \qquad
x^{\mathbf g_{j;t}} = \prod_{i = 1}^n x_i^{g_{j,i;t}^{B;t_0}}
\end{equation}

\begin{theorem}[\cite{nakanishi2015structure}]\label{thm:F-pols-are-pols-and-expansion-of-gen-cluster-vars}
For every vertex $t\in\mathbb{T}_n$ and every $j=1,\ldots,n$, the rational function $F_{j;t}^{B;t_0}$ is actually a polynomial in $y_1,\ldots,y_n$, with integer coefficients, and the generalized cluster variable $x_{j;t}^{B;t_0}$ can be written as
\begin{equation}\label{eq:generalized-cluster variable by g-vector and f-poly}
x_{j;t}^{B;t_0} = x^{\mathbf g_{j;t}} \cdot F_{j;t}^{B;t_0} (\hat y_1, \dots, \hat y_n).
\end{equation}
\end{theorem}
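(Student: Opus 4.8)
The plan is to follow Fomin--Zelevinsky's ``separation of additions'' strategy from \cite{fomin2007cluster}, in the form adapted to generalized cluster algebras by Nakanishi \cite{nakanishi2015structure}; the crucial move is to pass to \emph{principal coefficients}. So first I would introduce the generalized cluster algebra with principal coefficients at $t_0$: the ground field becomes $\mathbb Q(x_1,\dots,x_n,y_1,\dots,y_n)$, the coefficients live in the tropical semifield $\mathbb P=\operatorname{Trop}(y_1,\dots,y_n)$, and one runs the generalized mutations of \Cref{def:generalized-labeled-seed-mutation} with the same data $(r_i)$, $(\theta_i)$ and extended exchange matrix $\binom{B}{I_n}$, starting from the seed with cluster $(x_1,\dots,x_n)$ and coefficient tuple $(y_1,\dots,y_n)$; write $X_{j;t}\in\mathbb Q(x,y)$ for the resulting cluster variables (superscripts $B;t_0$ suppressed). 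Because the coefficients evolve in a tropical semifield, each coefficient $Y_{j;t}$ is a Laurent monomial $\prod_i y_i^{c_{j,i;t}}$, and a direct check identifies the exponent vectors $\mathbf c_{j;t}$ with those governed by the displayed recursion; here one uses the sign-coherence of $c$-vectors for generalized cluster algebras (each $\mathbf c_{j;t}$ has all entries $\geq 0$ or all $\leq 0$), which is what gives the $[\,\cdot\,]_+$ in that recursion its meaning. Finally, by definition $F_{j;t}^{B;t_0}=X_{j;t}\big|_{x_1=\cdots=x_n=1}\in\mathbb Q(y)$, and the coefficient-free cluster variable $x_{j;t}^{B;t_0}$ is recovered as $X_{j;t}\big|_{y_1=\cdots=y_n=1}$ (coefficient specialization being compatible with mutation).

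Next I would show that $F_{j;t}^{B;t_0}$ is a polynomial in $y_1,\dots,y_n$ with integer coefficients, by induction on the distance $d(t_0,t)$ in $\mathbb T_n$, carrying along the extra claim $F_{j;t}\big|_{y=0}=1$; integrality is automatic from the recursions, so only polynomiality needs work. For an edge $t\frac{k}{\quad}t'$ one has
\[
F_{k;t'}\cdot F_{k;t}=\theta_k\!\left(\textstyle\prod_i y_i^{[c_{k,i;t}]_+}F_{i;t}^{[\overline b_{ik}(t)]_+},\ \prod_i y_i^{[-c_{k,i;t}]_+}F_{i;t}^{[-\overline b_{ik}(t)]_+}\right),
\]
whose right-hand side is a genuine polynomial in $y$ by the inductive hypothesis on the $F_{i;t}$. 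Setting $y=0$ and using sign-coherence of $\mathbf c_{k;t}$ together with the monic, palindromic normalization $c_{k,0}=c_{k,r_k}=1$, exactly one of the two arguments of $\theta_k$ vanishes while the other specializes to $1$, so the right-hand side equals $\theta_k(0,1)=\theta_k(1,0)=1$; since $F_{k;t}\big|_{y=0}=1$ this forces $F_{k;t'}\big|_{y=0}=1$, in particular $F_{k;t}$ is coprime to each $y_i$ in $\mathbb Z[y]$. Combined with the Laurent phenomenon for generalized cluster algebras \cite{chekhov2011teichmller}, which gives $X_{k;t'}\in\mathbb Z[x_1^{\pm1},\dots,x_n^{\pm1},y_1^{\pm1},\dots,y_n^{\pm1}]$ and hence $F_{k;t'}=X_{k;t'}\big|_{x=1}\in\mathbb Z[y^{\pm1}]$, the identity $F_{k;t'}=\theta_k(\dots)/F_{k;t}$ then lands in $\mathbb Z[y]$ by a unique-factorization argument in $\mathbb Z[y]$ using that $F_{k;t}$ is coprime to each $y_i$.

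For the expansion formula \eqref{eq:generalized-cluster variable by g-vector and f-poly}, I would equip $\mathbb Z[x_1^{\pm1},\dots,x_n^{\pm1};y_1,\dots,y_n]$ with the $\mathbb Z^n$-grading given by $\deg x_i=\mathbf e_i$ and $\deg y_j=-\overline{\mathbf b}_j$, the $j$-th column of $\overline B=(\overline b_{ij})$ (note $\overline b_{ij}\in\mathbb Z$); this is arranged so that the principal-coefficient expression $\hat Y_j:=y_j\prod_i x_i^{\overline b_{ij}}$ is homogeneous of degree $0$. By induction on $d(t_0,t)$ one shows that $X_{j;t}$ is homogeneous of degree $\mathbf g_{j;t}$, with $\mathbf g_{j;t}$ obeying the displayed recursion; in the inductive step the hypothesis $r_k\mid b_{jk}$ and the fact that $\theta_k$ is homogeneous of degree $r_k$ are precisely what make the two sides of the exchange relation homogeneous of equal degree, and the palindromic/monic normalization pins down the $\mathbf g$-vector. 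Granting homogeneity, write $X_{j;t}=x^{\mathbf g_{j;t}}\cdot G_{j;t}$ with $G_{j;t}$ of degree $0$; since for this grading each graded component of $\mathbb Z[x^{\pm1}]$ is spanned by a single monomial, writing $G_{j;t}=\sum_{\mathbf m}g_{\mathbf m}(x)\,y^{\mathbf m}$ forces $g_{\mathbf m}(x)=\gamma_{\mathbf m}\prod_j\big(\prod_i x_i^{\overline b_{ij}}\big)^{m_j}$ with $\gamma_{\mathbf m}\in\mathbb Z$, hence $G_{j;t}=\widetilde G_{j;t}(\hat Y_1,\dots,\hat Y_n)$ where $\widetilde G_{j;t}(v):=\sum_{\mathbf m}\gamma_{\mathbf m}v^{\mathbf m}$; specializing $x_i\mapsto 1$ gives $\widetilde G_{j;t}=G_{j;t}\big|_{x=1}=X_{j;t}\big|_{x=1}=F_{j;t}$. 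Thus $X_{j;t}=x^{\mathbf g_{j;t}}F_{j;t}(\hat Y_1,\dots,\hat Y_n)$ in principal coefficients, and specializing $y_i\mapsto 1$ yields \eqref{eq:generalized-cluster variable by g-vector and f-poly} with $\hat y_j$ as in \eqref{eq:generalized-yhat}.

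The main obstacle, compared with the ordinary case, is the bookkeeping forced by the multi-term exchange polynomials $\theta_k$ and the rescaled matrix $\overline B$: one must verify that the divisors $r_i$ together with the monic, palindromic normalization of the $\theta_i$ are exactly what makes (i) the $F$-polynomial recursion close up with unit constant terms and (ii) the exchange relations homogeneous for the $g$-vector grading, and one must invoke sign-coherence of $c$-vectors and the Laurent phenomenon in the generalized setting at the right places. (Alternatively, one could deduce the statement from a scattering-diagram or theta-function description of generalized cluster algebras, but the inductive argument above is the most direct route, and is the one carried out in \cite{nakanishi2015structure}.)
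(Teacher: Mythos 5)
The paper does not prove \Cref{thm:F-pols-are-pols-and-expansion-of-gen-cluster-vars}: it is imported from Nakanishi's work, as the citation and the remark immediately afterward make explicit, so there is no in-paper proof to compare against. Your proposal is a faithful reconstruction of Nakanishi's adaptation of Fomin--Zelevinsky's separation-of-additions argument, and the overall scaffolding (pass to principal coefficients over the tropical semifield, match the recursively-defined $\mathbf c$-, $\mathbf g$- and $F$-data with the principal-coefficient cluster data, get polynomiality from the Laurent phenomenon plus a coprimality argument anchored at $F|_{y=0}=1$, and get the separation formula from the $\mathbb Z^n$-grading with $\deg x_i=\mathbf e_i$, $\deg y_j=-\overline{\mathbf b}_j$) is correct and is indeed the route carried out in \cite{nakanishi2015structure}.

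One inaccuracy worth flagging: you invoke sign-coherence already when matching the exponent vectors of the tropical coefficients $Y_{j;t}$ with the recursively-defined $\mathbf c_{j;t}$, saying it is ``what gives the $[\,\cdot\,]_+$ in that recursion its meaning.'' That identification does not require sign-coherence. Writing out the tropical coefficient mutation, the resulting exponent-vector recursion $\mathbf c_{i;t'}=\mathbf c_{i;t}+(r_k/r_i)[b_{ki}]_+[\mathbf c_{k;t}]_+-(r_k/r_i)[-b_{ki}]_+[-\mathbf c_{k;t}]_+$ already agrees with the paper's two-case formula, because that formula branches on the sign of $b_{ik}(t)$ (equivalently $b_{ki}(t)$), not on the sign of $\mathbf c_{k;t}$; the operators $[\,\cdot\,]_+$ are componentwise and are perfectly meaningful for vectors of mixed sign. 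Sign-coherence enters genuinely one step later, exactly where you also use it: in the inductive proof of $F_{k;t'}|_{y=0}=1$ (so that exactly one of $\prod_i y_i^{[c_{k,i;t}]_+}$ and $\prod_i y_i^{[-c_{k,i;t}]_+}$ specializes to $1$ while the other vanishes), and implicitly in pinning down the $\mathbf g$-vector recursion via homogeneity. It is also worth being explicit that sign-coherence for generalized cluster algebras is itself a nontrivial input (Nakanishi reduces it to the skew-symmetrizable ordinary case, which ultimately rests on Gross--Hacking--Keel--Kontsevich); your proof should cite it rather than treat it as obvious.
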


\begin{remark}\ 
\begin{enumerate}
    \item For cluster algebras, that is, for the choice $r_1=\ldots=r_n=1$ and $\theta_1(u,v)=\ldots=\theta_n(u,v)=u+v$, Theorem \ref{thm:F-pols-are-pols-and-expansion-of-gen-cluster-vars} was first proved by Fomin--Zelevinsky in \cite{fomin2007cluster}.
    \item In \cite{fomin2007cluster} and \cite{nakanishi2015structure}, the definition of $\mathbf{g}$-vectors and $F$-polynomials is given rather in terms of their properties in a (generalized) cluster algebra with principal coefficients. Under this approach, the fact that they can be obtained through recursions becomes a theorem instead of a definition.
\end{enumerate}
\end{remark}

For a vertex $t\in\mathbb{T}_n$ and a permutation $\sigma$ of the index set $\{1,\ldots,n\}$, set
\begin{equation}\label{eq:permutation-inside-a-labeled-seed}
\sigma\cdot((x_{1;t}^{B;t_0},\ldots,x_{n;t}^{B;t_0}),B(t))\coloneqq ((x_{\sigma^{-1}(1);t}^{B;t_0},\ldots,x_{\sigma^{-1}(n);t}^{B;t_0}),\sigma\cdot (B(t))),
\end{equation}
where the $(i,j)^{\operatorname{th}}$ entry of the matrix $\sigma\cdot(B(t))\in\mathbb{Z}^{n\times n}$ is defined to be $b_{\sigma^{-1}(i)\sigma^{-1}(j)}(t)$.
If $t$ and $s$ are vertices of $\mathbb{T}_n$ such that $(\mathbf{x}_{s}^{B;t_0},B(s))=\sigma\cdot(\mathbf{x}_t^{B;t_0},B(t))$, then
for every $k\in\{1,\ldots,n\}$ we have the equality of labeled seeds
$$
\mu_{\sigma(k)}(\mathbf{x}_s^{B;t_0},B(s))=\sigma\cdot\mu_{k}(\mathbf{x}_t^{B;t_0},B(t)),
$$
which means that the automorphism of $\mathbb{T}_n$ that sends $t$ to $s$
and the edge $t\frac{k}{\qquad}\star $ to the edge $s\frac{\sigma(k)}{\qquad}*$ is compatible with mutations of labeled seeds. Let $G(B)$ be the group of graph automorphisms of $\mathbb{T}_n$ that arise this way.

\begin{definition}\label{def:exhchange-graph-unlabeled-seeds}
The \emph{exchange graph of unlabeled seeds of the generalized cluster algebra $\mathcal A = \mathcal{A}(\mathbf{x},B)$} is the simple graph $\mathbf{E}(\mathcal A)$ obtained as the quotient $\mathbb{T}_n/G(B)$.
\end{definition}

\section{Surfaces with orbifold points of order 3}\label{section: surfaces}
Chekhov--Shapiro \cite{chekhov2011teichmller} have shown that surfaces with marked points and orbifold points give rise to generalize cluster algebras. In this paper we will restrict our attention to the situation where all marked points are contained in the boundary of the surface and all orbifold points have order 3.

\begin{definition}\label{def:unpunct-surf-with-orb-pts-order3}
	An \emph{unpunctured surface with marked points and orbifold points of order 3} is a triple $\surf$ consisting of:
	\begin{itemize}
		\item a compact connected oriented two-dimensional real differentiable manifold $\Sigma$ with non-empty boundary $\partial \Sigma$;
		\item a finite subset $\mathbb M\subset \partial \Sigma$ containing at least one element from each connected component of $\partial \Sigma$;
		\item a finite subset $\mathbb O\subset \Sigma\setminus \partial \Sigma$.
	\end{itemize}
	The elements of $\mathbb{M}$ will be called \emph{marked points}, whereas the elements of $\mathbb{O}$ will be called \emph{orbifold points} (of order 3).
\end{definition}

\begin{definition}
	An arc $i$ on $\surf$ is a curve $i\colon [0,1]\rightarrow \Sigma\setminus \mathbb O$ such that
	\begin{itemize}
		\item the image $i([0,1])$ must and only intersect the boundary $\partial \Sigma$ at its endpoints $i(0)$ and $i(1)$;
		\item $i$ has no self intersections except possibly at endpoints;
		\item if $i$ cuts a monogon, then such a monogon contains exactly one orbifold point. In this case, we call $i$ a \emph{pending arc}.
	\end{itemize} 
\end{definition}

We consider arcs up to isotopy relative to $\mathbb{M}\cup\mathbb{O}$. Two arcs are called \emph{compatible} if there are representatives in their isotopy classes whose images as curves do not intersect in $\Sigma\setminus\mathbb{M}$. Whenever two arcs are compatible, we will directly take the corresponding representatives.

\begin{definition}
	A \emph{triangulation} of $\surf$ is a maximal collection of pairwise compatible arcs. If $\kappa$ is a triangulation of $\surf$, we define the \emph{flip} at any $i\in \kappa$ to be the unique triangulation 
	\[
	\mu_i(\kappa) \coloneqq (\kappa \setminus \{i \}) \cup \{i'\}
	\]
	with the unique arc $i'\neq i$.
\end{definition}

It is easy to see from the definition that flips are involutive.

\begin{definition}
The \emph{flip graph} $\mathbf{E}\surf$ has the triangulations of $\surf$ as vertices; two triangulations are connected by an edge precisely when they are related by a flip.
\end{definition}

For the next definition, consider the subset of $\Sigma$ obtained as the union of the arcs in a triangulation~$\kappa$. The complement of this union in $\Sigma$ is a disjoint union of connected components.

\begin{definition}\label{def:triangles-of-triangulation}
Suppose $\kappa$ is a triangulation of $\surf$.
\begin{enumerate}
    \item The topological closure in $\Sigma$ of any of the connected components mentioned in the preceding paragraph will be called a \emph{triangle of $\kappa$};
    \item a triangle not containing orbifold points will be called \emph{non-singular triangle};
    \item a triangle containing an orbifold point will be called \emph{singular triangle}.
\end{enumerate}
\end{definition}

Thus, a non-singular triangle always has three distinct \emph{sides}, and a singular triangle always has exactly one \emph{side}. See Figure \ref{Fig_nonSing_and_sing_triangles}.
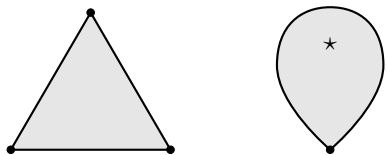
\begin{figure}[!h]
                \centering
                \begin{tikzpicture}[scale=0.70]

\draw[fill=gray!20, thick] (0, 0) -- (3, 0) -- (1.5, 2.598) -- cycle;
\filldraw[black] (0, 0) circle (2pt);
\filldraw[black] (3, 0) circle (2pt);
\filldraw[black] (1.5, 2.598) circle (2pt);

\draw [fill = gray!20, thick] plot [smooth, tension = 1] coordinates { (6,0) (5,1.6) (6,2.7) (7,1.6) (6,0)};
\filldraw[black] (6, 0) circle (2pt);
\node[] at (6,2) {$\star$};

\end{tikzpicture}
                \caption{The two types of triangles of a triangulation. Left: a non-singular triangle. Right: a singular triangle.}\label{Fig_nonSing_and_sing_triangles}
        \end{figure}
Each \emph{side} of a non-singular triangle is either an arc (even possibly a pending one) or a segment of the boundary of $\Sigma$. Furthermore, at most two sides of a non-singular triangle can be pending arcs. The unique side of a singular triangle is pending.

Recall that a \emph{quiver} is a quadruple $Q = (Q_0, Q_1, h, t)$ consisting of 
\begin{itemize}
    \item a set $Q_0$ of \emph{vertices};
    \item a set $Q_1$ of \emph{arrows};
    \item a map $h\colon Q_1\rightarrow Q_0$ giving the \emph{head} of each arrow;
    \item a map $t\colon Q_1\rightarrow Q_0$ giving the \emph{tail} of each arrow.
\end{itemize}
Recall also that we draw any $a\in Q_1$ as $t(a)\xrightarrow{a}h(a)$ or $a\colon t(a) \rightarrow h(a)$.

\begin{definition} 
 Following \cite{assem2010gentle} and \cite[Definition 3.2]{Geuenich-LF2}, we define $\overline{Q}(\kappa)$ to be the quiver whose vertices are the arcs in $\kappa$, that is, $\overline{Q}_0(\kappa)\coloneqq \kappa$, and whose arrows of are induced by the non-singular triangles of $\kappa$ through the orientation of $\Sigma$. More precisely, for each non-singular triangle $\triangle$ of $\kappa$ and every pair $i, j \in\kappa$ of arcs contained in $\triangle$ such that $i$ immediately follows $j$ in $\triangle$ with respect to the clockwise sense in $\triangle$ induced by the orientation of $\Sigma$, we draw a single arrow from $j$ to $i$.
\end{definition}

Notice that we are not including the boundary segments in the quiver $\overline{Q}(\kappa)$.

Take a triangulation $\kappa$, and fix a labeling of the arcs in $\kappa$ by the numbers $1,\ldots,n \coloneqq |\kappa|$. We say then, that $\kappa$ is a \emph{labeled triangulation}.
We associate to $\kappa$ a skew-symmetrizable matrix $B(\kappa)$ as follows.
The quiver $\overline{Q}(\kappa)$ has an associated skew-symmetric matrix $\overline B = (\overline{b}_{ij})_{(i,j)\in\kappa\times\kappa}$ whose entries are given by the rule
\[
\overline{b}_{ij} \coloneqq |\{a\in \overline Q_1(\kappa) \mid \text{$t(a) = i$ and $h(a) = j$}\}| - |\{a\in \overline Q_1(\kappa) \mid \text{$t(a) = j$ and $h(a) = i$}\}|.
\]
Consider the matrix $D = \mathrm{diag}(d_i\suchthat i\in\kappa)$ defined by
$$
d_i \coloneqq \begin{cases} 2 & \text{if $i$ is a pending arc};\\ 
1 & \text{otherwise}.
\end{cases}
$$
We define $B(\kappa) = (b_{ij}(\kappa)) \coloneqq \overline B D$. Notice that $DB(\kappa)$ is skew-symmetric, thus $B(\kappa)$ is skew-symmetrizable. The following lemma is easy to check. 
\begin{lemma}
If $\kappa$ and $\sigma$ are two triangulations, and $\sigma$ is the flip $\mu_i(\kappa)$ of $\kappa$ at $i\in \kappa$, then we have
\[
\mu_i(B(\kappa)) = B(\sigma).
\]
\end{lemma}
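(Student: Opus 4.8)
The plan is to reduce the claim to a purely local statement about how the quiver $\loopfreeQ(\kappa)$ — equivalently its skew-symmetric matrix $\overline B(\kappa)$ — and the diagonal matrix $D$ change under a flip at an arc $i$, and then to feed this into Fomin--Zelevinsky's matrix mutation rule. The key observation is that $D$ depends only on which arcs are pending, and the flip $\mu_i$ changes the set of pending arcs in a controlled way: $i$ is pending if and only if it is the unique side of a singular triangle, and this property is swapped to $i'$ under the flip at such an $i$ while all other arcs keep their type; if $i$ is not pending then no arc changes type. Hence $d_i$ may change (between $1$ and $2$) exactly at the flipped vertex, and never for $j\neq i$. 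Since mutation $\mu_i$ of a skew-symmetrizable matrix $B = \overline B D$ with skewsymmetrizer $D$ yields a matrix $\mu_i(B)$ with the \emph{same} skewsymmetrizer $D$ when $d_i$ is unchanged, and with $d_i$ replaced appropriately when $i$ toggles pending status (the relevant column/row rescaling is exactly absorbed by the factor $r_k/r_i = d_k/d_i$ in the mutation formula), it suffices to prove the companion identity at the level of the skew-symmetric matrices, i.e. that $\mu_i^{\mathrm{FZ}}(\overline B(\kappa)) = \overline B(\mu_i(\kappa))$ up to the bookkeeping of the changed diagonal entry.

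The core of the argument is therefore the quiver-level statement, and I would establish it by a case analysis on the local configuration around the arc $i$ inside the two triangles of $\kappa$ adjacent to $i$. There are finitely many cases determined by: whether $i$ borders two non-singular triangles, one non-singular and one singular triangle, or (degenerate cases) a self-folded configuration is forbidden here since the surface is unpunctured, so those drop out; and, within the non-singular cases, whether the four ``outer'' sides of the quadrilateral are arcs or boundary segments, and whether any of them are pending. In each case one writes down $\loopfreeQ(\kappa)$ restricted to the arcs meeting the two triangles, performs the flip to obtain the new quadrilateral/triangles and reads off $\loopfreeQ(\mu_i(\kappa))$, and checks it agrees with the Fomin--Zelevinsky quiver mutation $\mu_i$ applied to $\loopfreeQ(\kappa)$ — reversing arrows at $i$, composing through $i$, cancelling $2$-cycles — after accounting for the $d$-weights. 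This is essentially the unpunctured case of Fomin--Shapiro--Thurston combined with the orbifold-point bookkeeping, and for the non-singular local pictures it is literally the classical computation; the genuinely new input is the singular triangle case, where flipping the pending side $i$ of a singular triangle produces a new singular triangle with side $i'$, and one must verify the arrow pattern through the unique ``big'' side shared with the rest of the triangulation transforms correctly, including the doubling/halving of the corresponding matrix entries dictated by $d_i$ going between $1$ and $2$.

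The main obstacle I anticipate is not conceptual but organizational: getting the case list genuinely exhaustive and keeping the orientation conventions (clockwise in each triangle, as in the definition of $\loopfreeQ(\kappa)$) consistent throughout, particularly at the interface between a singular and a non-singular triangle, where the ``folding'' of the singular triangle onto its single side interacts with the outer arrows. A clean way to organize this — and the route I would take — is to invoke the already-known unpunctured-surface result of Fomin--Shapiro--Thurston for the auxiliary ``doubled'' surface in which each orbifold point is replaced by an ordinary marked point (or to cite \cite{chekhov2011teichmller} directly for the matrix-mutation compatibility with flips, which is exactly the statement that the $B(\kappa)$ behave correctly), and then check only that the passage from that standard setup to $B(\kappa) = \overline B(\kappa) D$ respects the diagonal rescaling at a flipped pending arc. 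With that reduction in place the lemma follows, and the remaining verification is the short $d$-weight computation sketched above; this is why the excerpt calls it ``easy to check''.
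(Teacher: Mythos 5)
Your high-level plan---localize around the two triangles meeting $i$, separate the pending case from the non-pending case, and invoke Fomin--Shapiro--Thurston for the purely skew-symmetric picture---has the right shape, but your central reduction is wrong in exactly the case that distinguishes orbifolds from ordinary surfaces. A smaller slip first: the diagonal $D$ does not change at all under a flip. If $i$ is pending then so is $i'$ (both are pending arcs in the same digon around the orbifold point, emanating from opposite corners), and if $i$ is not pending then $i'$ is not pending either (the quadrilateral spanned by the two adjacent non-singular triangles contains no orbifold point, so its other diagonal cannot cut a monogon around one); hence $d_i=d_{i'}$ always, and there is no ``toggling'' or ``changed diagonal entry'' to bookkeep. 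This is forced anyway by the fact that Fomin--Zelevinsky mutation preserves the skew-symmetrizer.

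The serious gap is your claim that it suffices to prove $\mu_i^{\mathrm{FZ}}(\overline{B}(\kappa))=\overline{B}(\sigma)$ and then ``account for the $d$-weights.'' When $i$ is pending this equality is simply false, and no diagonal rescaling repairs it. Take a pending $i$ whose adjacent non-singular triangle has the other two sides $j,k$ non-pending, so locally $\overline{Q}(\kappa)$ is the $3$-cycle $i\to j\to k\to i$; flipping reverses the cyclic order of the sides and $\overline{Q}(\sigma)$ is $j\to i'\to k\to j$, so $\overline{b}_{kj}(\sigma)=1$. But the skew-symmetric quiver mutation of $\overline{Q}(\kappa)$ at $i$ composes $k\to i\to j$ to a single arrow $k\to j$, which cancels the existing $j\to k$: $\mu_i(\overline{B}(\kappa))_{kj}=-1+\operatorname{sgn}(\overline{b}_{ki})[\,\overline{b}_{ki}\overline{b}_{ij}\,]_+=-1+1=0$. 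Since $d_j=d_k=1$, this $0$ cannot be rescaled into the required $1$. What actually makes the lemma true is that $B(\kappa)=\overline{B}(\kappa)D$ carries $b_{ki}=\overline{b}_{ki}\,d_i=2$, so the composition term becomes $\operatorname{sgn}(b_{ki})[\,b_{ki}b_{ij}\,]_+=2$ and $\mu_i(B(\kappa))_{kj}=-1+2=1=B(\sigma)_{kj}$. The factor $d_i=2$ therefore enters \emph{multiplicatively inside} the mutation rule, not as a post-hoc column rescaling: the local case analysis must be run on the skew-symmetrizable matrix $B$ (equivalently, on the valued quiver) from the outset, and only when $d_i=1$ does one have $\mu_i(\overline{B}D)=\mu_i(\overline{B})D$, which is where the Fomin--Shapiro--Thurston computation applies verbatim. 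Your fallback of citing Chekhov--Shapiro or Felikson--Shapiro--Tumarkin directly does sidestep all of this and is fine; the paper offers no argument of its own, calling the lemma ``easy to check.''
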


We associate a generalized cluster algebra to $B(\kappa)$ by taking
\begin{equation}\label{eq:gen-mut-rul-in-our-particular-setting}
    \begin{tabular}{cccc}
    $r_j=d_j$ & and & $\theta_j=\sum_{\ell=0}^{r_j}u^\ell v^{r_j-\ell}$ & for every $j=1,\ldots,n$.
\end{tabular}
\end{equation}
By construction, for each $j=1,\ldots,n$,  $r_j$  divides all entries of the  $j^{\operatorname{th}}$ column of $B(\kappa)$.

Combining \cite{felikson2012cluster} and \cite{chekhov2011teichmller}, we have

\begin{theorem}\label{thm: FST and CS}
Let $\surf$ be as in Definition \ref{def:unpunct-surf-with-orb-pts-order3} and let $\kappa$ be a triangulation of $\surf$. Denote by $\mathcal{A}(\kappa)$ the generalized cluster algebra $\mathcal{A}(\mathbf{x},B(\kappa))$ constructed with respect to the fixed data \eqref{eq:gen-mut-rul-in-our-particular-setting}.
	There is a bijection
	\[
	\{\text{arcs of $\surf$}\} \longleftrightarrow \{ \text{cluster variables of $\mathcal A(\kappa)$}\}
	\]
	under which the arcs in $\kappa$ correspond to the initial cluster variables. Furthermore, this bijection induces a graph isomorphism
	\[
	\mathbf{E}\surf \longleftrightarrow \mathbf{E}(\mathcal A(\kappa))
	\]
	between the flip graph of $\surf$ and the exchange graph of unlabeled seeds of $\mathcal{A}(\kappa)$, making the triangulation $\kappa$ correspond to the seed $(\mathbf{x},B(\kappa))$. Thus, denoting by $x_j$ the cluster variable corresponding to each arc $j$ on $\surf$, for every triangulation $\sigma$ and every arc $k\in\sigma$, if $k'$ denotes the unique arc on $\surf$ such that $(\sigma\setminus\{k\})\cup\{k'\}$ is the flip $\mu_k(\sigma)$ at $k\in \sigma$, then the cluster variables $x_j\in\mathcal{A}(\kappa)$, $j\in\sigma\cup\{k'\}$, satisfy
	\begin{equation}\label{def:exchange-formula-for-cluster-vars-in-terms-of-arcs}
	    x_kx_{k'}=\begin{cases}
	    \left(\underset{j:b_{jk}(\sigma)<0}{\prod} x_j^{-b_{jk}(\sigma)}\right)+\left(\underset{j:b_{jk}(\sigma)>0}{\prod} x_j^{b_{jk}(\sigma)} \right) & \text{if $k$ is not pending;}\\
	    \left(\underset{j:b_{jk}(\sigma)<0}{\prod} x_j^{2}\right)
	    +\left(\underset{j:b_{jk}(\sigma)>0}{\prod} x_j \right)\left(\underset{j:b_{jk}(\sigma)<0}{\prod} x_j\right)
	    +\left(\underset{j:b_{jk}(\sigma)>0}{\prod} x_j^{2} \right) 
	    & \text{if $k$ is pending}.
	    \end{cases}
	\end{equation}
	\end{theorem}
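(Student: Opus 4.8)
The statement combines results of Chekhov--Shapiro \cite{chekhov2011teichmller} and Felikson--Shapiro--Tumarkin \cite{felikson2012cluster}, and the plan is to assemble it through Penner's lambda-length model. Let $\mathcal T(\mathbb S)$ be the decorated Teichm\"uller space of $\mathbb S$ \cite{penner1987decorated}, extended to the orbifold setting by Chekhov--Shapiro. To each arc $\gamma$ on $\mathbb S$ is attached a positive function $\lambda_\gamma$ (its \emph{lambda length}) on $\mathcal T(\mathbb S)$, and for a triangulation $\kappa=\{1,\dots,n\}$ the family $(\lambda_1,\dots,\lambda_n)$ is algebraically independent and forms a global coordinate system on $\mathcal T(\mathbb S)$, so that $x_i\mapsto\lambda_i$ defines a field embedding $\iota\colon\mathcal F\hookrightarrow\mathcal K$, where $\mathcal K$ is the field generated by all the $\lambda_\gamma$. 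From \cite{chekhov2011teichmller,felikson2012cluster} I would invoke three facts: (i) the generalized Ptolemy relations, i.e.\ under the flip of a triangulation $\sigma$ at $k\in\sigma$ the lambda length $\lambda_{k'}$ of the new arc satisfies $\lambda_k\lambda_{k'}=\bigl(\prod_{j:b_{jk}(\sigma)<0}\lambda_j^{-b_{jk}(\sigma)}\bigr)+\bigl(\prod_{j:b_{jk}(\sigma)>0}\lambda_j^{b_{jk}(\sigma)}\bigr)$ when $k$ is not pending, and $\lambda_k\lambda_{k'}=\lambda_a^2+\lambda_a\lambda_b+\lambda_b^2$ when $k$ is pending ($a$ and $b$ being the other two arcs of the unique non-singular triangle incident to $k$); (ii) the injectivity of $\gamma\mapsto\lambda_\gamma$; and (iii) the connectivity of the flip graph $\mathbf E\mathbb S$. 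I would also use the lemma above, $\mu_k(B(\kappa))=B(\mu_k(\kappa))$.

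The first step is to match the generalized Ptolemy relation with the generalized mutation rule \eqref{eq:generalized-cluster-mutation} for the fixed data \eqref{eq:gen-mut-rul-in-our-particular-setting}. If $k\in\sigma$ is not pending, then $r_k=d_k=1$ and $\theta_k(u,v)=u+v$, and \eqref{eq:generalized-cluster-mutation} reads verbatim as the first line of \eqref{def:exchange-formula-for-cluster-vars-in-terms-of-arcs}. If $k$ is pending, then $r_k=d_k=2$ and $\theta_k(u,v)=u^2+uv+v^2$; here one uses the local structure around a pending arc (\Cref{def:triangles-of-triangulation} and the paragraph following it, or \cite{felikson2012cluster}), namely that $k$ is a side of exactly one non-singular triangle, so the $k$-th column of the skew-symmetric matrix $\overline B(\sigma)$ of $\overline Q(\sigma)$ has entries in $\{0,\pm1\}$, whence the $k$-th column of $B(\sigma)=\overline B(\sigma)D$ has entries in $\{0,\pm2\}$; substituting $b_{jk}(\sigma)/r_k\in\{0,\pm1\}$ into \eqref{eq:generalized-cluster-mutation} then turns it into the second line of \eqref{def:exchange-formula-for-cluster-vars-in-terms-of-arcs}. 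Thus the generalized Ptolemy relation for $\mathbb S$ is literally the relation \eqref{eq:generalized-cluster-mutation}. Combining this with the lemma above, an induction on the length of a flip sequence $\kappa=\kappa_0,\kappa_1,\dots,\kappa_m$ --- read off from the unique path $t_0,t_1,\dots,t_m$ in $\mathbb T_n$ with the labelings propagated along the flips --- then yields $B(t_m)=B(\kappa_m)$ and $\iota(x_{j;t_m})=\lambda_j$ for every arc $j\in\kappa_m$.

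It remains to assemble the bijection and the graph isomorphism. For an arc $\gamma$, choose (using (iii)) a flip sequence from $\kappa$ to a triangulation $\sigma\ni\gamma$ and let $x_\gamma$ be the cluster variable at the corresponding seed indexed by $\gamma$; by the previous step $\iota(x_\gamma)=\lambda_\gamma$, which does not depend on the chosen sequence, so $x_\gamma$ is well defined since $\iota$ is injective, and the empty sequence shows that the arcs of $\kappa$ correspond to the initial cluster variables $x_1,\dots,x_n$. Every cluster variable of $\mathcal A(\kappa)$ is some $x_{j;t}$, and reading the path $t_0,\dots,t$ as a flip sequence exhibits $x_{j;t}=x_\gamma$ for the appropriate arc $\gamma$, so $\gamma\mapsto x_\gamma$ is surjective; and $x_\gamma=x_{\gamma'}$ forces $\lambda_\gamma=\iota(x_\gamma)=\iota(x_{\gamma'})=\lambda_{\gamma'}$, hence $\gamma=\gamma'$ by (ii), so it is injective. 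Finally, sending $t\in\mathbb T_n$ to the triangulation $\sigma_t$ reached along $t_0,\dots,t$ defines a well-defined surjective graph morphism $\mathbb T_n\to\mathbf E\mathbb S$ that takes edges to edges, because a flip always changes the triangulation; by the inductive step together with the injectivity of $\iota$ and of $\gamma\mapsto\lambda_\gamma$, one has $\sigma_t=\sigma_s$ if and only if the seeds at $t$ and $s$ agree up to a relabeling permutation, i.e.\ if and only if $t$ and $s$ lie in the same $G(B)$-orbit; so this morphism descends to an isomorphism $\mathbf E(\mathcal A(\kappa))=\mathbb T_n/G(B)\xrightarrow{\sim}\mathbf E\mathbb S$ carrying $[t_0]$ to $\kappa$, and the exchange formula \eqref{def:exchange-formula-for-cluster-vars-in-terms-of-arcs} is then the identity of the first step transported along this isomorphism.

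The main obstacle is exactly the geometric input quoted from \cite{chekhov2011teichmller} and \cite{felikson2012cluster}: the proof of the generalized Ptolemy relation at an orbifold point of order $3$ (a hyperbolic-geometry computation with horocycles), the structural fact that at a pending arc the relevant column of $B(\sigma)$ has entries in $\{0,\pm2\}$ --- which is what makes that relation coincide with \eqref{eq:generalized-cluster-mutation} --- and the coordinatization and injectivity statements (algebraic independence of $(\lambda_i)_{i\in\kappa}$, injectivity of $\gamma\mapsto\lambda_\gamma$, connectivity of $\mathbf E\mathbb S$). Once these are granted, everything else is the bookkeeping above with labelings and the tree $\mathbb T_n$.
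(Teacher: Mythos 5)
The paper itself gives no proof of this theorem; it merely asserts it as a combination of results from Chekhov--Shapiro and Felikson--Shapiro--Tumarkin. Your proposal is a correct and faithful reconstruction of how those results assemble: the key reductions are (a) matching the generalized mutation rule \eqref{eq:generalized-cluster-mutation} for the fixed data \eqref{eq:gen-mut-rul-in-our-particular-setting} against the Chekhov--Shapiro Ptolemy relation, which for a pending arc $k$ indeed hinges on the structural observation that $k$ lies in a unique non-singular triangle, forcing $\overline{b}_{jk}\in\{0,\pm1\}$ and hence $b_{jk}\in\{0,\pm2\}$; and (b) the induction along a flip path in $\mathbb{T}_n$ using the lemma $\mu_i(B(\kappa))=B(\mu_i(\kappa))$, the injectivity of $\gamma\mapsto\lambda_\gamma$, and the connectivity of the flip graph to build the bijection and descend the labeling map $\mathbb{T}_n\to\mathbf{E}\mathbb{S}$ to the stated graph isomorphism. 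Both steps are sound, and the geometric inputs you isolate (Ptolemy relations, algebraic independence and injectivity of lambda lengths, flip-graph connectivity) are precisely what the two cited references supply; since the paper leaves this assembly implicit, there is nothing in the paper's own exposition to contrast with, and your proof is exactly the kind of reconstruction the citation is pointing to.
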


\begin{remark}
The role that the orbifold points play in this paper is overwhelmingly combinatorial. They play a geometric role, although very subtly, as follows. Chekhov--Shapiro \cite{chekhov2011teichmller} have shown that  within the ring of real-valued functions on the decorated Teichm\"{u}ller space of a surface with marked points and orbifold points of arbitrary orders, the \emph{$\lambda$-lengths} of arcs satisfy an exchange rule \eqref{eq:generalized-cluster-mutation}, hence the subring generated by them is (isomorphic to) a generalized cluster algebra (with so-called \emph{boundary coefficients}). When the orders of the orbifold points are all taken to be equal to three, the exchange relation \eqref{eq:generalized-cluster-mutation} obeyed by the $\lambda$-lengths takes the specific form \eqref{def:exchange-formula-for-cluster-vars-in-terms-of-arcs}. One of our main results in this paper, is that it is this specific form that the Caldero--Chapoton functions of indecomposable $\tau$-rigid pairs satisfy whenever one mutates a basic support $\tau$-tilting pair.
\end{remark}

\section{The gentle algebra associated to a triangulation}\label{section: gentle algebra}

Let $Q$ be a quiver. Denote by $\Bbbk \langle Q \rangle$ the path algebra of $Q$ over a field $\Bbbk$. Denote by $e_i$ the primitive idempotent of vertex $i\in Q_0$. A \emph{potential} $S$ of $Q$ is an element in $\Bbbk \langle Q \rangle/[\Bbbk \langle Q \rangle, \Bbbk \langle Q \rangle]$. So it can be viewed as a linear sum of oriented cyclic paths (called \emph{cycles}) in $Q$ and each cycle is considered up to cyclic permutations.

Assume that $\Bbbk$ is algebraically closed of characteristic zero. We define the \emph{Jacobian ideal} $\langle \partial S \rangle$ to be the (two-sided) ideal in $\Bbbk \langle Q \rangle$ generated by all cyclic derivatives $\partial S \coloneqq \{\partial_a S \mid a\in Q_1\}$ (see \cite[Definition 3.1]{derksen2008quivers}).

\begin{definition}\label{def: quiver with potential of a triangulation}
Let $\mathbb S = \surf$ and $\kappa$ be an ideal triangulation of $\mathbb S$. The quiver $Q(\kappa)$ is defined by adding an arrow $\varepsilon_j \colon j\rightarrow j$ (also called a \emph{loop}) for each pending arc $j\in\kappa$. We define a potential $S(\kappa)\in\KQ{Q(\kappa)}/[\KQ{Q(\kappa)},\KQ{Q(\kappa)}]$ according to the formula
$$
S(\kappa)\coloneqq\sum_{\triangle}\alpha_{\triangle}\beta_{\triangle}\gamma_{\triangle}+\sum_{j \ \text{pending}}\varepsilon_{j}^3,
$$
where the first sum runs over all internal non-singular triangles of $\kappa$ and the second sum runs over all pending arcs of $\kappa$.
\end{definition}

\begin{lemma}\label{lemma: fd and gentle}
The Jacobian algebra $\mathcal P(\kappa) \coloneqq \mathcal P(Q(\kappa), S(\kappa)) \coloneqq \KQ{Q(\kappa)}/ \langle \partial S(\kappa) \rangle$ is finite dimensional and gentle.
\end{lemma}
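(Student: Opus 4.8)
The plan is to analyze the Jacobian ideal explicitly, first handling the orbifold contributions and then reducing to the known non-orbifold case. First I would compute the cyclic derivatives of $S(\kappa)$. For an arrow $a$ appearing in a triangle term $\alpha_{\triangle}\beta_{\triangle}\gamma_{\triangle}$, the derivative $\partial_a S(\kappa)$ is (a cyclic rotation of) the length-two path completing that triangle, exactly as in the unpunctured case studied in \cite{assem2010gentle}; for a loop $\varepsilon_j$ we get $\partial_{\varepsilon_j} S(\kappa) = 3\varepsilon_j^2$, which (as $\operatorname{char}\Bbbk = 0$) forces $\varepsilon_j^2 = 0$ in $\mathcal P(\kappa)$. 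Thus the relations are: the "triangle" relations $\gamma_{\triangle}\beta_{\triangle}$, $\beta_{\triangle}\alpha_{\triangle}$, $\alpha_{\triangle}\gamma_{\triangle}$ (zero-relations of length two) coming from each internal non-singular triangle, together with $\varepsilon_j^2 = 0$ for each pending arc $j$. Note there are no commutativity (non-monomial) relations, so the ideal is monomial.

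Next I would verify the gentle conditions (G1)–(G4) directly from the combinatorics of triangulations: at each vertex $i$ of $Q(\kappa)$ there are at most two arrows in and at most two arrows out (this uses that an arc lies in at most two triangles, that a loop contributes one in and one out, and the structure of singular triangles, which contribute no arrows); and for every arrow $a$ there is at most one arrow $b$ with $ab \notin \langle\partial S\rangle$ and at most one with $ba \notin \langle\partial S\rangle$, which follows from the explicit list of length-two relations above — around a non-singular triangle exactly two of the three composable pairs are killed, and at a pending arc the only composable pair through the loop, namely $\varepsilon_j\varepsilon_j$, is killed. The relations being a set of length-two monomials, the remaining gentle axioms are immediate. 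Hence $\mathcal P(\kappa)$ is gentle.

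Finally, finite-dimensionality. Since the ideal is generated by paths of length two, any path of length $\ell$ that is nonzero in $\mathcal P(\kappa)$ avoids all of these subpaths; I would argue that such reduced paths have bounded length. For the loop, $\varepsilon_j^2 = 0$ bounds powers of $\varepsilon_j$. For the rest, one can invoke the general fact that a gentle algebra with finitely many arrows and finitely many relations whose quiver has no "non-gentle" infinite reduced walks is finite-dimensional; concretely, I would check there are no oriented cycles in $Q(\kappa)$ all of whose length-two subpaths avoid the relations — equivalently, no infinite reduced path. Each oriented cycle in $\overline{Q}(\kappa)$ must pass through some internal triangle, and the triangle relations prevent a cycle from being reduced (going around the $3$-cycle of a triangle is killed; any longer cycle threading several triangles also hits a relation at a shared vertex); together with $\varepsilon_j^2 = 0$ this shows every sufficiently long path is zero, giving $\dim_\Bbbk \mathcal P(\kappa) < \infty$.

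\textbf{Main obstacle.} The delicate point is the bookkeeping at vertices corresponding to pending arcs and at the "ears" of singular triangles: one must make sure that a pending arc, which carries a loop $\varepsilon_j$ and also sits on the boundary of a non-singular triangle, still has at most two arrows in and two out, and that the interaction between $\varepsilon_j$ and the arrows of the adjacent non-singular triangle produces no composable pair outside the relations other than the allowed one — in other words that the loop does not spoil gentleness. This requires a careful case analysis of the local configurations of triangles around a pending arc, which is exactly the content hidden in \Cref{def: quiver with potential of a triangulation} and its companion figures.
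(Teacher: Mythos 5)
Your proposal takes essentially the same route as the paper: compute the cyclic derivatives, observe that they are $3\varepsilon_j^2$ at pending arcs or products of two arrows in the same internal triangle (so all relations are monomial of length two), deduce gentleness by a local case check, and deduce finite-dimensionality by bounding the length of reduced paths. The paper is in fact terser than you are, asserting that conditions (3) and (4) ``are easily checked by studying all possible local configurations'' and that finite-dimensionality ``is then clear.''

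Two small inaccuracies in your write-up are worth flagging. First, you say that ``around a non-singular triangle exactly two of the three composable pairs are killed''; in fact if the internal triangle carries arrows $\alpha,\beta,\gamma$ forming a $3$-cycle, then \emph{all three} cyclic compositions $\beta\alpha$, $\gamma\beta$, $\alpha\gamma$ are relations. The correct local statement behind condition~(4) is that at a vertex $j$ shared by two triangles, each incoming arrow has exactly one continuation inside its own triangle (which is a relation) and at most one continuation into the other triangle (which is not), so there is at most one composable pair avoiding $I$. Second, the finite-dimensionality step is the delicate one: your explanation that ``any longer cycle threading several triangles also hits a relation at a shared vertex'' is not correct as stated, since passing from one triangle to an adjacent triangle through a shared arc is never a relation. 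The actual reason no reduced oriented cycle exists uses the fact that $\Sigma$ has non-empty boundary, so that a ``spiralling'' reduced path eventually meets a boundary segment and terminates; this is the geometric content hidden in the paper's ``it is then clear.'' Since the paper itself does not spell this out, your treatment is on a par with theirs, but the specific justification you offer for it should be replaced.
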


\begin{proof}
The cyclic derivatives are either of the form $3 \varepsilon_j^2$ for $j$ pending or $a\cdot b$ for $a$ and $b$ two arrows of $Q(\kappa)$ in the same internal triangle of $\kappa$. It is then clear that there exists some $N\in \mathbb N$ such that every path of length greater than $N$ vanishes modulo the relations generated by the cyclic derivatives. Thus the algebra $\mathcal P(\kappa)$ is of finite dimension over $\Bbbk$.

Recall that an algebra $\Bbbk \langle Q \rangle/I$ is called \emph{gentle} if
\begin{enumerate}
    \item each vertex of $Q$ is incident with at most two incoming and at most two outgoing arrows;
    \item the ideal $I$ is generated by paths of length two;
    \item for each $b\in Q_1$ there is at most one $a\in Q_1$ and at most one $c\in Q_1$ such that $ab\in I$ and $bc\in I$;
    \item for each $b\in Q_1$ there is at most one $a\in Q_1$ and at most one $c\in Q_1$ such that $ab\notin I$ and $bc\notin I$.
\end{enumerate}
The conditions (1) and (2) hold for $Q(\kappa)$ and $I = \partial S(\kappa)$ from the constructions of the quiver and the potential. Conditions (3) and (4) are easily checked by studying all possible local configurations of a triangulation.
\end{proof}

For such a Jacobian algebra $\mathcal P(\kappa)$, to each vertex $k\in \kappa$, we associate the subalgebra $H_k \coloneqq \Bbbk \langle \varepsilon_k, e_k \rangle$ (thus isomorphic to $\Bbbk[\varepsilon]/(\varepsilon^2)$) if $k$ is pending and otherwise $H_k \coloneqq \Bbbk e_k$. 

\begin{remark}
\begin{enumerate}
\item In the absence of orbifold points, the gentle algebras $\mathcal{P}(\kappa)$ were introduced in \cite{assem2010gentle} and \cite{labardini2009quivers}.
\item Our definition of the algebras $\mathcal{P}(\kappa)$ in terms of triangulations of $\surf$ can be seen as a special case of the much more general geometric construction of gentle algebras in terms of dissections of surfaces. See, for instance, \cite{opper2018geometric}.
\item The canonical inclusion of $\Bbbk\langle Q(\kappa)\rangle$ in the complete path algebra $\Bbbk\langle\langle Q(\kappa)\rangle\rangle$ induces an algebra isomorphism between $\mathcal{P}(\kappa)$ and the quotient of $\Bbbk\langle\langle Q(\kappa)\rangle\rangle$ by the $\mathfrak{m}$-adic topological closure of the two-sided ideal generated by $\partial S(\kappa)$.
\end{enumerate}
\end{remark}

\section{Reflection functors}\label{section: reflection functor}
In this section, we define BGP-type reflection functors \cite{bernstein1973coxeter} between module categories of the gentle algebras associated to triangulations related by a flip.

Denote by $\modu A$ the category of finitely generated left modules of a $\Bbbk$-algebra $A$. Let $\kappa$ and $\sigma$ be two labeled triangulations related by a flip at $k\in \kappa$. We will define two functors
\[
F^\pm_k \colon \modu \mathcal P(\kappa) \rightarrow \modu \mathcal P(\sigma).
\]
As $\mathcal P(\kappa) = \Bbbk \langle Q(\kappa) \rangle /\langle \partial S(\kappa) \rangle$ is finite dimensional (\Cref{lemma: fd and gentle}), the category $\modu \mathcal P(\kappa)$ is well-known to be equivalent to $\operatorname{rep}(Q(\kappa), \partial S(\kappa))$, the category of finite dimensional $\Bbbk$-linear representations of $Q(\kappa)$ satisfying relations in $\partial S(\kappa)$. Therefore in the following we treat any $\mathcal P(\kappa)$-module equivalently as an object in $\operatorname{rep}(Q(\kappa), \partial S(\kappa))$. For a representation $M$ of a quiver $Q$, denote by $M_i$ the vector space associated to $i\in Q_0$ and by $M_a \colon M_{t(a)} \rightarrow M_{h(a)}$ the linear map associated to $a\in Q_1$. All tensor products $\otimes$ in this section will be of vector spaces over $\Bbbk$.

Flipping $\kappa$ at $k$ will result an arrow $a^*\colon k\rightarrow i$ in $\overline Q_1(\sigma)$ for any $a\colon i\rightarrow k$ in $\overline Q_1(\kappa)$, similarly an arrow $b^*\colon j\rightarrow k$ for any $b\colon k\rightarrow j$. As we have $\mu_k^2(\kappa) = \kappa$, we identify $a^{**}$ with $a$ for any $a$ incident to $k$ in $\overline Q_1(\kappa)$. See the diagrams in Case 1 and Case 2 in \Cref{section: F_k^+}.

\subsection{Sink reflection $F^+_k$}\label{section: F_k^+} Let $M$ be in $\modu \mathcal P(\kappa)$. Consider the $\Bbbk$-linear map
\[
\alpha \colon  \bigoplus_{\substack{a\in \overline Q_1(\kappa)\\t(a) = i,\, h(a)=k}} H_k \otimes M_i  \rightarrow M_k
\]
where in each component $y\otimes x$ is mapped to $y\cdot M_a(x)$ for $y\in H_k$ and $x\in M_i$. Here $y$ acts on $M_k$ by the $H_k$-module structure of $M_k$. By construction, the map $\alpha$ is an $H_k$-module homomorphism. Define $M'_k \coloneqq \ker \alpha$ as an $H_k$-module. For each $a \colon i \rightarrow k$, there is an induced map
\[
f_a \colon M_k' \rightarrow  H_k \otimes M_i
\]
by the natural inclusion of the kernel composed by the projection to the component $H_k \otimes M_i$. 

If $k$ pending, we see $H_k\otimes M_i = (\Bbbk \cdot \varepsilon_k \otimes M_i) \oplus (\Bbbk \cdot 1 \otimes M_i)$ as a $\Bbbk$-linear space and identify it with $M_i\oplus M_i$ such that $\varepsilon_k(y,x) = (x,0)$. Then the map $f_a$ has two components $f_{1,2}\colon M_k' \rightarrow M_i$ satisfying $f_1(\varepsilon_k x) = f_2(x)$ for any $x\in M_k'$. In this case, we define the map $M'_{a^*}\colon M_k' \rightarrow M_i$ to be $f_2$.

If $k$ is not pending, as $H_k\otimes M_i \cong M_i$ by identifying $1\otimes x$ with $x$, we define $M'_{a^*}$ to be $f_a\colon M_k'\rightarrow M_i$.

To define $M' = F^+_k(M)\in \modu \mathcal P(\kappa)$, we first need to specify $M'_i$ for every $i$. We define
\[
M'_i \coloneqq \begin{cases}
	M_i \quad &\text{if $i\neq k$}\\
	M'_k \quad &\text{if $i = k$}.
\end{cases}
\]
We still need to determine the actions of the arrows in $Q_1(\sigma)$ for $F^+_k(M)$. For the arrows shared by $Q_1(\sigma)$ and $Q_1(\kappa)$ (except $\varepsilon_k$), it is easy to see that their domains and targets do not change. We keep the actions of these arrows the same as before.

For each $a \colon i \rightarrow k$ in $\overline Q_1(\kappa)$, we define the action of $a^*$ by $M'_{a^*}\colon M'_k\rightarrow M_i$.

For each $b \colon k \rightarrow j$ in $\overline Q_1(\kappa)$, there are two cases. If $b$ does not belong to an internal triangle, then we define $M'_{b^*} = 0$. Otherwise $b$ in an internal triangle has the local configuration
\[
i \xrightarrow{a} k \xrightarrow{b} j \xrightarrow {c} i.
\]
Note that $a$ and $c$ satisfy $M_a\circ M_c = 0$. So $M_c$ sends $M_j$ to $\ker a\subset \ker \alpha = M_k'$ where the inclusion is via $\Bbbk \cdot 1 \otimes M_i \subset H_k \otimes M_i$. We define $M'_{b^*} \coloneqq M_c \colon M_j \rightarrow M'_k$.

For any newly created $i \xrightarrow{d} j$ in $\sigma$ associated to $i\xrightarrow{a} k \xrightarrow{b} j$ in $\kappa$, we define $M'_d\colon M_j \rightarrow M_i$ to be $M_b\circ M_{\varepsilon_k} \circ M_a$ if $k$ is pending and otherwise $M_b\circ M_a$.

Now it is clear that $F^+_k(M)$ is a representation of the quiver $Q(\sigma)$. To show it is actually a module over $\mathcal P(\sigma)$ we need to check that the relations $\partial S(\sigma)$ are satisfied.

In fact, as any relation in $\partial S(\sigma)$ are local to each internal triangle or pending arc, we only need to check at the triangles which contain the arc $k$ where the flip $\mu_k$ takes place. There are essentially two cases of local configurations.

\textbf{Case 1: $k$ is not pending.}

\[
\begin{tikzcd}
& 1 \arrow[d, "a"] \\
2 \arrow[ru] & k \ar[l] \ar[r, "b"] & 3 \arrow[ld, "g"] \\
& 4 \arrow[u, "f"]
\end{tikzcd} \quad \xrightarrow{\mu_k} \quad
\begin{tikzcd}
& 1 \arrow[dr, "d"] \\
2 \arrow[r] & k \ar[u, "a^*"] \ar[d] & 3 \arrow[l, "b^*"] \\
& 4 \arrow[ul]	
\end{tikzcd}
\]
Note that some vertices of $\{1,2,3,4\}$ may be identical or pending, in which case we are not drawing the loops in the diagram. We check the following relations. We use the label of an arrow to represent its action for simplicity.
\begin{enumerate}
    \item $d\circ a^* = 0$. Consider the following commutative diagram.
    \[
    \begin{tikzcd}
       M_k' \ar[d, hook] \ar[r, "a^*"] & M_1 \ar[rd, "a"]\ar[rr, "d"] &  & M_3\\
       M_1 \oplus M_4 \ar[ur, swap, "{(\mathrm{id}, 0)}"] && M_k\ar[ur, "b"]
    \end{tikzcd}
    \]
    Let $(x, y)$ be in $M_k' \subset M_1 \oplus M_4$. Then it must satisfy $a(x) + f(y) = 0$. Thus we have (since $b\cdot f \in \partial S(\kappa)$) that
    $$ b\circ a \circ a^*(x,y) = b\circ a(x) =  -b\circ f (y) = 0. $$
    \item $a^*\circ b^* = 0$. This follows from the diagram below as $a^*\circ b^* = (\mathrm{id}, 0)\circ (0, g)^T = 0$.
    \[
    \begin{tikzcd}
       M_3 \ar[r, "b^*"] \ar[rd, swap, "{(0, g)^T}"] & M_k' \ar[r, "a^*"] \ar[d, hook]& M_1\\
       & M_1 \oplus M_4 \ar[ru, swap, "{(\mathrm{id}, 0)}"]
    \end{tikzcd}
    \]
    \item $b^*\circ d = 0$. This is clear from the following diagram and that $g\circ b = 0$.
    \[
    \begin{tikzcd}
       M_1 \ar[rr, "d"]\ar[rd, "a"] & & M_3 \ar[r, "b^*"] \ar[rd, swap, "{(0, g)^T}"] & M_k' \ar[d, hook]\\
       &M_k \ar[ru, "b"]&& M_1 \oplus M_4
    \end{tikzcd}
    \]
\end{enumerate}

\textbf{Case 2: $k$ is pending.}

\[
\begin{tikzcd}
	& 1 \ar[d, "a"] \\
	2 \ar[ru, "c"] & k \ar[l, swap, "\ b"] \ar[out=0, in = 270, loop, "\varepsilon"]
\end{tikzcd}
\quad \xrightarrow{\mu_k} \quad
\begin{tikzcd}
	& 1 \ar[dl, swap, "d"] \\
	2 \ar[r, "\ b^*"] & k \ar[u, swap, "a^*"] \ar[out=0, in = 270, loop, "\varepsilon"]
\end{tikzcd}
\]

Note that (exactly) one of the vertices 1 and 2 may also be pending. Denote $\varepsilon = \varepsilon_k$ and simplify the notation $\Bbbk \cdot \varepsilon \otimes M_1 = \varepsilon \otimes M_1$. We check the following relations.

\begin{enumerate}
    \item $d\circ a^* = 0$. Consider the following commutative diagram.
	\[
    \begin{tikzcd}
       M_k' \ar[d, hook] \ar[rr, "a^*"] && M_1 \ar[d, "a"]\ar[r, "d"] &   M_2\\
       M_1 \oplus (\varepsilon \otimes M_1) \ar[urr, swap, "{(x, \varepsilon\otimes y)\mapsto y}", sloped] && M_k\ar[r, "\varepsilon \cdot"] & M_k \ar[u, "b"]
    \end{tikzcd}
    \]
	Let $(x, \varepsilon \otimes y)$ be in $M_k'$. Then it must satisfy $a(x) + \varepsilon a(y) = 0.$ So we have (since $ba\in \partial S(\kappa)$)
	\[
	d\circ a^*((x, \varepsilon \otimes y)) = b(\varepsilon a(y)) = b (- a(x)) = 0.
	\]

	\item $a^*\circ b^* = 0$. This follows from the diagram

	\[
	\begin{tikzcd}
		M_2 \ar[dr, swap, "{(c,0)}"] \ar[r, "b^*"] & M_k' \ar[d, hook] \ar[r, "a^*"] & M_1 \\
		& M_1 \oplus (\varepsilon \otimes M_1)  \ar[ru, swap, "{(x,\varepsilon\otimes y)\mapsto y}"]
	\end{tikzcd}.
	\]

	\item $b^* \circ d = 0$. It follows from the diagram below and $c\circ a = 0$.
	\[
	\begin{tikzcd}
	   M_1 \ar[d, "a"] \ar[r, "d"] & M_2 \ar[r, "b^*"] \ar[rd, swap, "{(c,0)}"] & M_k' \ar[d, hook]\\
	   M_k \ar[r, "\varepsilon \cdot"] & M_k\ar[u, "b"] & M_1 \oplus (\varepsilon \otimes M_1)
	\end{tikzcd}
	\]
	\end{enumerate}

\subsection{Source reflection $F^-_k$} As before, let $M$ be in $\modu \mathcal P(\kappa)$. Consider the $\Bbbk$-linear map
\[
\beta \colon M_k \rightarrow \bigoplus_{\substack{b\in \overline Q_1(\kappa)\\t(b)=k,\, h(b)=j}} H_k \otimes M_j.
\]
defined as follows.

If $k$ is not pending, then each component of $\beta = (\beta_b)_b$ is given by $M_b$ by identifying $1\otimes x\in H_k\otimes M_j$ with $x\in M_j$. If $k$ is pending, then we define $\beta(m) = (1\otimes M_b(\varepsilon_k m)+ \varepsilon_k\otimes M_b(m))_b$ for $m\in M_k$. We check that
\[
\beta(\varepsilon_k m) = (\varepsilon_k\otimes M_b(\varepsilon_k m))_b = \varepsilon_k\cdot\beta(m).
\]
Thus $\beta$ is an $H_k$-homomorphism.
Let $M'_k \coloneqq \coker \beta$ with induced morphisms for each component
\[
f_b \colon H_k\otimes M_j \rightarrow M'_k.
\]
Then $f_b$ further restricts to a linear map on $\Bbbk \cdot 1\otimes M_j \subset H_k \otimes M_j$, which we denote by $M'_{b^*}\colon M_j \rightarrow M'_k$.

We now describe the module $M' = F_k^-(M)$. For $i\neq k$, set $M'_i \coloneqq M_i$; for $i = k$, $M'_k \coloneqq \coker \beta$.

For each $b\colon k\rightarrow j$ in $\overline Q_1(\kappa)$, define the action of $b^*$ by $M'_{b^*}\colon M_j\rightarrow M_k'$.

For each $a\colon i\rightarrow k$ in $\overline Q_1(\kappa)$, there are again two cases. If it does not belong to any internal triangle, set $M'_{a^*} = 0$. Otherwise it has the local configuration
\[
i \xrightarrow{a} k \xrightarrow{b} j \xrightarrow {c} i.
\]
If $k$ is not pending, the relation $c\cdot b = 0$ implies that $M_c\circ M_b \colon M_k \rightarrow M_i$ is $0$ which induces a unique map $M'_{a^*}\colon \coker \beta = M_k' \rightarrow M_i$. If $k$ is pending, let $\widetilde M_c\colon H_k\otimes M_j \rightarrow M_i$ denote the linear map such that $\varepsilon_k\otimes m \mapsto M_c(m)$ and $1\otimes m \mapsto 0$ for $m\in M_j$. Now the local configuration at $k$ is depicted as in \textbf{Case 2} in the last subsection. Then $\widetilde M_c\circ \beta \colon M_k \rightarrow M_i$ vanishes and thus $\widetilde M_c$ factors through $\coker \beta = M_k'$ by a unique map from $\coker \beta$ to $M_i$ which we denote by $M'_{a^*}\colon M_k' \rightarrow M_i$.

The rest of the actions of arrows are defined in the same way as for the case $F^+_k$.

As in the case of $F^+_k$, we then need to check that the actions satisfy the relations given by $\partial S(\sigma)$ so that $F^-_k(M)$ becomes a module over $\mathcal P(\sigma)$. We leave the details in this case to the reader.

\begin{remark}
If $k$ is a sink (resp. source) of $\overline Q_1(\kappa)$ and not pending, then $F_k^+$ (resp. $F_k^-$) is (defined in the same way as) the classical BGP reflection functor \cite{bernstein1973coxeter}. In the case where $\overline Q(\kappa)$ is acyclic and $k$ is a sink or source (possibly pending), $F_k^\pm$ are the same as the reflection functors of \cite{geiss2017quivers} in the corresponding situations. When there is no orbifold point, the functors $F_k^\pm$ fit into a more general setting of quivers with non-degenerate potentials as in \cite{mou2019scattering}. Although definitely our $F_k^\pm$ are motivated by the mutation of decorated representations of Derksen--Weyman--Zelevinsky \cite{derksen2008quivers}, the $F_k^\pm$ here are partial versions of the corresponding DWZ-mutations in the existence of loops which we shall develop in \cite{LMpart2}.
\end{remark}

\subsection{Adjunction between $F^+_k$ and $F^-_k$}

Let $k\in \kappa$ be a vertex. Consider the two reflection functors
\[
F^+_k \colon \modu \mathcal P(\kappa) \rightarrow \modu \mathcal P(\sigma)\quad \text{and} \quad F^-_k \colon \modu \mathcal P(\sigma) \rightarrow \modu \mathcal P(\kappa).
\]

\begin{proposition}\label{prop: adjunction}
The functors $F^+_k$ and $F^-_k$ are adjoint to each other, i.e., there is a bifunctorially isomorphism
\[
\Hom_{\mathcal P(\sigma)}(N, F^+_k(M)) \cong \Hom_{\mathcal P(\kappa)} (F^-_k(N), M) 
\]
for any $N \in \modu \mathcal P(\sigma)$ and $M \in \modu \mathcal P(\kappa)$.
\end{proposition}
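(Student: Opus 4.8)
The strategy is to construct the adjunction isomorphism explicitly at the level of representations, using the fact that $M'_k = F^+_k(M)_k$ is defined as a kernel of an $H_k$-module map $\alpha$, while $N'_k = F^-_k(N)_k$ is defined as a cokernel of an $H_k$-module map $\beta$; these two universal properties are exactly what will produce the natural bijection. Concretely, given $N \in \modu \mathcal P(\sigma)$ and $M \in \modu \mathcal P(\kappa)$, a morphism $\varphi \in \Hom_{\mathcal P(\sigma)}(N, F^+_k(M))$ consists of linear maps $\varphi_i \colon N_i \to M'_i$ compatible with all arrow actions in $Q(\sigma)$; since $M'_i = M_i$ for $i \neq k$, the only piece of genuinely new data is $\varphi_k \colon N_k \to M'_k = \ker \alpha$, equivalently a map $N_k \to \bigoplus_a H_k \otimes M_i$ landing in $\ker \alpha$, subject to the compatibility relations coming from the arrows $a^*$, $b^*$ and the newly created arrows $d$ incident to $k$. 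I would unwind these relations and show they match, under the cokernel universal property of $\beta$, exactly the data of a morphism $\psi \in \Hom_{\mathcal P(\kappa)}(F^-_k(N), M)$, whose only new component is $\psi_k \colon \coker \beta \to M_k$.

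**Key steps.** First I would separate the two local cases (\textbf{$k$ not pending} and \textbf{$k$ pending}) as in \Cref{section: F_k^+}, since the $H_k$-module structures differ ($H_k = \Bbbk e_k$ versus $H_k \cong \Bbbk[\varepsilon]/(\varepsilon^2)$); in the pending case the bookkeeping with the two components $(1 \otimes -, \varepsilon \otimes -)$ is where the care is needed. Second, in each case I would write down the correspondence on the level of the $k$-component: $\varphi_k \colon N_k \to \ker\alpha \subset \bigoplus_a H_k \otimes M_i$ composed with the projections gives maps into each $H_k \otimes M_i$; dualizing via the arrows incident to $k$ and using that $\beta_b$ is built from $M_b$ (resp. $1 \otimes M_b(\varepsilon_k -) + \varepsilon_k \otimes M_b(-)$), these assemble into a map out of $M_k$ that, by the relations in $\partial S(\kappa)$ forcing it to kill $\mathrm{im}\,\beta$, descends to $\coker\beta \to M_k$. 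Third, I would verify that this assignment is two-sided inverse to the analogous one in the other direction, and that the remaining arrow-compatibilities (for arrows not incident to $k$, where $\varphi$ and $\psi$ literally agree) transport correctly; bifunctoriality in $M$ and $N$ is then immediate since every construction used is natural.

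**Main obstacle.** The genuinely delicate point is matching the compatibility conditions at the arrows incident to $k$ — specifically checking that ``$\varphi$ commutes with $a^*$, $b^*$, and the new arrows $d$ in $Q(\sigma)$'' translates precisely into ``$\psi$ commutes with $a$, $b$, and the old arrows in $Q(\kappa)$,'' with nothing lost or gained. This hinges on the relations $M_a \circ M_c = 0$ and $M_b \circ M_a \in \partial S(\kappa)$ (and their pending-arc analogues involving $\varepsilon_k$) that were used to define $F^+_k$ and $F^-_k$ in the first place; these are exactly the relations that make the kernel-to-cokernel transfer well defined. I expect the non-pending case to be routine once the diagram is set up, and the pending case to require a short but careful computation tracking the $\varepsilon_k$-twist through $\alpha$, $\beta$, and $\widetilde M_c$. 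An alternative, cleaner packaging would be to exhibit $F^+_k$ as $\Hom_{H_k}(T, -)$ and $F^-_k$ as $T \otimes_{H_k} -$ for a suitable bimodule $T$ built from the local data at $k$, deducing the adjunction formally; I would mention this as a remark but carry out the hands-on verification, since the explicit unit and counit will be reused later.
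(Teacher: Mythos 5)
Your proposal is correct and follows essentially the same route as the paper: both separate into the pending and non-pending cases, describe each Hom-space explicitly as a tuple of linear maps on the vertices with the component at $k$ forced by the universal property of $\ker\alpha$ (resp.\ $\coker\beta$) together with the cyclic-derivative relations, and then match the remaining arrow-compatibility constraints one by one. The only stylistic difference is that the paper phrases this by eliminating the $k$-component outright and identifying the constrained tuples $(x_i)_{i\neq k}$ and $(z_i)_{i\neq k}$ directly, which makes the bijection (and its bifunctoriality) manifest, whereas you frame $\varphi_k$ as new data before noting it descends via $\coker\beta$ --- the content is the same.
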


\begin{proof}
There are two cases on whether $k$ is pending.

\textbf{Case 1: $k$ is not pending.}

\[
\begin{tikzcd}
& 1 \arrow[d, "a"] \\
2 \arrow[ru, "c"] & k \ar[l, "e"] \ar[r, "b"] & 3 \arrow[ld, "g"] \\
& 4 \arrow[u, "f"]
\end{tikzcd} \quad \xrightarrow{\mu_k} \quad
\begin{tikzcd}
& 1 \arrow[dr, "d"] \\
2 \arrow[r, "e^*"] & k \ar[u, "a^*"] \ar[d, "f^*"] & 3 \arrow[l, "b^*"] \\
& 4 \arrow[ul, "h"]	
\end{tikzcd}
\]

For any two representations $M$ and $N$ of a quiver $Q$ and a \emph{path} $a$ from $i$ to $j$ (i.e. a composition of arrows from $i$ to $j$), denote by $S_a(M,N)$ the equation
\[
x_j \circ M_a = N_a \circ x_i
\]
for $(x_i)_{i\in Q_0}$ a tuple of linear maps $x_i\colon M_i \rightarrow N_i$. The space of morphisms in $\modu \mathcal P(\kappa)$ has the following description 
\[
\Hom_{\mathcal P(\kappa)}(F^-_k(N), M) = \{(x_i)_{i\in \kappa} \mid x_i \colon F^-_k(N)_i \rightarrow M_i \ \text{s.t. $S_y(F^-_k(N), M)$ for any $y\in Q_1(\kappa)$} \}.
\]
Note that for $N' = F^-_k(N)$, the map $\alpha \colon N'_1 \oplus N'_4 \rightarrow N'_k$ is surjective. Thus $x_k$ is determined by $(x_i)_{i\neq k}$. So the space $\Hom_{\mathcal P(\kappa)}(N', M)$ is set of tuples $(x_i)_{i\neq k}$ satisfying
\begin{enumerate}
    \item $S_y(N', M)$ for $y\in Q_1(\kappa)$ not incident to $k$;
    \item $S_{e f}(N', M)$ and $S_{b a}(N', M)$;
    \item For any $(n_1, n_4)\in N'_1\oplus N'_4$ such that $N'_a(n_1) + N'_f(n_4) = 0$, $M_a(x_1(n_1)) + M_f(x_4(n_4)) = 0$, which is equivalent to that for any $n\in N_k$, $M_a\circ x_1 \circ N_{a^*}(n) + M_f\circ x_4 \circ N_{f^*}(n) = 0$.
\end{enumerate}

On the other hand, we can express $\Hom_{\mathcal P(\sigma)}(N, M')$ (writing $M' = F^+_k(M)$) as the set of tuples of linear maps $(z_i\colon N_i\rightarrow M'_i)_{i\neq k}$ such that
\begin{enumerate}
    \item $S_y(N, M')$ for $y\in Q_1(\sigma)$ not incident to $k$;
    \item $S_{a^* e^*}(N, M')$ and $S_{f^* b^*}(N, M')$;
    \item For any $n\in N_k$, we have $(z_1(N_{a^*}(n)), z_4(N_{f^*}(n)))\in M'_k\subset M_1'\oplus M_4'$, which is equivalent to that for any $n\in N_k$, $M_a\circ z_1 \circ N_{a^*}(n) + M_f\circ z_4 \circ N_{f^*}(n) = 0$.
\end{enumerate}

By the constructions of $N'$ and $M'$, there are following equivalences of equations
\begin{align*}
S_{e f}(N', M) & \Leftrightarrow S_{h}(N, M'),\\
S_{b a}(N', M) & \Leftrightarrow S_{d}(N, M'),\\
S_{a^* e^*}(N,M') &\Leftrightarrow S_c (N', M),\\
S_{f^* b^*}(N,M') &\Leftrightarrow S_g (N', M),
\end{align*}
and $S_y(N, M') \Leftrightarrow S_y(N, M')$ for any other $y$ shared by $Q_1(\kappa)$ and $Q_1(\sigma)$. Then the two spaces $\Hom_{\mathcal P(\kappa)}(N', M)$ and $\Hom_{\mathcal P(\sigma)}(N, M')$ are canonically identified and this identification is bifunctorial.

\textbf{Case 2: $k$ is pending.}

\[
\begin{tikzcd}
	& 1 \ar[d, "b"] \\
	2 \ar[ru, "c"] & k \ar[l, swap, "\ a"] \ar[out=0, in = 270, loop, "\varepsilon_k"]
\end{tikzcd}
\quad \xrightarrow{\mu_k} \quad
\begin{tikzcd}
	& 1 \ar[dl, swap, "d"] \\
	2 \ar[r, "\ a^*"] & k \ar[u, swap, "b^*"] \ar[out=0, in = 270, loop, "\varepsilon_k"]
\end{tikzcd}
\]

We keep the same notations such as $N'$ and $M'$ as in Case 1. Similarly to Case 1, the space $\Hom_{\mathcal P(\kappa)}(N', M)$ consists of tuples of linear maps $(x_i\colon N'_i\rightarrow M_i)_{i\neq k}$ such that
\begin{enumerate}
    \item $S_y(N', M)$ for $y\in Q_1(\kappa)$ not incident to $k$;
    \item $S_{a\varepsilon_k b}(N',M)$;
    \item $\alpha_M \circ x_1 \circ \beta_N (N_k) = 0$,
\end{enumerate}
and hence $x_k\colon N'_k\rightarrow M_k$ can be well-defined by the commutative diagram
\[
\begin{tikzcd}
N_k \ar[r, "\beta_N"] & H_k\otimes N_1' \ar[r, two heads, "\alpha_{N'}"] \ar[d, "x_1"]& N_k' = \coker \beta_{N} \ar[d, "x_k"] \\
& H_k\otimes M_1 \ar[r, "\alpha_M"] & M_k
\end{tikzcd}.
\]
Analogously, the space $\Hom_{\mathcal P(\sigma)}(N, M')$ is the set of tuples $(z_i)_{i\neq k}$ such that
\begin{enumerate}
    \item $S_y(N, M')$ for $y\in Q_1(\sigma)$ not incident to $k$;
    \item $S_{b^*\varepsilon_k a^*}(N',M)$;
    \item $\alpha_M \circ x_1 \circ \beta_N (N_k) = 0$.
\end{enumerate}
Now that $S_c(N', M) \Leftrightarrow S_{b^*\varepsilon_k a^*} (N, M')$, $S_d(N, M') \Leftrightarrow S_{a\varepsilon_k b} (N', M)$, and $S_y(N', M) \Leftrightarrow S_y(N, M')$ for any other $y\neq \varepsilon_k$ shared by $Q_1(\kappa)$ and $Q_1(\sigma)$, these two homomorphism spaces are canonically identified bifunctorially.

Combining Case 1 and Case 2, we have proven the adjunction between $F^+_k$ and $F^-_k$.
\end{proof}

For a labeled triangulation $\kappa$, denote by $S_k$ the simple module in $\modu \mathcal P(\kappa)$ corresponding to $k\in \kappa$. We define the following two full subcategories
\[
S_k^\perp \coloneqq \{M\in \modu \mathcal P(\kappa) \mid \Hom(S_k, M) = 0\},\quad ^\perp S_k \coloneqq \{M\in \modu \mathcal P(\kappa) \mid \Hom(M, S_k) = 0\}.
\]

\begin{proposition}\label{cor: left perp and right perp}
The functor $F^+_k$ restricts to an equivalence from $^\perp S_k\subset \modu \mathcal P(\kappa)$ to $S_k^\perp \subset \modu \mathcal P(\sigma)$ with $F_k^-$ being its inverse equivalence.
\end{proposition}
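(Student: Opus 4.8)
The plan is to show first that $F_k^+$ and $F_k^-$ carry the two perpendicular subcategories into each other, and then to upgrade this to an equivalence using the adjunction of \Cref{prop: adjunction} together with an analysis of the unit and counit. For the first part, I would trace through the construction in \Cref{section: F_k^+} and \Cref{section: F_k^-}. The simple $S_k$ is the representation with $S_k$ at vertex $k$ and zero elsewhere; a module $M$ lies in $^\perp S_k$ iff $S_k$ is not a quotient of $M$, which, since the arrows $a\colon i\to k$ together generate $M_k$ as an $H_k$-module over the images of the $M_a$ exactly when the structure map $\alpha$ of \Cref{section: F_k^+} is surjective, is equivalent to surjectivity of $\alpha$. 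Dually, $N\in S_k^\perp$ iff $S_k$ is not a submodule of $N$, equivalent to injectivity of the map $\beta$ of \Cref{section: F_k^-}. Thus the first task is: if $\alpha_M$ is surjective then the map $\beta_{F_k^+(M)}$ (built from $F_k^+(M)$ over the flipped quiver) is injective, and symmetrically; these are direct linear-algebra checks in the two local configurations (Case 1 and Case 2 of \Cref{section: F_k^+}), using the relations $M_a\circ M_c=0$, $M_c\circ M_b=0$ (resp. with $\varepsilon_k$ inserted in the pending case) that were already verified there.

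Next I would compute the unit and counit of the adjunction from \Cref{prop: adjunction}. For $M\in {}^\perp S_k$, the counit $\eta_M\colon F_k^-F_k^+(M)\to M$ is the identity away from vertex $k$ by construction, so it suffices to check it is an isomorphism at $k$; concretely $F_k^-F_k^+(M)_k=\coker(\beta_{F_k^+(M)})$, and one checks directly in each local configuration that, because $\alpha_M$ was surjective (the defining property of $^\perp S_k$), the composite $\ker\alpha_M\hookrightarrow \bigoplus H_k\otimes M_i \twoheadrightarrow \coker\beta$ patched up correctly recovers $M_k$. Dually, for $N\in S_k^\perp$ the unit $N\to F_k^+F_k^-(N)$ is the identity away from $k$ and at $k$ is the natural map $N_k\to \ker(\alpha_{F_k^-(N)})$, which is an isomorphism precisely because $\beta_N$ is injective. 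So both natural transformations restrict to isomorphisms on the relevant subcategories, giving the claimed inverse equivalences.

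The main obstacle I expect is the pending case: the $H_k$-module bookkeeping ($H_k\cong\Bbbk[\varepsilon]/(\varepsilon^2)$, the splitting $H_k\otimes M_i\cong M_i\oplus M_i$ with $\varepsilon_k(y,x)=(x,0)$, and the compatibility $f_1(\varepsilon_k x)=f_2(x)$) makes the identification of $\ker\alpha$ with $\coker\beta$ less transparent than in the one-dimensional case, and one has to be careful that the $\varepsilon_k$-action on $F_k^\pm(M)_k$ induced from the $H_k$-structure matches the $\varepsilon_k$ defined on the flipped quiver. I would handle this by writing out, in the pending Case 2 diagram, explicit bases and checking the four-term exact sequence $0\to N_k\xrightarrow{\beta}H_k\otimes N_1\xrightarrow{\text{mult}}H_k\otimes N_1\to \coker\to 0$ degenerates as needed; everything else is formal once the perpendicular subcategories are identified with the surjectivity/injectivity loci and the unit/counit are pinned down.
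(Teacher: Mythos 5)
Your proposal follows essentially the same route as the paper's proof: characterize $M\in{}^\perp S_k$ by surjectivity of $\alpha$ and $N\in S_k^\perp$ by injectivity of $\beta$, observe that $F_k^\pm$ land in the corresponding perpendicular subcategories, and then verify that the unit and counit of the adjunction from Proposition~\ref{prop: adjunction} restrict to isomorphisms there. (Minor cosmetic slip: you label the counit $\eta_M$, whereas the paper writes $\varepsilon_M$ for the counit and reserves $\eta_N$ for the unit, but the mathematics is unaffected.)
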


\begin{proof}
By the adjunction \Cref{prop: adjunction}, there are two natural transformations the counit $$\varepsilon_M \colon F^-_kF^+_k(M) \rightarrow M$$ for $M\in \modu \mathcal P(\kappa)$ and the unit $$\eta_N \colon N \rightarrow F^+_kF^-_k(N)$$ for $N\in \modu \mathcal P(\sigma)$. A characterization of $M\in {^\perp S_k}$ (resp. $S_k^\perp$) is the map $\alpha$ being surjective (resp. $\beta$ being injective). Thus $S_k^\perp$ (resp. $^\perp S_k$) is the image of $F_k^+$ (resp. $F_k^-$). It is also clear that for $M\in {^\perp S_k}$ (resp. $N\in S_k^\perp$), the counit $\varepsilon_M$ (resp. the unit $\eta_N$) is an isomorphism.
\end{proof}

\section{Stability conditions under reflections}\label{section: stability}

In this section we show that the reflection functors $F^\pm_k$ relate stability conditions for the abelian categories $\modu \mathcal P(\sigma)$ and $\modu \mathcal P(\kappa)$ in a certain way when $\sigma$ and $\kappa$ are related by a flip.

\subsection{Stability conditions}

\begin{definition}[King \cite{king1994moduli}]\label{def: stability condition}
 Let $A$ be a finite dimensional $\Bbbk$-algebra and $\theta \colon K_0(\modu A)\rightarrow \mathbb R$ an additive function on the Grothendieck group of $\modu A$. An object $M\in \modu A$ is called $\theta$-semistable (resp. $\theta$-stable) if $\theta(M) = 0$ and for any non-zero proper submodule $M'\subset M$, $\theta(M') \geq 0$ (resp. $>0$).
\end{definition}

The full subcategory of all $\theta$-semistable modules is denoted by $\moduss{\theta} A$. It is standard that $\moduss{\theta} A$ is closed under kernels, cokernels and extensions. We will call a full subcategory of $\modu A$ satisfying those properties a \emph{wide subcategory}.

We will from now on call such $\theta$ a (King's) \emph{stability condition} of $\modu A$. Let $\Bbbk$ be algebraically closed. Then it is well-known that $\modu A$ is equivalent to the module category of a \emph{basic algebra}. In other words, we could assume that $A$ comes from a quiver $Q$ with admissible relations, i.e. $A = \Bbbk \langle Q \rangle/I$ where $I\subset \Bbbk \langle Q \rangle$ is an ideal spanned by linear combinations of paths of length at least two. Now the category $\modu A$ is equivalent to $\rep(Q, I)$ the category of finite dimensional $\Bbbk$-linear representations of $Q$ satisfying relations in $I$. 

Identifying $Q_0$ with $\{1,\dots, n\}$ where $n = |Q_0|$, the Grothendieck group $K_0(\modu A)$ is then isomorphic to $\mathbb Z^n$ with the basis of (isoclasses) of simple modules $\{S_i\mid i\in Q_0\}$ identified with the standard basis $\{e_i\mid i = 1, \dots, n\}$ of $\mathbb Z^n$. A stability condition $\theta$ can be expressed as a tuple $(\theta_i)_i\in \mathbb R^n$ such that $\theta(S_i) = \theta_i$.

Recall that we have associated the matrix $B(\kappa) = (b_{ij})$ to a labeled triangulation $\kappa$.
Let $\theta = (\theta_i)_i$ be a stability condition of $\modu \mathcal P(\kappa)$. Fix $k\in \kappa$. Consider the flipped triangulation $\sigma = \mu_k(\kappa)$ and the associated gentle algebra $\mathcal P(\sigma)$. We define two stability conditions $\theta'$ and $\theta''$ on $\modu \mathcal P(\sigma)$ in terms of $\theta$ as follows. Define $\theta' = (\theta'_i)_i$ by
\begin{equation}\label{eq: pl map theta'}
\theta'_i \coloneqq \begin{cases}
        \theta_i + [b_{ik}]_+ \theta_k \quad & \text{if $i \neq k$}\\
        -\theta_k \quad & \text{if $i = k$}
\end{cases}    
\end{equation}
and $\theta'' = (\theta''_i)_i$ by
\begin{equation}\label{eq: pl map theta''}
\theta''_i \coloneqq \begin{cases}
        \theta_i + [-b_{ik}]_+ \theta_k \quad & \text{if $i \neq k$}\\
        -\theta_k \quad & \text{if $i = k$}.
\end{cases}    
\end{equation}

\begin{theorem} \label{thm: stability under mutation}
Let $\kappa$, $k$, $\sigma$ be as above. Let $\theta = (\theta_i)_i$ be a stability condition of $\modu \mathcal P(\kappa)$ such that $\theta_k>0$ (resp. $\theta_k<0$). Then $M\in \modu \mathcal P(\kappa)$ is $\theta$-semistable if and only if $F^+_k(M)\in \modu \mathcal P(\sigma)$ (resp. $F_k^-(M)$) is $\theta'$-semistable (resp. $\theta''$-semistable)
and $M\in {^\perp S_k}$ (resp. $M\in S_k^\perp$).
\end{theorem}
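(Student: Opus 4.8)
The plan is to prove the statement when $\theta_k>0$ and to obtain the case $\theta_k<0$ by the mirror argument, systematically interchanging $F^+_k$, ${}^\perp S_k$, $\theta'$ with $F^-_k$, $S_k^\perp$, $\theta''$. So assume $\theta_k>0$ and put $N:=F^+_k(M)$. I first record two elementary reductions. If $M$ is $\theta$-semistable, then $M\in{}^\perp S_k$: otherwise there is a surjection $M\twoheadrightarrow S_k$ whose kernel $K\subset M$ satisfies $\theta(K)=\theta(M)-\theta(S_k)=-\theta_k<0$ (as $\theta(M)=0$), contradicting semistability. And by \Cref{cor: left perp and right perp}, as soon as $M\in{}^\perp S_k$ we automatically have $N=F^+_k(M)\in S_k^\perp$ and $F^-_k(N)\cong M$. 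Hence it suffices to prove: for $M\in{}^\perp S_k$, the module $M$ is $\theta$-semistable if and only if $N=F^+_k(M)$ is $\theta'$-semistable.

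The computational heart is a weight-matching statement. For $X\in{}^\perp S_k$ one has $\dim F^+_k(X)_i=\dim X_i$ for $i\neq k$, while $F^+_k(X)_k=\ker\alpha$ (recall from \Cref{section: F_k^+} that $X\in{}^\perp S_k$ is precisely the surjectivity of $\alpha$); the source of $\alpha$ has dimension $\sum_i d_k\,\#\{a\in\overline Q_1(\kappa)\colon i\to k\}\,\dim X_i$, and using $\dim_{\Bbbk}H_k=d_k$ together with the absence of $2$-cycles in $\overline Q(\kappa)$ (the surface is unpunctured) this equals $\sum_i[b_{ik}]_+\dim X_i$. Thus $\dim F^+_k(X)_k=\sum_i[b_{ik}]_+\dim X_i-\dim X_k$, and the symmetric computation (with $Y\in S_k^\perp$ being the injectivity of $\beta$, so $F^-_k(Y)_k=\coker\beta$) gives the analogous formula for $F^-_k$ on $S_k^\perp$. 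Substituting into \eqref{eq: pl map theta'} yields $\theta'(F^+_k(X))=\theta(X)$ for $X\in{}^\perp S_k$ and $\theta(F^-_k(Y))=\theta'(Y)$ for $Y\in S_k^\perp$; in particular $\theta'(N)=\theta(M)$, so the vanishing conditions $\theta(M)=0$ and $\theta'(N)=0$ are equivalent.

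To match the semistability inequalities I use the two ``good'' closure properties: $S_k^\perp$ is closed under submodules and ${}^\perp S_k$ is closed under quotients. For ``$M$ $\theta$-semistable $\Rightarrow$ $N$ $\theta'$-semistable'', test $\theta'$-semistability of $N$ on quotients; since $\theta'_k=-\theta_k<0$, a routine reduction (repeatedly splitting off a quotient isomorphic to $S_k$) shows it suffices to check $\theta'(N'')\le0$ for quotients $N\twoheadrightarrow N''$ with $N''\in S_k^\perp$. The equivalence $F^-_k\colon S_k^\perp\to{}^\perp S_k$ of \Cref{cor: left perp and right perp} carries such an epimorphism to an epimorphism $M\cong F^-_k(N)\twoheadrightarrow F^-_k(N'')$ of $\mathcal P(\kappa)$-modules: its cokernel is a quotient of $F^-_k(N'')\in{}^\perp S_k$, hence lies in ${}^\perp S_k$, hence is zero. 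Then $\theta'(N'')=\theta(F^-_k(N''))\le0$ by $\theta$-semistability of $M$. Conversely, to show ``$N$ $\theta'$-semistable and $M\in{}^\perp S_k$ $\Rightarrow$ $M$ $\theta$-semistable'', test $\theta$-semistability of $M$ on submodules; since $\theta_k>0$, it suffices to check $\theta(L)\ge0$ for submodules $L\subseteq M$ with $L\in{}^\perp S_k$. Then $F^+_k$ carries the inclusion $L\hookrightarrow M$ to an inclusion $F^+_k(L)\subseteq N$ — the kernel of the transported map lies in $S_k^\perp$, hence is zero — and $\theta(L)=\theta'(F^+_k(L))\ge0$ by $\theta'$-semistability of $N$.

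The step I expect to be most delicate is the weight-matching computation. It has to be carried out carefully in the pending case, where $H_k\otimes X_i$ is identified with $X_i\oplus X_i$ and $\alpha$ (respectively $\beta$) picks up its extra component, and it genuinely relies on $\overline Q(\kappa)$ having no $2$-cycles, which is where the hypothesis that the surface is unpunctured is used. A secondary point is verifying that the equivalences of \Cref{cor: left perp and right perp} really do carry the submodules and quotients appearing above to honest submodules and quotients in the ambient module categories; this is precisely where one must invoke ``closed under submodules'' on the $S_k^\perp$ side and ``closed under quotients'' on the ${}^\perp S_k$ side, each in the right place.
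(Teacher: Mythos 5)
Your proposal is correct and follows essentially the same route as the paper: reduce to $M\in{}^\perp S_k$ via the quotient $M\twoheadrightarrow S_k$; compute the dimension vector of $F_k^+(M)$ at vertex $k$ as $-\dim M_k+\sum_i[b_{ik}]_+\dim M_i$ to get $\theta'(F_k^+(M))=\theta(M)$; reduce the semistability test to submodules lying in ${}^\perp S_k$ (resp.\ quotients lying in $S_k^\perp$) using the sign of $\theta_k$; and transport these across the equivalence of \Cref{cor: left perp and right perp}, with $M\cong F_k^-F_k^+(M)$ handling the converse. The paper packages both directions through a single short exact sequence $L_1\hookrightarrow M\twoheadrightarrow L_2$ and its image under $F_k^+$, whereas you argue the two implications separately; this is a cosmetic difference. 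One small point worth tightening: the justifications ``the kernel \dots lies in $S_k^\perp$, hence is zero'' and ``the cokernel \dots lies in ${}^\perp S_k$, hence is zero'' are not literal implications — membership in a torsion-free (resp.\ torsion) class does not force vanishing. What you actually need is that the equivalence sends a categorical mono (resp.\ epi) in ${}^\perp S_k$ (resp.\ $S_k^\perp$) to a categorical mono (resp.\ epi) in the other subcategory, and that because $S_k^\perp$ is closed under submodules and ${}^\perp S_k$ under quotients, a categorical mono/epi inside these subcategories is an honest mono/epi in the ambient module category; it is this closure-under-sub/quotient argument that kills the kernel/cokernel. (The paper's own proof is similarly terse here.) Your remark that the identification $[b_{ik}]_+=d_k\cdot\#\{a\colon i\to k\}$ uses the absence of $2$-cycles in $\overline Q(\kappa)$ is a good observation that the paper leaves implicit.
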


\begin{proof}
Let $M\in \modu \mathcal P(\kappa)$ be $\theta$-semistable with $\theta_k >0$. The $\theta$-semistability implies that $M$ does not have any quotient isomorphic to $S_k$. Thus $M$ belongs to $^\perp S_k$.

Denote by $\mathbf d = (d_i)_i$ the class of $M$ in the Grothendieck group in the basis of $S_i$ and $\mathbf d' = (d_i')_i$ for $M' = F_k^+(M)$. Since $M\in {^\perp S_k}$, by the construction of $F_k^+$ in \Cref{section: F_k^+}, we have that $d_i' = d_i$ for $i\neq k$ and
\[
d'_k = -d_k + \sum_i [b_{ik}]_+d_i.
\]
Thus $\theta'(M') = \theta(M) = 0$. Any submodule $N\subset M$ has a submodule $L_1\in {^\perp S_k}$ generated by the subspace $\bigoplus_{i\neq k} N_i$. The exact sequence $L_1\hookrightarrow M \twoheadrightarrow L_2$ in $^\perp S_k\subset \modu \mathcal P(\kappa)$ is sent by $F_k^+$ to an exact sequence $L_1'\hookrightarrow M' \twoheadrightarrow L_2'$ in $S_k^\perp \subset \modu \mathcal P(\sigma)$ with the identity
\[
\theta(L_1) = \theta'(L_1') = - \theta'(L_2').
\]
The module $M$ is $\theta$-semistable if and only if any such $L_1\hookrightarrow M$ satisfies $\theta(L_1) \geq 0$. Thus it is equivalent to that any such $M'\twoheadrightarrow L_2'$ (in $S_k^\perp$) satisfies $\theta'(L_2') \leq 0$, which is the same condition of $M'$ being $\theta'$-semistable. In fact, here such a quotient $L_2'$ of some $M'\in S_k^\perp$ has the characterization that for a quotient $M'\twoheadrightarrow N$, there is a quotient module $L_2'$ of $N$ cogenerated by the quotient vector space $N/N_k$, which guarantees that $L_2'$ belongs to $S_k^\perp$. Then $M'$ is $\theta'$-semistable if and only if any such quotient $L_2'$ satisfies $\theta'(L_2')\leq 0$.

Now suppose that $F_k^+(M)$ is $\theta'$-semistable and that $M\in {^\perp S_k}$. By \Cref{cor: left perp and right perp}, $M$ is isomorphic to $F_k^-F_k^+(M)$. Then proving the if part for $F_k^+$ amounts to show the only if part for $F_k^-$, which is similar to what we have done for the only if part for $F_k^+$. 

\end{proof}

The following is a direct corollary of \Cref{thm: stability under mutation}.

\begin{corollary}\label{cor: stability after mutation}
For any $\theta$ such that $\theta_k >0$ (resp. $\theta_k <0$), the functor $F^+_k$ (resp. $F^-_k$) induces an equivalence between wide subcategories $\moduss{\theta} \mathcal P(\kappa)$ and $\moduss{\theta^\prime} \mathcal P(\sigma)$ (resp. $\moduss{\theta^{\prime\prime}} \mathcal P(\sigma)$) with $F^-_k$ (resp. $F^+_k$) as its inverse equivalence.
\end{corollary}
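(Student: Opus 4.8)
The plan is to read off Corollary~\ref{cor: stability after mutation} as a formal consequence of Theorem~\ref{thm: stability under mutation} together with Proposition~\ref{cor: left perp and right perp}, with essentially no new work. I treat the case $\theta_k>0$; the case $\theta_k<0$ is identical with $F^+_k$ replaced by $F^-_k$, $\theta'$ by $\theta''$, and ${}^\perp S_k$ by $S_k^\perp$.

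First I would observe that Theorem~\ref{thm: stability under mutation} says precisely that, for $\theta_k>0$, an object $M\in\modu\mathcal P(\kappa)$ lies in $\moduss{\theta}\mathcal P(\kappa)$ if and only if $M\in{}^\perp S_k$ and $F^+_k(M)\in\moduss{\theta'}\mathcal P(\sigma)$. In particular $\moduss{\theta}\mathcal P(\kappa)$ is a full subcategory of ${}^\perp S_k$. Next I would note that any $\theta'$-semistable module $N$ over $\mathcal P(\sigma)$ satisfies $\Hom(N,S_k)=0$: since $\theta'_k=-\theta_k<0$, a nonzero map $N\twoheadrightarrow S_k$ would exhibit a quotient of negative weight, contradicting semistability (equivalently, a submodule of strictly positive weight), so in fact $\moduss{\theta'}\mathcal P(\sigma)\subset S_k^\perp$. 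Wait --- I should be careful here about whether the relevant condition is $\Hom(S_k,N)=0$ or $\Hom(N,S_k)=0$; from the proof of Theorem~\ref{thm: stability under mutation}, the image of $F^+_k$ restricted to ${}^\perp S_k$ is $S_k^\perp$ on the $\sigma$-side, and $S_k^\perp$ is exactly the subcategory on which $\beta$ is injective, i.e. the natural target of $F^+_k$. So the correct statement to use is: every $\theta'$-semistable $\mathcal P(\sigma)$-module lies in $S_k^\perp$, because $\theta'_k<0$ forces no subobject isomorphic to $S_k$ (submodule of weight $-\theta_k<0$), which is precisely the condition defining $S_k^\perp$ in this setup. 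Granting this, Theorem~\ref{thm: stability under mutation} establishes that $F^+_k$ sends $\moduss{\theta}\mathcal P(\kappa)\subseteq{}^\perp S_k$ into $\moduss{\theta'}\mathcal P(\sigma)\subseteq S_k^\perp$, and conversely that any $N\in\moduss{\theta'}\mathcal P(\sigma)$ is of the form $F^+_k(M)$ with $M=F^-_k(N)\in\moduss{\theta}\mathcal P(\kappa)$ (using that $N\in S_k^\perp$, so by Proposition~\ref{cor: left perp and right perp} the unit $\eta_N\colon N\to F^+_kF^-_k(N)$ is an isomorphism).

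Then I would assemble the equivalence: by Proposition~\ref{cor: left perp and right perp}, $F^+_k$ and $F^-_k$ are mutually inverse equivalences between ${}^\perp S_k$ and $S_k^\perp$, with counit $\varepsilon_M\colon F^-_kF^+_k(M)\to M$ an isomorphism for $M\in{}^\perp S_k$ and unit $\eta_N$ an isomorphism for $N\in S_k^\perp$. Since $\moduss{\theta}\mathcal P(\kappa)$ is a full subcategory of ${}^\perp S_k$ and $\moduss{\theta'}\mathcal P(\sigma)$ is a full subcategory of $S_k^\perp$, and since by the previous paragraph $F^+_k$ carries the former into the latter and $F^-_k$ carries the latter into the former, the restrictions of $F^\pm_k$ to these subcategories are quasi-inverse equivalences, the natural isomorphisms $\varepsilon$ and $\eta$ restricting accordingly. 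Finally, $\moduss{\theta}\mathcal P(\kappa)$ and $\moduss{\theta'}\mathcal P(\sigma)$ are wide subcategories by the standard fact recalled after Definition~\ref{def: stability condition}, so the statement about wide subcategories is automatic once the equivalence is in place. This completes the $\theta_k>0$ case, and the $\theta_k<0$ case follows by the same argument with the roles of $F^+_k$/${}^\perp S_k$/$\theta'$ and $F^-_k$/$S_k^\perp$/$\theta''$ interchanged, using the second half of Theorem~\ref{thm: stability under mutation}.

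I do not expect a genuine obstacle, since this is a bookkeeping corollary; the only point requiring care is the one I flagged above, namely checking that $\theta'$-semistability (with $\theta'_k<0$) forces membership in $S_k^\perp$ --- the right/left $\perp$ bookkeeping and the direction of the inequality must be matched to the conventions in Definition~\ref{def: stability condition} and to the image-characterization of $F^+_k$ in the proof of Proposition~\ref{cor: left perp and right perp}. Once that is pinned down, the rest is a formal transport of the equivalence of Proposition~\ref{cor: left perp and right perp} through the semistability criterion of Theorem~\ref{thm: stability under mutation}.
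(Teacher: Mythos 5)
Your proposal is correct and is exactly the intended unwinding of Theorem~\ref{thm: stability under mutation} and Proposition~\ref{cor: left perp and right perp} (the paper states this as a direct corollary and gives no proof). The one nontrivial observation you flagged --- that $\theta'_k=-\theta_k<0$ forces $\moduss{\theta'}\mathcal P(\sigma)\subseteq S_k^\perp$ because a subobject isomorphic to $S_k$ would have strictly negative weight --- is indeed the only point that needs spelling out, and you handled it correctly.
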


Let $A = \Bbbk \langle Q \rangle/I$ as in the beginning of the section. Let $N = K_0(\modu A)$ and $M = \Hom(N, \mathbb Z)$. Denote $e_i = S_i\in N$ for $i\in Q_0$. Define $N^\oplus \coloneqq \{ \sum_i \lambda_i e_i \mid \lambda_i\geq 0\}\subset N$ and $N^+ = N^\oplus \setminus \{0\}$. Denote $M_\mathbb R = M\otimes_\mathbb Z \mathbb R$. Denote by $\mathcal W(A)$ the collection of all wide subcategories of $\modu A$. Consider the function
\[
\Phi_A \colon M_\mathbb R \rightarrow \mathcal W(A), \quad \theta \mapsto \moduss{\theta} A.
\]
We denote the wide subcategory of only zero objects simply by $0$. For example, for $\theta\in M_\mathbb R$ generic, we have $\Phi_A(\theta) = 0$ as $\theta = (\theta_i)_i$ does not satisfy any linear equation $\sum_i \theta_id_i = 0$ for $\mathbf d = (d_i)_i\in \mathbb Z^n$. For $M\in \modu A$, denote by $\ab{M}$ the smallest wide subcategory of $\modu A$ containing $M$.

In the case that $A = \mathcal P(\kappa)$ from some labeled triangulation $\kappa$, denote for simplicity that $\Phi_\kappa = \Phi_{\mathcal P(\kappa)}$. The following proposition regards the situation when $\theta_k = 0$.

\begin{proposition}\label{prop: when theta_k = 0}
Let $\theta = (\theta_i)_i \in M_\mathbb R$ be a stability condition (of both $\modu \mathcal P(\kappa)$ and $\modu \mathcal P(\sigma)$) such that $\theta_k = 0$. Then $\Phi_\kappa(\theta) = \ab{S_k}$ if and only if $\Phi_\sigma(\theta) = \ab{S_k}$.
\end{proposition}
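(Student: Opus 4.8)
The plan is to analyze what the equality $\Phi_\kappa(\theta) = \ab{S_k}$ means concretely when $\theta_k = 0$, and then observe that this condition is symmetric in $\kappa$ and $\sigma = \mu_k(\kappa)$ because flipping at $k$ does not change the simple $S_k$ up to the identification of Grothendieck groups, nor does it change $\theta$ (since $\theta_k = 0$, the piecewise-linear transformations \eqref{eq: pl map theta'} and \eqref{eq: pl map theta''} both act as the identity). First I would unwind the statement. Since $\theta_k = 0$, the simple $S_k$ is automatically $\theta$-semistable (it has no proper nonzero submodules), so $S_k \in \moduss{\theta}\mathcal P(\kappa)$ always, and hence $\ab{S_k} \subseteq \Phi_\kappa(\theta)$ always. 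So the content of $\Phi_\kappa(\theta) = \ab{S_k}$ is the reverse inclusion: every $\theta$-semistable $\mathcal P(\kappa)$-module lies in the smallest wide subcategory generated by $S_k$.

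Next I would identify $\ab{S_k}$ explicitly. Since $S_k$ is a brick (its endomorphism ring is $\Bbbk$) — here one should note that even when $k$ is a pending arc, the simple at $k$ still has endomorphism ring $\Bbbk$ — the wide subcategory it generates is the category of modules admitting a filtration with all factors isomorphic to $S_k$, i.e. the category $\operatorname{add}$-closure of $S_k$ under extensions, which because $\operatorname{Ext}^1(S_k, S_k)$ may be nonzero (the loop $\varepsilon_k$!) consists of all modules supported at the vertex $k$ on which the relevant arrows act as zero — concretely the modules over $H_k = \Bbbk[\varepsilon_k]/(\varepsilon_k^2)$ (or $\Bbbk$ if $k$ is not pending) viewed as $\mathcal P(\kappa)$-modules via $e_k$. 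Call this subcategory $\mathcal C_k(\kappa)$. I would record that $\mathcal C_k(\kappa) = \ab{S_k}$, and that under the canonical identification of $K_0$'s and the fact that $H_k$ depends only on whether $k$ is pending (which is unchanged by the flip at $k$, as a pending arc flips to a pending arc), there is a tautological identification $\mathcal C_k(\kappa) \simeq \mathcal C_k(\sigma)$ compatible with stability.

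The core step is then: $\Phi_\kappa(\theta) = \ab{S_k}$ if and only if there is no $\theta$-semistable $\mathcal P(\kappa)$-module outside $\mathcal C_k(\kappa)$. I would argue this is symmetric via \Cref{cor: stability after mutation} applied in a limiting/boundary form, or more directly as follows. Suppose $\Phi_\kappa(\theta) \neq \ab{S_k}$; pick a $\theta$-semistable $M \notin \mathcal C_k(\kappa)$, which we may take to be a brick (a $\theta$-stable module, by passing to a stable factor in its semistable filtration; note a stable factor not isomorphic to $S_k$ cannot lie in $\mathcal C_k(\kappa)$). Then $M \in {}^\perp S_k$ or $M \in S_k^\perp$: indeed if $M$ had both a sub and a quotient isomorphic to $S_k$ one can still, using $\theta_k = 0$ and stability, derive a contradiction — more carefully, I would perturb $\theta$ slightly to $\theta_k > 0$ and invoke \Cref{thm: stability under mutation}, or argue directly that a $\theta$-stable brick $M \ne S_k$ with $\theta_k=0$ has either $\Hom(S_k,M)=0$ or $\Hom(M,S_k)=0$. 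In the first case $F_k^+(M)$ is defined and, since $\theta'_i = \theta_i$ for all $i$ (because $\theta_k = 0$), \Cref{cor: stability after mutation}'s argument — or rather a direct check along the lines of the proof of \Cref{thm: stability under mutation} specialized to $\theta_k = 0$ — shows $F_k^+(M)$ is $\theta$-semistable over $\mathcal P(\sigma)$ and $F_k^+(M) \notin \mathcal C_k(\sigma)$ (since $F_k^\pm$ are equivalences on the perpendicular categories and carry $\mathcal C_k \setminus 0$ out of the picture appropriately — more precisely $F_k^+$ restricted to ${}^\perp S_k$ is an equivalence onto $S_k^\perp$ and does not create or destroy membership in $\mathcal C_k$). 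Hence $\Phi_\sigma(\theta) \neq \ab{S_k}$. The symmetric argument with $F_k^-$ handles the case $M \in S_k^\perp$, and running everything backwards (using $\mu_k \sigma = \kappa$) gives the converse implication.

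The main obstacle I anticipate is the case analysis for a $\theta$-stable brick $M \ne S_k$ with $\theta_k = 0$: showing that such an $M$ necessarily lies in ${}^\perp S_k$ or in $S_k^\perp$, so that one of the two reflection functors applies. With $\theta_k > 0$ this was immediate in \Cref{thm: stability under mutation} (no quotient $S_k$), but at $\theta_k = 0$ both a sub $S_k$ and a quotient $S_k$ are a priori compatible with semistability, so one must use stability (properness of the inequality) together with the brick condition to rule out the pathological configuration — this is exactly where the hypothesis "$\Phi_\kappa(\theta) = \ab{S_k}$" versus just "$S_k \in \Phi_\kappa(\theta)$" does its work, and I would handle it by taking $M$ of minimal dimension among $\theta$-semistable modules not in $\mathcal C_k(\kappa)$ and analyzing the socle/top at vertex $k$. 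Once that lemma is in place, the rest is a formal consequence of the adjunction \Cref{prop: adjunction}, \Cref{cor: left perp and right perp}, and the observation that the PL maps $\theta \mapsto \theta', \theta''$ both fix $\theta$ when $\theta_k = 0$.
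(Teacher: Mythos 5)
Your proposal is correct and lands on the same core idea as the paper (produce a $\theta$-semistable module in ${}^\perp S_k$ that is not in $\ab{S_k}$, then push it across with $F_k^+$, observing that the piecewise-linear maps fix $\theta$ when $\theta_k = 0$), but the paper sidesteps the part you flagged as the ``main obstacle'' with a simpler device: given a $\theta$-semistable $M \notin \ab{S_k}$, take $N$ to be the submodule of $M$ \emph{generated by} $\bigoplus_{i\neq k} M_i$. Then $N$ is $\theta$-semistable (its submodules are submodules of $M$, and $\theta(N)=\theta(M)-\theta(M/N)=0$ since $M/N$ is concentrated at vertex $k$ and $\theta_k=0$), $N\in{}^\perp S_k$ automatically (its top is a quotient of $\bigoplus_{i\neq k}M_i$, so has no $S_k$ summand), and $N\notin\ab{S_k}$ because $\bigoplus_{i\neq k}N_i=\bigoplus_{i\neq k}M_i\neq 0$. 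Applying $F_k^+$ as in the proof of Theorem~\ref{thm: stability under mutation} then finishes one implication, and the other is symmetric since $\mu_k^2=\mathrm{id}$. By contrast, your plan passes to a $\theta$-\emph{stable} subquotient and then analyzes its socle/top at $k$; this does work, but is longer and the lemma you anticipate needing is actually trivial: for a $\theta$-stable $M'\not\cong S_k$ with $\theta_k=0$, an $S_k$ appearing as a proper sub or as a proper quotient would give a proper nonzero submodule of $\theta$-value $0$, contradicting strict stability, so $M'$ lies in \emph{both} ${}^\perp S_k$ and $S_k^\perp$ and no case split is needed. So your route is valid (with that lemma filled in) but the paper's direct construction of $N$ is cleaner and avoids the stable-factor reduction entirely.
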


\begin{proof}
By definition any object in $\ab{S_k}$ is $\theta$-semistable. If there exists $M\notin \ab{S_k}\subset \modu \mathcal P(\kappa)$ that is $\theta$-semistable. Let $N$ be the submodule of $M$ generated by $\bigoplus_{i\neq k} M_i$, which by definition is again $\theta$-semistable and belongs to $^\perp S_k$. Then it can be verified as in the proof of \Cref{thm: stability under mutation} that $F_k^+(N)\in \modu \mathcal P(\sigma)$ is $\theta$-semistable. It does not belong to $\ab{S_k}\subset \modu \mathcal P(\sigma)$ because $\bigoplus_{i\neq k}(F_k^+(N))_i = \bigoplus_{i\neq k} N_i \neq 0$.
\end{proof}

Denote by $\mathcal C^+$ the first (closed) quadrant in $M_\mathbb R$ with the basis $(e_i^*)_i$ dual to $(e_i)_i$, i.e., $$\mathcal C^+ \coloneqq \left \{\sum_{i=1}^n \lambda_i e_i^* \mid \lambda_i \geq 0 \right \} \subset M_\mathbb R.$$
The subset $\mathcal C^+$ is a closed polyhedral cone in $M_\mathbb R$. Denote by $C^\circ$ the interior of any closed cone $C$ in $M_\mathbb R$.  

\begin{lemma}\label{lemma: stability at first quadrant}
We have
\begin{enumerate}
    \item $\Phi_A(\theta) = 0$ for any $\theta \in (\mathcal C^+)^\circ$;
    \item $\Phi_A(\theta) = \ab{S_i}$ for any $\theta \in (\mathcal C^+ \cap e_i^\perp)^\circ$;
    \item $\Phi_A(\theta) = \ab{S_i}$ for any generic $\theta \in e_i^\perp$.
\end{enumerate}

\end{lemma}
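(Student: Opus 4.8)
The three statements are all computations of semistable subcategories for stability conditions lying in the closed first quadrant $\mathcal C^+$, so the plan is to compute directly with King's definition, using only the fact that $\theta_i = \theta(S_i) \geq 0$ for all $i$ in each case.

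For part (1), let $\theta \in (\mathcal C^+)^\circ$, so $\theta_i > 0$ for every $i$. If $M \neq 0$ is $\theta$-semistable then $\theta(M) = \sum_i \theta_i \dim M_i = 0$; since all $\theta_i > 0$ and all $\dim M_i \geq 0$, this forces $M = 0$, so $\moduss{\theta} A = 0$. For part (2), take $\theta \in (\mathcal C^+ \cap e_i^\perp)^\circ$, which means $\theta_i = 0$ and $\theta_j > 0$ for all $j \neq i$. Any $M \in \ab{S_i}$ is a successive extension of copies of $S_i$, hence $\theta(M) = 0$ and for every submodule $M' \subseteq M$ we have $\theta(M') = 0 \geq 0$; so $\ab{S_i} \subseteq \moduss{\theta} A$. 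Conversely, if $M$ is $\theta$-semistable, then $0 = \theta(M) = \sum_{j \neq i} \theta_j \dim M_j$ forces $\dim M_j = 0$ for all $j \neq i$, i.e. $M$ is a representation supported only at vertex $i$; since $A = \Bbbk\langle Q \rangle / I$ with $I$ admissible, the subquiver at $i$ carries only the loop $\varepsilon_i$ (if $i$ is pending) which satisfies $\varepsilon_i^2 \in I$, so $M$ is a module over $H_i \cong \Bbbk[\varepsilon]/(\varepsilon^2)$ (or over $\Bbbk$ if $i$ is not pending); in either case every such $M$ lies in $\ab{S_i}$, the smallest wide subcategory containing $S_i$, since $S_i$ is the unique simple with this support and $\ab{S_i}$ is closed under extensions. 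Hence $\moduss{\theta} A = \ab{S_i}$.

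For part (3), a generic $\theta \in e_i^\perp$ satisfies $\theta_i = 0$ and $\sum_{j} \theta_j d_j \neq 0$ for every $\mathbf d = (d_j)_j \in \mathbb Z^n$ that is not a multiple of $e_i$ (genericity inside the hyperplane $e_i^\perp$ rules out all the finitely many further linear relations coming from dimension vectors of indecomposables, or more simply all rational hyperplanes other than $e_i^\perp$ itself). Then $\ab{S_i} \subseteq \moduss{\theta} A$ exactly as before. Conversely if $M$ is $\theta$-semistable then $\theta(M) = 0$ forces the class $\underline{\dim} M$ to be a multiple of $e_i$ by genericity, so again $M$ is supported only at $i$, and the same argument about the local structure at $i$ shows $M \in \ab{S_i}$.

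\textbf{Main obstacle.} The genuinely substantive point, and the only place beyond bookkeeping, is the identification in parts (2) and (3) that a module supported at a single vertex $i$ necessarily lies in $\ab{S_i}$. For a non-pending vertex this is immediate since such a module is a direct sum of copies of $S_i$. For a pending vertex the local algebra is $H_i = \Bbbk[\varepsilon]/(\varepsilon^2)$, so a module supported at $i$ is an $H_i$-module; one must observe that the only indecomposable $H_i$-modules are $S_i$ and $H_i$ itself (a self-extension of $S_i$ by $S_i$), and that $H_i \in \ab{S_i}$ because wide subcategories are extension-closed. Once this is in hand, all three statements follow from the sign analysis above. I would also double-check that $\ab{S_i}$ is not strictly larger — i.e. that it really equals the category of modules supported at $i$ — which follows because any wide subcategory containing $S_i$ contains all its self-extensions and $S_i$ already generates everything supported at $i$ under extensions.
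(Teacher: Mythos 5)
Your proof is correct and follows essentially the same argument as the paper's: a sign analysis on $\theta(M)=\sum_j\theta_j\dim M_j$ in each of the three cases, reducing to the fact that modules with dimension vector proportional to $e_i$ lie in $\ab{S_i}$. You spell out the justification of this last point (via composition factors and extension-closedness of wide subcategories) more explicitly than the paper, which merely asserts it; note, though, that your appeal to $H_i\cong\Bbbk[\varepsilon]/(\varepsilon^2)$ at pending vertices is a harmless specialization to $\mathcal P(\kappa)$, whereas the extension-closedness argument you give at the end works verbatim for any $A=\Bbbk\langle Q\rangle/I$ with $I$ admissible, matching the lemma's stated generality.
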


\begin{proof}
(1) Any $\theta = (\theta_i)_i \in (\mathcal C^+)^\circ$ has every component $\theta_i$ positive. By definition the only $\theta$-semistable object is $0$. (2) Any $\theta \in (\mathcal C^+ \cap e_i^\perp)^\circ $ satisfies $\theta_j > 0$ for $j\neq i$ and $\theta_i = 0$. Thus $\theta$ can only have zero evaluation on $M\in \ab{S_i}$. Any $M\in \ab{S_i}$ is obviously $\theta$-semistable. (3) If $\theta$ avoids the countable union of codimension one subsets of the form $e_i^\perp \cap d^\perp$ for $d\in N$ not proportional to $e_i$, any module $M$ satisfying $\theta(M) = 0$ must be in $\ab{S_i}$, which is $\theta$-semistable.
\end{proof}

Let $\kappa$ and $\sigma$ be two triangulations related by a flip at $k$. Denote by 
\[
T_k^+ = T_{k;\kappa}^+\colon M_\mathbb R\rightarrow M_\mathbb R\quad \text{(resp. $T_k^- = T_{k;\kappa}^-$)}
\]
the linear map sending $\theta$ to $\theta'$ (resp. $\theta''$) as in (\ref{eq: pl map theta'}) and (\ref{eq: pl map theta''}). Define two (open) half spaces in $M_\mathbb R$
\[
H_k^+ \coloneqq \{\theta \in M_\mathbb R \mid \theta_k > 0\}\quad \text{and} \quad H_k^- \coloneqq \{\theta \in M_\mathbb R \mid \theta_k < 0\}.
\]
We define the piecewise linear map
\[
T_k = T_{k;\kappa}\colon M_\mathbb R\rightarrow M_\mathbb R
\]
such that $T_k\vert_{\overline{H}_k^+} = T_k^+$ and $T_k\vert_{\overline{H}_k^-} = T_k^-$. Notice that both $T^\pm_k$ (thus $T_k$) fix points on the hyperplane $e_k^\perp = \{\theta \mid \theta_k = 0\}\subset M_\mathbb R$.

\begin{remark}
   We note that the piecewise linear transformation $T_k$ defined above should not be confused with the one in \cite[Definition 1.22]{gross2018canonical} which we denote by $\tilde T_k$ below. In the current situation, their $\tilde T_k \colon M_\mathbb R \rightarrow M_\mathbb R$ reads as follows. Let $\theta' = (\theta_i')_i = \tilde T_k(\theta)$. Then
    \[
        \theta_i' = \begin{cases}
            \theta_i \quad &\text{for any $i\in I$ if $\theta \in \overline H_k^-$} \\
            \theta_i + b_{ik}\theta_k \quad &\text{for any $i\in I$ if $\theta \in \overline H_k^+$}.
        \end{cases}
    \]
    Thus the maps $T_k$ and $\tilde T_k$ are related by $T_k =  \tilde T_k \circ T_k^+$.
\end{remark}

The following proposition rephrases \Cref{cor: stability after mutation} in terms of the function $\Phi_A$.

\begin{proposition}\label{prop: stability after mutation}
For $\theta\in H_k^+$, we have $\Phi_\sigma (T^+_k (\theta)) = F^+_k(\Phi_\kappa (\theta))$; For $\theta\in H_k^-$, we have $\Phi_\sigma (T^-_k (\theta)) = F_k^-(\Phi_\kappa(\theta))$.
\end{proposition}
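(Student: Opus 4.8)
The plan is to directly translate \Cref{cor: stability after mutation}, which is a statement about individual modules and equivalences of subcategories, into the language of the function $\Phi$. The key observation is that by definition $\Phi_\kappa(\theta) = \moduss{\theta}\mathcal P(\kappa)$ and $\Phi_\sigma(T_k^+(\theta)) = \moduss{T_k^+(\theta)}\mathcal P(\sigma) = \moduss{\theta'}\mathcal P(\sigma)$, since $T_k^+$ was defined precisely to send $\theta$ to $\theta'$ as in \eqref{eq: pl map theta'}. So the content of the proposition is just that $F_k^+$ carries the wide subcategory $\moduss{\theta}\mathcal P(\kappa)$ onto the wide subcategory $\moduss{\theta'}\mathcal P(\sigma)$, which is exactly \Cref{cor: stability after mutation}.

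First I would fix $\theta \in H_k^+$, so that $\theta_k > 0$. By \Cref{thm: stability under mutation}, every $\theta$-semistable $M\in\modu\mathcal P(\kappa)$ automatically lies in ${}^\perp S_k$ (the argument being that a $\theta$-semistable module with $\theta_k>0$ cannot have $S_k$ as a quotient), so $\moduss{\theta}\mathcal P(\kappa)\subseteq {}^\perp S_k$. On this subcategory, \Cref{cor: left perp and right perp} tells us $F_k^+$ is an equivalence onto $S_k^\perp\subseteq\modu\mathcal P(\sigma)$, with inverse $F_k^-$. By \Cref{cor: stability after mutation}, $F_k^+$ restricts to an equivalence between $\moduss{\theta}\mathcal P(\kappa)$ and $\moduss{\theta'}\mathcal P(\sigma)$; in particular $F_k^+(\moduss{\theta}\mathcal P(\kappa))$ and $\moduss{\theta'}\mathcal P(\sigma)$ are the same full subcategory of $\modu\mathcal P(\sigma)$ (not merely abstractly equivalent — the objects of $\moduss{\theta'}\mathcal P(\sigma)$ all lie in $S_k^\perp$ by the same sink/source argument applied with $\theta'_k = -\theta_k < 0$, and there they are hit by $F_k^+$). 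This gives the first equality $\Phi_\sigma(T_k^+(\theta)) = F_k^+(\Phi_\kappa(\theta))$. The case $\theta\in H_k^-$ is entirely symmetric: now $\theta_k < 0$ forces $\theta$-semistable modules into $S_k^\perp$, and $F_k^-$ plays the role of $F_k^+$, yielding $\Phi_\sigma(T_k^-(\theta)) = F_k^-(\Phi_\kappa(\theta))$.

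The only point requiring a little care — and the step I expect to be the main (minor) obstacle — is the set-theoretic identification of the \emph{image} subcategory $F_k^+(\moduss{\theta}\mathcal P(\kappa))$ with the subcategory $\moduss{\theta'}\mathcal P(\sigma)$, as opposed to merely an equivalence of abstract categories. This requires knowing that \emph{every} $\theta'$-semistable $\mathcal P(\sigma)$-module is in the essential image of $F_k^+\big|_{{}^\perp S_k}$, i.e. lies in $S_k^\perp$; this follows because $\theta'_k = -\theta_k < 0$ and the same argument as in the opening lines of the proof of \Cref{thm: stability under mutation} shows a $\theta'$-semistable module cannot contain $S_k$ as a submodule, so it lies in $S_k^\perp = F_k^+({}^\perp S_k)$ by \Cref{cor: left perp and right perp}. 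Once that is noted, the proposition is immediate from \Cref{cor: stability after mutation} together with the definitions of $T_k^\pm$, $\theta'$ and $\theta''$, and no further computation is needed.
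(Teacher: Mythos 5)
Your proof is correct and is essentially the same argument the paper has in mind; the paper simply states that the proposition "rephrases" \Cref{cor: stability after mutation} without spelling out the details. Your careful note that the equality is one of full subcategories (not merely an abstract equivalence), justified by observing that $\theta'_k = -\theta_k < 0$ forces $\theta'$-semistable modules into $S_k^\perp$ where they are hit by $F_k^+$, is exactly the point that makes the "rephrasing" go through.
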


\subsection{Cluster chamber structure}\label{subsection: cluster chamber structure}

Applying \Cref{prop: when theta_k = 0}, \Cref{lemma: stability at first quadrant} and \Cref{prop: stability after mutation} iteratively for sequences of mutations, one can build the so-called \emph{cluster chamber structure} on a subset of $M_\mathbb R$ from the function $\Phi_\kappa$ as follows.

Let $\sigma = \kappa(t)$ be the triangulation associated to $t\in \mathbb T_n$ (with $\kappa$ associated to $t_0$). Suppose the unique path from $t_0$ to $t$ takes the sequence $(k_1, k_2, \dots, k_l)$ of flips. Denote $\kappa_j \coloneqq \mu_{k_j}\cdots\mu_{k_2}\mu_{k_1}(\kappa)$ with $\kappa_0 = \kappa$ and $\kappa_l = \sigma$.

\begin{proposition}\label{prop: chamber}
There is a unique choice of signs $(\epsilon_1, \epsilon_2,\dots, \epsilon_l)\in \{+,-\}^l$ with $\epsilon_l = +$ such that for any $j = 2, \dots, l$, the cone
\[
\mathcal C_{j-1} \coloneqq T_{k_j;\kappa_j}^{\epsilon_j}\circ T_{k_{j+1}; \kappa_{j+1}}^{\epsilon_{j+1}}\circ \cdots \circ T_{k_{l}; \kappa_l}^{\epsilon_l}(\mathcal C^+)
\]
is contained in $\overline{H}_{k_{j-1}}^{\epsilon_{j-1}}$.
\end{proposition}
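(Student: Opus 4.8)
I would argue by downward induction on $j$, building the signs $\epsilon_l=+,\epsilon_{l-1},\dots,\epsilon_1$ and the cones $\mathcal C_{l-1},\dots,\mathcal C_1$ in one sweep, while carrying along the auxiliary assertion that $\Phi_{\kappa_j}$ vanishes identically on the interior $\mathcal C_j^\circ$; it is convenient to also set $\mathcal C_l\coloneqq\mathcal C^+$, so that $\mathcal C_{j-1}=T^{\epsilon_j}_{k_j;\kappa_j}(\mathcal C_j)$ for $j=l,\dots,2$. I would first note that each $T^{\pm}_{k;\kappa}\colon M_{\mathbb R}\to M_{\mathbb R}$ is a linear automorphism (its determinant equals $-1$), so that every $\mathcal C_j$ is a full-dimensional closed convex cone, and $\mathcal C_j^\circ$ is a non-empty open subset of $M_{\mathbb R}$ contained in no hyperplane.

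For the base case $j=l$ one has $\mathcal C_l=\mathcal C^+\subseteq\overline H^{\,+}_{k_l}=\overline H^{\,\epsilon_l}_{k_l}$, and $\Phi_{\kappa_l}\equiv 0$ on $(\mathcal C^+)^\circ$ by \Cref{lemma: stability at first quadrant}(1). For the inductive step, suppose $\epsilon_j$ has been chosen with $\mathcal C_j\subseteq\overline H^{\,\epsilon_j}_{k_j}$ and $\Phi_{\kappa_j}\equiv 0$ on $\mathcal C_j^\circ$. Since $\mathcal C_j$ is full-dimensional and contained in a closed half-space bounded by $e_{k_j}^\perp$, an elementary convexity argument (a linear functional that is non-negative on a convex set and zero at an interior point vanishes identically) gives $\mathcal C_j^\circ\subseteq H^{\,\epsilon_j}_{k_j}$. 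Because $\kappa_{j-1}=\mu_{k_j}(\kappa_j)$, \Cref{prop: stability after mutation} applies on $\mathcal C_j^\circ$ and yields, for every $\psi\in\mathcal C_j^\circ$,
\[
\Phi_{\kappa_{j-1}}\bigl(T^{\epsilon_j}_{k_j;\kappa_j}(\psi)\bigr)=F^{\epsilon_j}_{k_j}\bigl(\Phi_{\kappa_j}(\psi)\bigr)=F^{\epsilon_j}_{k_j}(0)=0,
\]
so that $\Phi_{\kappa_{j-1}}\equiv 0$ on $\mathcal C_{j-1}^\circ=T^{\epsilon_j}_{k_j;\kappa_j}(\mathcal C_j^\circ)$.

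The crux is then to show that $\mathcal C_{j-1}$ lies in exactly one of the closed half-spaces $\overline H^{\,\pm}_{k_{j-1}}$. It cannot lie in both, being full-dimensional, hence not inside $e_{k_{j-1}}^\perp$. If it lay in neither, then --- using that a full-dimensional closed convex cone equals the closure of its interior and that the interior of a convex set is convex --- the open segment joining an interior point with negative $k_{j-1}$-coordinate to one with positive $k_{j-1}$-coordinate would cross $e_{k_{j-1}}^\perp$ inside $\mathcal C_{j-1}^\circ$; since $\mathcal C_{j-1}^\circ$ is open, $\mathcal C_{j-1}^\circ\cap e_{k_{j-1}}^\perp$ would be a non-empty relatively open subset of $e_{k_{j-1}}^\perp$ and hence would contain a point $\theta_0$ generic in $e_{k_{j-1}}^\perp$; by \Cref{lemma: stability at first quadrant}(3) this forces $\Phi_{\kappa_{j-1}}(\theta_0)=\ab{S_{k_{j-1}}}\neq 0$, contradicting the vanishing just established. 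I would then let $\epsilon_{j-1}$ be the unique sign with $\mathcal C_{j-1}\subseteq\overline H^{\,\epsilon_{j-1}}_{k_{j-1}}$; this closes the induction and gives existence. Uniqueness is immediate: fixing $\epsilon_l=+$ determines $\mathcal C_{l-1}$, which lies in exactly one closed half-space, so the containment requirement forces $\epsilon_{l-1}$; this in turn determines $\mathcal C_{l-2}$, forcing $\epsilon_{l-2}$; and so on down to $\epsilon_1$. Thus any sign tuple satisfying all the stated containments coincides with the constructed one. (One may also feed in \Cref{prop: when theta_k = 0} to identify $\Phi_{\kappa_{j-1}}$ along the walls $e_{k_{j-1}}^\perp$ bounding $\mathcal C_{j-1}$, though this is not needed for the statement itself.)

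The step I expect to be the real obstacle --- everything else being index bookkeeping or standard convexity --- is precisely the non-splitting of $\mathcal C_{j-1}$ by the coordinate hyperplane $e_{k_{j-1}}^\perp$: this is a sign-coherence phenomenon in disguise. What makes it go through is the interaction of \Cref{prop: stability after mutation} (which propagates the vanishing of $\Phi$ across a flip as long as the relevant open-half-space hypothesis holds) with \Cref{lemma: stability at first quadrant}(3) (which pins the stability category at a generic point of a coordinate hyperplane to the \emph{nonzero} subcategory $\ab{S_i}$); one has to be careful to invoke these only at points where their hypotheses literally hold, and to handle the elementary point-set topology (convexity of interiors, a full-dimensional closed convex cone is the closure of its interior, a non-empty relatively open subset of a hyperplane contains generic points), but none of this is deep.
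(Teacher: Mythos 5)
Your proof is correct and follows essentially the same downward induction as the paper's: propagate $\Phi_{\kappa_j}\equiv 0$ on $\mathcal C_j^\circ$ using \Cref{prop: stability after mutation}, then use \Cref{lemma: stability at first quadrant}(3) at a generic point of $e_{k_{j-1}}^\perp$ to rule out $\mathcal C_{j-1}$ straddling that hyperplane, which forces the sign $\epsilon_{j-1}$. You merely spell out the convexity/point-set steps and the uniqueness argument that the paper leaves implicit.
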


\begin{proof}
We prove the proposition by induction backwards on $j$ from $j=l$. In fact, the cone $T_{k_l;\kappa_l}^+({\mathcal C}^+)$ is contained either in $\overline{H}_k^+$ or $\overline{H}_k^-$ for any $k$. This is because by \Cref{prop: stability after mutation}, for any $\theta\in (\mathcal C^+)^\circ$, we have
\[
\Phi_{\kappa_{l-1}}(T_{k_l;\kappa_l}^+(\theta)) = F_{k_l}^+(\Phi_{\kappa_l}(\theta)) = 0.
\]
Therefore the hyperplane $e_k^\perp$ does not intersect with $T_{k_l;\kappa_l}^+(\mathcal C^+)$ in its interior since a generic point on $e_k^\perp$ supports $\ab{S_k}$ for the function $\Phi_{\kappa_{l-1}}$ (by (3) of \Cref{lemma: stability at first quadrant}). So we determine $\epsilon_{l-1}$ such that ${\mathcal C_{l-1}} = T_{k_l;\kappa_l}^+(\mathcal C^+)$ is contained in $\overline{H}_{k_{l-1}}^{\epsilon_{l-1}}$.

Now suppose for induction that we have determined the sign $\epsilon_j$ such that that $\mathcal C_{j}$ is contained in $\overline{H}_{k_{j}}^{\epsilon_j}$ and that $\Phi_{\kappa_j}(\theta) = 0$ for any $\theta\in \mathcal C_j^\circ$. Consider the reflection functor
\[
F_{k_j}^{\epsilon_j} \colon \modu \mathcal P(\kappa_{j}) \rightarrow \modu \mathcal P(\kappa_{j-1}).
\]
By \Cref{prop: stability after mutation}, for any $\theta \in \mathcal C_{j}^\circ \subset H_{k_j}^{\epsilon_j}$, we have
\[
\Phi_{\kappa_{j-1}}(T_{k_j;\kappa_j}^{\epsilon_{j}}(\theta)) = F_{k_j}^{\epsilon_j}(\Phi_{\kappa_{j}}(\theta)) = 0.
\]
Thus the cone $\mathcal C_{j-1} = T_{k_j;\kappa_j}^{\epsilon_j}(\mathcal C_{j})$ must be contained in either $\overline{H}_{k_{j-1}}^+$ or $\overline{H}_{k_{j-1}}^-$. Then we determine the sign $\epsilon_{j-1}$ such that $\mathcal C_{j-1}\subset H_{k_{j-1}}^{\epsilon_{j-1}}$, which finishes the induction.
\end{proof}

\begin{corollary}\label{cor: stability in a chamber}
Define the cone in $M_\mathbb R$ $$\mathcal C_t = \mathcal C_{t}^{\kappa;t_0} \coloneqq  T_{t}^{\kappa;t_0}(\mathcal C^+) = T_{k_1;\kappa_1}^{\epsilon_1}\circ T_{k_2;\kappa_2}^{\epsilon_2}\circ \cdots \circ T_{k_{l};\kappa_l}^{\epsilon_l}(\mathcal C^+).$$ We have for any $\theta\in \mathcal C_{t}^\circ$ that $\Phi_{\kappa} (\theta) = 0$.
\end{corollary}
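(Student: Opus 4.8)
The plan is to recognize that \Cref{cor: stability in a chamber} is, after one further repetition of an argument already made, nothing but the bottom ($j=0$) case of the backwards induction carried out in the proof of \Cref{prop: chamber}. That induction simultaneously fixes the signs $(\epsilon_1,\dots,\epsilon_l)$ and shows, for each relevant $j$, that $\mathcal C_j^\circ\subset H_{k_j}^{\epsilon_j}$ and that $\Phi_{\kappa_j}(\theta)=0$ for every $\theta\in\mathcal C_j^\circ$; the inclusion in the \emph{open} half-space holds because otherwise $\mathcal C_j^\circ$, being open, would contain a generic point of $e_{k_j}^\perp$, on which $\Phi_{\kappa_j}$ equals $\ab{S_{k_j}}\neq 0$ by \Cref{lemma: stability at first quadrant}(3), contradicting $\Phi_{\kappa_j}|_{\mathcal C_j^\circ}=0$.

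First I would apply \Cref{prop: stability after mutation} one more time, now to the flip at $k_1$ between $\kappa_1$ and $\kappa_0=\kappa$; this value of the index is the only one not literally covered by the statement of \Cref{prop: chamber} (there $j$ ranges over $2,\dots,l$), but the proof given there applies verbatim. Since $\mathcal C_1^\circ\subset H_{k_1}^{\epsilon_1}$, for every $\theta\in\mathcal C_1^\circ$ one obtains
\[
\Phi_{\kappa}\bigl(T_{k_1;\kappa_1}^{\epsilon_1}(\theta)\bigr)=F_{k_1}^{\epsilon_1}\bigl(\Phi_{\kappa_1}(\theta)\bigr)=F_{k_1}^{\epsilon_1}(0)=0 .
\]
Next I would use that $T_{k_1;\kappa_1}^{\epsilon_1}$ agrees on the closed half-space $\overline{H}_{k_1}^{\epsilon_1}$, which contains $\mathcal C_1$, with a linear automorphism of $M_\mathbb R$; hence it restricts to a homeomorphism sending $\mathcal C_1$ onto $\mathcal C_0=\mathcal C_t$ and $\mathcal C_1^\circ$ onto $\mathcal C_t^\circ$. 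Combined with the display, this yields $\Phi_\kappa(\theta)=0$ for all $\theta\in\mathcal C_t^\circ$, as claimed. The short cases $l=0$ (where $\mathcal C_t=\mathcal C^+$ and the assertion is \Cref{lemma: stability at first quadrant}(1)) and $l=1$ (which is directly the base step of the induction in \Cref{prop: chamber}, already giving $\Phi_\kappa(T_{k_1;\kappa_1}^+(\theta))=0$ on $(\mathcal C^+)^\circ$) need no extra argument.

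I do not anticipate a genuine obstacle: the mathematical content lies entirely in \Cref{prop: chamber} and \Cref{prop: stability after mutation}, and the corollary only unwinds one more step of the same recursion. The only points requiring care are that $T_{k_1;\kappa_1}^{\epsilon_1}$ is truly linear on the closed half-space containing $\mathcal C_1$, so that it preserves relative interiors, and that $\mathcal C_1^\circ$ lies in the open half-space $H_{k_1}^{\epsilon_1}$, which is exactly the hypothesis under which \Cref{prop: stability after mutation} may be invoked; both facts are already recorded in the course of proving \Cref{prop: chamber}.
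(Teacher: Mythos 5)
Your proposal is correct and follows exactly the same route as the paper's own proof, which simply observes that the corollary is obtained by running the backwards induction of \Cref{prop: chamber} one more step down to $j=0$. The only very minor imprecision is the remark that $T_{k_1;\kappa_1}^{\epsilon_1}$ ``agrees on the closed half-space with a linear automorphism'': each $T_{k;\kappa}^{\pm}$ is in fact globally linear by definition (it is only $T_{k;\kappa}$ without a sign that is merely piecewise linear), so preservation of interiors is automatic and needs no restriction to a half-space.
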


\begin{proof}
This is a direct consequence of (the proof of) \Cref{prop: chamber}. In fact, it is clear from the induction that for any $\theta\in \mathcal C_j^\circ$ for $j = 1, 2,\dots, l-1$, we have $\Phi_{\kappa_j}(\theta) = 0$. It is extended to $j=0$ for one more step of the induction.
\end{proof}

Note that each map $T_{k_j;\kappa_j}^{\epsilon_j}$ is an automorphism of $M\cong \mathbb Z^n$. So each $\mathcal C_j$ is again a maximal simplicial cone whose $n$ rays has the enumeration by $\{1, \dots, n\}$ inherited from that of $\mathcal C^+$. We denote the integral generators of these rays by $\mathbf r^j_i\in M$ for $i\in \{1, \dots, n\}$, which form an integral basis of $M$. The $n$ (relatively open) facets $\{\mathfrak d_i^j\mid i=1,\dots, n\}$ of $\mathcal C_j$ also has the enumeration such that $\mathbf r^j_i \notin \bar {\mathfrak d}^j_i$. The normal vectors of these facets dual to $\mathbf r^j_i$ are denoted by $\mathbf n^j_i\in N$. As already did in the proof of \Cref{cor: stability in a chamber}, we continue to denote $\mathcal C_0 = \mathcal C_t$ and $\mathfrak d^0_i = \mathfrak d_{i;t} $. We next consider the function value of $\Phi_\kappa$ in the interior of a facet $\mathfrak d_{i;t}$. 

\begin{construction}\label{con: semistable on a cluster wall}
We construct a subcategory $T_t(\ab{S_i}) = T_t^{\kappa;t_0}(\ab{S_i})$ of $\modu \mathcal P(\kappa)$ iteratively as follows. First consider $\ab{S_i}\subset \modu \mathcal P(\sigma)$, which is the smallest wide subcategory containing $S_i$. We have $\Phi_{\kappa_l}(\theta) = \Phi_{\sigma}(\theta) = \ab{S_i}$ for any $\theta\in \mathfrak d_i^l$. If $i\neq k_l$, we have for any $\theta\in \mathfrak d_i^{l-1}$
\[
\Phi_{\kappa_{l-1}}(\theta) = F_{k_l}^{\epsilon_l}(\ab{S_i}).
\]
If $i = k_l$, we have for any $\theta\in \mathfrak d_{k_l}^{l-1}$
\[
\Phi_{\kappa_{l-1}}(\theta) = \ab{S_{k_l}}\subset \modu \mathcal P({\kappa_{l-1}}).
\]
Assume for some $j$ that for any $i$, the function $\Phi_{\kappa_j}$ is constant on $\mathfrak d^j_i$. For $\mathfrak d^{j-1}_i$, by \Cref{prop: chamber}, there are two cases: (1) $\mathfrak d^{j-1}_i\subset e_{k_j}^\perp$ and (2) $\mathfrak d^{j-1}_i \cap e_{k_j}^\perp = \emptyset$. In the first case, $\mathfrak d_i^{j-1} = \mathfrak d_i^j$ as the transformation $T_{k_j;\kappa_j}^{\epsilon_j}$ fixes the hyperplane $e_{k_j}^\perp$. By the assumption $\Phi_{\kappa_j}(\theta)$ is constantly $\ab{S_{k_j}}$ in $\mathfrak d^{j-1}_i$. Hence by \Cref{prop: when theta_k = 0}, we have $\Phi_{\kappa_{j-1}}(\theta) = \ab{S_{k_j}}\subset \modu \mathcal P(\kappa_{j-1})$ for any $\theta\in \mathfrak d^{j-1}_i$. In the second case, we know from \Cref{prop: stability after mutation} that for any $\theta\in \mathfrak d^{j}_{i}$
\[
\Phi_{\kappa_{j-1}}(T_{k_j;\kappa_j}^{\epsilon_j}(\theta)) = F_{k_j}^{\epsilon_j}(\Phi_{\kappa_j}(\theta)) \subset \modu \mathcal P(\kappa_{j-1}).
\]
In either case, we see that $\Phi_{\kappa_{j-1}}(\mathfrak d^{j-1}_i)$ is equivalent to  $\Phi_{\kappa_j}(\mathfrak d^j_i)$. 

The above construction gives an inductive way to describe the subcategory $\Phi_{\kappa_{j}}(\mathfrak d_i^j)$ for $j=0,\dots, l$. We define $T_t(\ab{S_i})\subset \modu \mathcal P(\kappa)$ to be $\Phi_{\kappa}(\mathfrak d_{i; t})$ (corresponding to $j=0$). It is clear from the iterative construction of $T_t(\ab{S_i})$ that it is equivalent to $\ab{S_i}\subset \modu \mathcal P(\sigma)$ which is equivalent to (1) $\modu \Bbbk$ if $i$ is not pending; (2) $\modu \Bbbk[\varepsilon]/\varepsilon^2$ if $i$ is pending. \qed
\end{construction}

Summarizing \Cref{cor: stability in a chamber} and \Cref{con: semistable on a cluster wall}, we have
\begin{proposition}[Chamber structure]\label{prop: function value of phi}
The function $\Phi_\kappa\colon M_\mathbb R \rightarrow \mathcal W({\mathcal P(\kappa)})$ 
\begin{enumerate}
    \item takes value $0$ in the interior of $\mathcal C_t$ for any $t\in \mathbb T_n$, and
    \item is constant in the interior of any facet $\mathfrak d_{i;t}$ of $\mathcal C_t$ with value $T_t(\ab{S_i})$.
\end{enumerate} 
\end{proposition}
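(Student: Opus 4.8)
The plan is to read off both assertions directly from the two results being summarized, so that the proof is essentially bookkeeping over what has already been established.

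For part (1), I would simply invoke \Cref{cor: stability in a chamber}. By definition $\mathcal C_t = T_t^{\kappa;t_0}(\mathcal C^+)$, and that corollary asserts precisely that $\Phi_\kappa(\theta) = 0$ for every $\theta$ in the interior $\mathcal C_t^\circ$. Nothing further is needed.

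For part (2), I would unwind \Cref{con: semistable on a cluster wall}. That construction already runs a backward induction on $j$ from $j=l$ down to $j=0$, and the inductive statement it proves is exactly the one I want at each stage: $\Phi_{\kappa_j}$ is constant on the relatively open facet $\mathfrak d_i^j$ of $\mathcal C_j$. The base case $j=l$ is \Cref{lemma: stability at first quadrant}(2), which identifies $\Phi_\sigma$ on a facet of $\mathcal C^+$ with $\ab{S_i}$. The inductive step splits into the two cases distinguished in \Cref{prop: chamber}: if $\mathfrak d_i^{j-1}\subset e_{k_j}^\perp$, then $T_{k_j;\kappa_j}^{\epsilon_j}$ fixes that facet and \Cref{prop: when theta_k = 0} transports the constant value $\ab{S_{k_j}}$ across the flip; otherwise $\mathfrak d_i^{j-1}$ lies in the open half-space $H_{k_j}^{\epsilon_j}$ and \Cref{prop: stability after mutation} shows that $\Phi_{\kappa_{j-1}}$ on $\mathfrak d_i^{j-1}$ equals $F_{k_j}^{\epsilon_j}$ applied to the (constant) value of $\Phi_{\kappa_j}$ on $\mathfrak d_i^j$. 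Specializing to $j=0$ yields exactly the claim that $\Phi_\kappa$ is constant on $\mathfrak d_{i;t}$ with value $T_t(\ab{S_i})$.

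The only point requiring a little care — and what I would flag as the main (modest) obstacle — is ensuring that the case division in \Cref{prop: chamber} is exhaustive, i.e.\ that each facet $\mathfrak d_i^{j-1}$ either lies entirely inside $e_{k_j}^\perp$ or misses its interior entirely. This rests on the cones $\mathcal C_j$ being genuine maximal simplicial cones with the reflecting hyperplane $e_{k_j}^\perp$ occurring among their walls, which is guaranteed by \Cref{prop: chamber} together with the fact that each $T_{k_j;\kappa_j}^{\epsilon_j}$ is a lattice automorphism of $M\cong\mathbb Z^n$. Once this is in place, the proposition is an immediate consequence.
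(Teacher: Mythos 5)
Your proof follows exactly the route the paper itself takes: the paper introduces Proposition~\ref{prop: function value of phi} with the sentence ``Summarizing \Cref{cor: stability in a chamber} and \Cref{con: semistable on a cluster wall}, we have\dots,'' so reading part~(1) off \Cref{cor: stability in a chamber} and part~(2) off the induction performed in \Cref{con: semistable on a cluster wall} is precisely the intended argument, and your description of that induction (base case via \Cref{lemma: stability at first quadrant}(2), inductive step split by \Cref{prop: when theta_k = 0} and \Cref{prop: stability after mutation}) is faithful.

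One remark on the caveat you flag. You are right that the dichotomy $\mathfrak d_i^{j-1}\subset e_{k_j}^\perp$ versus $\mathfrak d_i^{j-1}\cap e_{k_j}^\perp=\emptyset$ deserves a word, but the specific mechanism you propose---that $e_{k_j}^\perp$ occurs among the walls of $\mathcal C_j$---is not always true: it can happen that $\mathcal C_j$ meets $e_{k_j}^\perp$ only at the origin (e.g.\ in small rank examples), in which case case~(1) simply never occurs for that $j$. The dichotomy holds for a simpler reason that does not require the hyperplane to be a wall: since $\mathcal C_{j-1}$ is a pointed maximal simplicial cone contained in the closed half-space $\overline H_{k_j}^{-\epsilon_j}$, every generating ray $\mathbf r_{i'}^{j-1}$ satisfies $e_{k_j}(\mathbf r_{i'}^{j-1})\le 0$ (say). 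A relatively open facet is the set of strictly positive combinations of $n-1$ of these rays, so its image under $e_{k_j}$ is identically zero if all $n-1$ rays lie in $e_{k_j}^\perp$, and is strictly negative otherwise; there is no intermediate case. So your conclusion is correct, but the justification should be this positivity argument rather than ``$e_{k_j}^\perp$ is a wall.''
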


Now we relate the cones $\mathcal C_t$ with the $\mathbf g$-vectors and $\mathbf c$-vectors associated to the generalized cluster algebra $\mathcal A(\kappa)$. Namely we will show that the \emph{chamber structure} in \Cref{prop: function value of phi} coincide with the cluster combinatorics of $\mathcal A(\kappa)$, thus bearing the name \emph{cluster chamber structure}. We denote $\mathbf r_{i; t}\in M$ for the integral generators of the rays of $\mathcal C_t$ and $\mathbf n_{i; t}\in N$ the dual normal vectors of the facets $\mathfrak d_{i;t}$.

\begin{theorem}\label{thm: g-vectors are rays of stability chambers}
We have for any $t\in \mathbb T_n$ and and $i\in \{1, \dots, n\}$ the equalities of vectors
\[
\mathbf r_{i; t} = \mathbf g_{i; t}\in M,\quad \mathbf n_{i; t} = \mathbf c_{i; t}\in N.
\]
\end{theorem}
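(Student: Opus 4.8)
The plan is to prove both equalities simultaneously by induction on the length $l$ of the mutation sequence $(k_1,\ldots,k_l)$ taking $t_0$ to $t$, matching the recursion that builds $\mathcal C_t$ via the maps $T^{\epsilon_j}_{k_j;\kappa_j}$ against the recursion defining the $\mathbf g$-vectors and $\mathbf c$-vectors in \Cref{section: generalized cluster algebra}. The base case $l=0$ is immediate: $\mathcal C_{t_0}=\mathcal C^+$ has integral ray generators $\mathbf e_i=\mathbf g_{i;t_0}$ and dual normal vectors $\mathbf e_i=\mathbf c_{i;t_0}$, matching the initial conditions $\mathbf g_{i;t_0}=\mathbf e_i$, $\mathbf c_{i;t_0}=\mathbf e_i$. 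For the inductive step, I would compare one application of the linear automorphism $T^{\epsilon}_{k;\kappa'}$ of $M\cong\mathbb Z^n$ (where $\epsilon=\epsilon_1$ and $\kappa'=\kappa_1$, say, reading the sequence from the $\kappa$-end as in \Cref{cor: stability in a chamber}) with the mutation formulas for $\mathbf c$- and $\mathbf g$-vectors recorded in \Cref{section: generalized cluster algebra}.

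The key computation is to write down explicitly, from \eqref{eq: pl map theta'} and \eqref{eq: pl map theta''}, the matrices of $T^+_{k;\kappa'}$ and $T^-_{k;\kappa'}$ acting on $M_\mathbb R$, and then dualize to get their action on $N$, since $\mathbf n_{i;t}$ lives in $N$ and is the dual basis to $\mathbf r_{i;t}$. Concretely, $T^+_k$ sends the dual basis vector $e_k^*\mapsto -e_k^*+\sum_{i\colon b_{ik}>0}b_{ik}e_i^*$ and fixes the other $e_i^*$ (up to the precise reading of which variables move), so on the dual side $N$ it acts by the transpose-inverse; tracking this through gives exactly the piecewise-linear mutation of $\mathbf g$-vectors and the mutation of $\mathbf c$-vectors appearing in the displayed recursions. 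Here the factor $r_k/r_i=d_k/d_i$ and the $[\pm\mathbf c]_+$ terms in those recursions must be matched against the $[b_{ik}]_+$ versus $[-b_{ik}]_+$ dichotomy in \eqref{eq: pl map theta'}–\eqref{eq: pl map theta''}, using the relation $B(\kappa)=\overline B D$; the sign $\epsilon_j$ selected in \Cref{prop: chamber} is precisely the sign of the $k$-th component of the relevant stability condition, which corresponds to the sign of the relevant $\mathbf c$-vector component, so the case division in the $\mathbf c$-vector recursion ($b^{\mathbf k}_{ik}>0$ versus $\leq 0$, i.e. $[+\mathbf c_k]_+$ versus $[-\mathbf c_k]_+$) is exactly the case division $\epsilon_j=+$ versus $\epsilon_j=-$. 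One also needs that the enumeration of rays/facets of $\mathcal C_j$ inherited from $\mathcal C^+$ matches the indexing of $\mathbf g_{i;t}$ and $\mathbf c_{i;t}$, which is built into the definition of $T^{\epsilon}_k$ (it fixes $e_k^\perp$ and flips the $k$-th coordinate).

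The main obstacle I expect is bookkeeping: reconciling the two conventions — the stability-condition/reflection-functor side uses the maps $T^\pm_k$ composed from the $\sigma$-end inward (as in \Cref{cor: stability in a chamber}), whereas Nakanishi's $\mathbf c$/$\mathbf g$ recursions in \Cref{section: generalized cluster algebra} are written forward from $t_0$; one has to be careful that the sign choices $(\epsilon_1,\ldots,\epsilon_l)$ from \Cref{prop: chamber} correspond under this reversal to the signs of the tropical $\mathbf c$-vector entries, and that the "$C$-matrix" whose columns are the $\mathbf c_{i;t}$ is indeed $T_t$ applied to the identity. The cleanest route is probably to prove the stronger statement that $T_t$, as an automorphism of $M$, has matrix (in the standard basis) equal to the $\mathbf g$-matrix $G_t=(\mathbf g_{1;t}|\cdots|\mathbf g_{n;t})$, and that its inverse transpose is the $\mathbf c$-matrix $C_t=(\mathbf c_{1;t}|\cdots|\mathbf c_{n;t})$; then \eqref{eq: pl map theta'}–\eqref{eq: pl map theta''} reduce to the single-step matrix identities $G_{t'}=G_t E_\epsilon$ and $C_{t'}=C_t (E_\epsilon^{-1})^T$ for the appropriate elementary matrix $E_\epsilon$, which is a direct check against Nakanishi's recursion and the $C$-$G$ duality $G_t^T C_t = \mathrm{Id}$ from \cite{nakanishi2015structure}. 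Once this linear-algebraic reformulation is in place, the induction is routine.
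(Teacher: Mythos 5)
Your induction plan and base case are right, and the idea of comparing the linear maps $T^{\epsilon_j}_{k_j;\kappa_j}$ against Nakanishi's recursions for $\mathbf c$- and $\mathbf g$-vectors is also what the paper does. But there is a genuine gap that you have filed under ``bookkeeping'': the recursion for $\mathbf r_{i;t}$ and the dual recursion for $\mathbf n_{i;t}$ involve $\operatorname{sgn}(\mathbf n_{k;t})$, and matching these against the $\mathbf c$/$\mathbf g$-vector mutation rules requires knowing a priori that each $\mathbf n_{i;t}$ is sign-coherent (all entries of one sign). Without that, the case split in the $\mathbf c$-vector recursion (on $[\pm\mathbf c_k]_+$) simply cannot be aligned with the case split coming from \eqref{eq: pl map theta'}--\eqref{eq: pl map theta''}, so the ``routine check'' at the end of your argument is not available. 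Invoking sign-coherence as a known fact about $\mathbf c$-vectors would make the argument either circular or dependent on an external theorem you haven't justified in the generalized (skew-symmetrizable with divisors) setting.

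The paper closes exactly this gap with a short but essential representation-theoretic observation, not present in your sketch: by \Cref{prop: function value of phi}, the facet $\mathfrak d_{i;t}$ supports a constant nonzero value of $\Phi_\kappa$, namely the wide subcategory $T_t(\ab{S_i})$, so $\mathfrak d_{i;t}$ is orthogonal to the dimension vector $d\in N^+$ of the generator of that subcategory. Since $\mathbf n_{i;t}$ is the normal vector of $\mathfrak d_{i;t}$, it must be proportional to $d$, hence sign-coherent. With this in hand, the recursion for $\mathbf r_{i;t}$ (extracted from GHKK/Mou directly from the definition $\mathbf r_{i;t}=T^{\kappa;t_0}_t(e_i)$) and its dual for $\mathbf n_{i;t}$ can be matched term-by-term to Nakanishi's recursions, as you propose. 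Also note that the paper does not need the $C$-$G$ duality $G_t^{T}C_t=\mathrm{Id}$ as an input from \cite{nakanishi2015structure}: it is built in by definition, since $(\mathbf n_{i;t})_i$ is defined as the dual basis of $(\mathbf r_{i;t})_i$; the duality between the $\mathbf c$-vectors and $\mathbf g$-vectors is then a corollary of the theorem rather than a prerequisite. You should add the sign-coherence argument explicitly; the rest of your plan then works.
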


\begin{proof}
This theorem is proved inductively. Initially, $\mathbf r_{i;t_0} = \mathbf g_{i;t_0} = e_i^*\in M$ and $\mathbf n_{i;t_0} = \mathbf c_{i;t_0}= e_i$ for all $i$. We show that the recursive formula for $\mathbf r_{i; t}$ is the same as that of $\mathbf g_{i; t}$ and the one of $\mathbf n_{i; t}$ the same as that of $\mathbf c_{i; t}$. Note that by definition $(\mathbf n_{i;t})_i$ is the dual basis of $(\mathbf r_{i;t})_i$.

A remarkable feature is that for any $\mathbf n_{i; t}$, its coordinates (in the basis $e_i$) always have the same sign (called \emph{sign-coherent}), i.e., they are either all non-negative or non-positive. This is because by \Cref{prop: function value of phi} the corresponding facet $\mathfrak d_{i; t}$ constantly supports the wide subcategory $T_t(\ab{S_i})$ via the function $\Phi_\kappa$ so it must be orthogonal to the dimension vector $d\in N^+$ of the generator of $T_t(\ab{S_i})$. So the normal vector $\mathbf n_{i; t}$ must be sign-coherent.

With the duals $\mathbf n_{i; t}$ known to be sign-coherent, the vectors $\mathbf r_{i;t}$ satisfy the following recursion (for $t' \frac{k}{\quad\quad} t$ in $\mathbb T_n$):
\[
\mathbf r_{i;t'} = \begin{cases}
\mathbf r_{i; t} \quad &\text{if $i \neq k $}\\
 - \mathbf r_{k;t} + \sum_{i = 1}^n [-\operatorname{sgn}(\mathbf n_{k;t}) b_{ik}(t)]_+ \mathbf r_{i; t} \quad &\text{if $i = k$}.
\end{cases}
\]
This result can be extracted from \cite[Corollary 5.5 and 5.9]{gross2018canonical} which only relies on the definitions of $\mathbf r_{i;t}$ as $T_{t}^{\kappa;t_0}(e_i)$. A more direct proof can be found in \cite[Proposition 7.7]{mou2021scattering}. Accordingly the dual vectors $\mathbf n_{i; t}$ have the recursion
\[
\mathbf n_{i; t'} = \begin{cases}
\mathbf n_{i; t} + [\operatorname{sgn}(\mathbf n_{k; t})b_{ik}(t)]_+ \mathbf n_{k; t} \quad &\text{if $i\neq k$}\\
-\mathbf n_{k; t} \quad &\text{if $i = k$}.
\end{cases}
\]
Using the sign-coherence of the vectors $\mathbf n_{i; t}$, one sees their recursion is exactly the same as that of the $\mathbf c$-vectors in \Cref{section: generalized cluster algebra} by induction on the distance from $t$ to $t_0$. This leads to the equality $\mathbf n_{i; t} = \mathbf c_{i; t}$ for any $t$ and $i$. Now with the $\mathbf c$-vectors known to be sign-coherent, it is clear that the $\mathbf g$-vectors defined in \Cref{section: generalized cluster algebra} have the same recursion as the vectors $\mathbf r_{i;t}$.
\end{proof}

The following corollary is an easy consequence of the sign-coherence of $\mathbf c$-vectors showed in the proof of \Cref{thm: g-vectors are rays of stability chambers}.
\begin{corollary}\label{cor: c vectors determine b matrix}
The matrix $B(t) = (b_{ij}(t))$ can be recovered from the $\mathbf c$-vectors by the formula
\[
b_{ij}(t) = \mathbf c_{i;t} \cdot \overline B(\kappa) \cdot (r_j\mathbf c_{j;t}^T).
\]
\end{corollary}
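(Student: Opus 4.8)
The plan is to establish, by induction on the distance $d(t_0,t)$ in $\mathbb{T}_n$, the equivalent matrix identity
\[
B(t)=C_t^{\mathrm T}\,\overline B(\kappa)\,C_t\,D ,
\]
where $C_t$ is the matrix whose $i$-th column is the $\mathbf c$-vector $\mathbf c_{i;t}$ and $D=\operatorname{diag}(r_1,\dots,r_n)=\operatorname{diag}(d_1,\dots,d_n)$; its $(i,j)$-entry is exactly $\mathbf c_{i;t}\cdot\overline B(\kappa)\cdot(r_j\mathbf c_{j;t}^{\mathrm T})$. Since $\overline b_{ij}(t)=b_{ij}(t)/r_j$, it is enough to prove that the matrices $\overline B(t):=B(t)D^{-1}=(\overline b_{ij}(t))$ satisfy $\overline B(t)=C_t^{\mathrm T}\,\overline B(\kappa)\,C_t$. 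Two facts will be used repeatedly: matrix mutation preserves the symmetrizer $D$, so $DB(t)$ is skew-symmetric and hence $\overline B(t)=B(t)D^{-1}$ is a skew-symmetric matrix for every $t$; and the base case $t=t_0$ holds because $C_{t_0}$ is the identity and $\overline B(\kappa)=B(\kappa)D^{-1}$.

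For the inductive step, let $t\frac{k}{\qquad}t'$ be an edge with $d(t_0,t')=d(t_0,t)+1$, and assume $\overline B(t)=C_t^{\mathrm T}\,\overline B(\kappa)\,C_t$. Put $\varepsilon:=\operatorname{sgn}(\mathbf c_{k;t})\in\{+,-\}$, which is meaningful by the sign-coherence proved in \Cref{thm: g-vectors are rays of stability chambers}. By the $\mathbf c$-vector recursion recorded in the proof of that theorem, $\mathbf c_{k;t'}=-\mathbf c_{k;t}$ and $\mathbf c_{i;t'}=\mathbf c_{i;t}+[\varepsilon\,b_{ik}(t)]_+\,\mathbf c_{k;t}$ for $i\neq k$; in matrix form $C_{t'}=C_t\,E$, where $E$ is the identity matrix with its $k$-th row replaced by $\big(\dots,[\varepsilon\,b_{ik}(t)]_+,\dots,-1,\dots\big)$ ($-1$ in position $k$, and $[\varepsilon\,b_{ik}(t)]_+=r_k[\varepsilon\,\overline b_{ik}(t)]_+$ in position $i\neq k$). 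Substituting and using the inductive hypothesis,
\[
C_{t'}^{\mathrm T}\,\overline B(\kappa)\,C_{t'}=E^{\mathrm T}\big(C_t^{\mathrm T}\,\overline B(\kappa)\,C_t\big)E=E^{\mathrm T}\,\overline B(t)\,E ,
\]
so it remains to verify the matrix identity $E^{\mathrm T}\,\overline B(t)\,E=\overline B(t')$.

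This last identity is the only real computation, and it is elementary. Expanding $E^{\mathrm T}\,\overline B(t)\,E$, the $k$-th row and column come out to $-\overline b_{kj}(t)$ and $-\overline b_{ik}(t)$ at once (using $\overline b_{kk}(t)=0$), matching $\mu_k$ on those entries; and for $i,j\neq k$ one obtains, after using skew-symmetry of $\overline B(t)$,
\[
\overline b_{ij}(t)+r_k\Big(\overline b_{ik}(t)\,[\varepsilon\,\overline b_{jk}(t)]_+-[\varepsilon\,\overline b_{ik}(t)]_+\,\overline b_{jk}(t)\Big).
\]
On the other hand, Fomin--Zelevinsky matrix mutation $B(t')=\mu_k(B(t))$, together with $b_{ij}(t)=\overline b_{ij}(t)\,r_j$ and $r_k>0$, yields for $i,j\neq k$
\[
\overline b_{ij}(t')=\overline b_{ij}(t)+r_k\Big([\overline b_{ik}(t)]_+[\overline b_{kj}(t)]_+-[-\overline b_{ik}(t)]_+[-\overline b_{kj}(t)]_+\Big).
\]
The two displays agree by the elementary identity $a\,[\varepsilon b]_+-[\varepsilon a]_+\,b=[a]_+[-b]_+-[-a]_+[b]_+$, valid for all real $a,b$ and both signs $\varepsilon\in\{+1,-1\}$, applied with $a=\overline b_{ik}(t)$ and $b=\overline b_{jk}(t)=-\overline b_{kj}(t)$. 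This closes the induction, and reading off the $(i,j)$-entry of $B(t)=C_t^{\mathrm T}\,\overline B(\kappa)\,C_t\,D$ gives the corollary.

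I do not anticipate any real obstacle: this is a standard tropical-duality computation. The only thing that needs care is the bookkeeping of the diagonal matrix $D$ --- recognizing that $\overline B(t)=B(t)D^{-1}$ is the correct skew-symmetric model on which to run the induction, and that its skew-symmetry is exactly what lets the conjugation $E^{\mathrm T}(-)\,E$ reproduce Fomin--Zelevinsky mutation (with the factor $r_k$ absorbed into the $k$-th row of $E$). One could instead quote the skew-symmetrizable tropical-duality theorem of Nakanishi--Zelevinsky, or its analogue for generalized cluster algebras, which is by now known to be a formal consequence of the sign-coherence of $\mathbf c$-vectors; this is presumably the sense in which the corollary is an ``easy consequence'' of that sign-coherence.
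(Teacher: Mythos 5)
Your proof is correct, and it is exactly the kind of argument the paper has in mind: the paper gives no explicit proof, merely remarking that the corollary is ``an easy consequence of the sign-coherence of $\mathbf c$-vectors,'' and your induction on the distance to $t_0$ — writing $C_{t'}=C_tE$ via the $\mathbf c$-vector recursion from the proof of Theorem~\ref{thm: g-vectors are rays of stability chambers}, passing to the skew-symmetric matrix $\overline B(t)=B(t)D^{-1}$ so that conjugation by $E$ matches Fomin--Zelevinsky mutation, and invoking sign-coherence to give $\varepsilon=\operatorname{sgn}(\mathbf c_{k;t})$ a meaning — is the standard tropical-duality computation that the remark alludes to. I checked the elementary sign identity $a[\varepsilon b]_+-[\varepsilon a]_+b=[a]_+[-b]_+-[-a]_+[b]_+$ and the bookkeeping of the factors $r_k$, $r_j$; both are right.
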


\begin{definition}\label{def: cluster cones}
We define $\Delta^+_\kappa$ to be the set of the cones $\{ \mathcal C_t = \mathcal C^{\kappa;t_0}_t \mid t\in \mathbb T_n\}$. We note that two cones $\mathcal C_t$ and $\mathcal C_s$ are regarded equal if they are the same subset of $M_\mathbb R$.
\end{definition}

\begin{remark}\label{rmk: chamber and intersection}
By \Cref{prop: function value of phi}, the interior of $\mathcal C_t$ has the characterization of being a connected component of $M_\mathbb R\setminus \operatorname{supp}(\Phi_\kappa)$ (thus shall be called a \emph{chamber}). Two such cones $\mathcal C_t$ and $\mathcal C_s$ are hence either identical or their interiors do not intersect. When $t\frac{k}{\quad\quad}t'$ in $\mathbb T_n$, we have
\[
\mathcal C_t \cap \mathcal C_{t'} = \overline{\mathfrak d}_{k;t} = \overline{\mathfrak d}_{k;t'}
\]
from their constructions. In this case, we say that they are related by mutation at their $k$-th facet. There is then an $n$-regular graph $\mathbf E(\Delta_\kappa^+)$ whose vertex set is $\Delta_\kappa^+$ such that two cones are joint by an edge if and only if they intersect in codimension one.
\end{remark}

Let $\sigma = \kappa(t)$ be the labeled triangulation associated to $t\in \mathbb T_n$ (with $\kappa$ associated to $t_0\in \mathbb T_n$). Applying the previous constructions, but with $t$ as a new root of $\mathbb T_n$, and with $\sigma$ sitting on this new root, we obtain $\Delta_\sigma^+=\{ \mathcal C^{\sigma;t}_s \mid s\in \mathbb T_n\}$ and the $n$-regular graph $\mathbf E(\Delta_\sigma^+)$. We now establish the concrete relation between this data with $t$ as root, and the data $\Delta_\kappa^+$ and $\mathbf E(\Delta_\kappa^+)$ with $t_0$ as root. Recall the notations at the beginning of \Cref{subsection: cluster chamber structure}. Consider the piecewise linear map
\[
T_{t}^{\kappa;t_0} \coloneqq T_{k_1;\kappa_1} \circ T_{k_2; \kappa_2} \circ \cdots\circ T_{k_l;\sigma} \colon \mathbb R^n\rightarrow \mathbb R^n.
\]

\begin{proposition}\label{prop: isomorphism of graphs of chambers}
The map $T_{t}^{\kappa; t_0}\colon \mathbb R^n \rightarrow \mathbb R^n$ sends the cones in $\Delta_\sigma^+$ to $\Delta_\kappa^+$ bijectively such that for any $s\in \mathbb T_n$, we have
\begin{equation}\label{eq: pull back of chamber}
T_{t}^{\kappa;t_0}(\mathcal C_{s}^{\sigma;t}) = \mathcal C_{s}^{\kappa;t_0}.    
\end{equation}
Moreover, it induces an isomorphism between the graphs 
\[
T_{t}^{\kappa;t_0}\colon \mathbf E(\Delta_\sigma^+) \rightarrow \mathbf E(\Delta_\kappa^+)
\]
\end{proposition}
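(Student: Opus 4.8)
The plan is to deduce the statement from two facts about piecewise linear maps, one computational and one formal. The computational fact is that each elementary map $T_{k;\kappa}\colon M_\mathbb R\to M_\mathbb R$ is a piecewise linear homeomorphism whose inverse is $T_{k;\mu_k(\kappa)}$. The formal fact is that the composite map obtained by travelling along a walk in $\mathbb T_n$, accumulating one factor $T_{m;-}$ per edge, depends only on the endpoints of the walk. Granting these, the proposition is immediate: the composed map $T_t^{\kappa;t_0}$ is then a PL homeomorphism that carries each $\mathcal C_s^{\sigma;t}$ to $\mathcal C_s^{\kappa;t_0}$, and such a homeomorphism automatically induces an isomorphism of the incidence graphs of the two cone collections.

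First I would treat the elementary maps. By construction $T_{k;\kappa}$ restricts to the linear isomorphism $T_k^+$ on $\overline{H}_k^+$ and to $T_k^-$ on $\overline{H}_k^-$; on $e_k^\perp$ both restrict to the identity, so $T_{k;\kappa}$ is a well-defined continuous piecewise linear bijection that carries $\overline{H}_k^+$ onto $\overline{H}_k^-$ and $\overline{H}_k^-$ onto $\overline{H}_k^+$. Using that Fomin--Zelevinsky mutation turns $b_{ik}(\kappa)$ into $b_{ik}(\mu_k(\kappa))=-b_{ik}(\kappa)$, a two-line substitution in \eqref{eq: pl map theta'} and \eqref{eq: pl map theta''} shows $T_{k;\mu_k(\kappa)}\circ T_{k;\kappa}=\operatorname{id}$ on each of $\overline{H}_k^+$ and $\overline{H}_k^-$, and symmetrically $T_{k;\kappa}\circ T_{k;\mu_k(\kappa)}=\operatorname{id}$; hence $T_{k;\kappa}$ is a PL homeomorphism. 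Composing along the path $(k_1,\dots,k_l)$ from $t_0$ to $t$, the map $T_t^{\kappa;t_0}=T_{k_1;\kappa_1}\circ\cdots\circ T_{k_l;\sigma}$ is then a PL homeomorphism of $M_\mathbb R\cong\mathbb R^n$, whose inverse is likewise a PL homeomorphism (it is $T_{t_0}^{\sigma;t}$, the analogous composite along the reverse path, with $\sigma$ sitting on $t$).

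Next I would establish the walk-independence. For a walk $w$ in $\mathbb T_n$ with vertices $u_0,u_1,\dots,u_q$ (consecutive ones joined by an edge, the edge between $u_{j-1}$ and $u_j$ labelled $m_j$) and a triangulation $\rho$ placed on $u_0$, set $\rho_j\coloneqq\mu_{m_j}\cdots\mu_{m_1}(\rho)$ and $T_w^\rho\coloneqq T_{m_1;\rho_1}\circ\cdots\circ T_{m_q;\rho_q}$. By definition $T^\rho$ is multiplicative under concatenation of walks; and if $w$ repeats an edge, i.e. $u_{j-1}=u_{j+1}$ and $m_j=m_{j+1}=k$, then $\rho_{j+1}=\mu_k(\rho_j)$ and $\rho_{j+2}=\rho_j$, so the two relevant factors $T_{k;\rho_j}\circ T_{k;\rho_{j+1}}$ compose to the identity by the previous step and may be deleted without affecting the remaining factors. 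Iterating, $T_w^\rho$ equals $T^\rho$ on the reduced walk, which in a tree is the unique geodesic between the endpoints; so $T_w^\rho$ depends only on the endpoints (given $\rho$ at $u_0$). Applying this to the walk obtained by concatenating the geodesic $t_0\to t$ with the geodesic $t\to s$ (its reduction is the geodesic $t_0\to s$, the cancellations occurring around the median of $t_0,t,s$) yields the key identity
\[
T_s^{\kappa;t_0}=T_t^{\kappa;t_0}\circ T_s^{\sigma;t}
\]
of PL maps $\mathbb R^n\to\mathbb R^n$. Since $\mathcal C_s^{\sigma;t}=T_s^{\sigma;t}(\mathcal C^+)$ and $\mathcal C_s^{\kappa;t_0}=T_s^{\kappa;t_0}(\mathcal C^+)$ with the \emph{same} first quadrant $\mathcal C^+$ of the stability space of $\mathcal P(\kappa(s))$ — and on $\mathcal C^+$ these PL maps coincide with the sign-decorated composites of \Cref{cor: stability in a chamber}, by the sign analysis of \Cref{prop: chamber} — evaluating the key identity at $\mathcal C^+$ gives exactly \eqref{eq: pull back of chamber}, namely $T_t^{\kappa;t_0}(\mathcal C_s^{\sigma;t})=\mathcal C_s^{\kappa;t_0}$.

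Finally the bijection and the graph isomorphism drop out. As $s$ runs over $\mathbb T_n$ the cones $\mathcal C_s^{\sigma;t}$ (resp. $\mathcal C_s^{\kappa;t_0}$) are precisely the members of $\Delta_\sigma^+$ (resp. $\Delta_\kappa^+$), so \eqref{eq: pull back of chamber} shows $T_t^{\kappa;t_0}$ maps $\Delta_\sigma^+$ onto $\Delta_\kappa^+$; being globally injective on $M_\mathbb R$ it is injective on cones and respects the convention identifying cones that coincide as subsets, hence it is a bijection. For the graph structure, $T_t^{\kappa;t_0}$ and its inverse are PL homeomorphisms, so they send polyhedral cones to polyhedral cones, commute with intersections, and preserve the dimension of polyhedral subsets; therefore $\mathcal C_s^{\sigma;t}\cap\mathcal C_{s'}^{\sigma;t}$ has codimension one in $M_\mathbb R$ if and only if $\mathcal C_s^{\kappa;t_0}\cap\mathcal C_{s'}^{\kappa;t_0}$ does, so edges of $\mathbf E(\Delta_\sigma^+)$ correspond bijectively to edges of $\mathbf E(\Delta_\kappa^+)$ under the cone bijection, giving the asserted graph isomorphism. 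I expect the one genuinely delicate point to be the bookkeeping in the walk-independence step — keeping straight which triangulation sits on which vertex so that the cancellation $T_{k;\kappa}\circ T_{k;\mu_k(\kappa)}=\operatorname{id}$ is applied with the correct arguments — together with the compatibility check between the PL and the sign-decorated versions of $T_s^{\kappa;t_0}$ on $\mathcal C^+$.
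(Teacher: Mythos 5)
Your proof is correct and follows the same global strategy as the paper: establish the identity $T_s^{\kappa;t_0}=T_t^{\kappa;t_0}\circ T_s^{\sigma;t}$, evaluate it on $\mathcal C^+$ to obtain \eqref{eq: pull back of chamber}, and deduce the cone bijection and graph isomorphism. The difference is that you supply the justification for that identity that the paper takes for granted: since $t$ need not lie on the geodesic from $t_0$ to $s$ in $\mathbb T_n$, the concatenated walk generally backtracks, so one genuinely needs the cancellation $T_{k;\mu_k(\kappa)}\circ T_{k;\kappa}=\operatorname{id}$ (which you verify by direct substitution using $b_{ik}(\mu_k(\kappa))=-b_{ik}(\kappa)$) together with walk-independence along reductions to the geodesic in the tree. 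You also check explicitly, via the sign analysis of \Cref{prop: chamber}, that the PL composite and the sign-decorated linear composite from \Cref{cor: stability in a chamber} agree on $\mathcal C^+$ — another point the paper uses silently. Your route buys a self-contained, purely combinatorial argument. By contrast, for the bijection and graph isomorphism the paper reduces to a single flip and invokes \Cref{prop: stability after mutation} (a representation-theoretic statement that reflection functors exchange semistable categories) plus the characterization of $\Delta_\kappa^+$ in \Cref{rmk: chamber and intersection}, while you argue purely topologically that $T_t^{\kappa;t_0}$ is a PL homeomorphism and hence preserves the codimension of intersections of polyhedral cones, so walls go to walls. Both approaches are sound; the paper's ties the statement more directly to the module-theoretic chamber structure (which is what gets used downstream), whereas yours is more elementary and isolates the PL-combinatorial content.
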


\begin{proof}
By definition, $\mathcal C_{s}^{\sigma;t} = T_{s}^{\sigma;t}(\mathcal C^+)$ and $\mathcal C_{s}^{\kappa;t_0} = T_{s}^{\kappa;t_0}(\mathcal C^+)$. Since $T_{t}^{\kappa;t_0}\circ T_{s}^{\sigma;t} = T_{s}^{\kappa;t_0}$, the equation (\ref{eq: pull back of chamber}) follows. For the rest of the statement, it suffices to show the case where $\sigma$ and $\kappa$ are related by a single flip. By \Cref{prop: stability after mutation}, the map $T_t^{\kappa;t_0}$ sends chambers to chambers with the inverse being $T_{t_0}^{\sigma;t}$. Here $\Delta_\kappa^+$ is in fact the subset of chambers that are connected to $\mathcal C^+$ via mutations at facets; see \Cref{rmk: chamber and intersection}. Hence the bijection follows. The graph isomorphism follows from that $T_t^{\kappa;t_0}$ is a bijection on vertices and is also a local isomorphism between graphs.
\end{proof}

\section{\texorpdfstring{The motivic Hall algebra of $(Q,I)$}{The motivic Hall algebra of (Q, I)}}\label{section: hall algebras}
In this section, we briefly review the motivic Hall algebra (as introduced by Joyce \cite{joyce2007configurations}) of a quiver with relations following \cite{bridgeland2012introduction} and \cite[Section 5]{bridgeland2016scattering}. We note that quivers may have oriented cycles such as loops and two-cycles.

\subsection{Motivic Hall algebras}
Let $A$ be a finite dimensional algebra of the form $\mathbb C\langle Q\rangle/I$ with $I$ admissible. Recall the notations below \Cref{cor: stability after mutation}. There is an algebraic $\mathbb C$-stack $\mathfrak M$ (for example, as defined in \cite[Section 4.2]{bridgeland2016scattering}) that parametrizes all objects in $\modu A$. The stack $\mathfrak M$ can be decomposed into a disjoint union
$\mathfrak M = \bigsqcup_{d\in N^\oplus} \mathfrak M_d$ where each $\mathfrak M_d$ is an open and closed substack parametrizing objects of dimension vector $d\in N^\oplus$; see \cite[Lemma 4.1]{bridgeland2016scattering}.

There is another algebraic stack $\mathfrak M^{(2)}$ parametrizing short exact sequences $0\rightarrow E_1 \rightarrow E_2 \rightarrow E_3 \rightarrow 0$ in $\modu A$ up to certain equivalence (\cite[Section 4.4]{bridgeland2016scattering}). There are morphisms of stacks $\pi_i \colon \mathfrak M^{(2)}\rightarrow \mathfrak M$ sending a short exact sequence to its constituent $E_i$ for $i = 1, 2 ,3$. In the following diagram, the morphism $(\pi_3, \pi_1)$ is of finite type and $\pi_2$ is representable and proper (\cite[Lemma 4.5]{bridgeland2016scattering}).
\[
\begin{tikzcd}
\mathfrak M^{(2)} \ar[r, "\pi_2"] \ar[d, "{(\pi_3, \pi_1)}"] & \mathfrak M \\
\mathfrak M \times \mathfrak M
\end{tikzcd}
\]

Let $K(\mathrm{St}/\mathfrak M)$ be the relative Grothendieck group of finite type $\mathbb C$-stacks (with affine stabilizers) over $\mathfrak M$ as in \cite[Definition 5.1]{bridgeland2016scattering}. It is naturally a module over the Grothendieck ring $K(\mathrm{St/\mathbb C})$. The elements in $K(\mathrm{St}/\mathfrak M)$ are of the form $[X\xrightarrow{f} \mathfrak M]$ where $X$ is in $\mathrm{St}/\mathbb C$ and $f$ is a morphism of $\mathbb C$-stacks. There is a convolution product $*$ on $K(\mathrm{St}/\mathfrak M)$ defined as
\begin{equation}
[X\xrightarrow{f} \mathfrak M] * [Y\xrightarrow{g} \mathfrak M] = [Z \xrightarrow{\pi_2\circ h}\mathfrak M\times \mathfrak M]    
\end{equation}
where $h$ fits into the Cartesian square
\[
\begin{tikzcd}
Z \ar[d]\ar[r, "h"] & \mathfrak M^{(2)} \ar[r, "\pi_2"] \ar[d, "{(\pi_3, \pi_1)}"] & \mathfrak M\\
\mathfrak M \times \mathfrak M \ar[r] & \mathfrak M \times \mathfrak M
\end{tikzcd}.
\]
This product $*$ is $K(\mathrm{St}/\mathbb C)$-linear and makes $K(\mathrm{St}/\mathfrak M)$ into an associative $K(\mathrm{St}/\mathbb C)$-algebra; see \cite[Section 4]{bridgeland2012introduction}. The algebra $(K(\mathrm{St}/\mathfrak M), *)$ is the \emph{motivic Hall algebra} of $(Q,I)$ which we denote by $H = H(Q,I)$.

The algebra $H$ is $N^\oplus$-graded as $H  =  \bigoplus_{d\in N^\oplus} H_d$ where $H_d$ is the submodule generated by $[X\xrightarrow{f}\mathfrak M]$ such that $f$ factors through the inclusion $\mathfrak M_d\subset \mathfrak M$. The unit is $\mathbf 1 = [\mathfrak M_0 \subset \mathfrak M]$ in degree $0$.

We complete $H$ with respect to $N^+\subset N^\oplus$ obtaining the completed motivic Hall algebra $\hat H = \prod_{d\in N^\oplus} H_d$. Consider the $N^+$-graded Lie subalgebra (under the commutator bracket) $$ \mathfrak g_{\mathrm{Hall}} = H_{>0} \coloneqq \bigoplus_{d\in N^+} H_d\subset H $$ and its completion $\hat {\mathfrak g}_{\mathrm{Hall}} = \prod_{d\in N^+} H_d$. There is a corresponding pro-unipotent group $\hat G_\mathrm{Hall}$ whose elements are formal symbols $\exp(x)$, which are in one-to-one correspondence with $x\in \hat {\mathfrak g}_{\mathrm{Hall}}$. Note that there is an embedding $\hat G_\mathrm{Hall} \hookrightarrow \hat H$ by sending $\exp(x)\in \hat G_\mathrm{Hall}$ to the exponential series $1 + x + x^2/2! + \cdots + x^k/k! + \cdots \in \hat H$. This embedding respects multiplications because the multiplication in $\hat G_{\mathrm{Hall}}$ is defined through the Baker--Campbell--Hausdorff formula.

\subsection{Integration map}
Let $K(\mathrm{Var}/\mathbb C)$ be the Grothendieck ring of varieties over $\mathbb C$. Consider the \emph{regular subring}
\[
K_\mathrm{reg}(\mathrm{St}/\mathbb C)\coloneqq K(\mathrm{Var}/\mathbb C)[\mathbb L^{-1}, [\mathbb P^n]^{-1} \mid n\in \mathbb N] \subset K(\mathrm{St}/\mathbb C)
\]
where $\mathbb L$ denote the class of the affine line $\mathbb A^1$ and $\mathbb P^n$ the projective space of dimension $n$ in $K(\mathrm{St}/\mathbb C)$. We define $H_\mathrm{reg}$ to be the submodule of $H$ generated over $K_\mathrm{reg}(\mathrm{St}/\mathbb C)$ by elements $[X\rightarrow \mathfrak M]$ with $X$ a variety.

\begin{theorem}[{\cite[Theorem 5.1]{bridgeland2012introduction}}]
The submodule $H_\mathrm{reg}$ is closed under the convolution product, thus a $K_{\mathrm{reg}}(\mathrm{St/\mathbb C})$-algebra. Moreover, the quotient $H_\mathrm{reg}/(\mathbb L - 1) H_\mathrm{reg}$ is a commutative.
\end{theorem}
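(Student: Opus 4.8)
The plan is to deduce both statements from a single structural fact about Bridgeland's correspondence $\mathfrak M^{(2)}\xrightarrow{(\pi_3,\pi_1)}\mathfrak M\times\mathfrak M$: the geometric fibre over a pair of modules $(E_3,E_1)$ is the quotient stack $[\mathrm{Ext}^1_A(E_3,E_1)/\mathrm{Hom}_A(E_3,E_1)]$, with the vector group $\mathrm{Hom}_A(E_3,E_1)$ acting trivially (it is the automorphism group of a short exact sequence with fixed end terms). First I would make this uniform in families. After stratifying a given variety by dimension vectors, one may work over $\mathfrak M_d\times\mathfrak M_{d'}$; there the standard three-term $\mathrm{Hom}$-$\mathrm{Ext}$ complex of locally free sheaves (whose term ranks depend only on $d$, $d'$ and the quiver data) computes the relative $\mathrm{Hom}$- and $\mathrm{Ext}^1$-sheaves, and by constructibility and upper semicontinuity of its cohomological ranks any variety $T$ mapping in decomposes into finitely many locally closed subvarieties on each of which $\dim\mathrm{Hom}_A$ and $\dim\mathrm{Ext}^1_A$ are constant. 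On such a stratum $S$ the $\mathrm{Hom}$- and $\mathrm{Ext}^1$-sheaves are locally free, so the base change of $\mathfrak M^{(2)}$ is the vector-group gerbe $[\mathcal E_S/\mathcal V_S]$, where $\mathcal E_S\to S$ is the (variety) total space of the $\mathrm{Ext}^1$-bundle and $\mathcal V_S$ the bundle of $\mathrm{Hom}$-vector groups.

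Closure of $H_\mathrm{reg}$ under $*$ then follows on generators. For $e_X=[X\xrightarrow{f}\mathfrak M]$ and $e_Y=[Y\xrightarrow{g}\mathfrak M]$ with $X,Y$ varieties one has $e_X*e_Y=[Z\xrightarrow{\pi_2\circ h}\mathfrak M]$ with $Z=(X\times Y)\times_{\mathfrak M\times\mathfrak M}\mathfrak M^{(2)}$. Stratifying $X\times Y$ as above, $Z$ restricts over each $S$ to $[\mathcal E_S/\mathcal V_S]$; since a vector group is a special algebraic group, $[\,[\mathcal E_S/\mathcal V_S]\to\mathfrak M\,]=\mathbb L^{-\operatorname{rk}\mathcal V_S}\,[\mathcal E_S\to\mathfrak M]$ in $K(\mathrm{St}/\mathfrak M)$, and this lies in $H_\mathrm{reg}$ because $\mathbb L^{-1}\in K_\mathrm{reg}(\mathrm{St}/\mathbb C)$ and $\mathcal E_S$ is a variety. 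Summing over the finite stratification via the cut-and-paste relations in $K(\mathrm{St}/\mathfrak M)$ gives $e_X*e_Y\in H_\mathrm{reg}$; as $*$ is $K(\mathrm{St}/\mathbb C)$-bilinear, $H_\mathrm{reg}$ is a $K_\mathrm{reg}(\mathrm{St}/\mathbb C)$-subalgebra.

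For commutativity of $H_\mathrm{reg}/(\mathbb L-1)H_\mathrm{reg}$ the idea is to separate split from non-split extensions. Scaling of extension classes defines a $\mathbb G_m$-action on $\mathfrak M^{(2)}$ commuting with $(\pi_3,\pi_1)$; crucially, rescaling an extension class does not change the isomorphism type of the middle term, even in families, via the explicit isomorphism $(n,m)\mapsto(n,\lambda m)$, so this action covers the \emph{trivial} action on $\mathfrak M$ through $\pi_2$. Its fixed locus is the substack $\mathfrak M^{(2),0}$ of split sequences, and the action is free on the complement. Pulling back along $X\times Y\to\mathfrak M\times\mathfrak M$ splits $Z=Z^{0}\sqcup Z^{\circ}$ compatibly with the maps to $\mathfrak M$. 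On $Z^{\circ}$ the free $\mathbb G_m$-action covering the trivial action downstairs gives $[Z^{\circ}\to\mathfrak M]=(\mathbb L-1)[Z^{\circ}/\mathbb G_m\to\mathfrak M]\in(\mathbb L-1)H_\mathrm{reg}$, the quotient being regular (projectivized $\mathrm{Ext}^1$-bundles modulo the $\mathrm{Hom}$-gerbe, as in the previous step). On $Z^{0}$, the stack $\mathfrak M^{(2),0}$ is a $\mathrm{Hom}$-vector-group gerbe over $\mathfrak M\times\mathfrak M$ whose composite to $\mathfrak M$ is the direct-sum morphism $\oplus$; stratifying as before, $[Z^{0}\to\mathfrak M]=\sum_{S}\mathbb L^{-\operatorname{rk}\mathcal V_S}[S\xrightarrow{\oplus}\mathfrak M]$, which modulo $(\mathbb L-1)$ equals $[X\times Y\xrightarrow{\oplus}\mathfrak M]$. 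Since $E\oplus F\cong F\oplus E$, the morphism $\oplus$ is symmetric, so $[Z^{0}_{XY}\to\mathfrak M]\equiv[Z^{0}_{YX}\to\mathfrak M]\pmod{\mathbb L-1}$; hence $e_X*e_Y\equiv e_Y*e_X$ in $H_\mathrm{reg}/(\mathbb L-1)H_\mathrm{reg}$ on generators, and therefore everywhere.

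The technical heart, and the step I expect to cost the most work, is the first one: identifying the fibres of $\mathfrak M^{(2)}\to\mathfrak M\times\mathfrak M$ with $\mathrm{Hom}$-gerbes over affine $\mathrm{Ext}^1$-spaces \emph{uniformly in families} over an arbitrary variety. This is exactly what forces the cohomological stratification and what converts every stacky correction into an invertible power of $\mathbb L$. Granting that, the subalgebra statement is cut-and-paste bookkeeping, and commutativity reduces to the single observation that $\mathbb G_m$-rescaling of extension classes is invisible to $\pi_2$, so that the non-split locus contributes a multiple of $(\mathbb L-1)$ while the split locus contributes the manifestly commutative direct-sum operation.
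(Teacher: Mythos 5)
Your sketch reproduces the argument of Bridgeland's original proof of \cite[Theorem 5.1]{bridgeland2012introduction}: fibrewise description of $(\pi_3,\pi_1)$ as a $\operatorname{Hom}$-gerbe over an affine $\operatorname{Ext}^1$-space, stratification by constancy of $\dim\operatorname{Hom}$ and $\dim\operatorname{Ext}^1$, specialness of vector groups to turn the gerbe contribution into an invertible power of $\mathbb L$, and the $\mathbb G_m$-rescaling of extension classes (which is invisible to $\pi_2$) to split $Z$ into the manifestly symmetric locus of split sequences and an $(\mathbb L-1)$-divisible non-split part. The strategy and all the key structural facts you invoke are the same ones used in the cited source.
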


It follows from the above theorem that $H_\mathrm{reg}$ can be equipped with the Poisson bracket
\begin{equation}\label{eq: poisson bracket on reg}
\{f,g\} = (\mathbb L-1)^{-1}(f*g-g*f).    
\end{equation}
Thus $\mathfrak g_\mathrm{reg} \coloneqq (H_\mathrm{reg})_{>0}/(\mathbb L - 1) \subset H$ is a Lie algebra with the commutator bracket. Denote by $\hat {\mathfrak g}_\mathrm{reg}$ its completion and $\hat G_\mathrm{reg}$ the corresponding pro-unipotent group.

Now we turn to the situation where $I = \partial S$ is the Jacobian ideal of some potential $S$ of $Q$. Consider the group algebra $\mathbb C[N]$ with the Poisson bracket $\{ -, - \}$ defined by
\[
\{ y^{n_1}, y^{n_2} \} = \{ n_1, n_2 \} \cdot y^{n_1 + n_2}
\]
where $\{ - , - \}$ on the right also denotes the skew-symmetric bilinear form on $N$ such that $$\{ e_i, e_j \} = |\{a\in Q_1 \mid t(a) = i,\ h(a) = j\}| - |\{a\in Q_1 \mid t(a) = j,\ h(a) = i\}|.$$
Let $\mathfrak g = \mathbb C[N^+]$ be the $N^+$-graded Lie subalgebra of $\mathbb C[N]$ and $\hat {\mathfrak g}$ its completion. Denote by $\hat G$ the pro-unipotent group of $\hat {\mathfrak g}$.

\begin{theorem}[{\cite[Theorem 11.1]{bridgeland2016scattering}}]\label{thm: integration map}
When $I$ is the Jacobian ideal of a potential $S$ which is minimal (i.e. every cycle has length at least 3), there is a Poisson homomorphism (the integration map)
    \[
    I \colon H_\mathrm{reg} \rightarrow \mathbb C[N^\oplus]
    \]
    such that $I([X\xrightarrow {f} \mathfrak M_d]) = \chi (X) \cdot y^d$ where $\chi(\cdot)$ takes the Euler characteristic of a complex variety.
\end{theorem}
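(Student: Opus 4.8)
The plan is to construct $I$ on the natural generators of $H_\mathrm{reg}$ and then verify, in order, that it is well defined, multiplicative, and compatible with the Poisson brackets; only the last step uses that $I=\partial S$ is Jacobian with $S$ minimal. Throughout write $A=\mathbb C\langle Q\rangle/\langle\partial S\rangle$.

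\emph{Step 1: definition and well-definedness.} For a generator $[X\xrightarrow{f}\mathfrak M_d]$ of $H_\mathrm{reg}$ with $X$ a variety, set $I([X\xrightarrow{f}\mathfrak M_d]):=\chi(X)\,y^d$, where $\chi$ is the compactly supported Euler characteristic, and extend $K_\mathrm{reg}(\mathrm{St}/\mathbb C)$-linearly using the ring homomorphism $\chi\colon K_\mathrm{reg}(\mathrm{St}/\mathbb C)\to\mathbb Z$ determined by $\mathbb L\mapsto 1$ and $[\mathbb P^n]\mapsto n+1$ (so the inverted classes go to units). Since $\chi$ is additive over stratifications and multiplicative over Zariski-locally-trivial fibrations, and the relations defining the relative Grothendieck group $K(\mathrm{St}/\mathfrak M)$ only perform such operations on the source while retaining the component $\mathfrak M_d\subset\mathfrak M$, this produces a well-defined $\mathbb Z$-linear, $N^\oplus$-graded map $I\colon H_\mathrm{reg}\to\mathbb C[N^\oplus]$. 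As $\chi(\mathbb L-1)=0$, the map $I$ annihilates $(\mathbb L-1)H_\mathrm{reg}$ and so factors through the commutative ring $H_\mathrm{reg}/(\mathbb L-1)H_\mathrm{reg}$.

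\emph{Step 2: $I$ is an algebra homomorphism.} For variety classes $a=[X\to\mathfrak M_e]$ and $b=[Y\to\mathfrak M_d]$, unwinding the convolution gives $a*b=[Z\to\mathfrak M_{d+e}]$ where $Z=(X\times Y)\times_{\mathfrak M\times\mathfrak M}\mathfrak M^{(2)}$ and the structure map records the middle term $E_2$ of a short exact sequence $0\to E_1\to E_2\to E_3\to 0$ with $(E_3,E_1)\in X\times Y$. The fibre of $Z$ over $(E_3,E_1)$ is the stack of such sequences with fixed ends, of motivic class $\mathbb L^{\dim\operatorname{Ext}^1_A(E_3,E_1)-\dim\Hom_A(E_3,E_1)}$ in $K_\mathrm{reg}(\mathrm{St}/\mathbb C)$: the affine space of extension classes, fibred over the classifying stack of the abelian automorphism group $\Hom_A(E_3,E_1)$. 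Stratifying $X\times Y$ so these dimensions are constant, and further stratifying by the isomorphism class of $E_2$, each piece of $Z$ is, class-wise in $H_\mathrm{reg}$, a variety class times a power of $\mathbb L$; since $I$ sends every power of $\mathbb L$ to $1$, we get $I(a*b)=\chi(X)\chi(Y)\,y^{d+e}=I(a)I(b)$. Hence $I$ is an algebra homomorphism and induces $\overline I\colon H_\mathrm{reg}/(\mathbb L-1)H_\mathrm{reg}\to\mathbb C[N^\oplus]$.

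\emph{Step 3: $I$ respects the Poisson bracket.} With $a,b$ as above, commutativity of $H_\mathrm{reg}/(\mathbb L-1)H_\mathrm{reg}$ (\cite[Theorem 5.1]{bridgeland2012introduction}) shows $a*b-b*a\in(\mathbb L-1)H_\mathrm{reg}$, so $\{a,b\}=(\mathbb L-1)^{-1}(a*b-b*a)\in H_\mathrm{reg}$. Running the computation of Step 2 in both orders, the fibres of $a*b$ and of $b*a$ over the pair $(E_3,E_1)$ contribute the exponents $p=\dim\operatorname{Ext}^1_A(E_3,E_1)-\dim\Hom_A(E_3,E_1)$ and $q=\dim\operatorname{Ext}^1_A(E_1,E_3)-\dim\Hom_A(E_1,E_3)$; the standard Hall-algebra bookkeeping --- stratifying by the middle term and tracking the automorphism groups of the short exact sequences and the split locus, as in \cite[\S11]{bridgeland2016scattering} --- then gives
\[
I(\{a,b\})=(p-q)\,\chi(X)\,\chi(Y)\,y^{d+e}.
\]

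It remains to identify $p-q$ with the skew form $\{e,d\}$ of $Q$, which is the only point where the hypotheses on $S$ are used. Minimality of $S$ guarantees that the completed Ginzburg dg algebra $\Gamma(Q,S)$, with $H^0(\Gamma(Q,S))\cong A$, is bimodule $3$-Calabi--Yau (cf.\ \cite{keller2011derived} and the references therein). For finite-dimensional $A$-modules $M,N$, viewed as $\Gamma(Q,S)$-modules, one has $\operatorname{Ext}^0_{\Gamma}(M,N)=\Hom_A(M,N)$ and $\operatorname{Ext}^1_{\Gamma}(M,N)=\operatorname{Ext}^1_A(M,N)$ (an extension of modules concentrated in degree $0$ remains concentrated in degree $0$, by the natural $t$-structure on $D(\Gamma(Q,S))$ with heart $\operatorname{mod}A$), while Calabi--Yau duality gives $\operatorname{Ext}^{3-i}_{\Gamma}(M,N)\cong D\operatorname{Ext}^i_{\Gamma}(N,M)$ (with $D$ the $\mathbb C$-linear dual) and $\operatorname{Ext}^i_{\Gamma}(M,N)=0$ for $i\notin\{0,1,2,3\}$. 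A short computation then shows that $\sum_i(-1)^i\dim\operatorname{Ext}^i_{\Gamma}(M,N)$ equals $-(p-q)$ when $(M,N)=(E_3,E_1)$; being the Euler form of a $\Hom$-finite $3$-Calabi--Yau (hence odd) triangulated category, it is a skew-symmetric bilinear form on $K_0\cong\mathbb Z^{Q_0}$, and evaluating on simple modules $S_i,S_j$ gives $-\bigl(\#\{a\colon i\to j\}-\#\{a\colon j\to i\}\bigr)$, i.e.\ the negative of the skew form $\{-,-\}$ of $Q$ (loops contributing $0$). Thus $p-q=\{e,d\}$ and
\[
I(\{a,b\})=\{e,d\}\,\chi(X)\chi(Y)\,y^{d+e}=\bigl\{\chi(X)y^e,\chi(Y)y^d\bigr\}=\{I(a),I(b)\};
\]
by bilinearity $I$ is a Poisson homomorphism. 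The main obstacle is precisely this last identification of the antisymmetrized $\operatorname{Ext}$-defect with the combinatorial skew form --- it is what forces $p-q$ to depend only on dimension vectors and to match the bracket on $\mathbb C[N^\oplus]$; the remaining steps are formal manipulations with motivic classes and with the convolution product.
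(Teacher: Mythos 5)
Your proposal is correct, but it takes a genuinely different route from the paper's. The paper's proof of this theorem is essentially a two-line remark: it cites Bridgeland's \cite[Theorem~11.1]{bridgeland2016scattering} for the whole construction and then observes that the only place Bridgeland's no-loops/no-2-cycles hypothesis enters is in the homological identity
\[
\hom(M,N)-\operatorname{ext}^1(M,N)-\bigl(\hom(N,M)-\operatorname{ext}^1(N,M)\bigr)=-\{\dim M,\dim N\},
\]
which it then imports, for arbitrary Jacobian algebras of quivers with (possibly looped) potential, from Joyce--Song \cite[Theorem~7.6]{joyce2012theory}. You instead reprove Bridgeland's argument from the ground up (Steps 1 and 2 reproduce the well-definedness of the Euler-characteristic integration and the fact that the convolution product of variety classes integrates multiplicatively modulo $\mathbb L-1$), and you derive the homological identity yourself in Step 3 via the bimodule $3$-Calabi--Yau property of the completed Ginzburg dg algebra and Calabi--Yau duality on $D_{fd}(\Gamma)$. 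The upshot of your route is that it gives a conceptual reason for the identity (it is the antisymmetry of the Euler form of a Hom-finite odd-CY triangulated category) rather than a citation; the upshot of the paper's route is brevity and independence from the Calabi--Yau machinery of dg categories, which is worth having here precisely because loops make the surrounding literature (mutation, Keller--Yang equivalences) treacherous. Two small caveats on your Step~3: the reference \cite{keller2011derived} (Keller--Yang derived equivalences) is not the right citation for the $3$-CY property of $\Gamma(Q,S)$ -- that is in Keller's work on deformed Calabi--Yau completions, and one should spell out that it indeed holds without loop-freeness -- and minimality of $S$ is used to identify $H^0(\Gamma)$ with $A$, not directly to establish Calabi--Yau duality, so the logical role of that hypothesis should be made explicit.
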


\begin{proof}
Note that the quiver $Q$ may have loops or two-cycles which are ruled out in the assumption of \cite[Theorem 11.1]{bridgeland2016scattering}. However, in the presence of loops or two-cycles the proof (see also the argument in \cite[Theorem 5.1]{bridgeland2012introduction}) still works since in this more general situation, for any $M$ and $N$ in $\modu A$, the following identity still holds
\[
\hom(M, N) - \operatorname{ext}^1(M, N) - (\hom(N, M) - \operatorname{ext}^1(N, M)) = -\{ \dim M, \dim N\}.
\]
A direct proof of the identity can be found in \cite[Theorem 7.6]{joyce2012theory}.
\end{proof}

The Poisson homomorphism in the above theorem then induces the following Lie algebra homomorphism
\[
I\colon \hat{\mathfrak g}_\mathrm{reg}\rightarrow \hat {\mathfrak g},\quad [X \xrightarrow{f}\mathfrak M_d]/(\mathbb L - 1) \mapsto \chi(X) \cdot y^d,
\]
which further induces a group homomorphism $I\colon \hat G_\mathrm{reg} \rightarrow \hat G$. By abuse of notation, we use the same letter $I$ for various maps induced by the integration map. However there should not be any ambiguity when the argument of $I$ is specified.

Now consider the tensor product algebra
\[
B \coloneqq \mathbb C[N^\oplus] \otimes_\mathbb C \mathbb C[M]
\]
with a monomial denoted by $y^nx^m$. We extend the previously defined Poisson bracket $\{-,-\}$ on $\mathbb C[N^\oplus]$ to $B$ by
\[
\{y^n, x^m\} = m(n) y^nx^m,\quad \{x^{m_1}, x^{m_2}\} = 0.
\]
The Poisson algebra $B$ is $N^\oplus$-graded where $B_n = y^n\cdot \mathbb C[M]$ for any $n\in N^\oplus$. Denote the corresponding completion by $\hat B \coloneqq \prod_{n\in N^\oplus} B_n$. Any $a\in \hat {\mathfrak g}$ acts on $\hat B$ by derivations $a(b) = \{a, b\}$ for $b\in \hat B$. The group element $\exp (a) \in \hat G$ acts on $\hat B$ by the automorphism $\exp(\{a,-\})\in \Aut(\hat B)$ $$\exp(\{a, -\})(b) \coloneqq b + \{a, b\} + \{a,\{a,b\}\}/2 + \dots + \{a,\{a,\cdots \{a,b\}\cdots\}/k!+\cdots.$$

We define another non-commutative $N^\oplus$-graded algebra
\[
C \coloneqq H_{\mathrm{reg}} \otimes_{\mathbb C} \mathbb C[M] = \bigoplus_{d\in N^\oplus} (H_{\mathrm{reg}})_d\otimes_\mathbb C \mathbb C[M]
\]
extending the multiplicative structures by $$a_d * x^m = \mathbb L^{m(d)} x^m * a_d$$ for $a_d\in H_d$, $d\in N^\oplus$ and $m\in M$. We equip $C$ with the following Poisson bracket
\[
\{a,b\} = [a,b]/(\mathbb L - 1)
\]
extending the one on $H_\mathrm{reg}$. Then the $N^\oplus$-graded Poisson homomorphism (the integration map in \Cref{thm: integration map}) $I\colon H_\mathrm{reg} \rightarrow \mathbb C[N^\oplus]$ extends to $I\colon C\rightarrow B$ (as well as to the completions $I\colon \hat C\rightarrow \hat B$).

For an element $\exp(a)\in \hat G_\mathrm{reg}$, the following lemma describes how $I(\exp(a))\in \hat G$ acts on $\hat B$ as an automorphism. Note that via the embedding $\hat G_\mathrm{Hall}\subset \hat H$, $\exp(a)$ can be viewed as an invertible element in $\hat H$.

\begin{lemma}\label{lemma: Poisson homomorphism}
For any $a\in \hat {\mathfrak g}_\mathrm{reg}$ and $x\in \mathbb C[M]$, we have
\[
\exp(a)*x*\exp(a)^{-1} = \exp\left(\{(\mathbb L-1)a, - \}_{\hat C}\right)(x)\in \hat C
\]
and thus $I(\exp(a))$ acts on $x$ by
\[
\exp(\{I(a),-\}) (x) = I\left(\exp(a)*x*\exp(a)^{-1}\right)\in \hat B.
\]
\end{lemma}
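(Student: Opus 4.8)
The plan is to read the first identity off the standard (Hadamard) formula for conjugation by a group-like element in a complete associative algebra, and then to deduce the second by pushing the first through the integration map, using that $I$ is a homomorphism of Poisson algebras. For the first identity, recall that $\mathbb L-1$ is invertible in $K(\mathrm{St}/\mathbb C)$, so $\exp(a)$ is a genuine invertible element of $\hat H$ (as recalled just before the statement), and that $w\coloneqq(\mathbb L-1)a$ is a genuine element of $(\hat H_{\mathrm{reg}})_{>0}$ because $a\in\hat{\mathfrak g}_{\mathrm{reg}}$. The Hadamard formula gives
\[
\exp(a)*x*\exp(a)^{-1}=\sum_{k\ge 0}\tfrac1{k!}\,\mathrm{ad}_a^{\,k}(x),\qquad \mathrm{ad}_a(y)=a*y-y*a,
\]
with the series converging in the completion since $a$ has positive $N^+$-degree. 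As $\mathbb L-1$ is central and the bracket on $\hat C$ is $\{-,-\}_{\hat C}=[-,-]/(\mathbb L-1)$ by definition, one has $\mathrm{ad}_a=(\mathbb L-1)^{-1}\mathrm{ad}_w=\{(\mathbb L-1)a,-\}_{\hat C}$, and an induction on $k$ using that $H_{\mathrm{reg}}/(\mathbb L-1)H_{\mathrm{reg}}$ is commutative (the theorem of Bridgeland recalled above) shows that the $k$-fold $*$-commutator $\mathrm{ad}_w^{\,k}(x)$ is divisible by $(\mathbb L-1)^k$ in $H_{\mathrm{reg}}$; hence each $\mathrm{ad}_a^{\,k}(x)$ already lies in $\hat C$, and summing gives the first identity with the right-hand side in $\hat C$.

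For the second identity I would apply $I\colon\hat C\to\hat B$ to the first one. Since $I$ is $N^\oplus$-graded it is continuous, hence commutes with the convergent sum; since it is a homomorphism of Poisson algebras (the extension of \Cref{thm: integration map} to $\hat C\to\hat B$) it intertwines $\{w,-\}_{\hat C}$ with $\{I(w),-\}_{\hat B}$, and iterating this gives $I(\{w,\{w,\dots\{w,x\}\dots\}\}_{\hat C})=\{I(w),\{I(w),\dots\{I(w),x\}\dots\}\}_{\hat B}$ using $I(x)=x$ for $x\in\mathbb C[M]$. Finally $I(w)=I((\mathbb L-1)a)$ coincides with $I(a)$: if $w=\sum_d[X_d\to\mathfrak M_d]$ then the former equals $\sum_d\chi(X_d)y^d$ by the integration map $H_{\mathrm{reg}}\to\mathbb C[N^\oplus]$, and the latter equals the same by the induced map $\hat{\mathfrak g}_{\mathrm{reg}}\to\hat{\mathfrak g}$, $[X\to\mathfrak M_d]/(\mathbb L-1)\mapsto\chi(X)y^d$. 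Reassembling the series yields $I(\exp(a)*x*\exp(a)^{-1})=\exp(\{I(a),-\}_{\hat B})(x)$, which by the discussion preceding the lemma is exactly the action of $I(\exp(a))=\exp(I(a))\in\hat G$ on $x$.

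The step I expect to be the main obstacle is the bookkeeping around the formal division by $\mathbb L-1$: although $\exp(a)$ exists only after inverting $\mathbb L-1$ (inside $\hat H$), one must verify that the conjugation operator $\exp(a)*(-)*\exp(a)^{-1}$ maps $\hat C$ into $\hat C$ --- which is what justifies ``$\in\hat C$'' in the statement --- and that $I$ turns it into the genuine automorphism $\exp(\{I(a),-\}_{\hat B})$ of $\hat B$. Both points rest on the divisibility by powers of $\mathbb L-1$ of iterated $*$-commutators of regular elements, i.e.\ on the commutativity of the semiclassical limit $H_{\mathrm{reg}}/(\mathbb L-1)$, together with the compatibility of that division with $I$ that is built into \Cref{thm: integration map} and its extension to the completions.
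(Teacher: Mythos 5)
Your proof is correct and follows essentially the same route as the paper: write the conjugation as $\exp(\mathrm{ad}_a)(x)$, identify $\mathrm{ad}_a$ with $\{(\mathbb L-1)a,-\}_{\hat C}$, and then push through the integration map using that it is a Poisson homomorphism on $\hat C\to\hat B$ and that $I(a)\in\hat{\mathfrak g}$ can be read as $I((\mathbb L-1)a)$ under the integration $\hat H_{\mathrm{reg}}\to\widehat{\mathbb C[N^\oplus]}$. The only difference is that you spell out the divisibility of the iterated commutators by powers of $\mathbb L-1$ to justify membership in $\hat C$, a point the paper leaves implicit in the phrase ``in particular it is in $\hat C$.''
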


\begin{proof}
First we have $\exp(a)*x*\exp(a)^{-1} = \exp([a,-])(x)$ where $[-,-]$ is the commutator on $\hat H\otimes_{\mathbb C}\mathbb C[M]$. Since $[a,-] = \{(\mathbb L-1)a, -\}_{\hat C}$ for $a\in \hat {\mathfrak g}_\mathrm{reg}$, we have the first equality and in particular it is in $\hat C$.

By definition, $I(\exp(a))$ acts on $\hat B$ by $\exp(\{I(a), -\})$. Let $\bar a = (\mathbb L-1) a \in \hat H_\mathrm{reg}$. Then we can regard $I(a)\in \hat g$ as $I(\bar a)$ where by abuse of notation the later $I$ denotes the integration $I\colon \hat H_\mathrm{reg}\rightarrow \widehat{\mathbb C[N^\oplus]}$. Thus we have
\[
\exp(\{I(\bar a),-\}) (x) = I\left(\exp\left(\{\bar a, -\}_{\hat C}\right)(x)\right)\in \hat B
\]
since $I\colon \hat C \rightarrow \hat B$ is a Poisson homomorphism.
\end{proof}

\section{\texorpdfstring{Scattering diagrams of $\mathcal P(\kappa)$}{Scattering diagrams of P(kappa)}}\label{section: scattering diagram}

Bridgeland \cite{bridgeland2016scattering} introduced the \emph{Hall algebra scattering diagram} for any quiver with relations $(Q,I)$, and in the case where $I$ is a Jacobian ideal the \emph{stability scattering diagram} by applying the integration map (\Cref{thm: integration map}). The aim of this section is to apply this construction to the algebra $\mathcal P(\kappa)$, where we stress that in the presence of loops in the quiver, the construction remains valid.

Let $A = \mathbb C\langle Q\rangle/I$ be as in the last section and $\theta\in M_\mathbb R$ a stability condition. There is an open substack $\mathfrak M_\mathrm{ss}(\theta)\subset \mathfrak M$ parametrizing $\theta$-semistable modules; see \cite[Lemma 6.3]{bridgeland2016scattering}. Denote
\[
1_\mathrm{ss}(\theta) \coloneqq [\mathfrak M_\mathrm{ss}(\theta)\subset \mathfrak M] \in \hat H(Q,I).
\]
Note that the degree zero term of $1_\mathrm{ss}(\theta)$ is the unit $\mathbf 1$ since the zero module is $\theta$-semistable. So the element $1_\mathrm{ss}(\theta)$ belongs to the group $\hat G_{\mathrm{Hall}} = \mathbf 1 + \hat H(Q,I)_{>0}$ where $\hat H(Q,I)_{>0} = \prod_{d\in N^+} H(Q, I)_d$.

\begin{definition}\label{def: hall algebra sd}
Following \cite{bridgeland2016scattering}, we define the \emph{Hall algebra scattering diagram} of $A$ to be the function
\[
\phi_A^\mathrm{Hall} \colon M_\mathbb R \rightarrow \hat G_\mathrm{Hall}, \quad \phi_A^\mathrm{Hall}(\theta) = 1_\mathrm{ss}(\theta) \in \hat G_\mathrm{Hall}.
\]
\end{definition}

\begin{remark}
The function $\phi_A^\mathrm{Hall}$ can be viewed as an algebraic avatar of $\Phi_A\colon M_\mathbb R\rightarrow \mathcal W(A)$. When $\Phi_A(\theta) = 0$, we have $\phi_A^\mathrm{Hall}(\theta) = \mathbf 1$.
\end{remark}

The following \emph{absence of poles} theorem is due to Joyce \cite{joyce2007configurations}. Here we present the form taken in \cite[Theorem 5.3]{bridgeland2016scattering} and \cite[Theorem 6.3]{bridgeland2011hall}.

\begin{theorem}[Joyce]\label{thm: joyce absence of poles} For any $\theta\in M_\mathbb R$, we write $1_\mathrm{ss}(\theta)  = \mathbf 1 + \epsilon$ for some $\epsilon\in \hat H(Q,I)_{>0}$. Then the series
    \[
        \log(1_\mathrm{ss}(\theta)) \coloneqq \epsilon - \epsilon^2/2 + \epsilon^3/3 + \cdots + (-1)^{k-1} \epsilon^k/k + \cdots
    \]
    computes an element in $\hat{\mathfrak g}_\mathrm{reg}$. In other words, $1_\mathrm{ss}(\theta)$ belongs to $\hat G_{\mathrm{reg}}$.
\end{theorem}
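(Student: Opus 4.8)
The statement is Joyce's ``no poles'' theorem, valid for the module category of any finite-dimensional algebra over an algebraically closed field; since only the length-finiteness of $\modu\mathcal P(\kappa)$ and the finite-dimensionality of its $\Hom$ and $\mathrm{Ext}^1$ spaces enter, the loops and two-cycles in $Q(\kappa)$ play no role, exactly as in \Cref{thm: integration map}. The plan is to reduce it to Joyce's theorem that $\log(1_\mathrm{ss}(\theta))$ is supported on virtual indecomposables and to recall why that suffices. As $\hat H(Q,I)$ is the completion of the $N^+$-graded algebra $H(Q,I)$, it is enough to prove, for each $d\in N^+$, that the homogeneous component $\epsilon_d(\theta)$ of $\log(1_\mathrm{ss}(\theta))$ lies in $(\mathbb L-1)^{-1}(H_\mathrm{reg})_d$; summing over $d$ then gives $\log(1_\mathrm{ss}(\theta))\in\hat{\mathfrak g}_\mathrm{reg}$, hence $1_\mathrm{ss}(\theta)=\exp(\log(1_\mathrm{ss}(\theta)))\in\hat G_\mathrm{reg}$. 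Since $1_\mathrm{ss}(\theta)$ is concentrated in degrees $d$ with $\theta(d)=0$, one has the finite expansion
\[
\epsilon_d(\theta)=\sum_{n\ge 1}\frac{(-1)^{n-1}}{n}\sum_{\substack{d_1+\cdots+d_n=d\\ d_1,\dots,d_n\in N^+}} 1_{\mathrm{ss},d_1}(\theta)*\cdots*1_{\mathrm{ss},d_n}(\theta),
\]
where $1_{\mathrm{ss},e}(\theta)$ is the degree-$e$ component of $1_\mathrm{ss}(\theta)$, i.e. $[\mathfrak M_\mathrm{ss}(\theta)\cap\mathfrak M_e\hookrightarrow\mathfrak M]$. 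A priori each of these classes carries the gerbe contribution of a moduli stack of modules, hence a denominator involving $(\mathbb L-1)$ up to order equal to the total dimension of $e$; the point of the theorem is that this alternating sum reduces the pole along $\mathbb L=1$ to at most first order, with regular residue.

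The heart of the argument is Joyce's theory of stack functions supported on virtual indecomposables. I would lift $1_\mathrm{ss}(\theta)$ to the element $\bar 1_\mathrm{ss}(\theta)=\sum_d\bar\delta^\mathrm{ss}_d(\theta)$ of the algebra $\mathrm{SF}^\mathrm{al}(\mathfrak M)$ of stack functions with algebra stabilizers, and invoke Joyce's theorem that its logarithm $\log\bar 1_\mathrm{ss}(\theta)$ lies in the Lie subalgebra $\mathrm{SF}^\mathrm{ind}_\mathrm{al}(\mathfrak M)$ of functions supported on virtual indecomposables. The proof is an induction on the number of summands: one rewrites the alternating equal-slope sum above in terms of Joyce's transformation coefficients $S(\cdot)$, $U(\cdot)$ between different combinatorial ``types'', and applies the projection $\Pi^\mathrm{vi}$ onto $\mathrm{SF}^\mathrm{ind}_\mathrm{al}(\mathfrak M)$, using the vanishing and factorization identities these coefficients satisfy to show that $\Pi^\mathrm{vi}$ fixes $\log\bar 1_\mathrm{ss}(\theta)$. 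I expect this combinatorial step to be the main obstacle --- it is a genuine computation with Joyce's coefficients and does not simplify for the algebras at hand, so I would cite it rather than reprove it.

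Finally I would transfer the conclusion back to the Hall algebra: the realization map $\mathrm{SF}^\mathrm{al}(\mathfrak M)\to\hat H(Q,I)$ sends $\mathrm{SF}^\mathrm{ind}_\mathrm{al}(\mathfrak M)$ into $(\mathbb L-1)^{-1}\widehat{(H_\mathrm{reg})_{>0}}=\hat{\mathfrak g}_\mathrm{reg}$. This is because ``virtual indecomposability'' is defined, via a Möbius-type inversion over the subtori of the automorphism groups, precisely so that such a stack function, realized as a class over $\mathfrak M$, retains a single factor $[\mathbb G_m]^{-1}=(\mathbb L-1)^{-1}$ --- the residual scaling automorphism of an indecomposable --- with everything else lying in $K_\mathrm{reg}(\mathrm{St}/\mathbb C)$ times a variety; the prototype is $[B\mathbb G_m\hookrightarrow\mathfrak M_{e_i}]=(\mathbb L-1)^{-1}[\mathrm{pt}_{[S_i]}\to\mathfrak M]$, which is already virtually indecomposable, whereas the larger stabilizers of decomposable modules are killed by $\Pi^\mathrm{vi}$. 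Applying this to $\log\bar 1_\mathrm{ss}(\theta)$ yields $\epsilon_d(\theta)\in(\mathbb L-1)^{-1}(H_\mathrm{reg})_d$ for every $d$, which completes the reduction. (Alternatively one could try to compute $1_\mathrm{ss}(\theta)$ directly from the string-and-band classification of indecomposables over the gentle algebra $\mathcal P(\kappa)$ and verify the pole cancellation by hand, but this is considerably more laborious and offers no advantage; see also \cite{bridgeland2016scattering} for the precise formulation used here.)
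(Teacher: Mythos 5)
The paper does not prove this theorem: it is stated as a cited result (attributed to Joyce, in the form given by Bridgeland), and no proof is provided. Your sketch correctly reconstructs the standard argument behind that citation — lift $1_{\mathrm{ss}}(\theta)$ to the algebra of stack functions with algebra stabilizers, invoke Joyce's theorem that its logarithm lands in the Lie subalgebra of functions supported on virtual indecomposables (via the projection $\Pi^{\mathrm{vi}}$), and then observe that the realization map sends that subalgebra into $(\mathbb L-1)^{-1}(H_{\mathrm{reg}})_{>0}$. Your observation that the presence of loops and two-cycles in $Q(\kappa)$ is harmless here is also correct and is the only point worth flagging in the present setting; Joyce's framework only needs finiteness of $\Hom$ and $\mathrm{Ext}^1$, finite length, and artinian-ness of the abelian category, none of which are affected by loops. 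Since you explicitly defer the genuinely hard combinatorial step (the identities among Joyce's transformation coefficients $S(\cdot)$, $U(\cdot)$ and the fixed-point property under $\Pi^{\mathrm{vi}}$) to citation rather than reproving it, your treatment is consistent in spirit with the paper's own choice to cite rather than prove, while being more explicit about why the cited result applies unchanged to the quivers with loops that arise from orbifold points.
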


Suppose that $I$ is a Jacobian ideal. Hence by \Cref{thm: integration map} the integration maps $I\colon \hat{\mathfrak g}_\mathrm{reg}\rightarrow \mathfrak g$ and $I\colon \hat G_{\mathrm{reg}} \rightarrow \hat G$ are valid. Furthermore, according to Joyce's \Cref{thm: joyce absence of poles}, the range of the Hall algebra scattering diagram
\[
\phi_A^\mathrm{Hall}\colon M_\mathbb R \rightarrow \hat G_\mathrm{Hall},\quad \theta\mapsto 1_\mathrm{ss}(\theta)
\]
is in the subgroup $\hat G_\mathrm{reg}\subset \hat G_\mathrm{Hall}$.

\begin{definition}
We define \emph{stability scattering diagram} for $A$ to be the function
\[
        \phi_A \coloneqq I \circ \phi_A^\mathrm{Hall} \colon M_\mathbb R\rightarrow \hat G,\quad \phi_A(\theta) = I(1_\mathrm{ss}(\theta))\in \hat G.
\]
\end{definition}

To describe the group element $\phi_A(\theta) = I(1_\mathrm{ss}(\theta))$ for some $\theta$, we express it by its Poisson action on the completion $\hat B$ of $B = \mathbb C[N^\oplus] \otimes_\mathbb C \mathbb C[M]$. Let $\exp(a)$ be in $\hat G_d \coloneqq \exp(\prod_{k=1}^\infty \mathfrak g_{kd})$ with $d$ primitive in $N^+$. It acts on $x^m$ by 
\begin{equation}\label{eq: wall-crossing function}
\exp(a)(x^m) = \exp(\{a,-\})(x^m) =  x^m \cdot f(y^d)^{m(d)}\in \hat B    
\end{equation}
for some power series $f(y)$ in a single variable $y$. In fact, it is easy to see that if 
\[
a = \sum_{k=1}^\infty a_k y^{kd}\in \prod_{k=1}^\infty \mathfrak g_{kd},
\]
then $f(y)$ is expressed as
\[
    f(y) = \exp\left( \sum_{k=1}^\infty ka_ky^{k}\right)\in \mathbb C[[y]].
\]
To compute the power series $f(y)$ for $\phi_A(\theta)$, we use the framed stability introduced below.

Let $m\in M$ such that $m(d)\geq 0$ for any $d\in N^\oplus$. Let $\theta\in M_\mathbb R$ be a King's stability condition. Denote by $P^m$ the projective $A$-module
\[
    P^m \coloneqq \bigoplus_{i = 1}^n P_i^{m(e_i)}.
\]

\begin{definition}
An $m$-framed $\theta$-stable $A$-module is $M\in \modu A$ with a morphism $\mathbf{fr}\colon P^m \rightarrow M$ (\emph{the framing}) such that
\begin{enumerate}
    \item $M$ is $\theta$-semistable;
    \item Any proper submodule $M'\subset M$ containing the image of $\mathbf{fr}$ satisfies $\theta(M')>0$.
\end{enumerate}
Two $m$-framed $A$-modules $M$ and $N$ are said to be equivalent if there exists an isomorphism $f\colon M\rightarrow N$ intertwining their framings. 
\end{definition}

There is a fine moduli scheme $F(d, m, \theta)$ whose $\mathbb C$-points parametrize $m$-framed $\theta$-stable $A$-modules of dimension $d$ up to equivalence; see \cite[Section 8.2]{bridgeland2016scattering} for a detailed description. In the case where $I=0$, these moduli schemes were studied by Engel and Reineke earlier in \cite{MR2511752}. Now we present Bridgeland's formula on the wall-crossing action using framed representations. A closely related formula but in a slightly different setting is obtained by Reineke for acyclic quivers in \cite{reineke2010poisson}.

\begin{theorem}[{\cite[Theorem 10.2 and (11.1)]{bridgeland2016scattering}}]\label{thm: bridgeland wall crossing}
Let $m\in M^\oplus$ and $d\in N^+$ be a primitive dimension vector. Let $\theta$ be a general point in $d^\perp\subset M_\mathbb R$ (such that any $n\in N$ orthogonal to $\theta$ is parallel to $d$). Then the action of $\phi_A(\theta)$ on $x^m\in \hat B$ is given by
\[
x^m \mapsto x^m \cdot \sum_{k=1}^\infty \chi(F(kd, m, \theta))y^{kd}.
\]
Here $\chi(X)$ takes the Euler characteristic of $X(\mathbb C)$ (in the analytic topology) for a $\mathbb C$-scheme $X$. 
\end{theorem}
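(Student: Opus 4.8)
The statement is \cite[Theorem 10.2 and (11.1)]{bridgeland2016scattering}, so the plan is to run Bridgeland's argument and to check that the loops $\varepsilon_j$ in $Q(\kappa)$ cause no trouble. As in the proof of \Cref{thm: integration map}, the only ingredient of \cite{bridgeland2016scattering} that is sensitive to loops or $2$-cycles is the existence and Poisson property of the integration map, and that has already been secured via Joyce's homological identity; every other object used below (the stacks $\mathfrak M$, $\mathfrak M^{(2)}$, the stack of framed modules, and the framed moduli scheme $F(d,m,\theta)$ as a GIT quotient of a framed representation space) is defined for an arbitrary quiver with relations. So no new ideas are needed, only care in quoting.

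First I would reduce to a one–variable identity. Since $\theta$ is general in $d^\perp$, genericity forces every $\theta$-semistable module to have dimension vector in $\mathbb Z_{\ge 0}d$; hence $1_{\mathrm{ss}}(\theta)\in\hat G_d$, by \Cref{thm: joyce absence of poles} it lies in $\hat G_{\mathrm{reg}}$, and $\phi_A(\theta)=I(1_{\mathrm{ss}}(\theta))=\exp(a)$ with $a=\sum_{k\ge1}a_ky^{kd}\in\prod_k\mathfrak g_{kd}$. By \eqref{eq: wall-crossing function} the action of $\phi_A(\theta)$ on $x^m$ is $x^m\mapsto x^m f(y^d)^{m(d)}$ with $f(y)=\exp\bigl(\sum_k ka_ky^k\bigr)\in\mathbb C[[y]]$, so the theorem is equivalent to the identity $f(y)^{m(d)}=\sum_{k\ge 0}\chi\bigl(F(kd,m,\theta)\bigr)y^k$ in $\mathbb C[[y]]$, with the convention $F(0,m,\theta)=\mathrm{pt}$.

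The substance is then a framing argument. I would introduce the stack $\mathfrak M^{m\text{-fr}}$ of pairs $(M,\mathbf{fr}\colon P^m\to M)$, whose fibre over $[M]$ is the affine space $\Hom(P^m,M)=\prod_iM_i^{m(e_i)}$ of dimension $m(\dim M)$, and inside the pairs with $M$ $\theta$-semistable the open substack $\mathfrak M^{m\text{-fr}}_{\mathrm{st}}(\theta)$ cut out by condition $(2)$ of the definition of $m$-framed $\theta$-stable module. The key geometric point is that the subcategory $\mathcal A_\theta\subset\modu A$ of $\theta$-semistable modules is abelian and of finite length — for semistable submodules $M_1,M_2$ of a semistable $M$ one checks $\theta(M_1+M_2)=0=\theta(M_1\cap M_2)$ — so any framed $\theta$-semistable $(M,\mathbf{fr})$ has a smallest $\mathcal A_\theta$-subobject $\langle\mathbf{fr}\rangle$ containing $\operatorname{im}\mathbf{fr}$, and the assignment $(M,\mathbf{fr})\mapsto\bigl(\langle\mathbf{fr}\rangle\hookrightarrow M\twoheadrightarrow M/\langle\mathbf{fr}\rangle\bigr)$ is a bijection onto (framed $\theta$-stable)$\times$($\theta$-semistable); it upgrades to a geometric bijection of stacks over $\mathfrak M$, giving a Hall-algebra identity expressing the framed-semistable count in terms of $1_{\mathrm{ss}}(\theta)$ and $[\mathfrak M^{m\text{-fr}}_{\mathrm{st}}(\theta)\to\mathfrak M]$. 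Because a nontrivial automorphism of $M$ fixing $\mathbf{fr}$ would fix the submodule generated by $\operatorname{im}\mathbf{fr}$, hence all of $M$, the stack $\mathfrak M^{m\text{-fr}}_{\mathrm{st}}(\theta)$ has trivial stabilizers, so its degree-$kd$ component is the fine moduli scheme $F(kd,m,\theta)$; in particular it lies in $H_{\mathrm{reg}}$ and $I$ sends it to $\sum_k\chi(F(kd,m,\theta))y^{kd}$. Applying the integration map $I\colon C\to B$ (which respects the twisted product $a_d*x^m=\mathbb L^{m(d)}x^m*a_d$) to the Hall-algebra identity, and invoking \Cref{lemma: Poisson homomorphism} to recognise the integral of $1_{\mathrm{ss}}(\theta)*x^m*1_{\mathrm{ss}}(\theta)^{-1}$ as the action of $\phi_A(\theta)$ on $x^m$, one recovers the one–variable identity of the previous paragraph and hence the theorem.

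I expect the main obstacle to be precisely the two geometric inputs that must be imported line by line from \cite[\S\S8--10]{bridgeland2016scattering}: first, that $\mathfrak M^{m\text{-fr}}_{\mathrm{st}}(\theta)$ really is the fine moduli scheme $F(kd,m,\theta)$ and lands in $H_{\mathrm{reg}}$, so that honest Euler characteristics materialise after integration (this uses Joyce's absence of poles together with the GIT construction of framed moduli); and second, promoting the set-theoretic decomposition $(M,\mathbf{fr})\mapsto(\langle\mathbf{fr}\rangle\hookrightarrow M\twoheadrightarrow M/\langle\mathbf{fr}\rangle)$ to a morphism of stacks inducing the stated Hall-algebra identity, which rests on the stratification technology of \cite{joyce2007configurations}. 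The only thing specific to our setting is the remark that none of these stack-theoretic or GIT constructions uses acyclicity of $Q$ and that the integration map is available by \Cref{thm: integration map}; granting this, the proof of \cite{bridgeland2016scattering} applies verbatim.
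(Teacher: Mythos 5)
Your proposal is correct and takes the same route as the paper, which gives no independent proof: it cites Bridgeland's Theorem 10.2 and (11.1) and records, in the Remark immediately following, precisely your observation that the only loop-sensitive ingredient is the integration map, already secured via Joyce's homological identity in the proof of \Cref{thm: integration map}. One small slip in your reconstruction: the reason given for trivial stabilizers is not quite right, since $\operatorname{im}\mathbf{fr}$ need not generate $M$; rather, for an automorphism $\phi$ fixing $\mathbf{fr}$, the submodule $\ker(\phi-\mathrm{id})$ contains $\operatorname{im}\mathbf{fr}$ and satisfies $\theta(\ker(\phi-\mathrm{id}))=-\theta(\operatorname{im}(\phi-\mathrm{id}))\le 0$ by semistability of $M$, which combined with the framed-stability inequality forces $\ker(\phi-\mathrm{id})=M$.
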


\begin{remark}
Our setting for the above theorem is slightly more generalized than that of \cite{bridgeland2016scattering} as we allow quivers to have loops. However, as long as the ideal $I$ is the Jacobian ideal from a potential, the integration map $I$ is valid as noted in the proof of \Cref{thm: integration map}. The rest then carries over to the present setting.
\end{remark}

Next we apply the above theory to the case when $A = \mathcal P(\kappa) \coloneqq \mathbb C\langle Q(\kappa)\rangle/\langle \partial S(\kappa) \rangle$ coming from a triangulation $\kappa$ of some $\surf$. We write $\phi_\kappa$ for the function $\phi_{\mathcal P(\kappa)}$ for simplicity. The motivic Hall algebra is written as $H(\modu \mathcal P(\kappa))$.

\begin{theorem}\label{thm: cluster complex structure} The stability scattering diagram $\phi_\kappa\colon M_\mathbb R\rightarrow \hat G$ has the following properties.
\begin{enumerate}
    \item Let $\mathcal C$ be any chamber in $\Delta_\kappa^+$. Then the function $\phi_\kappa$ has constant value $\mathbf 1$ in $\mathcal C^\circ$ the interior of $\mathcal C$;
    \item Let $\mathfrak d$ be any (relatively open) facet of $\mathcal C$. Then the power series $f(y^d)$ corresponding to $\phi_\kappa(\theta) = I(1_\mathrm{ss}(\theta))$ for any $\theta \in \mathfrak d$ (as in (\ref{eq: wall-crossing function})) is actually a polynomial
\begin{enumerate}
    \item $1 + y^d$ if $\Phi_\kappa (\theta) \cong \modu \mathbb C$;
    \item $1 + y^d + y^{2d}$ if $\Phi_\kappa (\theta) \cong \modu \mathbb C[\varepsilon]/\varepsilon^2$
\end{enumerate}
where $d\in N^+$ is the dimension vector of the generator of $\Phi_\kappa (\theta)$.
\end{enumerate}
 
\end{theorem}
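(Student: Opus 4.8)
The plan is to dispose of part (1) by a direct appeal to what has already been established, and to reduce part (2) to an Euler‑characteristic computation of framed moduli spaces, which in turn collapses to counting quotients over a very small algebra. Part (1) is immediate: by \Cref{cor: stability in a chamber} we have $\Phi_\kappa(\theta)=\moduss{\theta}\mathcal P(\kappa)=0$ for every $\theta$ in the interior of a chamber $\mathcal C\in\Delta_\kappa^+$, so $\mathfrak M_{\mathrm{ss}}(\theta)=\mathfrak M_0$ and $\phi_\kappa^{\mathrm{Hall}}(\theta)=1_{\mathrm{ss}}(\theta)=\mathbf 1$; applying the integration map gives $\phi_\kappa(\theta)=I(\mathbf 1)=\mathbf 1$, the identity of $\hat G$.

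For part (2), fix a relatively open facet $\mathfrak d=\mathfrak d_{i;t}$ of $\mathcal C_t$ and set $\mathcal A\coloneqq\Phi_\kappa(\theta)$ for $\theta\in\mathfrak d$; by \Cref{prop: function value of phi} this is independent of $\theta\in\mathfrak d$, and by \Cref{con: semistable on a cluster wall} the abelian category $\mathcal A$ is equivalent to $\modu\mathbb C$ if $i$ is not pending and to $\modu R$ with $R=\mathbb C[\varepsilon]/\varepsilon^2$ if $i$ is pending. Let $G\colon\mathcal A\to\modu R$ be such an equivalence, let $B\in\mathcal A$ be the brick generating $\mathcal A$ (the image of the simple of $R$), and let $d\in N^+$ be its dimension vector; by \Cref{thm: g-vectors are rays of stability chambers} together with the fact that $\{\mathbf r_{j;t}\}_j$ is an integral basis of $M$, one identifies $d$ with the primitive vector $\mathbf c_{i;t}$, and $\mathfrak d$ spans the hyperplane $d^\perp$. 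Since $\phi_\kappa$ is constant on $\mathfrak d$ (its value $I(1_{\mathrm{ss}}(\theta))$ depends only on $\moduss{\theta}\mathcal P(\kappa)=\mathcal A$) and $1_{\mathrm{ss}}(\theta)$ is supported on objects of $\mathcal A$, whose dimension vectors are all multiples of $d$, the element $\phi_\kappa(\theta)$ lies in $\hat G_d$ and is given by a power series $f(y^d)$ with $f(0)=1$ as in \eqref{eq: wall-crossing function}. Choosing $m\in M^\oplus$ with $m(d)\geq 1$ and applying \Cref{thm: bridgeland wall crossing} for the primitive vector $d$ at a general point $\theta\in\mathfrak d\subset d^\perp$ yields
\[
f(y^d)^{m(d)}\;=\;\sum_{k\geq 0}\chi\big(F(kd,m,\theta)\big)\,y^{kd}.
\]

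The core of the argument is to evaluate $F(kd,m,\theta)$. A $\theta$-semistable module of dimension $kd$ is exactly an object of $\mathcal A$ of that dimension, and — because $\theta$ is a general point of $d^\perp$ — every submodule of such a module with $\theta$-weight $0$ is itself $\theta$-semistable of slope $0$, hence lies in $\mathcal A$. Using that $\mathcal A$ is a full subcategory of $\modu\mathcal P(\kappa)$ closed under subobjects and quotients and that $P^m$ is projective, the stability clause in the definition of $F(kd,m,\theta)$ translates into: a framing $\mathbf{fr}\colon P^m\to M$ with $M\in\mathcal A$ is admissible iff $\Hom(P^m,q)(\mathbf{fr})\neq 0$ for every proper quotient $q\colon M\twoheadrightarrow\bar M$ in $\mathcal A$. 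Now $\Hom_{\mathcal P(\kappa)}(P^m,-)|_{\mathcal A}$ is an exact functor $\mathcal A\to\mathrm{Vect}$ sending $B$ to a space of dimension $m(d)$; since every exact functor $\modu R\to\mathrm{Vect}$ is isomorphic to $\Hom_R(R^a,-)$ for some $a$, transporting through $G$ identifies it with $\Hom_R(R^{m(d)},G(-))$, and $G$ carries objects of $\mathcal A$ of dimension $kd$ to $R$-modules of $\mathbb C$-dimension $k$. Hence $F(kd,m,\theta)$ is isomorphic to the scheme of $R$-module quotients of $R^{m(d)}$ of $\mathbb C$-dimension $k$. The torus $(\mathbb C^\times)^{m(d)}$ acts on $R^{m(d)}$ with fixed points exactly the coordinatewise quotients, i.e. a choice in each of the $m(d)$ coordinates of one of the two quotients $0,\mathbb C$ of $R=\mathbb C$ (resp. one of the three quotients $0,\mathbb C,R$ of $R=\mathbb C[\varepsilon]/\varepsilon^2$), so torus localization gives
\[
\sum_{k\geq 0}\chi\big(F(kd,m,\theta)\big)\,y^{kd}\;=\;(1+y^d)^{m(d)}\quad\text{resp.}\quad(1+y^d+y^{2d})^{m(d)}.
\]
Comparing with the previous display and taking the unique $m(d)$-th root with constant term $1$ yields $f(y^d)=1+y^d$ in case (2)(a) and $f(y^d)=1+y^d+y^{2d}$ in case (2)(b), as claimed.

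The step I expect to be the main obstacle is the translation of the framed‑stability condition into one taking place purely inside $\mathcal A$: although admissibility of $\mathbf{fr}$ refers a priori to all proper submodules of $M$ inside $\modu\mathcal P(\kappa)$, one must check that genericity of $\theta$ within $\mathfrak d$ makes the only relevant ones the $\mathcal A$-subobjects, and one must verify that the identification $\Hom_{\mathcal P(\kappa)}(P^m,-)|_{\mathcal A}\cong\Hom_R(R^{m(d)},G(-))$ is an isomorphism of functors, so that it induces an isomorphism of moduli schemes rather than merely a bijection on points. A way to sidestep much of the Quot‑scheme bookkeeping is to first observe the vanishing $F(kd,m,\theta)=\emptyset$ for $k>m(d)$ (resp. $k>2m(d)$), which already forces $f(y^d)$ to have degree $\leq 1$ (resp. $\leq 2$), and then to pin down the remaining coefficient(s) from $\chi(F(d,m,\theta))=\chi(\mathbb P^{m(d)-1})=m(d)$ alone, together with one additional count $\chi(F(2d,m,\theta))$ in the pending case.
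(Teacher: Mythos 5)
Your proof is correct and follows the paper's high-level strategy (use \Cref{cor: stability in a chamber} for part (1); for part (2), reduce the wall-crossing element to the abelian category generated by the brick $B$ and compute Euler characteristics of framed moduli via Bridgeland's formula \Cref{thm: bridgeland wall crossing}), but the execution of part (2) is genuinely different in two ways. The paper first constructs an injective Hall algebra homomorphism $\iota\colon \hat H(\modu H_i)\hookrightarrow\hat H(\modu\mathcal P(\kappa))$ sending $[\mathfrak M\xrightarrow{\operatorname{id}}\mathfrak M]$ to $1_{\mathrm{ss}}(\theta)$, thereby replacing $\mathcal P(\kappa)$ by the one-vertex algebra $H_i$; it then applies Bridgeland's formula with the single framing $m=e_1^*$ (so $m(d)=1$) and computes $\chi(F(ke_1,e_1^*,0))$ directly, citing \cite[Lemma 11.4]{bridgeland2016scattering} for the non-pending case and doing a short case analysis $k=0,1,2$ and $k\geq 3$ for the pending case. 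You instead keep a general $m$, translate the framed-stability condition inside $\moduss{\theta}\mathcal P(\kappa)$ into a Quot-scheme condition over $R$ via the representability of the exact functor $\Hom(P^m,-)|_{\mathcal A}$ by $R^{m(d)}$, and then compute $\sum_k\chi(F(kd,m,\theta))y^{kd}=(1+y^d)^{m(d)}$ or $(1+y^d+y^{2d})^{m(d)}$ by torus localization, extracting $f$ by taking the $m(d)$-th root. Your route avoids the Hall algebra embedding but in exchange requires the identification of $F(kd,m,\theta)$ with the Quot scheme of $R^{m(d)}$ to be geometric (scheme-theoretic, or at least a homeomorphism) rather than merely set-theoretic --- you correctly flag this as the potential soft spot; your fallback (bound the degree of $f$ by showing $F(kd,m,\theta)=\emptyset$ for $k$ large and then pin down the low coefficients) is essentially what the paper does after reducing to $m(d)=1$. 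One minor imprecision: the phrase ``$\mathcal A$ is closed under subobjects and quotients'' is false as literally stated (a submodule with $\theta>0$ is not in $\mathcal A$); the correct and sufficient statement, which you in fact articulate in the preceding sentence, is that subobjects and quotients of an object of $\mathcal A$ that have $\theta$-weight $0$ again lie in $\mathcal A$ because $\theta$ is generic in $d^\perp$.
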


\begin{proof}
Part (1) is true because $\Phi_\kappa(\theta) = 0$ for any $\theta\in \mathcal C^\circ$ by \Cref{prop: function value of phi}. Thus we have $\phi_\kappa(\theta) = I([\mathfrak M_0\subset \mathfrak M]) = \mathbf 1\in \hat G$.

For part (2), suppose that the facet $\mathfrak d$ is $\mathfrak d_{i;t}$ of $\mathcal C_t$. Then by \Cref{prop: function value of phi}, there is an embedding $\iota\colon \modu H_i \rightarrow \modu \mathcal P(\kappa)$ such that the image is exactly $\Phi_\kappa(\theta)$. Here $H_i$ is the algebra $\mathbb C[\varepsilon]/\varepsilon^2$ if $i$ is pending or $\mathbb C$ if $i$ is not pending. The simple module $S_i\in \modu H_i$ embeds as $\iota(S_i)$ the generator of $\Phi_\kappa(\theta)$. The embedding induces an injective algebra homomorphism $\iota\colon \hat H(\modu H_i) \rightarrow \hat H(\modu \mathcal P(\kappa))$ such that $\iota([\mathfrak M \xrightarrow{\mathrm{id}}\mathfrak M]) = 1_\mathrm{ss}(\theta)$. Meanwhile the algebra $\mathbb C[[y]]$ embeds in $\mathbb C[[N^\oplus]]$ by sending $y$ to $y^d$ where $d\in N^+$ is the dimension vector of $\iota(S_i)$. We further have
\[
I(\log [\mathfrak M\xrightarrow{\mathrm{id}}\mathfrak M])|_{y = y^d} = I(\log (1_\mathrm{ss}(\theta)))\in \hat {\mathfrak g}.
\]
Therefore to compute the power series $f(y^d)$ in $y^d$ for $I(1_\mathrm{ss}(\theta))\in \hat G$, we only need to compute the one of $I([\mathfrak M\xrightarrow{\mathrm{id}} \mathfrak M])$ for $\hat H(\modu H_i)$ as a power series in a single variable $y$, and replace $y$ with $y^d$. Thus it amounts to the calculation for the action of $I([\mathfrak M\xrightarrow{\mathrm{id}}\mathfrak M])$ on the Poisson algebra $\mathbb C[[y]]\otimes_\mathbb C \mathbb C[x]$ with $\{y,x\} = yx$.

By \Cref{thm: bridgeland wall crossing}, the action of $I(1_\mathrm{ss}(\theta))$ on a monomial $x^m$ is given by
\[
x^m \mapsto x^m \cdot \sum_{\theta(d) = 0} \chi (F(d, m, \theta)) \cdot y^{d}.
\]
Now we are in $\modu \mathbb C[\varepsilon]/\varepsilon^2$ or $\modu \mathbb C$, and $[\mathfrak M\xrightarrow{\mathrm{id}} \mathfrak M] = 1_\mathrm{ss}(0)$. Take $m = e_1^*$ and $\theta = 0$ is trivial. The only dimension vectors that can appear are the multiples of $e_1$. In the case where $i$ is not pending, it follows from the exact same computation in \cite[Lemma 11.4]{bridgeland2016scattering} that the action is given by $x\mapsto x (1 + y)$. So in this case $f(y^d) = 1 + y^d$. Suppose that $i$ is pending. Then we have
\begin{enumerate}
    \item For $d=0$, the only $e_1^*$-framed $0$-stable module is the $0$ module (with the framing $\mathbf {fr}$ being the $0$ map), thus $\chi(F(0, e_1^*, 0)) = \chi(\mathrm{pt}) =1$.
    \item For $d=e_1$, up to equivalence there is only the simple module $\mathbb C$ with the framing being the quotient $\mathbf {fr}\colon H_i \rightarrow \mathbb C$, $\varepsilon\mapsto 0$. Thus $\chi(F(e_1, e_1^*, 0)) = \chi(\mathrm{pt}) = 1$.
    \item For $d=2e_2$, up to isomorphism there are two $H_i$-modules $H_i$ and $\mathbb C\oplus\mathbb C$ where the later can never be $e_1^*$-framed $0$-stable because the image of a framing is always proper (thus breaking the stability condition). The former $H_i$ with the framing $\mathbf{fr}\colon H_i\rightarrow H_i$ being an isomorphism is $e_1^*$-framed $0$-stable but every such is equivalent to each other. So we have $\chi(F(2e_1, e_1^*, 0) = \chi(\mathrm{pt}) = 1$.
    \item For all other $ke_1$ for $k\geq 3$, the moduli scheme $F(ke_1, e_1^*, \theta)$ is empty. This is because the image of any framing must be a proper submodule, which however violates the stability.
\end{enumerate}
Now we conclude that in the case where $i$ is pending, the power series $f(y^d)$ is $1+y^d+y^{2d}$.
\end{proof}

We associate an element $\mathfrak p^\mathrm{Hall}_t\in \hat G_\mathrm{Hall}$ to each $t\in \mathbb T_n$ and will see that its integration $\mathfrak p_t \coloneqq I(\mathfrak p_t^\mathrm{Hall})\in \hat G$ is closely related to the mutations of cluster variables from $\mathbf x$ to $\mathbf x_t$. Recall the notations in \Cref{con: semistable on a cluster wall}. Define
\[
\mathfrak p_t^\mathrm{Hall} \coloneqq \phi^\mathrm{Hall}_\kappa(\mathfrak d^{l}_{k_l})^{\epsilon_l} * \cdots * \phi^\mathrm{Hall}_\kappa(\mathfrak d^{1}_{k_1})^{\epsilon_1} \in \hat G_{\mathrm{Hall}}.
\]
where $\epsilon_i \coloneqq \operatorname{sgn}(\mathbf n_{k_i}^i)\in \{\pm\}$ (as these must be sign-coherent).

Let $\theta \in \mathcal C_t^\circ$. We define the following \emph{torsion class}
\[
\mathcal T(\theta) \coloneqq \{ M\in \modu \mathcal P(\kappa)\mid \text{any quotient $N$ of $M$ satisfies $\theta(N)<0$}\}
\]
as a full subcategory of $\modu \mathcal P(\kappa)$. There is an open substack $\mathfrak M_{\mathcal T(\theta)}\subset \mathfrak M$ parametrizing objects in $\mathcal T(\theta)$; see its definition in \cite[Lemma 6.3 and Lemma 6.6]{bridgeland2016scattering}. Denote the element $1_{\mathcal T(\theta)} \coloneqq [\mathfrak M_{\mathcal T(\theta)}\subset \mathfrak M]\in \hat G_{\mathrm{Hall}} \subset \hat H(\modu \mathcal P(\kappa))$.

\begin{lemma}\label{lemma: consistency and torsion class}
We have $1_{\mathcal T(\theta)} = \mathfrak p_{t}^\mathrm{Hall}\in \hat G_\mathrm{Hall}$. Thus the element $\mathfrak p_t^{\mathrm{Hall}}$ only depends on the cone $\mathcal C_t$ and so is $\mathfrak p_t\in \hat G$.
\end{lemma}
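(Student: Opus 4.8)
The plan is to prove, by induction on the length of a flip sequence, that
$1_{\mathcal T(\theta)} = \mathfrak p_t^{\mathrm{Hall}}$ in $\hat G_{\mathrm{Hall}}$ for \emph{every} $\theta$ in the interior of $\mathcal C_t$. Granting this, the remaining assertions follow formally: for two vertices $t,s$ of $\mathbb T_n$ with $\mathcal C_t=\mathcal C_s$ and any $\theta$ in their common interior we get $\mathfrak p_t^{\mathrm{Hall}} = 1_{\mathcal T(\theta)} = \mathfrak p_s^{\mathrm{Hall}}$, since $1_{\mathcal T(\theta)}$ depends only on $\theta$ and on the algebra $\mathcal P(\kappa)$, not on the flip sequence; moreover each factor $\phi_\kappa^{\mathrm{Hall}}(\cdot)^{\pm1}$ lies in $\hat G_{\mathrm{reg}}$ by \Cref{thm: joyce absence of poles}, hence so does $\mathfrak p_t^{\mathrm{Hall}}$, and because $\mathcal P(\kappa)$ is a Jacobian algebra we may apply the integration map to obtain $\mathfrak p_t = I(\mathfrak p_t^{\mathrm{Hall}})$, which therefore also depends only on $\mathcal C_t$.

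For the induction, write $\sigma=\kappa(t)$, let $(k_1,\dots,k_l)$ be the flip sequence from $\kappa$ to $\sigma$, and keep the notation of \Cref{con: semistable on a cluster wall}; denote by $\mathcal T_\kappa$, $\mathcal T_{\kappa_1}$ the torsion class $\mathcal T(-)$ formed in $\modu\mathcal P(\kappa)$, resp. $\modu\mathcal P(\kappa_1)$. If $l=0$ then $\mathcal C_t=\mathcal C^+$, every coordinate of $\theta\in(\mathcal C^+)^\circ$ is positive, so the only module all of whose quotients have negative $\theta$-value is $0$; hence $1_{\mathcal T_\kappa(\theta)}=\mathbf 1$, the empty product $\mathfrak p_{t_0}^{\mathrm{Hall}}$. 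For $l\ge 1$ set $\kappa_1=\mu_{k_1}(\kappa)$ and $\theta_1 := T_{k_1;\kappa_1}^{-1}(\theta)$. Reading \Cref{prop: chamber} with $\kappa_1$ as root and flip sequence $(k_2,\dots,k_l)$, the point $\theta_1$ lies in the interior of the cone $\mathcal C_1$, which is the cluster chamber of $\mathcal P(\kappa_1)$ associated to $\sigma$; the inductive hypothesis then yields
\[
1_{\mathcal T_{\kappa_1}(\theta_1)} \;=\; \phi_{\kappa_1}^{\mathrm{Hall}}(\mathfrak d^l_{k_l})^{\epsilon_l} * \cdots * \phi_{\kappa_1}^{\mathrm{Hall}}(\mathfrak d^2_{k_2})^{\epsilon_2},
\]
with the same walls $\mathfrak d^j_{k_j}$ and signs $\epsilon_j$ as in the definition of $\mathfrak p_t^{\mathrm{Hall}}$, since the chain $\mathcal C^+=\mathcal C_l,\mathcal C_{l-1},\dots,\mathcal C_1$ and its facets are unchanged by the re-rooting.

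It remains to perform the wall-crossing at $\mathfrak d^1_{k_1}$. Suppose $\theta_{k_1}>0$ (the case $\theta_{k_1}<0$ is symmetric, with $F^-_{k_1}$, $S_{k_1}^\perp$ and $\epsilon_1=+$ in place of $F^+_{k_1}$, ${}^\perp S_{k_1}$ and $\epsilon_1=-$). Then every $\theta$-semistable $\mathcal P(\kappa)$-module lies in ${}^\perp S_{k_1}$ by \Cref{thm: stability under mutation}, so $\mathcal T_\kappa(\theta)\subseteq {}^\perp S_{k_1}$, whereas $\theta_1(S_{k_1})<0$ forces $\ab{S_{k_1}}\subseteq\mathcal T_{\kappa_1}(\theta_1)$. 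The pair $(\ab{S_{k_1}},S_{k_1}^\perp)$ is a torsion pair in $\modu\mathcal P(\kappa_1)$, and I would show it restricts to a torsion pair on $\mathcal T_{\kappa_1}(\theta_1)$ with torsion part $\ab{S_{k_1}}$ and torsion-free part $\mathcal T_{\kappa_1}(\theta_1)\cap S_{k_1}^\perp$, which by \Cref{cor: left perp and right perp} and \Cref{cor: stability after mutation} equals $F^+_{k_1}(\mathcal T_\kappa(\theta))$. The torsion-pair factorization in the motivic Hall algebra then gives $1_{\mathcal T_{\kappa_1}(\theta_1)} = (F^+_{k_1})_*\,1_{\mathcal T_\kappa(\theta)} * 1_{\ab{S_{k_1}}}$, where $(F^+_{k_1})_*$ is pushforward along the equivalence ${}^\perp S_{k_1}\simeq S_{k_1}^\perp$, which twists the $N$-grading by $T_{k_1;\kappa_1}$. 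Since $\mathfrak d^1_{k_1}\subset e_{k_1}^\perp$ and a generic point there supports $\ab{S_{k_1}}$, one has $1_{\ab{S_{k_1}}}=\phi_{\kappa_1}^{\mathrm{Hall}}(\mathfrak d^1_{k_1})$; solving for $(F^+_{k_1})_*\,1_{\mathcal T_\kappa(\theta)}$, substituting the inductive hypothesis, using $\phi_{\kappa_1}^{\mathrm{Hall}}(\mathfrak d^1_{k_1})^{-1}=\phi_{\kappa_1}^{\mathrm{Hall}}(\mathfrak d^1_{k_1})^{\epsilon_1}$, and finally applying $(F^+_{k_1})_*^{-1}$ together with \Cref{cor: stability after mutation} to identify $\phi_{\kappa_1}^{\mathrm{Hall}}(\mathfrak d^j_{k_j})$ with $\phi_\kappa^{\mathrm{Hall}}(\mathfrak d^j_{k_j})$ for each $j$, one obtains $1_{\mathcal T_\kappa(\theta)}=\mathfrak p_t^{\mathrm{Hall}}$.

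The hard part is the last paragraph. One must first verify rigorously that $(\ab{S_{k_1}},S_{k_1}^\perp)$ really does restrict to a torsion pair on $\mathcal T_{\kappa_1}(\theta_1)$ with the stated torsion-free class, so that the Hall-algebra factorization is legitimate (this uses $\ab{S_{k_1}}\subseteq\mathcal T_{\kappa_1}(\theta_1)$ and closure of torsion classes under quotients). Second, and more delicate, one must track the entire product $\prod_{j\ge 2}\phi_{\kappa_1}^{\mathrm{Hall}}(\mathfrak d^j_{k_j})^{\epsilon_j}$ back through $(F^+_{k_1})_*$, which is only a \emph{partial} equivalence (not defined on all of $\modu\mathcal P(\kappa)$) and which shifts dimension vectors by $T_{k_1;\kappa_1}$: this requires knowing that the walls $\mathfrak d^j_{k_j}$ with $j\ge 2$ lie on the side of $e_{k_1}^\perp$ where \Cref{cor: stability after mutation} applies, and that the exponents $\epsilon_j=\operatorname{sgn}(\mathbf n^j_{k_j})$ record exactly the crossing directions, which is precisely the content of \Cref{prop: chamber} and \Cref{thm: g-vectors are rays of stability chambers}.
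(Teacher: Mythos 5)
Your proof takes a genuinely different route from the paper, and it has a real gap. The paper disposes of the lemma in a few lines by invoking Bridgeland's consistency theorem \cite[Theorem 6.5]{bridgeland2016scattering}, which asserts, for an arbitrary finite-dimensional algebra of the form $\mathbb C\langle Q\rangle/I$, that $1_{\mathcal T(\theta_1)}*1_{\mathcal T(\theta_2)}^{-1}\in\hat G_{\mathrm{Hall}}$ equals the path-ordered product of the wall-crossings $\phi^{\mathrm{Hall}}_\kappa(\mathfrak d_j)^{\pm 1}$ along any path from a chamber containing $\theta_1$ to one containing $\theta_2$. Taking $\theta_1=\theta$ and $\theta_2$ in the interior of $\mathcal C^+$ (where $\mathcal T(\theta_2)=0$, hence $1_{\mathcal T(\theta_2)}=\mathbf 1$) immediately gives $1_{\mathcal T(\theta)}=\mathfrak p_t^{\mathrm{Hall}}$, and path-independence gives the well-definedness over $\mathcal C_t$. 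You are instead attempting to \emph{reprove} this consistency, for the special algebras $\mathcal P(\kappa)$ and for the special paths arising from flip sequences, by induction via the reflection functors.

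The gap lies exactly where you flag it, but it is more serious than you suggest. The step
\[
1_{\mathcal T_{\kappa_1}(\theta_1)} \;=\; (F^+_{k_1})_*\,1_{\mathcal T_\kappa(\theta)} * 1_{\ab{S_{k_1}}}
\]
presupposes a well-defined pushforward $(F_{k_1}^+)_*$ on motivic Hall algebras, compatible with the convolution product. But $F_{k_1}^+$ is not an equivalence of abelian categories, and the full subcategories ${}^\perp S_{k_1}\subset\modu\mathcal P(\kappa)$ and $S_{k_1}^\perp\subset\modu\mathcal P(\kappa_1)$ between which it restricts to an equivalence are not abelian (not closed under subobjects), so they do not carry motivic Hall algebras in the standard sense; a convolution-compatible pushforward between the ambient Hall algebras is not something \Cref{cor: left perp and right perp} or \Cref{cor: stability after mutation} provide, and constructing it (at the level of stacks, with control over the $N$-grading shift by $T_{k_1;\kappa_1}$) would be a substantial piece of new work. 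The auxiliary identity $F^+_{k_1}(\mathcal T_\kappa(\theta))=\mathcal T_{\kappa_1}(\theta_1)\cap S_{k_1}^\perp$ is also left as a claim; it is plausible but does not follow formally from the cited results, since quotients in $\modu\mathcal P(\kappa_1)$ of an object of $S_{k_1}^\perp$ need not stay in $S_{k_1}^\perp$, so the correspondence between the quotient-testing conditions defining the two torsion classes is not a direct consequence of the restricted equivalence. Citing Bridgeland's general theorem, as the paper does, avoids all of this.
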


\begin{proof}
This lemma completely follows from Bridgeland's theorem \cite[Theorem 6.5]{bridgeland2016scattering} that $\Phi_\kappa^{\mathrm{Hall}}$ is \emph{consistent} in the following sense. For any $\theta_i\in M_\mathbb R$ in the interior of some $\mathcal C_i\in \Delta_\kappa^+$ for $i=1,2$, there is an associated element
\[
\phi_\kappa^{\mathrm{Hall}}(\theta_1, \theta_2) \coloneqq 1_{\mathcal T(\theta_1)} * 1_{\mathcal T(\theta_2)}^{-1}\in \hat G_{\mathrm{Hall}}.
\]
In the meantime, one could consider a path $\gamma$ going from $\mathcal C_1$ to $\mathcal C_2$ by crossing facets of cones in $\Delta_\kappa^+$. Since $\mathbb E(\Delta_\kappa^+)$ is connected, such paths always exist. Consider the product
\[
\mathfrak p_\gamma \coloneqq \phi_\kappa^{\mathrm{Hall}}(\mathfrak d_1)^{\epsilon_1}*\phi_\kappa^{\mathrm{Hall}}(\mathfrak d_2)^{\epsilon_2}* \cdots *\phi_\kappa^{\mathrm{Hall}}(\mathfrak d_l)^{\epsilon_l}\in \hat G_{\mathrm{Hall}}.
\]
Here from left to right of the product, the sequence $\mathfrak d_i$ record the facets that $\gamma$ crosses in order and we determine the signs $\epsilon_i = +1$ if the cross is in the same direction as the positive normal vector of $\mathfrak d_i$ and otherwise $\epsilon_i = -1$. A direct implication of the consistency of $\Phi_\kappa$ is that
\[
\mathfrak p_\gamma = \phi_\kappa^{\mathrm{Hall}}(\theta_1, \theta_2)
\]
and in particular this does not depend on the path $\gamma$ (see \cite[(6.7)]{bridgeland2016scattering}). The current lemma corresponds to the particular situation where $\theta_1 = \theta$ and $\theta_2$ is some interior point in $\mathcal C^+$.

\end{proof}

\begin{proposition}\label{cor: f-polynomial from stability}
We have the identity for any $t\in \mathbb T_n$ and any $i$ 
\[
\mathfrak p_t(x^{\mathbf g_{i;t}}) = x^{\mathbf g_{i;t}}\cdot F_{i;t}(y) \in \mathbb C[N^\oplus]\otimes_\mathbb C \mathbb C[M]
\]
where $\mathbf g_{i;t}$ is the $\mathbf g$-vector associated to $x_{i;t}$ and $F_{i;t}(y)$ is the corresponding $F$-polynomial.
\end{proposition}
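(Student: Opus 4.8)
The plan is to prove the identity by induction on the length $l$ of the unique path $t_0\to\cdots\to t$ in $\mathbb T_n$, with flips $k_1,\dots,k_l$, establishing it for all $i$ simultaneously. The base case $l=0$ is immediate: $\mathfrak p_{t_0}$ is the empty ordered product, hence the identity of $\hat G$, while $\mathbf g_{i;t_0}=e_i^{\ast}$ and $F_{i;t_0}=1$, so both sides equal $x^{\mathbf g_{i;t_0}}$.

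For the inductive step assume $l\geq 1$, let $s$ be the predecessor of $t$ on the path, $k=k_l$ the last flip, and suppose the identity holds at $s$; recall that $\mathbf g_{j;t}$, $F_{j;t}$ are produced from $\mathbf g_{j;s}$, $F_{j;s}$, the $\mathbf c$-vectors $\mathbf c_{\bullet;s}$ and the matrix $B(s)$ by the recursions of \Cref{section: generalized cluster algebra}. By \Cref{rmk: chamber and intersection} the cones $\mathcal C_s$ and $\mathcal C_t$ share the facet $\mathfrak d:=\overline{\mathfrak d}_{k;s}=\overline{\mathfrak d}_{k;t}$, and by \Cref{lemma: consistency and torsion class} together with the path-independence (consistency) of $\phi_\kappa^{\mathrm{Hall}}$, the element $\mathfrak p_t$ is obtained from $\mathfrak p_s$ by composing, on the appropriate side, with the single wall-crossing automorphism attached to $\mathfrak d$. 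By \Cref{thm: cluster complex structure}(2) this wall-crossing sends $x^m$ $(m\in M)$ to $x^m\cdot\bigl(\sum_{\ell=0}^{r_k}y^{\ell\mathbf n}\bigr)^{\langle m,\mathbf n\rangle}$, where $\mathbf n\in N^+$ is the primitive normal of $\mathfrak d$ (the sign and direction being fixed by the geometry) and $r_k\in\{1,2\}$ according to whether $k$ is non-pending or pending; observe that $\sum_{\ell=0}^{r_k}u^\ell v^{r_k-\ell}$ is exactly the Chekhov--Shapiro exchange polynomial $\theta_k$. By \Cref{thm: g-vectors are rays of stability chambers} one has $\mathbf n=\operatorname{sgn}(\mathbf c_{k;s})\,\mathbf c_{k;s}$ and $\mathbf g_{j;s}=\mathbf r_{j;s}$, the $(\mathbf r_{j;s})_j$ and $(\mathbf c_{j;s})_j$ forming dual bases of $M$ and $N$.

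The case $i\neq k$ is the easier one: there $\mathbf g_{i;t}=\mathbf g_{i;s}$ and $F_{i;t}=F_{i;s}$, while $\langle\mathbf g_{i;s},\mathbf n\rangle=0$ means the wall-crossing interacts trivially with the $\mathbf g_{i;s}$-direction, reducing the identity at $t$ to that at $s$. For $i=k$ — the crux — one rewrites $x^{\mathbf g_{k;t}}$ through the $\mathbf g$-vector (equivalently $\mathbf r$-vector) mutation, applies the inductive hypothesis together with the identity $b_{jk}(s)=r_k\overline b_{jk}(s)$, and then crosses $\mathfrak d$. Since $\mathfrak p_s$ and the wall-crossing act by Poisson automorphisms of $\hat B=\widehat{\mathbb C[N^\oplus]\otimes_\mathbb C\mathbb C[M]}$, whose bracket couples the pairing $M\times N\to\mathbb Z$ with the skew form on $N$ (the latter expressible through $\overline B(s)$ by \Cref{cor: c vectors determine b matrix}), the output can be reorganized — using the palindromy of $\theta_k$ and the sign of $\mathbf c_{k;s}$ to swap the two arguments of $\theta_k$ symmetrically — into precisely Nakanishi's recursion
\[
F_{k;t}\,F_{k;s}=\theta_k\!\Bigl(\textstyle\prod_j y_j^{[c_{k,j;s}]_+}F_{j;s}^{[\overline b_{jk}(s)]_+},\ \prod_j y_j^{[-c_{k,j;s}]_+}F_{j;s}^{[-\overline b_{jk}(s)]_+}\Bigr),
\]
yielding $\mathfrak p_t(x^{\mathbf g_{k;t}})=x^{\mathbf g_{k;t}}F_{k;t}(y)$ and closing the induction.

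I expect the $i=k$ step to be the main obstacle: one must unwind the composite Poisson automorphism around $\mathfrak d$ with care, tracking in particular the contribution of the $N$--$N$ part of the Poisson bracket on $\hat B$ — which is what makes a wall-crossing act non-multiplicatively on the $y$-variables — and check that everything assembles term by term into the exchange polynomial $\theta_k$ and the $F_{j;s}$-monomials of Nakanishi's recursion, with the signs of the $\mathbf c$-vectors and the palindromy of $\theta_k$ conspiring to handle the two sides of the wall. This is the generalized-cluster-algebra analogue of the comparison between path-ordered products and $F$-polynomials carried out in the ordinary case in \cite{gross2018canonical,nagao2013donaldson}; alternatively, one can shorten the explicit bookkeeping by transporting the statement to the initial scattering diagram of $\mathcal P(\kappa(t))$ via the mutation-invariance of \Cref{prop: isomorphism of graphs of chambers}.
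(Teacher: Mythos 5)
Your proposal is correct and mirrors the paper's proof: both induct on the distance from $t_0$, trivialize the $i\neq k$ case via $\langle\mathbf g_{i;s},\mathbf n\rangle=0$, and resolve the $i=k$ case by pushing the $\mathbf g$-vector recursion through the single wall-crossing at $\mathfrak d$ and matching against Nakanishi's $F$-polynomial recursion. The one ingredient you leave implicit (and correctly flag as the crux) that the paper makes explicit is the subsidiary identity $\mathfrak p_s(y^{\mathbf c_{i;s}})=y^{\mathbf c_{i;s}}\prod_j F_{j;s}^{\overline b_{ji}(s)}(y)$, obtained from the $x$-statement because every wall-crossing fixes $x^{\overline B n}y^n$; it is exactly this that lets the paper do the term-by-term assembly into $\theta_k$ that you describe as "reorganizing."
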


\begin{proof}
    Note that the action of $\mathfrak p_t$ on $x^m$ is taken in the algebra $\hat B$. The result follows from the chamber structure and the description of wall-crossings of the stability scattering diagram $\phi_\kappa$ from \Cref{thm: cluster complex structure}. We prove it by induction on the distance from $t$ to $t_0$. Here the Poisson bracket on $\mathbb C[N^\oplus]$ is given by the skew-symmetric matrix $\overline{B} = \overline{B}(\kappa)$.
    
    Suppose the identity is true for $t$ and let $t'\frac{k}{\quad\quad}t$ be an edge in $\mathbb T_n$. Notice that since all wall-crossings act on $x^{\overline{B}n}y^n$ for any $n\in N^\oplus$ trivially, the identity implies for any $i$,
    \[
    \mathfrak p_t(y^{\mathbf c_{i;t}}) = y^{\mathbf c_{i;t}} \prod_{j = 1}^n F_{j;t}^{\overline{b}_{ji}(t)}(y).
    \]
    We show in the following the identity is true for $t'$ and $k$ in the case where $k$ is pending and the vector $\mathbf c_{k;t}$ is positive; other cases are similar. In this case, we have the recursion
    \[
    \mathbf g_{k;t'} = -\mathbf g_{k;t} + \sum_{i\neq k} [-b_{ik}(t)]_+ \mathbf g_{i;t}.
    \]
    Now since $\mathfrak p_{t'} = \phi_\kappa(\mathfrak d_{k;t}) \cdot \mathfrak p_t$, we have
    \[
    \mathfrak p_{t'}(x^{\mathbf g_{k;t'}}) = \mathfrak p_t \left(\phi(\mathfrak d_{k;t}) \left (x^{-\mathbf g_{k;t}}\cdot \prod_{i\neq k} x^{[-b_{ik}(t)]_+ \mathbf g_{i;t}} \right) \right).
    \]
    The action of $\phi(\mathfrak d_{k;t})$ on $x^{\mathbf g_{i;t}}$ for $i\neq k$ is trivial because $\mathbf c_{k;t}$ is orthogonal to any of those $\mathbf g_{i;t}$. Thus we have
    \begin{align*}
    \mathfrak p_{t'}(x^{\mathbf g_{k;t'}}) & = \mathfrak p_t\left(\phi(\mathfrak d_{k;t})(x^{-\mathbf g_{k;t}})\right)\cdot \prod_{i\neq k}\mathfrak p_t(x^{\mathbf g_{i;t}})^{[-b_{ik}^t]_+}\\
    &=\mathfrak p_t\left(x^{-\mathbf g_{k;t}}(1 + y^{\mathbf c_{k;t}} + y^{2\mathbf c_{k;t}})\right) \prod_{i\neq k} x^{[-b_{ik}(t)]_+\mathbf g_{i;t}}F_{i;t}^{[-b_{ik}(t)]_+}\ \text{(by \Cref{thm: cluster complex structure} and induction)}\\
    & = x^{\mathbf g_{k;t'}} \cdot F_{k;t}^{-1}\cdot \left(1 + y^{\mathbf c_{k;t}}\prod_{i\neq k}F_{i;t}^{\overline{b}_{ik}(t)} + y^{2\mathbf c_{k;t}}\prod_{i\neq k}F_{i;t}^{2\overline{b}_{ik}(t)}\right) \cdot \prod_{i\neq k}F_{i;t}^{[-b_{ik}(t)]_+}\\
    & = x^{\mathbf g_{k;t'}}\cdot F_{k;t'}.
    \end{align*}
    The last step uses the recursive definition of $F$-polynomials in \Cref{section: generalized cluster algebra}.
\end{proof}

\section{\texorpdfstring{$\tau$-tilting theory of $\mathcal P(\kappa)$}{tau-tilting theory of P(kappa)}}\label{section: tau tilting}

In this section we briefly review the $\tau$-tilting theory of Adachi--Iyama--Reiten \cite{adachi2014tau} and study the case of $\mathcal P(\kappa)$.

\begin{definition}\label{def: tau-rigid module}
	Let $A$ be a finite dimensional algebra over an algebraically closed field $\Bbbk$. A finitely generated $A$-module is said to be \emph{$\tau$-rigid} if $\Hom_A (M, \tau M) = 0$ where $\tau \colon \modu A \rightarrow \modu A$ is the \emph{Auslander--Reiten translation}.
\end{definition}

In what follows we assume that $A$ is of the form $\Bbbk \langle Q \rangle/I$ with $I$ an admissible ideal. Let $n = |Q_0|$.

\begin{definition}\ 
\begin{enumerate}
	\item A \emph{$\tau$-rigid pair} $(M , P)$ is a $\tau$-rigid module $M$ and a projective $A$-module such that 
	\[
	\Hom_A(P,M) = 0.
	\]
	\item A $\tau$-rigid pair is said to be \emph{support $\tau$-tilting} if $|M| + |P| = n$ where $|\cdot |$ counts the number of non-isomorphic indecomposable direct summands.
	
	\item A $\tau$-rigid pair is said to be \emph{almost complete support $\tau$-tilting} if $|M| + |P| = n-1$.
\end{enumerate}
\end{definition}

We say that an $A$-module $M$ is \emph{basic} if its indecomposable summands are pairwise non-isomorphic. We say a pair $(M,P)$ \emph{basic} if $M$ and $P$ are both basic. Denote by $\stau{A}$ the set of all isomorphism classes of basic support $\tau$-tilting pairs of $A$. It is clear that both $(A, 0)$ and $(0,A)$ are support $\tau$-tilting pairs.

For two $\tau$-rigid pairs $(M_1, P_1)$ and $(M_2, P_2)$, simply write $(M_1, P_1)\oplus (M_2, P_2)$ for $(M_1 \oplus M_2, P_1 \oplus P_2)$. We say $(M, P)$ indecomposable if it is of the form $(M, 0)$ or $(0,P)$ with $M$ or $P$ being indecomposable.

\begin{theorem}[{\cite[Theorem 0.4]{adachi2014tau}}]\label{thm: almost complete pair has two completions}
Any basic almost complete support $\tau$-tilting pair for $A$ is a direct summand of exactly two basic support $\tau$-tilting pairs.
\end{theorem}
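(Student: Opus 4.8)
This is \cite[Theorem~0.4]{adachi2014tau}; here is how I would prove it. Write the given pair as $(U,P)$, so that $U$ is $\tau$-rigid, $P$ is projective with $\Hom_A(P,U)=0$, and $|U|+|P|=n-1$. The plan is to transport everything to the lattice of functorially finite torsion classes of $\modu A$ via the Adachi--Iyama--Reiten correspondence, and then to collapse the set of completions of $(U,P)$ onto the support $\tau$-tilting pairs of a suitable \emph{local} algebra.

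First I would set up the dictionary and produce the two completions. Recall that $(M,Q)\mapsto \operatorname{Fac}M$ is an order-preserving bijection from basic support $\tau$-tilting pairs of $A$ to functorially finite torsion classes of $\modu A$, and that $(U,P)$ is a direct summand of $(M,Q)$ if and only if $\operatorname{Fac}U\subseteq\operatorname{Fac}M$, $U$ is $\operatorname{Ext}$-projective in $\operatorname{Fac}M$, and $\Hom_A(P,\operatorname{Fac}M)=0$. Since $U$ is $\tau$-rigid, $\mathcal T^{-}:=\operatorname{Fac}U$ is itself a functorially finite torsion class: any map from a quotient of $U^{k}$ to $\tau U$ vanishes, hence $\operatorname{Ext}^1_A(U,\operatorname{Fac}U)=0$ and $\operatorname{Fac}U$ is extension-closed; in it $U$ is $\operatorname{Ext}$-projective, and it is killed by $P$ because $P$ is projective with $\Hom_A(P,U)=0$. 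Thus $\mathcal T^{-}$ yields one completion (the ``co-Bongartz'' one). A second completion is the Bongartz completion $(B,Q^{+})$ of the $\tau$-rigid pair $(U,P)$, whose existence is \cite[Theorem~2.10]{adachi2014tau}; its torsion class $\mathcal T^{+}:=\operatorname{Fac}B$ contains $\mathcal T^{-}$, is contained in ${}^{\perp}(\tau U)$ since $U$ is $\operatorname{Ext}$-projective in it, is killed by $P$, and is the largest functorially finite torsion class with these three properties. Combining this with the summand criterion shows that the completions of $(U,P)$ are in bijection with the functorially finite torsion classes lying in the interval $[\mathcal T^{-},\mathcal T^{+}]$.

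It then remains to prove that this interval consists of exactly the two torsion classes $\mathcal T^{-}\subsetneq\mathcal T^{+}$. Here I would run the explicit exchange-sequence analysis of \cite{adachi2014tau} --- equivalently, the $\tau$-tilting reduction --- built from the minimal left $\operatorname{add}(U)$-approximation of the indecomposable summand of $B$ not lying in $\operatorname{add}U$ (with the parallel case where the Bongartz completion enlarges the projective part instead): because $|U|+|P|=n-1$, this identifies $[\mathcal T^{-},\mathcal T^{+}]$, together with the functorially finite torsion classes it contains, with the whole lattice of torsion classes of a finite-dimensional algebra $C$ having a single simple module, i.e.\ a local algebra. A local algebra has precisely two torsion classes, $0$ and $\modu C$, since every nonzero module surjects onto the unique simple $S$, so any nonzero torsion class contains $S$ and, being closed under extensions while every $C$-module is an iterated self-extension of $S$, must be all of $\modu C$; both are functorially finite, so $C$ has exactly the support $\tau$-tilting pairs $(C,0)$ and $(0,C)$. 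Transporting back, $(U,P)$ has exactly two completions. The main obstacle is exactly this last reduction step: one must construct $C$ from the Bongartz completion $B$ and check that passing to $\operatorname{Ext}$-projectives over $\operatorname{Fac}U$ produces a poset isomorphism $[\mathcal T^{-},\mathcal T^{+}]\cong\operatorname{tors}C$ compatible with functorial finiteness --- this is the technical heart of \cite{adachi2014tau}; everything else is bookkeeping with the torsion-class dictionary.
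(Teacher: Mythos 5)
The paper does not prove this statement at all: it is imported as a black box, cited directly from \cite[Theorem~0.4]{adachi2014tau}, so there is no ``paper's own proof'' to compare against.

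Your proposal is a correct proof strategy for the AIR theorem, and the individual steps are sound: $\operatorname{Fac}U$ is a functorially finite torsion class when $U$ is $\tau$-rigid (via the Auslander--Reiten formula), the Bongartz completion gives the other end of the interval, completions of $(U,P)$ correspond to functorially finite torsion classes sandwiched in $[\operatorname{Fac}U,\operatorname{Fac}B]$, and a local finite-dimensional algebra has exactly the two torsion classes $0$ and $\modu C$, both functorially finite. However, the final collapse of the interval to $\operatorname{tors}C$ for a local algebra $C$ is the $\tau$-tilting reduction of Jasso (``Reduction of $\tau$-tilting modules and torsion pairs''), which postdates \cite{adachi2014tau}; attributing it to AIR, as you do when you call it ``the technical heart of \cite{adachi2014tau},'' is anachronistic. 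AIR's original proof does not construct the local algebra $C$: it works directly with the minimal left $\operatorname{add}U$-approximation $X\to U'$ of the indecomposable complement $X$ and an explicit exchange triangle/sequence, and deduces by hand that there are at most two choices of complement, one in $\operatorname{Fac}U$ and one not. Your route is conceptually cleaner and is the ``modern'' way to see the result, at the cost of invoking Jasso's reduction theorem, which is itself a substantial piece of machinery whose proof recapitulates much of the hands-on approximation analysis that AIR carry out directly. Either way, both arguments lean on the Bongartz completion theorem and the torsion-class dictionary as load-bearing inputs, so the overall shape is the same; what differs is whether the final uniqueness step is run through a reduction to a local algebra or through an explicit exchange-sequence case analysis.
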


The two support $\tau$-tilting pairs in \Cref{thm: almost complete pair has two completions} are said to be \emph{AIR-mutations} of each other. We use the notation $(N,Q)=\mu_k^{\operatorname{AIR}}(M,P)$ to indicate that the support $\tau$-tilting pairs $(M,P)$ and $(N,Q)$ are related by AIR-mutation of their $k^{\operatorname{th}}$ indecomposable summand.  One could regard AIR-mutation as an operation on $\operatorname{s\tau-tilt} A$. For example, write $(M,P)\in \operatorname{s\tau-tilt} A$ as $(M_1\oplus M_2, P)$ such that $M_2$ is indecomposable. Then by \Cref{thm: almost complete pair has two completions}, there exists either a unique $M_3\in \modu A$ non-isomorphic to $M_2$ or a unique $P_1\in \operatorname{proj} A$ such that $(M_1\oplus M_3, P)$ or $(M_1, P\oplus P_1)$ is support $\tau$-tilting.

From now on we will often use a single letter (e.g. $\mathcal M$ instead of $(M,P)$) for a $\tau$-rigid pair for simplicity. We shall call $(0,A)$ the \emph{initial} support $\tau$-tilting pair. Suppose that the vertex set $Q_0$ has been identified with $I = \{1,\dots, n\}$. Then the indecomposable summands of $(0,A)$ are $(0,P_i)$ for $i\in I$, thus indexed by $I$. For any $\mathcal M\in \operatorname{s\tau-tilt} A$ whose indecomposable summands are indexed by $I$, denote by $\mu_k(\mathcal M)$ its mutation at the corresponding summand. Immediately one sees there is an association of $\mathcal M_t\in \operatorname{s\tau-tilt} A$ to a vertex $t\in \mathbb T_n$ such that $\mathcal M_{t_0} = (0,A)$ and for $t\frac{k}{\quad\quad}t'$, $\mathcal M_t = \mu_k(\mathcal M_{t'})$. Denote by $\mathcal M_{i;t}$ the indecomposable summand of $\mathcal M_t$ of index $i$. We call $\mathcal M\in \operatorname{s\tau-tilt} A$ \emph{reachable} if it is isomorphic to some $\mathcal M_t$. The set of isomorphism classes of \emph{reachable} $\tau$-rigid pairs is denoted by $\rstau{A}$. A $\tau$-rigid pair is \emph{reachable} if all its indecomposable summands are that of some reachable $\mathcal M_t$.

Let $M \coloneqq K_0(\operatorname{proj} A)$ be the Grothendieck group of the category of finitely generated projective modules over $A$. It is a lattice with a basis consisting of the classes of the indecomposable projective modules 
\[
P_1,\ P_2,\dots,\ P_n.
\]
The lattice $M$ is naturally dual to $N \coloneqq K_0(\modu A)$ by the pairing
\[
K_0(\operatorname{proj} A) \times K_0(\modu A) \rightarrow \mathbb Z,\quad (P,T) \mapsto \hom (P, T).
\]
The basis $e_i \coloneqq S_i$ for $N$ is then dual to $e_i^* \coloneqq P_i$ for $M$.

For $T\in \modu A$, let $Q_1\rightarrow Q_0\rightarrow T\rightarrow 0$ be its minimal projective presentation. We define the \emph{$\mathbf g$-vector} of $T$ to be
\[
\mathbf g(T) \coloneqq Q_1 - Q_0\in M,
\]
identified with a vector in $\mathbb Z^n$ when expressed in the basis $\{e_i^*\mid i\in I\}$.

To each $\tau$-rigid pair $\mathcal M = (T, P)$, we associate a $g$-vector
\[
\mathbf g(\mathcal M)\coloneqq \mathbf g(T) - \mathbf g(P).
\]
Note that our convention is opposite to that in \cite{adachi2014tau} where they define $\mathbf g(\mathcal M)\coloneqq \mathbf g(P) - \mathbf g(T)$. In our convention, the $\mathbf g$-vector of $(0,P_i)$ is $e_i^*$.

\begin{theorem}[\cite{adachi2014tau}]\label{thm: AIR}\ 
\begin{enumerate}
    \item Any $\tau$-rigid pair is always a direct summand of some $\mathcal M \in \operatorname{s\tau-tilt} A$.
	\item The $\mathbf g$-vectors of indecomposable direct summands of any $\mathcal M \in \operatorname{s\tau-tilt} A$ form a $\mathbb Z$-basis of $M$.
	\item The map $\mathcal M\mapsto \mathbf g(\mathcal M)$ is injective from the set of (the isoclasses of) $\tau$-rigid pairs to $M$.
\end{enumerate}	
\end{theorem}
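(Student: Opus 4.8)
The plan is to establish the three parts in the order (1), (2), (3), since the injectivity of the $\mathbf g$-vector map in (3) will rest on the first two together with the fan structure of $\mathbf g$-vectors. Throughout I would work with the basic model $A=\Bbbk\langle Q\rangle/I$ and use freely the Auslander--Reiten formula $\operatorname{Ext}^1_A(X,Y)\cong D\overline{\operatorname{Hom}}_A(Y,\tau X)$, the bijection between functorially finite torsion classes of $\modu A$ and basic support $\tau$-tilting pairs, and the dictionary between $\tau$-rigid pairs and $2$-term (pre)silting complexes in $K^{b}(\operatorname{proj}A)$. None of this is new --- it is the content of Adachi--Iyama--Reiten \cite{adachi2014tau} --- so the argument is really an outline of their machinery specialised to what we need.

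For part (1) I would, given a basic $\tau$-rigid pair $(M,P)$, form the subcategory $\mathcal T(M,P)=\{X\mid\operatorname{Hom}_A(X,\tau M)=0\}\cap\{X\mid\operatorname{Hom}_A(P,X)=0\}$ and check that it is a functorially finite torsion class: closure under quotients of the first factor is immediate from left-exactness of $\operatorname{Hom}_A(-,\tau M)$, the second factor is $\modu(A/\langle e\rangle)$ for the idempotent $e$ cutting out $\operatorname{supp}P$ (hence a Serre subcategory), and functorial finiteness follows from finite-dimensionality of $A$ via a Bongartz-type universal extension $0\to A\to E\to M^{a}\to 0$. The Auslander--Reiten formula then shows $M$ is Ext-projective in $\mathcal T(M,P)$, and $\operatorname{Hom}_A(P,M)=0$ places $M$ in $\mathcal T(M,P)$, so the support $\tau$-tilting pair attached to $\mathcal T(M,P)$ has $(M,P)$ as a direct summand.

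For part (2) I would translate a support $\tau$-tilting pair $(M,P)$ into the $2$-term silting complex $\bigl(P^{1}\oplus P\to P^{0}\bigr)$ obtained from the minimal projective presentation $P^{1}\to P^{0}\to M\to 0$ of $M$, and note that under $K_{0}(K^{b}(\operatorname{proj}A))\cong K_{0}(\operatorname{proj}A)$ the class of the summand coming from an indecomposable $M_{j}\mid M$ is $[P^{0}_{j}]-[P^{1}_{j}]=\mathbf g(M_{j})$, while the class of $P_{v}[1]$ is $e^{*}_{v}=\mathbf g((0,P_{v}))$. Since a silting complex generates $K^{b}(\operatorname{proj}A)$ as a thick subcategory, its $n$ indecomposable summand classes generate the rank-$n$ lattice; linear independence follows because every $2$-term silting complex is linked to $A$ by silting mutations staying in the $2$-term window, each acting unimodularly on $K_{0}$, with $A$ contributing the standard basis. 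Hence these $n$ $\mathbf g$-vectors form a $\mathbb Z$-basis.

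The hard part is (3). Via (1) and (2), and the fact that a presilting summand of a fixed silting complex is recovered up to isomorphism from its $K_{0}$-class, injectivity reduces to showing that the cones $\mathbb R_{\geq 0}\langle\mathbf g(M_{1}),\dots,\mathbf g(M_{r})\rangle$ spanned by the summand $\mathbf g$-vectors of $\tau$-rigid pairs $(M_{1}\oplus\cdots\oplus M_{r},Q)$ form a simplicial fan, i.e.\ distinct such cones have disjoint interiors; granting this, a given $\mathbf g$-vector lies in a unique cone, which determines the multiset of summand $\mathbf g$-vectors and hence the pair. I expect this fan property to be the main obstacle: it is proved by a torsion-class comparison showing that a shared interior point of two such cones forces the ambient support $\tau$-tilting pairs --- and therefore their summands --- to coincide. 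Once the torsion-class and $2$-term silting formalisms are set up, parts (1) and (2) are comparatively formal, and all of this is carried out in \cite{adachi2014tau}.
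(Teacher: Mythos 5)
The paper does not prove this theorem: \Cref{thm: AIR} is quoted verbatim from Adachi--Iyama--Reiten \cite{adachi2014tau} and used as a black box, so there is no ``paper's own proof'' to compare against. Your outline is a fair sketch of the AIR machinery, with one genuine error and one spot worth tightening.

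In part (2), the claim that ``every $2$-term silting complex is linked to $A$ by silting mutations staying in the $2$-term window'' is false in general --- this is exactly why the adjective ``reachable'' is not vacuous in the paper, and why \cite{fu2021support} is a nontrivial result. Fortunately you do not need it: once you know the $n$ classes of indecomposable summands \emph{generate} $K_0(K^b(\operatorname{proj}A))\cong\mathbb{Z}^n$ (which follows from the co-$t$-structure description of silting, cf.\ Aihara--Iyama, rather than a naive ``thick closure'' argument), linear independence is automatic: $n$ generators of a free abelian group of rank $n$ are necessarily a basis, since a surjective endomorphism of $\mathbb{Z}^n$ is injective. So excise the connectedness sentence and replace it with that observation.

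In part (3), the passage ``a given $\mathbf g$-vector lies in a unique cone, which determines the multiset of summand $\mathbf g$-vectors'' should be phrased in terms of relative interiors: a basic $\tau$-rigid pair $\mathcal M$ has $\mathbf g(\mathcal M)$ in the relative interior of $\mathcal C_{\mathcal M}$, and the fan property says these relative interiors are disjoint, which forces $\mathcal C_{\mathcal M}=\mathcal C_{\mathcal M'}$; since each cone is simplicial its rays are determined, and then the indecomposable summands are recovered from their rays by the single-summand case you mention. You should also be aware that the full simplicial-fan statement is usually attributed to Demonet--Iyama--Jasso \cite{demonet2019tau} (indeed the paper cites them for precisely this point in \Cref{section: tau tilting}); what is in \cite{adachi2014tau} proper is the injectivity on support $\tau$-tilting pairs via the $2$-term silting correspondence. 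Your parts (1) and (2), modulo the fix above, are in line with the AIR arguments (Bongartz-type completion via the torsion class ${}^{\perp}(\tau M)\cap P^{\perp}$, and the $K_0$-basis property of silting objects, respectively).
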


For $\mathcal M$ any $\tau$-rigid pair, define $\mathcal C_\mathcal M$ to be the closed rational polyhedral cone in $M_\mathbb R$ generated by the $\mathbf g$-vectors of its indecomposable summands. By \Cref{thm: AIR}, the cone $\mathcal C_\mathcal M$ is simplicial of maximal dimension if $\mathcal M$ is support $\tau$-tilting, and the collection of all these cones $\left\{\mathcal C_\mathcal M\mid \mathcal M\in \operatorname{s\tau-tilt} A\right\}$ together with all their faces form a simplicial fan in $M_\mathbb R$ considered in \cite{demonet2019tau}. Two maximal cones $\mathcal C_\mathcal M$ and $\mathcal C_{\mathcal M'}$ intersect at one of their common facet if they are related by a mutation. We denote the dual graph of the reachable component of this simplicial fan by $\mathbf E(\rstau{A})$. In other words, the vertices of $\mathbf E(\rstau{A})$ are reachable objects in $\rstau{A}$ up to isomorphism and two are joint by an edge if and only if they are obtained by a mutation from each other. Thus we call $\mathbf E(\rstau{A})$ the exchange graph of $\rstau{A}$.

The following important theorem of Br\"ustle, Smith and Treffinger relates the cones $\mathcal C_\mathcal M$ with stability conditions of $\modu A$.

\begin{theorem}[{\cite[Proposition 3.13 and Theorem 3.14]{brustle2019wall}}]\label{thm: BST}
The function $\Phi_A\colon M_\mathbb R \rightarrow \mathcal W(A)$ is constant in the interior of $\mathcal C_\mathcal M$ for any $\tau$-rigid pair $\mathcal M = (T,P)$ and has the value
\[
J_\mathcal M \coloneqq T^\perp \cap {^\perp\tau T} \cap P^\perp.
\]
Moreover, the subcategory $J_\mathcal M$ contains exactly $n - |T| - |P|$ non-isomorphic stable modules. In particular, $\Phi_A(\theta) = 0$ for any $\theta$ in the interior of $\mathcal C_\mathcal M$ for $\mathcal M\in \stau{A}$.
\end{theorem}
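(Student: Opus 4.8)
The plan is to reprove this wall-and-chamber description of the $\mathbf g$-vector fan from the $\mathbf g$-vector pairing together with Jasso's $\tau$-tilting reduction; this is \cite[Proposition 3.13 and Theorem 3.14]{brustle2019wall}. The input is the standard identity: if $Q_1\to Q_0\to T\to 0$ is the minimal projective presentation of $T\in\modu A$, then for every $X\in\modu A$
\[
\langle\mathbf g(T),\underline{\dim}\,X\rangle=\dim_\Bbbk\overline{\Hom}_A(X,\tau T)-\dim_\Bbbk\Hom_A(T,X),
\]
while $\langle e_j^*,\underline{\dim}\,X\rangle=\dim_\Bbbk\Hom_A(P_j,X)$ for the indecomposable projective $P_j$, where $\langle-,-\rangle$ is the pairing between $M=K_0(\operatorname{proj} A)$ and $N=K_0(\modu A)$. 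Since the indecomposable summands of $\mathcal M=(T,P)$ have $\mathbf g$-vectors $\mathbf g(T_i)$ and $e_j^*$, a point $\theta\in\mathcal C_\mathcal M^\circ$ can be written $\theta=\sum_i a_i\,\mathbf g(T_i)+\sum_j b_j\,e_j^*$ with all $a_i,b_j>0$, and these two formulas then evaluate $\theta$ on an arbitrary module in terms of $\operatorname{Hom}$-dimensions.

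First I would prove the inclusion $J_\mathcal M\subseteq\moduss{\theta} A$ for every $\theta\in\mathcal C_\mathcal M^\circ$. For $X\in J_\mathcal M=T^\perp\cap{}^\perp\tau T\cap P^\perp$ all of $\Hom_A(T,X)$, $\Hom_A(X,\tau T)$, $\Hom_A(P,X)$ vanish, so each summand of $\theta$ is zero on $X$ and $\theta(X)=0$; for any submodule $Y\subseteq X$ with quotient $Z=X/Y$, both ${}^\perp\tau T$ and $P^\perp$ are closed under quotients (the latter because $\Hom_A(P_j,-)$ is exact), so $Z\in{}^\perp\tau T\cap P^\perp$, whence $\langle\mathbf g(T_i),\underline{\dim}\,Z\rangle=-\dim_\Bbbk\Hom_A(T_i,Z)\le 0$ and $\langle e_j^*,\underline{\dim}\,Z\rangle=0$, giving $\theta(Z)\le 0$ and therefore $\theta(Y)=-\theta(Z)\ge 0$; thus $X$ is $\theta$-semistable. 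The reverse inclusion $\moduss{\theta} A\subseteq J_\mathcal M$, which simultaneously forces $\Phi_A$ to be constant on $\mathcal C_\mathcal M^\circ$, is the main obstacle. One piece of it is still direct: if $X$ is $\theta$-semistable and $f\colon T_i\to X$ is nonzero, then $W=\operatorname{Im} f$ is a submodule of $X$; as a quotient of $T_i$ it lies in $\operatorname{Fac} T\subseteq{}^\perp\tau T$, and it lies in $P^\perp$ because $\Hom_A(P_j,W)$ is a quotient of $\Hom_A(P_j,T_i)=0$ (using the $\tau$-rigidity of the \emph{pair} $(T,P)$); hence $\theta(W)=-\sum_k a_k\dim_\Bbbk\Hom_A(T_k,W)\le-a_i\dim_\Bbbk\Hom_A(T_i,W)<0$, contradicting semistability, so $X\in T^\perp$. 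The remaining conditions $X\in{}^\perp\tau T\cap P^\perp$ and the constancy are obtained as in \cite{brustle2019wall}: one shows that the torsion pair cut out in $\modu A$ by $\theta$ is caught between the torsion pairs of the support $\tau$-tilting completions of $\mathcal M$ and hence depends only on the cone $\mathcal C_\mathcal M$, which pins down $\moduss{\theta} A=J_\mathcal M$.

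For the count, observe that the $\theta$-stable $A$-modules are exactly the simple objects of the abelian category $\moduss{\theta} A$: a $\theta$-stable module is $\theta$-semistable and has no proper nonzero subobject inside $\moduss{\theta} A$, and conversely a simple object of $\moduss{\theta} A$ is $\theta$-stable. By Jasso's $\tau$-tilting reduction, $J_\mathcal M$ is a wide subcategory of $\modu A$ equivalent, as an abelian category, to $\modu B$ for a finite-dimensional algebra $B$ with $n-|T|-|P|$ simple modules; since an equivalence of abelian categories preserves simple objects, $J_\mathcal M=\moduss{\theta} A$ contains exactly $n-|T|-|P|$ non-isomorphic stable modules. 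In particular, when $\mathcal M\in\stau{A}$ we have $|T|+|P|=n$, so $J_\mathcal M$ has no simple objects and therefore $J_\mathcal M=0$, i.e.\ $\Phi_A(\theta)=0$ throughout $\mathcal C_\mathcal M^\circ$. The hard part is the reverse inclusion together with the constancy of the induced torsion pair; everything else is the bookkeeping above.
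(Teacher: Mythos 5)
The paper does not prove this result at all; it states it as a direct citation of Br\"ustle--Smith--Treffinger, so there is no in-paper argument to compare against. Taken on its own merits, your sketch is essentially correct and, with one observation, actually closes completely without re-importing the hard part of \cite{brustle2019wall}.

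Every step you write out is sound: the inclusion $J_\mathcal M\subseteq\moduss{\theta}A$ via the AIR pairing and closure of ${}^\perp\tau T\cap P^\perp$ under quotients; the argument that a nonzero $f\colon T_i\to X$ gives a destabilizing $W=\operatorname{Im} f\in\operatorname{Fac} T\cap P^\perp$ with $\theta(W)<0$; the identification of $\theta$-stable modules with the simple objects of the wide subcategory $\moduss{\theta}A$; and the count via Jasso's $\tau$-tilting reduction. (Small remark: the dimension count coming from AIR Proposition 2.4(a) already holds with plain $\Hom$ rather than $\overline{\Hom}$ --- the exact sequence $0\to\Hom(T_i,X)\to\Hom(Q_0,X)\to\Hom(Q_1,X)\to D\Hom(X,\tau T_i)\to0$ gives $\langle\mathbf g(T_i),\underline{\dim}\,X\rangle=\dim\Hom(X,\tau T_i)-\dim\Hom(T_i,X)$ directly --- which is what you actually need, since $^\perp\tau T$ is cut out by $\Hom$.)

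The one genuine shortcoming is that you overestimate the remaining work. Once you have established $X\in T^\perp$, i.e.\ $\Hom(T_i,X)=0$ for all $i$, the semistability equation $\theta(X)=0$ at $\theta=\sum_i a_i\,\mathbf g(T_i)+\sum_j b_j\,e_j^*$ with all $a_i,b_j>0$ collapses to
\[
0\;=\;\sum_i a_i\,\dim_\Bbbk\Hom_A(X,\tau T_i)\;+\;\sum_j b_j\,\dim_\Bbbk\Hom_A(P_j,X),
\]
a sum of nonnegative terms with strictly positive coefficients, so every summand vanishes. This gives $X\in{}^\perp\tau T\cap P^\perp$ in one line, with no torsion-pair sandwich, and the constancy of $\Phi_A$ on the relative interior of $\mathcal C_\mathcal M$ then falls out because the equality $\moduss{\theta}A=J_\mathcal M$ has been established for every such $\theta$ simultaneously. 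So in fact your proof is self-contained modulo the two standard black boxes you invoke (the AIR pairing and Jasso's reduction); the appeal to BST for the reverse inclusion is unnecessary.
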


For a $\tau$-rigid pair $\mathcal M = (T, P)$, define the torsion class
\[
\operatorname{Fac} \mathcal M \coloneqq \{E\in \modu A \mid \text{$E$ is a quotient of $T^{\oplus l}$ for some $l\in \mathbb N$}\} \subset \modu A.
\]
We will need the following proposition in \cite{brustle2019wall} later.

\begin{proposition}[{\cite[Proposition 3.27]{brustle2019wall}}]\label{prop: two definitions of a torsion class}
For any $\tau$-rigid pair $\mathcal M = (T, P)$, the torsion class $\operatorname{Fac} \mathcal M$ is the same as $\mathcal T(\theta)$ for any $\theta$ in the interior of $\mathcal C_\mathcal M$.
\end{proposition}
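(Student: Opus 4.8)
The plan is to establish the two inclusions $\operatorname{Fac}\mathcal M\subseteq\mathcal T(\theta)$ and $\mathcal T(\theta)\subseteq\operatorname{Fac}\mathcal M$ for $\theta$ in the relative interior of $\mathcal C_\mathcal M$, i.e. for $\theta=\sum_j c_j\,\mathbf g(\mathcal M_j)$ with all $c_j>0$, the sum running over the indecomposable summands $\mathcal M_j$ of $\mathcal M=(T,P)$. The computational input I would use is the standard identity, valid for an arbitrary module $X$,
\begin{equation*}
\langle \mathbf g(\mathcal M_j),\,[X]\rangle=
\begin{cases}
\dim\operatorname{Hom}(X,\tau T_j)-\dim\operatorname{Hom}(T_j,X) & \text{if }\mathcal M_j=(T_j,0),\\
\dim\operatorname{Hom}(P_j,X) & \text{if }\mathcal M_j=(0,P_j),
\end{cases}
\end{equation*}
which follows from the definition of the pairing $K_0(\operatorname{proj} A)\times K_0(\modu A)\to\mathbb Z$ applied to a minimal projective presentation of $T_j$, together with the Auslander--Reiten formula $\operatorname{Ext}^1_A(T_j,X)\cong D\,\overline{\operatorname{Hom}}_A(X,\tau T_j)$. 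I would also record three elementary facts about the module part $T$: (i) $\operatorname{Fac} T\subseteq{}^{\perp}(\tau T)$, because a surjection $T^{\oplus l}\twoheadrightarrow N$ induces an injection $\operatorname{Hom}(N,\tau T)\hookrightarrow\operatorname{Hom}(T,\tau T)^{\oplus l}=0$; (ii) $\operatorname{Hom}(P,N)=0$ for every $N\in\operatorname{Fac} T$, since $\operatorname{Hom}(P,-)$ is exact and $\operatorname{Hom}(P,T)=0$ by the definition of a $\tau$-rigid pair; (iii) $(\operatorname{Fac} T,\,T^{\perp})$ is a torsion pair, because $\operatorname{Ext}^1_A(T,\operatorname{Fac} T)=0$ by (i) and the Auslander--Reiten formula (so $\operatorname{Fac} T$ is closed under extensions), while $(\operatorname{Fac} T)^{\perp}=T^{\perp}$ is immediate.

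For $\operatorname{Fac}\mathcal M\subseteq\mathcal T(\theta)$: as $\mathcal T(\theta)$ and $\operatorname{Fac}\mathcal M=\operatorname{Fac} T$ are both closed under quotients, it is enough to show that every nonzero $N\in\operatorname{Fac} T$ satisfies $\theta(N)<0$. By (i) and (ii) the displayed identity gives $\langle\mathbf g(\mathcal M_j),[N]\rangle=-\dim\operatorname{Hom}(T_j,N)\le 0$ for the module summands and $=0$ for the coefficient summands, so $\theta(N)=-\sum_j c_j\dim\operatorname{Hom}(T_j,N)$ (sum over the module summands $T_j$ of $T$); and if $N\ne 0$ a surjection $T^{\oplus l}\twoheadrightarrow N$ is nonzero on at least one summand $T_{j_0}$, hence $\operatorname{Hom}(T_{j_0},N)\ne 0$, and $c_{j_0}>0$ forces $\theta(N)<0$.

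For the reverse inclusion $\mathcal T(\theta)\subseteq\operatorname{Fac}\mathcal M$: given $M\in\mathcal T(\theta)$, take the canonical sequence $0\to tM\to M\to X\to 0$ for the torsion pair $(\operatorname{Fac} T,T^{\perp})$ of (iii), so $tM\in\operatorname{Fac} T$ and $X\in T^{\perp}$. Since $X$ is a quotient of $M$ it lies in the torsion class $\mathcal T(\theta)$; hence if $X\ne 0$ then $\theta(X)<0$. On the other hand $X\in T^{\perp}$ means $\operatorname{Hom}(T_j,X)=0$ for all $j$, so the displayed identity makes each $\langle\mathbf g(\mathcal M_j),[X]\rangle$ equal to $\dim\operatorname{Hom}(X,\tau T_j)\ge 0$ or $\dim\operatorname{Hom}(P_j,X)\ge 0$, giving $\theta(X)=\sum_j c_j\langle\mathbf g(\mathcal M_j),[X]\rangle\ge 0$ — a contradiction. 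Therefore $X=0$, i.e. $M=tM\in\operatorname{Fac} T=\operatorname{Fac}\mathcal M$, which finishes the argument.

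I expect the genuine obstacle to be step (iii): exhibiting $\operatorname{Fac}\mathcal M$ as the torsion class of an actual torsion pair whose torsion-free class is precisely $T^{\perp}$, which is the one place where $\tau$-rigidity of $T$ is really used. After that the proof is bookkeeping, the two points requiring care being to keep the module summands $T$ and the coefficient summands $P$ of the pair $\mathcal M$ separate when applying the $\mathbf g$-vector identity, and to invoke the strict positivity $c_j>0$ exactly where a strict inequality $\theta(N)<0$ is needed.
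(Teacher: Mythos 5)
The paper cites this proposition without proof (it is \cite[Proposition 3.27]{brustle2019wall}), so there is no in-paper argument to compare against; judged on its own, your proof is correct. The torsion-pair step, which you correctly flag as the crux, is handled properly: $\operatorname{Ext}^1(T,\operatorname{Fac} T)=0$ follows from $\tau$-rigidity via the Auslander--Reiten formula, which by the Auslander--Smal{\o} lifting argument makes $\operatorname{Fac} T$ closed under extensions, and the identification $(\operatorname{Fac} T)^\perp=T^\perp$ is indeed immediate. The summand bookkeeping in both directions also checks out, including the use of strict positivity $c_j>0$ exactly where strict inequality is needed, and the disposal of the projective summands via $\Hom(P,\operatorname{Fac} T)=0$.

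One slip worth fixing: your displayed $\mathbf g$-vector identity for a module summand should read
\[
\langle \mathbf g(T_j),[X]\rangle=\dim\overline{\Hom}(X,\tau T_j)-\dim\Hom(T_j,X),
\]
with the stable Hom modulo injectives $\overline{\Hom}$ in the first term rather than the ordinary $\Hom$; this is what the exact sequence of \cite[Proposition~2.4(a)]{adachi2014tau} actually produces, and you quote the Auslander--Reiten formula in precisely this $\overline{\Hom}$ form one line earlier, so your stated identity is inconsistent with your own citation. Fortunately the slip is harmless here: in the inclusion $\operatorname{Fac}\mathcal M\subseteq\mathcal T(\theta)$ you first show $\Hom(N,\tau T)=0$, which forces $\overline{\Hom}(N,\tau T_j)=0$ a fortiori; and in the reverse inclusion you only use $\dim\overline{\Hom}(X,\tau T_j)\ge 0$, which is automatic. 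So the argument is unaffected once the overline is restored.
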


Let $A$ be the algebra $\mathcal P(\kappa)$ associated to some labeled triangulation $\kappa$ of $\surf$. We consider the $\tau$-tilting theory and especially the graph $\mathbf E(\rstau{\mathcal P(\kappa)})$ of $\modu \mathcal P(\kappa)$. For $t\in \mathbb T_n$ and $i\in I$, denote 
\[
\mathcal M_t = \mathcal M_t^{\kappa;t_0}\quad \text{and} \quad \mathcal M_{i;t} = \mathcal M_{i;t}^{\kappa;t_0}
\]
the corresponding support $\tau$-tilting pair and indecomposable $\tau$-rigid pair.

\begin{theorem}\label{cor: g-vector equality}
For any $t\in \mathbb T_n$ and $i\in I$, the $\mathbf g$-vector of $\mathcal M_{i;t}$ is equal to the $\mathbf g$-vector of the cluster variable $x_{i;t}$ of $\mathcal A(\kappa)$, that is,
\[
\mathbf g(\mathcal M_{i;t}) = \mathbf g_{i;t}
\]
\end{theorem}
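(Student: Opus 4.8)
The plan is to show that both $\mathbf g$-vectors satisfy the same recursion along edges of $\mathbb T_n$ and the same initial condition at $t_0$, and then invoke uniqueness. At the root $t_0$, the support $\tau$-tilting pair is $\mathcal M_{t_0}=(0,\mathcal P(\kappa))$, whose $i$-th indecomposable summand is $(0,P_i)$ with $\mathbf g((0,P_i))=e_i^*$ by our conventions; on the cluster side $\mathbf g_{i;t_0}=\mathbf e_i$, and under the identification $\mathbf e_i\leftrightarrow e_i^*$ of \Cref{section: tau tilting} these agree. So the base case holds.

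For the inductive step, suppose $t'\frac{k}{\qquad}t$ is an edge of $\mathbb T_n$ and $\mathbf g(\mathcal M_{i;t})=\mathbf g_{i;t}$ for all $i$. For $i\neq k$ we have $\mathcal M_{i;t'}=\mathcal M_{i;t}$, hence equality is inherited. For $i=k$, the key input is \Cref{cor: f-polynomial from stability}: it gives $\mathfrak p_t(x^{\mathbf g_{i;t}})=x^{\mathbf g_{i;t}}F_{i;t}(y)$ for every $i$, where $\mathfrak p_t=I(\mathfrak p_t^{\mathrm{Hall}})$ is attached to the cone $\mathcal C_t$ by \Cref{lemma: consistency and torsion class}. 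I would now compare the AIR-mutation formula for $\mathbf g(\mathcal M_{k;t'})$ with the $\mathbf g$-vector recursion of \Cref{section: generalized cluster algebra}. The bridge is \Cref{thm: BST} together with \Cref{prop: two definitions of a torsion class}: the torsion class $\operatorname{Fac}\mathcal M_t$ equals $\mathcal T(\theta)$ for $\theta$ in the interior of $\mathcal C_{\mathcal M_t}$, and by \Cref{thm: BST} the cone $\mathcal C_{\mathcal M_t}$ is exactly the chamber $\mathcal C_t$ of \Cref{prop: function value of phi} (both are connected components of $M_\mathbb R\setminus\operatorname{supp}\Phi_\kappa$ whose common description forces $\mathcal C_{\mathcal M_t}=\mathcal C_t$, once one checks the base case and that AIR-mutation corresponds to mutation at a facet). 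Consequently $\mathfrak p_t^{\mathrm{Hall}}=1_{\mathcal T(\theta)}=1_{\operatorname{Fac}\mathcal M_t}$, so $\mathfrak p_t$ is an invariant of the support $\tau$-tilting pair $\mathcal M_t$.

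With $\mathcal C_{\mathcal M_t}=\mathcal C_t$ established, \Cref{thm: AIR}(2) says the $\mathbf g$-vectors of the indecomposable summands of $\mathcal M_t$ are the integral ray generators of $\mathcal C_t$, i.e. $\mathbf g(\mathcal M_{i;t})=\mathbf r_{i;t}$ for all $i$; but \Cref{thm: g-vectors are rays of stability chambers} already identifies $\mathbf r_{i;t}=\mathbf g_{i;t}$. This would finish the proof in one stroke, bypassing any explicit manipulation of the AIR-mutation formula. Alternatively, if one prefers a self-contained argument, one verifies directly that under a mutation at $k$ the AIR-$\mathbf g$-vector recursion (read off from the minimal projective presentation of the new summand, as in \cite{adachi2014tau}) coincides with the sign-coherent $\mathbf g$-vector recursion of \Cref{thm: g-vectors are rays of stability chambers}, using that the $\mathbf c$-vectors $\mathbf n_{i;t}=\mathbf c_{i;t}$ are sign-coherent (proved in the course of \Cref{thm: g-vectors are rays of stability chambers}) so that $\operatorname{sgn}(\mathbf n_{k;t})=\epsilon_{?}$ matches the direction of AIR-mutation.

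The main obstacle is the identification $\mathcal C_{\mathcal M_t}=\mathcal C_t$ of the two fans — the $\tau$-tilting fan of \Cref{thm: BST} and the cluster chamber fan of \Cref{prop: function value of phi}. Both are built from $\Phi_\kappa$ and both are indexed by $\mathbb T_n/G$, but matching the two labelings requires checking that the AIR-mutation at the $k$-th summand is sent, under $\Phi_\kappa$, to the wall-crossing $T_{k;\kappa_j}^{\epsilon_j}$ at the $k$-th facet — precisely the content one extracts by combining \Cref{prop: stability after mutation}, \Cref{prop: when theta_k = 0}, \Cref{thm: BST} and \Cref{prop: two definitions of a torsion class}. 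Once this compatibility of combinatorics is in place, the statement is immediate from \Cref{thm: g-vectors are rays of stability chambers} and \Cref{thm: AIR}(2).
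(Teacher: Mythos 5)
Your high-level strategy matches the paper's: induct on the distance from $t_0$ and identify the $\tau$-tilting cone $\mathcal C_{\mathcal M_t}$ with the cluster chamber $\mathcal C_t$, then read off the g-vector equality from \Cref{thm: AIR}(2) and \Cref{thm: g-vectors are rays of stability chambers}. The base case is handled correctly, and the reduction of the claim to the cone identification $\mathcal C_{\mathcal M_t}=\mathcal C_t$ is exactly right. Two points, however, deserve attention.

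First, the detour through $\mathfrak p_t$, \Cref{cor: f-polynomial from stability} and \Cref{lemma: consistency and torsion class} contributes nothing here; those are ingredients for the Caldero--Chapoton theorem (\Cref{thm: cc function}), not for the g-vector equality, and including them obscures the argument. Second — and this is the genuine gap — you never actually carry out the inductive step. You write ``once one checks the base case and that AIR-mutation corresponds to mutation at a facet'' and later flag this as ``the main obstacle,'' but those words are a restatement of what must be proved, not a proof. The paper's resolution of the step is short and worth spelling out: by the inductive hypothesis $\mathcal C_{\mathcal M_t}=\mathcal C_t$. By \Cref{thm: BST} (using the ``in particular'' clause together with the facet description) the interior of $\mathcal C_{\mathcal M_{t'}}$ is a connected component of $M_\mathbb R\setminus\operatorname{supp}(\Phi_\kappa)$, and by \Cref{cor: stability in a chamber} so is the interior of $\mathcal C_{t'}$. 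By \Cref{rmk: chamber and intersection}, $\mathcal C_{t'}\cap\mathcal C_t=\overline{\mathfrak d}_{k;t}$, while \Cref{thm: AIR} gives $\mathcal C_{\mathcal M_{t'}}\cap\mathcal C_{\mathcal M_t}=\mathcal C_{\mathcal M_t/\mathcal M_{k;t}}$, which under the inductive hypothesis equals $\overline{\mathfrak d}_{k;t}$. Thus $\mathcal C_{t'}$ and $\mathcal C_{\mathcal M_{t'}}$ are both maximal-dimensional chambers meeting $\mathcal C_t=\mathcal C_{\mathcal M_t}$ along the same facet, hence lie on the same side of it; their interiors therefore intersect and, being connected components of the same open set, they coincide. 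This is the content you elide with ``once this compatibility of combinatorics is in place.'' Without it, your argument establishes only the equivalence of the inductive hypothesis with the cone identity at $t$, not the passage to $t'$.
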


\begin{proof}
For $t=t_0$, the equality holds as $\mathbf g(\mathcal M_{i;t_0}) =  \mathbf g(0, P_i)= \mathbf g_{i;t_0} = e_i^*\in M$. We prove for the rest of $\mathbb T_n$ by induction on the distance to $t_0$. Suppose the equality is true for some $t\in \mathbb T_n$ and let $t'\frac{k}{\quad\quad} t$ be in $\mathbb T_n$.

Notice that by \Cref{thm: g-vectors are rays of stability chambers} the vectors $\mathbf g_{i;t}$ are the generators $\mathbf r_{i;t}$ of the rays of the cone $\mathcal C_t$. The equality $\mathbf g(\mathcal M_{i;t}) = \mathbf g_{i;t}$ implies $\mathcal C_{\mathcal M_t} = \mathcal C_t$. To prove $\mathbf g(\mathcal M_{k;t'}) = \mathbf g_{k;t'}$, it suffices to show that $\mathcal C_{\mathcal M_{t'}} = \mathcal C_{t'}$ as they already share a common facet $\overline{\mathfrak d}_{k;t}$ and $\mathbf g(\mathcal M_{k;t'})$ and $\mathbf g_{k;t'}$ are the integral generators of the corresponding rays that are not in $\overline{\mathfrak d}_{k;t}$.

By \Cref{thm: BST}, the interior of $\mathcal C_{\mathcal M_t}$ is a connected component of $\mathcal M_\mathbb R\setminus \operatorname{supp}(\Phi_\kappa)$, and by \Cref{cor: stability in a chamber} so is $\mathcal C_t$. As pointed out in \Cref{rmk: chamber and intersection}, the two cones $\mathcal C_{t'}$ and $\mathcal C_t$ intersect exactly on their $k$-th facet $\overline{\mathfrak d}_{k;t}$. The same is true for $\mathcal C_{\mathcal M_t}$ and $\mathcal C_{\mathcal M_{t'}}$ since by \Cref{thm: AIR} they intersect exactly at $\mathcal C_{\mathcal M_t/\mathcal M_{k;t}} = \mathcal C_{\mathcal M_{t'}/\mathcal M_{k;t'}}$. Therefore $\mathcal C_{t'}^\circ \cap \mathcal C^\circ_{\mathcal M_{t'}} \neq \emptyset$ and thus must be the same connected component $M_\mathbb R\setminus \operatorname{supp}(\Phi_\kappa)$. Hence their closures are equal, i.e., $\mathcal C_{t'} = \mathcal C_{\mathcal M_{t'}}$. This finishes the induction.
\end{proof}

\begin{remark}\label{rmk: perpendicular category}
By \Cref{cor: g-vector equality}, the cone $\mathcal C_{\mathcal M_t/\mathcal M_{i;t}}$ is equal to the cone $\overline{\mathfrak d}_{i;t}$ for any $t$ and $i$. Thus by \Cref{prop: function value of phi} and \Cref{thm: BST}, for any $\theta\in \mathfrak d_{i;t}$, we have
\[
\Phi_\kappa(\theta) = J_{\mathcal M_t/\mathcal M_{i;t}} = T_t(\ab{S_i}).
\]
\end{remark}

Recall that $\mathbf E(\rstau{\mathcal P(\kappa)})$ is the exchange graph of reachable support $\tau$-tilting pairs. The following result is a direct corollary of \Cref{cor: g-vector equality}.

\begin{corollary}\label{cor: isomorphism stability chambers and stau}
The equality $\mathcal C_{\mathcal M_t} = \mathcal C_t$ for any $t\in \mathbb T_n$ induces a bijection between $\Delta_\kappa^+$ and $\rstau{\mathcal P(\kappa)}$ and an isomorphism between graphs $\mathbf E(\Delta_\kappa^+)$ and $\mathbf E(\rstau {\mathcal P(\kappa)})$.
\end{corollary}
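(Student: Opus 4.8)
The plan is to make the bijection explicit and then check it is a graph isomorphism, everything being a formal consequence of the identity $\mathcal C_{\mathcal M_t}=\mathcal C_t$ established in \Cref{cor: g-vector equality}, combined with the injectivity of the $\mathbf g$-vector map in \Cref{thm: AIR}. First I would define $\Delta_\kappa^+\to\rstau{\mathcal P(\kappa)}$ by sending $\mathcal C\in\Delta_\kappa^+$ to $\mathcal M_t$ whenever $\mathcal C=\mathcal C_t$; by \Cref{cor: g-vector equality} this cone equals $\mathcal C_{\mathcal M_t}$, which is simplicial of maximal dimension with extreme rays generated precisely by the $\mathbf g$-vectors $\mathbf g(\mathcal M_{i;t})$ of the indecomposable summands of $\mathcal M_t$ (using also \Cref{thm: g-vectors are rays of stability chambers}). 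Hence a cone $\mathcal C\in\Delta_\kappa^+$ determines the set $\{\mathbf g(\mathcal M_{i;t})\}_i$, and by the injectivity of $\mathcal M\mapsto\mathbf g(\mathcal M)$ on indecomposable $\tau$-rigid pairs it determines these summands up to isomorphism, hence $\mathcal M_t$ up to isomorphism; so $\mathcal C_t=\mathcal C_s$ as subsets of $M_\mathbb R$ implies $\mathcal M_t\cong\mathcal M_s$. The converse is immediate: $\mathcal M_t\cong\mathcal M_s$ gives $\mathcal C_{\mathcal M_t}=\mathcal C_{\mathcal M_s}$, i.e.\ $\mathcal C_t=\mathcal C_s$. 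Thus ``same chamber'' and ``isomorphic support $\tau$-tilting pair'' define the same equivalence relation on $\mathbb T_n$, and since every element of $\rstau{\mathcal P(\kappa)}$ is by definition some $\mathcal M_t$, the assignment descends to a bijection $\Delta_\kappa^+\xrightarrow{\ \sim\ }\rstau{\mathcal P(\kappa)}$.

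Next I would verify that this bijection identifies edge sets. For an edge $t\frac{k}{\quad\quad}t'$ of $\mathbb T_n$, the pairs $\mathcal M_t$ and $\mathcal M_{t'}=\mu_k(\mathcal M_t)$ are AIR-mutations, hence adjacent in $\mathbf E(\rstau{\mathcal P(\kappa)})$, while $\mathcal C_t$ and $\mathcal C_{t'}$ share the facet $\overline{\mathfrak d}_{k;t}$ by \Cref{rmk: chamber and intersection}, hence are adjacent in $\mathbf E(\Delta_\kappa^+)$. For the converse, suppose $\mathcal C_t\neq\mathcal C_s$ are chambers meeting in codimension one; since the cones $\mathcal C_{\mathcal M}$ form a fan and both are maximal, their intersection is a common facet $F$ of dimension $n-1$, and as each cone is simplicial, $F$ is spanned by $n-1$ of the extreme rays of $\mathcal C_t$ and by $n-1$ of those of $\mathcal C_s$. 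By \Cref{cor: g-vector equality} these rays are $\mathbf g$-vectors of indecomposable summands, so the primitive generators of $F$ form simultaneously a size-$(n-1)$ subset of $\{\mathbf g(\mathcal M_{i;t})\}$ and of $\{\mathbf g(\mathcal M_{j;s})\}$; by the injectivity in \Cref{thm: AIR} the corresponding $n-1$ indecomposable $\tau$-rigid pairs coincide, and their direct sum is a basic almost complete support $\tau$-tilting pair $\mathcal N$ that is a common summand of $\mathcal M_t$ and $\mathcal M_s$. Thus both $\mathcal M_t$ and $\mathcal M_s$ are completions of $\mathcal N$; by \Cref{thm: almost complete pair has two completions} there are exactly two such completions, and since $\mathcal M_t\not\cong\mathcal M_s$ they are AIR-mutations of one another, giving an edge of $\mathbf E(\rstau{\mathcal P(\kappa)})$. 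Hence the bijection is a graph isomorphism.

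The only step that needs a little care is this converse direction of the edge correspondence: reading off the shared facet of two chambers as a shared set of $n-1$ $\mathbf g$-vectors, and then promoting this to a genuine common almost complete support $\tau$-tilting summand so that \Cref{thm: almost complete pair has two completions} applies. This is exactly where \Cref{cor: g-vector equality} and the injectivity of $\mathcal M\mapsto\mathbf g(\mathcal M)$ are both used in an essential way; the remaining assertions are formal once the identification $\mathcal C_{\mathcal M_t}=\mathcal C_t$ is in hand.
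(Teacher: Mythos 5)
Your proof is correct, and it fills in the details that the paper leaves implicit when it declares this a "direct corollary" of the $\mathbf g$-vector identity; the bijection via injectivity of $\mathcal M\mapsto\mathbf g(\mathcal M)$ and the edge correspondence via the almost-complete-pair theorem are exactly the ingredients the paper has already set up in its definitions of $\Delta_\kappa^+$ and $\mathbf E(\rstau{A})$.

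One small remark: the converse direction of the edge correspondence (shared facet $\Rightarrow$ AIR-mutation), which you flag as the place needing care, is actually already baked into the paper's definition of $\mathbf E(\rstau{A})$ as the dual graph of the $\tau$-tilting fan together with the sentence identifying dual-graph adjacency with AIR-mutation (quoting Demonet--Iyama--Jasso). So the paper's reader is expected to take that equivalence as given, whereas you re-derive it from \Cref{thm: AIR} and \Cref{thm: almost complete pair has two completions}; your derivation is correct and makes the dependence on the simplicial fan structure explicit, but it is not logically new relative to what the paper has already stated. Aside from that, your argument matches the intended one.
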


Let $\sigma = \kappa(t)$ be the labeled triangulation associated to $t\in \mathbb T_n$ (with $\kappa$ associated to $t_0\in \mathbb T_n$). One could consider all the previous constructions with respect to $\sigma$ and $t\in \mathbb T_n$ as the root.

\begin{remark}\label{eq: isomorphism stau graphs}
Composing the isomorphism \Cref{cor: isomorphism stability chambers and stau} with \Cref{prop: isomorphism of graphs of chambers}, we obtain an isomorphism of graphs
\begin{equation}
T_{t}^{\kappa; t_0} \colon \mathbf E(\rstau{\mathcal P(\sigma)}) \xrightarrow{\sim} \mathbf E(\rstau {\mathcal P(\kappa)})
\end{equation}
such that $T_t^{\kappa;t_0}(\mathcal M_{s}^{\sigma;t}) = \mathcal M_s^{\kappa;t_0}$ for any $s\in \mathbb T_n$.
\end{remark}

\section{Cluster variables as Caldero--Chapoton functions}\label{section: caldero chapoton}

In this section, we prove that the Caldero--Chapoton functions of reachable $\tau$-rigid pairs are cluster monomials of the generalized cluster algebra $\mathcal A(\kappa)$, leading to the main result \Cref{thm: main theorem}.

\subsection{Caldero--Chapoton function}
Let $A$ be a finite dimensional algebra over $\mathbb C$ of the form $\mathbb C\langle Q\rangle/I$ with $I$ admissible and $Q_0 = \{1, 2, \dots, n\}$. An $A$-module $M$ can be equivalently viewed as a finite dimensional representation of $Q$ satisfying relations given by $I$.

\begin{definition}\label{def: quiver grassmannian}
	For $M\in \modu A$ and $\mathbf d = (d_1, d_2, \dots, d_n) \in \mathbb N^n$, we define the \emph{quiver Grassmannian} $\operatorname{Gr}(M, \mathbf d)$ to be the projective $\mathbb C$-scheme whose $\mathbb C$-points parametrize the quotient representations of $M$ of dimension vector $\mathbf d$.
\end{definition}

\begin{definition}\label{def: f-poly}
The \emph{$F$-polynomial} of $M\in \modu A$ is defined to be
\begin{equation}\label{eq: f-poly}
F_M(y_1, \dots , y_n) = \sum_{\mathbf d\in \mathbb N^n} \chi(\mathrm{Gr}(M,\mathbf d)) \cdot y^\mathbf d \in \mathbb Z[y_1, y_2, \dots, y_n]   
\end{equation}
where $y^\mathbf d = \prod_{i=1}^n y_i^{d_i}$. Here $\chi(\cdot)$ takes the Euler characteristic of the analytic topology of $X(\mathbb C)$ of some finite type $\mathbb C$-scheme $X$.
\end{definition}

Now consider the algebra $\mathcal P(\kappa)$ associated to some labeled triangulation $\kappa$ of $\surf$. We choose the field $\Bbbk$ to be $\mathbb C$.

\begin{definition}\label{def: cc function}
Let $(M,P)$ be a $\tau$-rigid pair in $\modu \mathcal P(\kappa)$. We define the \emph{Caldero--Chapoton function} of the pair $(M,P)$ to be
\[
CC_\kappa(M,P) = x^{\mathbf g(M,P)}\cdot F_M(\hat y_1, \dots, \hat y_n) \in \mathbb Z[x_1^\pm, \dots, x_n^\pm].
\]
where $x^m = \prod_{i=1}^nx_i^{m_i}$ for an integral vector $m = (m_1, \dots, m_n)$.
\end{definition}

Consider the generalized cluster algebra $\mathcal A(\kappa)$ with initial cluster variables $x_1, x_2, \dots, x_n$.

\begin{theorem}\label{thm: cc function}
For any $t\in \mathbb T_n$ and $i\in \{1, 2, \dots, n\}$, let $\mathcal M_{i; t}$ be the corresponding indecomposable $\tau$-rigid pair of $\modu \mathcal P(\kappa)$. Then the Caldero--Chapoton function of $\mathcal M_{i;t}$ is equal to the cluster variable $x_{i;t}$, i.e.,
\[
CC_\kappa(\mathcal M_{i;t}) = x_{i;t}\in \mathcal A(\kappa).
\]
\end{theorem}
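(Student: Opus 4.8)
The strategy is to compare the Caldero--Chapoton function $CC_\kappa(\mathcal M_{i;t})$ with the $\mathbf g$-vector/$F$-polynomial expansion of $x_{i;t}$ supplied by \Cref{thm:F-pols-are-pols-and-expansion-of-gen-cluster-vars}, namely $x_{i;t} = x^{\mathbf g_{i;t}}\cdot F_{i;t}(\hat y_1,\dots,\hat y_n)$. By \Cref{cor: g-vector equality} we already know that the two $\mathbf g$-vectors agree: $\mathbf g(\mathcal M_{i;t}) = \mathbf g_{i;t}$. Therefore, since both sides are Laurent monomials in the $x_j$ times a polynomial in the $\hat y_j$ with the same leading monomial, and since the $\hat y_j$ are algebraically independent, it suffices to prove the identity of $F$-polynomials
\[
F_{\mathcal M_{i;t}}(y_1,\dots,y_n) = F_{i;t}(y_1,\dots,y_n),
\]
where the left-hand side is the quiver-Grassmannian generating function of \Cref{def: f-poly} for the module underlying $\mathcal M_{i;t}$, and the right-hand side is the recursively defined $F$-polynomial of $\mathcal A(\kappa)$ from \Cref{section: generalized cluster algebra}.

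The bridge between these two is \Cref{cor: f-polynomial from stability}, which says that the stability scattering diagram group element $\mathfrak p_t = I(\mathfrak p_t^{\mathrm{Hall}})$ acts by $\mathfrak p_t(x^{\mathbf g_{i;t}}) = x^{\mathbf g_{i;t}}\cdot F_{i;t}(y)$. So the plan is to identify $F_{\mathcal M_{i;t}}$ with this same scattering-diagram output. First I would invoke \Cref{lemma: consistency and torsion class}, which identifies $\mathfrak p_t^{\mathrm{Hall}}$ with $1_{\mathcal T(\theta)} = [\mathfrak M_{\mathcal T(\theta)}\subset \mathfrak M]$ for $\theta\in\mathcal C_t^\circ$, together with \Cref{prop: two definitions of a torsion class}, which says $\mathcal T(\theta) = \operatorname{Fac}\mathcal M_{i;t}$'s ambient torsion class $\operatorname{Fac}\mathcal M_t$ for $\theta$ in the interior of $\mathcal C_{\mathcal M_t} = \mathcal C_t$ (the equality of cones being \Cref{cor: g-vector equality}). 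Thus $\mathfrak p_t$ is the integration of the Hall-algebra element attached to the torsion class $\operatorname{Fac}\mathcal M_t$. The next step is the standard computation identifying the action of $I([\mathfrak M_{\mathcal T}\subset\mathfrak M])$ on a monomial $x^m$ with a product/quotient of $F$-polynomials of the indecomposable $\mathrm{Ext}$-projective objects of the torsion class $\mathcal T$: concretely, $\mathfrak p_t(y^{\mathbf c_{j;t}})$ is governed by $F_{\mathcal M_{j;t}}$ for each $j$, and applying $\mathfrak p_t$ to $x^{\mathbf g_{i;t}}$ reads off $F_{\mathcal M_{i;t}}$ on the nose. This uses Bridgeland's framed-moduli formula \Cref{thm: bridgeland wall crossing}: the Euler characteristics $\chi(F(d,m,\theta))$ of the framed moduli, for a generic $\theta$ on the wall $\mathbf g(\mathcal M_{i;t})^\perp$ adjacent to $\mathcal C_t$, coincide with the Euler characteristics $\chi(\mathrm{Gr}(M,\mathbf d))$ of the quiver Grassmannians of the rigid module $M$ underlying $\mathcal M_{i;t}$, because for a $\tau$-rigid (hence rigid, $\mathrm{Ext}$-projective in its torsion class) module the framed stable moduli are smooth and isomorphic to the quiver Grassmannians. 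Assembling these identifications gives $F_{\mathcal M_{i;t}} = F_{i;t}$, and combined with the $\mathbf g$-vector equality, $CC_\kappa(\mathcal M_{i;t}) = x^{\mathbf g_{i;t}}F_{i;t}(\hat y) = x_{i;t}$, with an induction on the distance from $t$ to $t_0$ in $\mathbb T_n$ to set up the comparison cleanly (base case $t = t_0$ being $CC_\kappa(0,P_i) = x_i$).

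The main obstacle I anticipate is the precise identification, at each wall $\mathfrak d_{i;t}$, of the framed moduli scheme $F(d, e_i^*, \theta)$ appearing in \Cref{thm: bridgeland wall crossing} with the quiver Grassmannian $\mathrm{Gr}(M_{i;t}, \mathbf d)$ of the module underlying $\mathcal M_{i;t}$; this requires knowing that $\mathcal M_{i;t}$ (or more precisely the generator of the wide subcategory $T_t(\ab{S_i})$, which by \Cref{rmk: perpendicular category} and \Cref{con: semistable on a cluster wall} is a brick isomorphic to a simple over $\mathbb C$ or over $\mathbb C[\varepsilon]/\varepsilon^2$) is $\tau$-rigid, so that its endomorphism algebra and $\mathrm{Ext}^1$ vanishing force the framing map to pin down a unique point in each stratum, making $F(d,e_i^*,\theta)\cong \mathrm{Gr}(M_{i;t},\mathbf d)$ as schemes. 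Handling the pending case — where the wall polynomial is the trinomial $1+y^d+y^{2d}$ of \Cref{thm: cluster complex structure}(2b) rather than the binomial $1+y^d$ — will need the corresponding count over $\mathbb C[\varepsilon]/\varepsilon^2$ and must be checked to match the trinomial exchange relation \eqref{def:exchange-formula-for-cluster-vars-in-terms-of-arcs}; but this is exactly the content already built into \Cref{cor: f-polynomial from stability}, so the residual work is bookkeeping with $\mathbf g$- and $\mathbf c$-vector recursions rather than new geometric input.
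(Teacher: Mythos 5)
Your overall structure matches the paper's: reduce via \Cref{cor: g-vector equality} and \Cref{thm:F-pols-are-pols-and-expansion-of-gen-cluster-vars} to showing $F_{\mathcal M_{i;t}}=F_{i;t}$, then bridge the two through the scattering-diagram element $\mathfrak p_t$ using \Cref{cor: f-polynomial from stability} on one side and a Hall-algebra computation of $\mathfrak p_t(x^{\mathbf g_{i;t}})$ on the other, invoking \Cref{lemma: consistency and torsion class} and \Cref{prop: two definitions of a torsion class}. That much is correct and is exactly what the paper does.

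The gap is in the final computational step, where you propose to identify $\mathfrak p_t(x^{\mathbf g_{i;t}})$ with $x^{\mathbf g_{i;t}}\cdot F_{\mathcal M_{i;t}}(y)$. You attribute this to Bridgeland's framed-moduli formula (\Cref{thm: bridgeland wall crossing}), claiming that for a generic $\theta$ on the wall $\mathfrak d_{i;t}$ adjacent to $\mathcal C_t$ the framed moduli scheme $F(d,e_i^*,\theta)$ is isomorphic to $\operatorname{Gr}(M_{i;t},\mathbf d)$. This is false. For $\theta$ in the interior of $\mathfrak d_{i;t}$, the $\theta$-semistable subcategory is $T_t(\ab{S_i})$, a wide subcategory equivalent to $\modu\mathbb C$ or $\modu\mathbb C[\varepsilon]/\varepsilon^2$ with one brick of dimension vector $d=\mathbf n_{i;t}$ (see \Cref{prop: function value of phi} and \Cref{con: semistable on a cluster wall}); the framed moduli at that wall is therefore supported only on multiples of $d$ and yields the binomial $1+y^d$ or the trinomial $1+y^d+y^{2d}$ of \Cref{thm: cluster complex structure}(2). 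It does not produce $F_{\mathcal M_{i;t}}$, whose quiver Grassmannians $\operatorname{Gr}(M_{i;t},\mathbf d)$ are taken over all $\mathbf d\le\rank M_{i;t}$, most of which are not multiples of $d$. Composing the single-wall framed-moduli contributions along a path to $\mathcal C_t$ also does not help: that composition is precisely the content of \Cref{cor: f-polynomial from stability} and returns the recursively defined cluster $F$-polynomial $F_{i;t}$, so you would be going in a circle. What is actually needed is the Nagao-type motivic Hall algebra identity
\[
\sum_{\mathbf d\in N^\oplus}\mathbb L^{-m(\mathbf d)}\,1_{\mathcal T(\theta),\mathbf d}
=1_{\mathcal T(\theta)}*\sum_{\mathbf d\in N^\oplus}[\operatorname{Gr}(M_{i;t},\mathbf d)\to\mathfrak M_{\mathbf d}],
\]
proved in the paper as \Cref{lemma: hall algebra identity} by describing the stack fitting in the relevant Cartesian square as a $\mathfrak{Hom}$-stack and using the AIR formula $-\mathbf g(M_{i;t})(\mathbf d)=\hom(M_{i;t},T)-\hom(T,\tau M_{i;t})$ together with $\tau$-rigidity to compute the fiber dimensions. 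This identity is the genuine new input that extracts the quiver Grassmannian of $M_{i;t}$ from the torsion-class element $1_{\mathcal T(\theta)}$, and it is not a consequence of the framed-moduli formula at a single wall.
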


\begin{proof}
By \Cref{thm:F-pols-are-pols-and-expansion-of-gen-cluster-vars}, a cluster variable $x_{i;t}$ is determined by its $\mathbf g$-vector and $F$-polynomial as
\[
x_{i;t} = x^{\mathbf g_{i;t}}\cdot F_{i;t}(\hat y_1, \dots, \hat y_n).
\]
It is shown in \Cref{cor: g-vector equality} that $\mathbf g(\mathcal M_{i; t}) = \mathbf g_{i;t}$. By \Cref{def: cc function}, it suffices to show that 
\[
    F_{\mathcal M_{i;t}}(y) = F_{i;t}(y) \in \mathbb Z[y_1, \dots, y_n]
\]
(if $\mathcal M_{i;t}$ is of the form $(0,P)$, define $F_{(0,P)}$ to be $1$). Consider the cone $\mathcal C_t \in \Delta^+_\kappa$ and let $\theta$ be in the interior of $\mathcal C_t$. By \Cref{cor: f-polynomial from stability}, we only need to show the identity
\begin{equation}\label{eq: identity to prove}
    \mathfrak p_t(x^{\mathbf g_{i;t}}) = x^{\mathbf g_{i;t}} \cdot F_{\mathcal M_{i;t}}(y).
\end{equation}
By the consistency of $\phi_\kappa^\mathrm{Hall}$ (\Cref{lemma: consistency and torsion class}), we have $\mathfrak p^\mathrm{Hall}_t = 1_{\mathcal T(\theta)}$ in the motivic Hall algebra. Denote $m = \mathbf g_{i;t}\in M$. We compute the action of $\mathfrak p_{t}^\mathrm{Hall}$ on $x^m$
\begin{equation}\label{eq: action of torsion class}
\mathfrak p_{t}^\mathrm{Hall}(x^m) = 1_{\mathcal T(\theta)}^{-1} * x^m * 1_{\mathcal T(\theta)} = 1_{\mathcal T(\theta)}^{-1} * \left( \sum_{\mathbf d\in N^\oplus} \mathbb L^{-m(\mathbf d)} 1_{\mathcal T(\theta), \mathbf d} \right) * x^m.    
\end{equation}

Suppose that $\mathcal M_{i;t}$ is of the form $(0,P)$. Then there is no morphism from the projective module $P$ to any object in $\mathcal T(\theta) = \mathrm{Fac}(\mathcal M_{t})$ (by \Cref{prop: two definitions of a torsion class}) because of the $\tau$-rigidity of $\mathcal M_t$. Thus we have $m(\mathbf d) = \hom (P, T) = 0$ for $T\in \mathcal T(\theta)$ with dimension vector $\mathbf d$. Then $\mathfrak p_t^\mathrm{Hall}(x^m) = x^m$. Applying the integration map and by \Cref{lemma: Poisson homomorphism}, we get $\mathfrak p_t(x^m) = x^m$ as desired in (\ref{eq: identity to prove}). 

Suppose that $\mathcal M_{i;t}$ is of the form $(M_{i;t}, 0)$. We use the following identity provided by \Cref{lemma: hall algebra identity} in the motivic Hall algebra:
\begin{equation}\label{eq: nagao identity}
    \sum_{\mathbf d\in N^\oplus} \mathbb L^{-m(\mathbf d)} 1_{\mathcal T(\theta), \mathbf d} = 1_{\mathcal T(\theta)} * \sum_{\mathbf d\in N^\oplus}  [\operatorname{Gr}(M_{i; t}, \mathbf d)\rightarrow \mathfrak M_\mathbf d].
\end{equation}
Applying the identity to the right side of (\ref{eq: action of torsion class}) gives
\[
\mathfrak p_t^\mathrm{Hall}(x^m) = \left( \sum_{\mathbf d\in N^\oplus}  [\operatorname{Gr}(M_{i; t}, \mathbf d)\rightarrow \mathfrak M_\mathbf d] \right) * x^m.
\]
Finally using the integration map, we have
\[
\mathfrak p_t(x^m) \overset{{\Cref{lemma: Poisson homomorphism}}}{ = } I\left( \mathfrak p_t^\mathrm{Hall}(x^m) \right) = x^m \cdot \left( \sum_{\mathbf d\in N^\oplus} \chi(\mathrm{Gr}(M_{i;t}, \mathbf d))y^\mathbf d\right) = x^m \cdot F_{M_{i;t}}(y).
\]
\end{proof}

We show the following identity in the motivic Hall algebra to complete the proof of \Cref{thm: cc function}.
\begin{lemma}\label{lemma: hall algebra identity}
$\sum\limits_{\mathbf d\in N^\oplus} \mathbb L^{-m(\mathbf d)} 1_{\mathcal T(\theta), \mathbf d} = 1_{\mathcal T(\theta)} * \sum\limits_{\mathbf d\in N^\oplus}  [\operatorname{Gr}(M_{i; t}, \mathbf d)\rightarrow \mathfrak M_\mathbf d]$ where $m = \mathbf g(M_{i;t})$.
\end{lemma}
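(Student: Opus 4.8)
The plan is to prove the identity by realizing both sides as the class, in the completed Hall algebra $\hat H(\modu \mathcal P(\kappa))$, of one and the same algebraic stack over $\mathfrak M$. Write $M\coloneqq M_{i;t}$ and $m\coloneqq \mathbf g(M)$, let $T_t$ be the module part of $\mathcal M_t$, and note that by \Cref{cor: g-vector equality} the stability condition $\theta$ lies in the interior of $\mathcal C_{\mathcal M_t}$, so $\mathcal T(\theta)=\operatorname{Fac}\mathcal M_t$ by \Cref{prop: two definitions of a torsion class}. The stack in question, call it $\mathcal E$, parametrizes pairs $(E,f)$ with $E\in\mathcal T(\theta)$ and $f\colon M\to E$ a morphism of $\mathcal P(\kappa)$-modules, with structure morphism $(E,f)\mapsto E$.

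First I would identify the left-hand side with $[\mathcal E\to\mathfrak M]$. Since $M$ is a direct summand of the $\tau$-rigid module $T_t$, its Auslander--Reiten translate $\tau M$ is a summand of $\tau T_t$, so $\operatorname{Hom}(T_t,\tau M)\subseteq\operatorname{Hom}(T_t,\tau T_t)=0$; as every object of $\operatorname{Fac}\mathcal M_t=\mathcal T(\theta)$ is a quotient of a power of $T_t$ and $\operatorname{Hom}(-,\tau M)$ is left exact, this gives $\operatorname{Hom}(E,\tau M)=0$ for all $E\in\mathcal T(\theta)$. Applying $\operatorname{Hom}(-,E)$ to a minimal projective presentation $Q_1\to Q_0\to M\to 0$ and using the standard exact sequence
\[
0\to\operatorname{Hom}(M,E)\to\operatorname{Hom}(Q_0,E)\to\operatorname{Hom}(Q_1,E)\to D\operatorname{Hom}(E,\tau M)\to 0
\]
(with $D$ the $\Bbbk$-dual), the last term vanishes for $E\in\mathcal T(\theta)$, whence $\hom(M,E)=\hom(Q_0,E)-\hom(Q_1,E)=-m(\underline{\dim}\,E)$. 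Thus $\hom(M,E)$ is constant on each $\mathfrak M_{\mathcal T(\theta),\mathbf d}$ with value $-m(\mathbf d)$, the universal $\operatorname{Hom}$-sheaf is a vector bundle of that rank there, and its total space is precisely the $\mathbf d$-part of $\mathcal E$; summing over $\mathbf d$ gives $[\mathcal E\to\mathfrak M]=\sum_{\mathbf d\in N^\oplus}\mathbb L^{-m(\mathbf d)}\,1_{\mathcal T(\theta),\mathbf d}$.

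Next I would identify the right-hand side with $[\mathcal E\to\mathfrak M]$ too. Unwinding the convolution product recalled in \Cref{section: hall algebras} (in which the first factor controls the quotient term and the second the sub term of a short exact sequence), the element $1_{\mathcal T(\theta)}*\sum_{\mathbf d}[\operatorname{Gr}(M,\mathbf d)\to\mathfrak M_\mathbf d]$ is the class of the stack $\mathcal E'$ of data $\big(M\twoheadrightarrow Q,\ 0\to Q\to E\to T\to 0,\ T\in\mathcal T(\theta)\big)$, with structure morphism sending such data to $E$; here $Q\in\operatorname{Fac} M\subseteq\mathcal T(\theta)$ automatically, so $E\in\mathcal T(\theta)$. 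I would then write down the morphism $\mathcal E\to\mathcal E'$, $(E,f)\mapsto\big(M\twoheadrightarrow\operatorname{im} f,\ 0\to\operatorname{im} f\hookrightarrow E\twoheadrightarrow\operatorname{coker} f\to 0,\ \operatorname{coker} f\big)$ --- well defined because $\operatorname{coker} f$ is a quotient of $E\in\mathcal T(\theta)$ and a torsion class is closed under quotients --- and its inverse $\mathcal E'\to\mathcal E$, $\big(M\twoheadrightarrow Q,\ 0\to Q\hookrightarrow E\to T\to 0,\ T\big)\mapsto\big(E,\ (M\twoheadrightarrow Q\hookrightarrow E)\big)$. These are inverse to each other and commute with the projections to $\mathfrak M$, so $\mathcal E\cong\mathcal E'$ over $\mathfrak M$ and the two sides of the lemma coincide.

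The conceptual content is entirely in the third paragraph's observation that a morphism out of $M$ is the same datum as a surjection of $M$ onto a submodule of the target together with the inclusion of that submodule; the main obstacle is the routine-but-careful verification that this upgrades to an isomorphism of algebraic stacks and that the vector-bundle claim of the second paragraph is legitimate --- i.e.\ that the universal $\operatorname{Hom}$-sheaf on $\mathfrak M_{\mathcal T(\theta),\mathbf d}$ is locally free of rank $-m(\mathbf d)$, which follows from the constancy of $\hom(M,E)$ via cohomology and base change on the representation schemes underlying the presentations $\mathfrak M_\mathbf d=[\mathrm R_\mathbf d/\mathrm{GL}_\mathbf d]$, and that $\operatorname{im}$, $\operatorname{coker}$ and the induced filtrations are well behaved in flat families. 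These are exactly the kinds of checks carried out in \cite{bridgeland2016scattering} and \cite{joyce2007configurations}.
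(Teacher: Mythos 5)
Your argument follows essentially the same route as the paper: both rewrite the right-hand side via the Cartesian-square description of the convolution product, identify the resulting stack of short exact sequences with the stack $\mathfrak{Hom}(M_{i;t},\mathcal T(\theta))$ by factoring a morphism through its image, and compute the fibers over $\mathfrak M_{\mathcal T(\theta),\mathbf d}$ as affine spaces of dimension $-m(\mathbf d)$ using $\operatorname{Hom}(E,\tau M_{i;t})=0$ for $E\in\operatorname{Fac}\mathcal M_t$. The only cosmetic difference is that you re-derive the dimension count from the Auslander--Reiten exact sequence associated to a minimal projective presentation, where the paper simply cites \cite[Proposition 2.4]{adachi2014tau}, and you spell out the inverse stack morphisms rather than just asserting the groupoid equivalence.
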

\begin{proof}
For $M\in \modu A$, denote the disjoint union of quiver Grassmannians of all dimensions by $\mathrm{Gr}(M) \coloneqq \coprod_{\mathbf d\in N^\oplus} \mathrm{Gr}(M,\mathbf d)$. Denote $\mathcal T \coloneqq \mathcal T(\theta)$. So the right-hand side of the identity can be rewritten as $[\mathfrak M_{\mathcal T} \rightarrow \mathfrak M] * [\mathrm{Gr}(M_{i;t}) \rightarrow \mathfrak M] = [Z\xrightarrow{\pi_2\circ h} \mathfrak M]$ where $Z$ fits into the following Cartesian square
\[
\begin{tikzcd}
Z \ar[d]\ar[r, "h"] & \mathfrak M^{(2)} \ar[r, "\pi_2"] \ar[d, "{(\pi_3, \pi_1)}"] & \mathfrak M\\
\mathfrak M_{\mathcal T} \times \operatorname{Gr}(M_{i;t}) \ar[r] & \mathfrak M \times \mathfrak M
\end{tikzcd}
\]
A $\mathbb C$-point of the stack $Z$ by definition represents a short exact sequence in $\modu A$
\[
0\rightarrow N \rightarrow E \rightarrow T \rightarrow 0
\]
where $N$ is a quotient module of $M_{i;t}$ and $T\in \mathcal T$ (which implies $E\in \mathcal T$). The set of morphisms between two such objects are pairs of isomorphisms $(a, b)$ such that the following diagram commutes
\[
\begin{tikzcd}
0 \ar[r] & N \ar[r]\ar[d, "\mathrm{id}"] & E \ar[d, "a"] \ar[r] & T \ar[d, "b"] \ar[r] &0\\
0 \ar[r] & N_1 \ar[r] & E_1 \ar[r] &T_1 \ar[r] &0
\end{tikzcd}.
\]
Notice that here $N = N_1$ are the same quotient module of $M_{i;t}$ (if $N \neq N_1$, the set is empty). Consider another stack $\mathfrak {Hom}(M_{i;t}, \mathcal T)$ (and with a natural map to $\mathfrak M_{\mathcal T}$) parametrizing morphisms from $M_{i;t}$ to $\mathcal T$ (see for example the construction in \cite[Section 6.1(D)]{nagao2013donaldson}). Then there is a morphism of stacks from $Z$ to $\mathfrak {Hom}(M_{i;t}, \mathcal T)$ over $\mathfrak M_{\mathcal T}$ sending a short exact sequence above to the composition $M_{i;t} \rightarrow N \rightarrow E$ which induces an equivalence from the groupoid $Z(\mathbb C)$ to $\mathfrak {Hom}(M_{i;t}, \mathcal T)(\mathbb C)$. Thus we have $[Z\rightarrow \mathfrak M] = [\mathfrak {Hom}(M_{i;t}, \mathcal T) \rightarrow \mathfrak M]$ in the motivic Hall algebra (see \cite[Section 5.2]{bridgeland2016scattering}). 

By \cite[Proposition 2.4]{adachi2014tau}, for any $T\in \mathcal T(\theta)$ of dimension vector $\mathbf d$, we have the formula
\[
-\mathbf g(M_{i;t})(\mathbf d) = \hom{(M_{i;t}, T)} - \hom{(T, \tau(M_{i;t}))}.
\]
However since $T\in \mathcal T = \mathrm{Fac}(\mathcal M_t)$ (by \Cref{prop: two definitions of a torsion class}) and $M_{i;t}$ is $\tau$-rigid, we have $\hom {(T, \tau(M_{i;t}))} = 0$. Thus the fiber at any $\mathbb C$-point in $\mathfrak M_{\mathcal T,d}$ of the map $\mathfrak {Hom}(M_{i;t}, \mathcal T(\theta)) \rightarrow \mathfrak M_\mathcal T$ is isomorphic to $\mathbb C^{-m(d)}$. Therefore we have 
\[
\sum_{\mathbf d\in N^\oplus} \mathbb L^{-m(\mathbf d)} 1_{\mathcal T, \mathbf d} = [\mathfrak {Hom}(M_{i;t}, \mathcal T) \rightarrow \mathfrak M]
\]
which finishes the proof.
\end{proof}

\begin{remark}\label{rmk: cc function of monomials}
The proof of \Cref{thm: cc function} can be easily extended to any reachable $\tau$-rigid pairs in which case we have for $(b_i)_i\in \mathbb N^n$,
\[
CC_\kappa\left(\bigoplus_{i=1}^n \mathcal M_{i;t}^{b_i}\right) = \prod_{i=1}^n x_{i;t}^{b_i} \in \mathcal A(\kappa).
\]
\end{remark}

\subsection{Isomorphism of exchange graphs}

Let us attach $\mathcal M_t\in \rstau{\mathcal P(\kappa)}$ to any $t\in \mathbb T_n$. If $\mathcal M_t \cong \mathcal M_s$, then there exists some permutation of indices $\{1,2,\dots, n\}$ such that their components are isomorphic via this permutation. Then there exists a unique graph automorphism of $\mathbb T_n$ sending $t$ to $s$ that respects the mutations of support $\tau$-tilting pairs. Let $G(\mathcal P(\kappa))\subset \Aut (\mathbb T_n)$ be the subgroup formed by all such automorphisms. We obtain the quotient graph $\mathbb T_n/G(\mathcal P(\kappa))$ isomorphic to $\mathbf E(\rstau{\mathcal P(\kappa)})$. Recall the exchange graph $\mathbf E(B(\kappa)) = \mathbb T_n/G(B(\kappa))$ of the generalized cluster algebra $\mathcal A(\kappa)$ as in \Cref{def:exhchange-graph-unlabeled-seeds}. 

\begin{corollary}\label{prop: cc induces surjective map}
For each $t\in \mathbb T_n$, sending the support $\tau$-tilting pair $\mathcal M_t$ to the labeled seed $$((CC_\kappa(\mathcal M_{1;t}), CC_\kappa(\mathcal M_{2;t}), \dots, CC_\kappa(\mathcal M_{n;t})), B(t))$$ induces a covering of graphs from $\mathbf E(\rstau{\mathcal P(\kappa)})$ to $\mathbf E(B(\kappa))$.
\end{corollary}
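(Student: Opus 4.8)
The plan is to reduce the assertion to the single group‑theoretic inclusion $G(\mathcal P(\kappa))\subseteq G(B(\kappa))$. By \Cref{thm: cc function}, for every $t\in\mathbb T_n$ the labeled seed produced by the prescribed rule out of $\mathcal M_t$ is exactly the labeled seed $(\mathbf x_t,B(t))$ of $\mathcal A(\kappa)$ at $t$ (here $\mathbf x_t=(x_{1;t},\dots,x_{n;t})$), so at the level of the tree $\mathbb T_n$ the assignment is the identity on vertices and is compatible with the flip/mutation in direction $k$ along each edge. Since $\mathbf E(\rstau{\mathcal P(\kappa)})=\mathbb T_n/G(\mathcal P(\kappa))$ and $\mathbf E(B(\kappa))=\mathbb T_n/G(B(\kappa))$, this identity descends to a graph morphism $r\colon\mathbf E(\rstau{\mathcal P(\kappa)})\to\mathbf E(B(\kappa))$ exactly when $G(\mathcal P(\kappa))\subseteq G(B(\kappa))$, and then $q_2=r\circ q_1$ for the two quotient maps $q_1,q_2$ out of $\mathbb T_n$; so this inclusion is the heart of the matter.

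To prove the inclusion I would take an element of $G(\mathcal P(\kappa))$, arising from an isomorphism $\mathcal M_t\cong\mathcal M_s$ realized by a permutation $\sigma$ of $\{1,\dots,n\}$ with $\mathcal M_{i;s}\cong\mathcal M_{\sigma^{-1}(i);t}$, and show that the same graph automorphism lies in $G(B(\kappa))$, i.e.\ that $(\mathbf x_s,B(s))=\sigma\cdot(\mathbf x_t,B(t))$. The cluster part is immediate from \Cref{thm: cc function}, as $x_{i;s}=CC_\kappa(\mathcal M_{i;s})=CC_\kappa(\mathcal M_{\sigma^{-1}(i);t})=x_{\sigma^{-1}(i);t}$. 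For the matrix part I would use \Cref{cor: g-vector equality} together with \Cref{thm: g-vectors are rays of stability chambers} to identify $\mathbf g(\mathcal M_{i;t})$ with the ray generator $\mathbf g_{i;t}$ of the chamber $\mathcal C_t$ and the $\mathbf c$-vector $\mathbf c_{i;t}$ with the dual facet normal, so that the isomorphism forces $\mathbf c_{i;s}=\mathbf c_{\sigma^{-1}(i);t}$. One also needs $r_i=r_{\sigma^{-1}(i)}$, and this comes from \Cref{rmk: perpendicular category} and \Cref{con: semistable on a cluster wall}: the wide subcategory $J_{\mathcal M_t/\mathcal M_{j;t}}$ is $\modu \mathbb{C}$ if the arc $j$ is non‑pending and $\modu \mathbb{C}[\varepsilon]/\varepsilon^2$ if it is pending, and $\mathcal M_s/\mathcal M_{i;s}\cong\mathcal M_t/\mathcal M_{\sigma^{-1}(i);t}$ then forces $i$ and $\sigma^{-1}(i)$ to be simultaneously pending or not. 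Substituting $\mathbf c_{i;s}=\mathbf c_{\sigma^{-1}(i);t}$ and $r_j=r_{\sigma^{-1}(j)}$ into the reconstruction formula of \Cref{cor: c vectors determine b matrix} yields $b_{ij}(s)=\mathbf c_{\sigma^{-1}(i);t}\cdot\overline{B}(\kappa)\cdot(r_{\sigma^{-1}(j)}\,\mathbf c_{\sigma^{-1}(j);t}^{T})=b_{\sigma^{-1}(i)\sigma^{-1}(j)}(t)$, i.e.\ $B(s)=\sigma\cdot B(t)$, as desired.

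It then remains to check that $r$ is actually a covering, not just a surjection. Surjectivity is free from $q_2=r\circ q_1$, so the point is that $r$ is a local isomorphism at every vertex. For this I would first note that $q_1$ and $q_2$ are themselves covering maps of graphs with $\mathbb T_n$ as covering space: the actions of $G(\mathcal P(\kappa))$ and $G(B(\kappa))$ on $\mathbb T_n$ have trivial vertex stabilizers and move no vertex to an adjacent one — a nontrivial permutation cannot fix a labeled seed, resp.\ a support $\tau$-tilting pair, together with its indexing (algebraic independence of the cluster variables; distinctness of the labeled indecomposable summands), and a single mutation/flip always changes the seed, resp.\ the pair — so each of $q_1,q_2$ is a bijection between the star of any vertex of $\mathbb T_n$ and the star of its image. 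Hence $r$, being locally the composite of $q_2$ with a local inverse of $q_1$, restricts to a bijection on the star of every vertex, and is therefore a covering of graphs.

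The one genuinely substantive step is the middle paragraph: passing from the abstract isomorphism $\mathcal M_t\cong\mathcal M_s$ to the precise relation $B(s)=\sigma\cdot B(t)$. This is exactly the place where one has to orchestrate the chamber combinatorics (\Cref{thm: g-vectors are rays of stability chambers}, \Cref{cor: g-vector equality}), the recovery of $B(t)$ from its $\mathbf c$-vectors (\Cref{cor: c vectors determine b matrix}), and the detection of pending arcs by the $\tau$-tilting perpendicular categories (\Cref{rmk: perpendicular category}, \Cref{con: semistable on a cluster wall}); the first and last paragraphs are essentially formal once this is in place.
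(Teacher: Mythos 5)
Your proof is correct and follows the same overall strategy as the paper's: identify each labeled seed $((CC_\kappa(\mathcal M_{i;t}))_i, B(t))$ with $(\mathbf x_t, B(t))$ via \Cref{thm: cc function}, then show $G(\mathcal P(\kappa))\subseteq G(B(\kappa))$ using $\mathbf c$-vectors and \Cref{cor: c vectors determine b matrix}, so that the assignment descends to a map of quotient graphs. You do, however, supply two justifications that the paper's proof leaves implicit. First, when deducing $B(s)=\sigma\cdot B(t)$ from \Cref{cor: c vectors determine b matrix}, you observe that one also needs $r_j=r_{\sigma^{-1}(j)}$, and you correctly extract this from the $\tau$-tilting perpendicular categories ($J_{\mathcal M_t/\mathcal M_{j;t}}$ being $\modu \mathbb C$ versus $\modu \mathbb C[\varepsilon]/\varepsilon^2$ via \Cref{rmk: perpendicular category} and \Cref{con: semistable on a cluster wall}); the paper cites \Cref{cor: c vectors determine b matrix} without flagging this point. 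Second, the paper's proof stops at "hence the result" once the subgroup inclusion is established, while you make explicit why the induced map is a covering rather than merely a surjective graph morphism: both $q_1$ and $q_2$ are coverings since the relevant group actions on $\mathbb T_n$ are vertex-free and never identify adjacent vertices, so $r$ is a local isomorphism. These are genuine additions that tighten the argument, not deviations in method.
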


\begin{proof}
\Cref{thm: cc function} shows that $((CC_\kappa(\mathcal M_{1;t}), CC_\kappa(\mathcal M_{2;t}), \dots, CC_\kappa(\mathcal M_{n;t})), B(t))$ is actually the labeled seed $(\mathbf x_t, B(t))$ of $\mathcal A(\kappa)$. Furthermore if $\mathcal M_t \cong \mathcal M_s$ (so their indecomposable summands are isomorphic respectively via a unique permutation $\sigma$), we know that $\mathbf x_t = \sigma(\mathbf x_s)$ and also by \Cref{cor: c vectors determine b matrix} that $B(s) = \sigma(B(t))$. Thus the group $G(\mathcal P(\kappa))$ is a subgroup of $G(B(\kappa))$, hence the result. We summarize the result in the following commutating diagram.
\[
\begin{tikzcd}[row sep=large,column sep=huge]
(\mathbb T_n, M_t) \ar[r, "{(CC_\kappa(\cdot), B(\cdot))}"] \ar[d, "{\cdot/G(\mathcal P(\kappa))}"] & (\mathbb T_n, (\mathbf x_t, B(t))) \ar[d, "{\cdot/G(B(\kappa))}"]\\
\mathbf E(\rstau{\mathcal P(\kappa)}) \ar[r, two heads] & \mathbf E(B(\kappa))
\end{tikzcd}
\]
\end{proof}

Let $\mathcal M = \mathcal M_1\oplus \mathcal M_2 \oplus \cdots \oplus \mathcal M_n$ be any reachable support $\tau$-tilting pair. Not like $\mathcal M_t$ which comes with the information that its indecomposable summands are indexed by $\{1,2,\dots, n\}$, there is no preferred labeling on the summands $\mathcal M_i$.

By \Cref{rmk: perpendicular category}, the perpendicular category $J_{\mathcal M/ \mathcal M_i}$ must be equivalent to either $\modu \mathbb C$ or $\modu \mathbb C[\varepsilon]/\varepsilon^2$ where the two cases correspond to $r_i = 1$ and $r_i = 2$ respectively. We define the matrix $B(\mathcal M) = (\varepsilon_{ij})$ by 
\[
\varepsilon_{ij} \coloneqq \mathbf c_{i} \cdot \overline{B}(\kappa) \cdot (r_j \mathbf c_j^T)
\]
where $(\mathbf c_i)_i$ is just the dual basis of $(\mathbf g(\mathcal M_i))_i$. If $\mathcal M$ is isomorphic to some $\mathcal M_t$, then again by \Cref{cor: c vectors determine b matrix}, $B(\mathcal M) = \sigma(B(t))$ for the unique permutation $\sigma$ determined by the isomorphism.

\begin{theorem}\label{thm: cc function induces bijection and isomorphism}
Sending the $\tau$-rigid pair $\mathcal M_{i;t}^{\kappa;t_0}$ to its Caldero--Chapoton function $CC_\kappa(\mathcal M_{i;t}^{\kappa;t_0})$ induces a bijection
	\[
	CC_\kappa \colon \{\text{reachable indecomposable $\tau$-rigid pairs over $\mathcal{P}(\kappa)$}\} \longleftrightarrow \{ \text{cluster variables of $\mathcal A(\kappa)$}\}
	\]
Hence the covering of graphs 
\[
    \mathbf E(\rstau{\mathcal P(\kappa)})\twoheadrightarrow\mathbf E(B(\kappa)), \quad \mathcal M\mapsto \left((CC_\kappa(\mathcal M_i))_{i=1}^n, B(\mathcal M)\right)
\]
is actually an isomorphism. Accordingly, for every $k\in\{1,\ldots,n\}$ we have{\small
\begin{equation}\label{eq:CC-functions-satisfy-Chekhov-Shapiro-equation}
CC_\kappa(\mathcal{M}_k)CC_\kappa(\mathcal{M}_k')=\begin{cases}
	    \underset{j:\varepsilon_{jk}<0}{\prod} CC_\kappa(\mathcal{M}_j)^{-\varepsilon_{jk}}+
	    \underset{j:\varepsilon_{jk}>0}{\prod} CC_\kappa(\mathcal{M}_j)^{\varepsilon_{jk}} & \text{if $r_k=1$;}\\
	    \underset{j:\varepsilon_{jk}<0}{\prod} CC_\kappa(\mathcal{M}_j)^{2}
	    +\underset{j:\varepsilon_{jk}>0}{\prod} CC_\kappa(\mathcal{M}_j) \underset{j:\varepsilon_{jk}<0}{\prod} CC_\kappa(\mathcal{M}_j)
	    +\underset{j:\varepsilon_{jk}>0}{\prod} CC_\kappa(\mathcal{M}_j)^{2}
	    & \text{if $r_k=2$},
	    \end{cases}
	\end{equation}}
where, up to isomorphism, $\mathcal{M}_k'$ is the unique indecomposable $\tau$-rigid pair such that
\[
    \mathcal{M}_1\oplus\cdots\mathcal{M}_{k-1}\oplus\mathcal{M}_k'\oplus\mathcal{M}_{k+1}\oplus\cdots\oplus\mathcal{M}_n=\mu_k^{\operatorname{AIR}}(\mathcal{M}_1\oplus\cdots\mathcal{M}_{k-1}\oplus\mathcal{M}_k\oplus\mathcal{M}_{k+1}\oplus\cdots\oplus\mathcal{M}_n)
\]
\end{theorem}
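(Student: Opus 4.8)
The plan is to derive this theorem as a formal consequence of the machinery already assembled, in particular \Cref{thm: cc function}, \Cref{cor: g-vector equality}, \Cref{thm: AIR}, and \Cref{cor: c vectors determine b matrix}. First I would establish that $CC_\kappa$ is a bijection on the level of indecomposable objects. Injectivity is immediate: by \Cref{cor: g-vector equality} the $\mathbf g$-vector of a reachable indecomposable $\tau$-rigid pair $\mathcal M_{i;t}$ equals $\mathbf g_{i;t}$, and distinct cluster variables have distinct $\mathbf g$-vectors by \Cref{thm:F-pols-are-pols-and-expansion-of-gen-cluster-vars} together with sign-coherence (the $\mathbf g$-vectors of cluster variables are exactly the rays $\mathbf r_{i;t}$ of the chambers $\mathcal C_t$ by \Cref{thm: g-vectors are rays of stability chambers}), while the map $\mathcal M \mapsto \mathbf g(\mathcal M)$ is injective by \Cref{thm: AIR}(3); since $CC_\kappa(\mathcal M_{i;t})$ records $\mathbf g(\mathcal M_{i;t})$ as its leading exponent and $\mathbf g(\mathcal M_{i;t}) = \mathbf g_{i;t}$, equality of Caldero--Chapoton functions forces equality of $\mathbf g$-vectors hence of the pairs. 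Surjectivity is exactly the content of \Cref{thm: cc function}: every cluster variable $x_{i;t}$ is of the form $CC_\kappa(\mathcal M_{i;t})$, and by \Cref{thm: FST and CS} every cluster variable of $\mathcal A(\kappa)$ arises as some $x_{i;t}$.

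Next I would promote this to the graph isomorphism. The covering $\mathbf E(\rstau{\mathcal P(\kappa)}) \twoheadrightarrow \mathbf E(B(\kappa))$ is already produced in \Cref{prop: cc induces surjective map}, so it remains to check it is injective on vertices, i.e. that two reachable support $\tau$-tilting pairs $\mathcal M_t, \mathcal M_s$ whose associated labeled seeds $(\mathbf x_t, B(t))$ and $(\mathbf x_s, B(s))$ coincide up to a permutation $\sigma$ must themselves be isomorphic. But $\mathbf x_t$ and $\mathbf x_s$ being equal up to $\sigma$ means $CC_\kappa(\mathcal M_{i;t}) = CC_\kappa(\mathcal M_{\sigma(i);s})$ for all $i$; by the bijectivity just established (applied summand-wise), $\mathcal M_{i;t} \cong \mathcal M_{\sigma(i);s}$, hence $\mathcal M_t \cong \mathcal M_s$. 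Thus the covering is a bijection on vertices, and since it is already a covering (local isomorphism) of connected graphs, it is an isomorphism. The compatibility $B(\mathcal M) = \sigma(B(t))$ when $\mathcal M \cong \mathcal M_t$ is \Cref{cor: c vectors determine b matrix} (via the definition of $B(\mathcal M)$ in terms of the dual basis $\mathbf c_i$ of the $\mathbf g(\mathcal M_i)$), which is what makes the association $\mathcal M \mapsto ((CC_\kappa(\mathcal M_i))_i, B(\mathcal M))$ well-defined independently of a labeling of the summands.

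Finally, the exchange relation \eqref{eq:CC-functions-satisfy-Chekhov-Shapiro-equation} follows by transporting the generalized cluster mutation formula of \Cref{thm: FST and CS} (equation \eqref{def:exchange-formula-for-cluster-vars-in-terms-of-arcs}, in the form \eqref{eq:generalized-cluster-mutation} with the data \eqref{eq:gen-mut-rul-in-our-particular-setting}, so $r_k \in \{1,2\}$ and $\theta_k = \sum_{\ell} u^\ell v^{r_k - \ell}$) across the isomorphism. Concretely: fix $\mathcal M = \bigoplus_i \mathcal M_i$ reachable support $\tau$-tilting and let $\mathcal M_k' $ be the AIR-mutation of $\mathcal M_k$; choosing $t$ with $\mathcal M \cong \mathcal M_t$ and $k$ the corresponding index, the graph isomorphism identifies the AIR-mutation $\mu_k^{\operatorname{AIR}}$ with the generalized seed mutation $\mu_k$, so $CC_\kappa(\mathcal M_k') = x_{k;t'}$ where $t' \frac{k}{\quad} t$, and then \eqref{def:exchange-formula-for-cluster-vars-in-terms-of-arcs} with $B(\sigma) = B(\mathcal M) = (\varepsilon_{ij})$ and $r_k = d_k$ yields precisely the two cases displayed (the $r_k = 1$ case being the binomial exchange, the $r_k = 2$ case the trinomial one). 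The main obstacle here is almost entirely bookkeeping: matching the exponents $[b_{jk}]_\pm$, $[-b_{jk}]_\pm$ in \eqref{def:exchange-formula-for-cluster-vars-in-terms-of-arcs} against the $\varepsilon_{jk}$ of $B(\mathcal M)$ and confirming that the sign conventions for $\mathbf g$-vectors (opposite to \cite{adachi2014tau}, as flagged after \Cref{thm: AIR}) and for $B(t)$ are consistent so that \Cref{cor: c vectors determine b matrix} delivers $B(\mathcal M)$ with the right signs; none of the individual steps requires new ideas, but the verification that the AIR-mutation of \Cref{thm: almost complete pair has two completions} corresponds under \Cref{cor: isomorphism stability chambers and stau} to the flip/seed mutation at the matching index — which is where the $\mathbf g$-fan geometry of \Cref{cor: g-vector equality} and \Cref{rmk: chamber and intersection} does the real work — should be spelled out carefully.
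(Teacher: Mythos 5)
The overall architecture of your proposal — surjectivity from \Cref{thm: cc function}, an injectivity check, upgrading to a graph isomorphism, and then transporting the cluster exchange relations — matches the paper's. The surjectivity, graph isomorphism, and exchange-relation steps are all essentially what the paper does. The difficulty is in the injectivity step, and there your argument has a genuine gap.

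You assert that ``$CC_\kappa(\mathcal M_{i;t})$ records $\mathbf g(\mathcal M_{i;t})$ as its leading exponent,'' so that equal Caldero--Chapoton functions have equal $\mathbf g$-vectors, and then invoke \Cref{thm: AIR}(3). But there is no canonical term order on Laurent monomials in which $x^{\mathbf g}$ is ``the leading one'' inside $x^{\mathbf g}\cdot F_M(\hat y_1,\dots,\hat y_n)$: the substituted monomials $x^{\overline B\mathbf d}$ for $\mathbf d\geq 0$ are not sign-definite, and a priori $x^{\mathbf g_1}F_1(\hat y)=x^{\mathbf g_2}F_2(\hat y)$ could hold with $\mathbf g_1\neq\mathbf g_2$. (Even your phrasing ``distinct cluster variables have distinct $\mathbf g$-vectors'' is the converse of what you need.) The well-definedness of $\mathbf g$-vectors of coefficient-free cluster variables is a nontrivial fact; the paper does not appeal to it, but instead proves injectivity via a change-of-basepoint trick. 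Concretely, the paper proves a local statement — \Cref{lemma: cc function of a module is not initial}, asserting that if $\mathcal M_{i;t}$ is a genuine module (not of the form $(0,P)$), then $CC_\kappa(\mathcal M_{i;t})$ is not equal to any initial variable $x_j$; the argument is that every monomial $x^{\mathbf g_{i;t}+\overline B\mathbf d}$ with $\mathbf d\neq 0$ in the support of $F_{M_{i;t}}$ must carry a strictly negative exponent, which is established from $\langle\mathbf g_{i;t},\mathbf d\rangle = -\hom(M_{i;t},N)<0$ using \cite[Proposition 2.4]{adachi2014tau} and $\tau$-rigidity. They then reduce the general comparison of $CC_\kappa(\mathcal M_{i;t})$ with $CC_\kappa(\mathcal M_{j;s})$ to this lemma by re-rooting at $t$: under the field automorphism $\varphi$ sending $x_{i;t}\mapsto x_i$ and the graph isomorphism of \Cref{eq: isomorphism stau graphs}, the pair $\mathcal M_{i;t}^{\sigma;t}$ becomes the initial $(0,P_i)$ over $\mathcal P(\sigma)$, $\sigma=\kappa(t)$, so the lemma applied to $\mathcal P(\sigma)$ shows that no other reachable indecomposable $\tau$-rigid pair can share its $CC$-value.

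In short: your ``leading exponent'' step is where the actual content lies, and it needs to be replaced by the change-of-basepoint argument together with \Cref{lemma: cc function of a module is not initial} (or an equivalent argument proving the required separation of $x^{\mathbf g}$ from the other monomials). Everything after injectivity — deducing the graph isomorphism from injectivity of $CC_\kappa$ on summands plus \Cref{cor: c vectors determine b matrix}, and reading off \eqref{eq:CC-functions-satisfy-Chekhov-Shapiro-equation} from \eqref{def:exchange-formula-for-cluster-vars-in-terms-of-arcs} under that isomorphism — is fine and agrees with the paper.
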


\begin{proof}
By \Cref{prop: cc induces surjective map}, the map $CC_\kappa$ is surjective since a cluster variable must be some Caldero--Chapoton function. For the injectivity, we need to show that if $\mathcal M_{i;t}$ is not isomorphic to $\mathcal M_{j;s}$, then $CC_\kappa(\mathcal M_{i;t}) \neq CC_\kappa(\mathcal M_{j;s})$. We claim that it is enough to show
\begin{lemma}\label{lemma: cc function of a module is not initial}
If $\mathcal M_{i;t}$ is not of the form $(0,P)$, i.e. is of the form $(M_{i;t},0)$, then $CC_\kappa(\mathcal M_{i;t})$ is different from each of the initial variables $x_1, x_2, \dots, x_n$.
\end{lemma}
Suppose the above lemma is true. Consider the automorphism $\varphi$ of $\mathcal F = \mathbb Q(x_1,\dots, x_n)$ sending $x_{i;t}$ to $x_i$ for $i=1,2,\dots n$. Now it suffices to show  
\[
\varphi(x_{j;s}) = x_{j;s}^{B(t);t} \neq x_i = \varphi(x_{i;t}).
\]
Let $\sigma = \kappa(t)$ be the labeled triangulation associated to $t$ (such that $\kappa$ is associated to $t_0$). Then we have $x_{j;s}^{B(t);t} = CC_{\sigma}(\mathcal M_{j;s}^{\sigma; t})$ by applying \Cref{thm: cc function} to $\sigma$ where the Caldero--Chapoton function is with respect to the algebra $\mathcal P(\sigma)$, thus the subscript. Since $\mathcal M_{i;t}\not\cong \mathcal M_{j;s}$ implies that
$(0, \mathcal P(\sigma){e_i}) = \mathcal M_{i;t}^{\sigma; t} \not \cong \mathcal M_{j;s}^{\sigma; t}$ by \Cref{eq: isomorphism stau graphs}, the \Cref{lemma: cc function of a module is not initial} applied to $\mathcal P(\sigma)$ just asserts that 
\[
x_{j;s}^{B(t);t} = CC_{\sigma}(\mathcal M_{j;s}^{\sigma; t}) \neq x_i.
\]

Now we complete the proof of \Cref{thm: cc function induces bijection and isomorphism} by proving \Cref{lemma: cc function of a module is not initial}.
\begin{proof}[{Proof of \Cref{lemma: cc function of a module is not initial}}]
It is enough to show that in the Laurent expansion of $CC(\mathcal M_{i;t})$, each monomial $$x^{\mathbf g_{i;t}}\cdot x^{\overline{B}\cdot \mathbf d} = \prod_{i=1}^n x_i^{a_i}$$ must have some $a_i<0$. It suffices to show the inner product (as vectors in $\mathbb Z^n$)
\[
\langle \mathbf g_{i;t} + \overline{B}\cdot \mathbf d, \mathbf d\rangle = \langle \mathbf g_{i;t}, \mathbf d\rangle
\]
is strictly negative for any $\mathbf d\in \mathbb N^n$ such that there is a quotient module $N$ of $M_{i;t}$ whose dimension vector is $\mathbf d$. Using the exact sequence in \cite[Proposition 2.4 (a)]{adachi2014tau}, we have
\[
\langle \mathbf g_{i;t}, \mathbf d\rangle = -\hom(M_{i;t},N) + \hom(N, \tau(M_{i;t})).
\]
The second term on the right side vanishes because $M_{i;t}$ is $\tau$-rigid and $N$ is a quotient of $M_{i;t}$, which makes the inner product strictly negative.
\end{proof}

Returning to the proof of \Cref{thm: cc function induces bijection and isomorphism}, now that non-isomorphic $\mathcal M_{i;t}$ and $\mathcal M_{j;s}$ are shown to have distinguished Caldero--Chapoton functions, non-isomorphic objects in $\rstau{\mathcal P(\kappa)}$ must not correspond to the same unlabeled seed. Then the covering of graphs is injective, thus an isomorphism.

Finally, the Caldero--Chapoton functions $CC_\kappa(\mathcal M_k)$ and $CC_\kappa(\mathcal M_k')$ satisfy the exchange relations as stated simply because in the corresponding unlabeled seeds, the cluster variables satisfy such exchange relations.
\end{proof}

\section{\texorpdfstring{An example in affine type $\widetilde{C}_2$}{An example in affine type C2}}\label{section:example} 
Consider the triangulations $\kappa_0 = \kappa,\kappa_1,\kappa_2,\kappa_3$ and $\kappa_4=\sigma$ from Figure \ref{Fig_ExTriangsC2tilde_GoodExample}. Arcs in these triangulations are labeled by $\{1, 2, 3\}$. The sequence of flips from $\kappa$ to $\sigma$ is $(k_1, k_2, k_3, k_4) = (1, 3, 2, 3)$. We say this example in type $\widetilde C_2$ since $B(\kappa)$ is of that type. The choice of signs $(\epsilon_1, \epsilon_2,\epsilon_3, \epsilon_4)\in \{+,-\}^4$ from Proposition \ref{prop: chamber} is $(+,+,+,+)$.
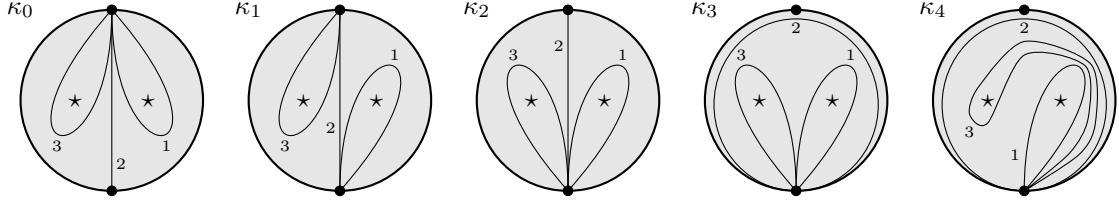
\begin{figure}[h]
                \centering
\begin{tikzpicture}[scale = 1.2]
\filldraw[fill=gray!20, thick](0,0) circle (1);
\draw[] (0,-1) -- (0,1);
\draw[] (0,1) .. controls (-1.5, -0.7) and (0, -1) .. (0,1);
\draw[] (0,1) .. controls (1.5, -0.7) and (0, -1) .. (0, 1);
\node[] at (-0.4, 0) {$\star$};
\node[] at (0.4, 0) {$\star$};
\node[] at (0.6, -0.5) {\tiny $1$};
\node[] at (0.1, -0.7) {\tiny $2$};
\node[] at (-0.6, -0.5) {\tiny $3$};
\filldraw(0, -1) circle (1.5pt);
\filldraw(0, 1) circle (1.5pt);
\node[] at (-1, 1) {$\kappa_0$};

\filldraw[fill=gray!20, thick](2.5,0) circle (1);
\filldraw(2.5, -1) circle (1.5pt);
\filldraw(2.5, 1) circle (1.5pt);
\node[] at (2.1, 0) {$\star$};
\node[] at (2.9, 0) {$\star$};
\draw[] (2.5,-1) -- (2.5,1);
\draw[] (2.5,1) .. controls (1, -0.7) and (2.5, -1) .. (2.5,1);
\draw[] (2.5,-1) .. controls (4, 0.7) and (2.5, 1) .. (2.5, -1);
\node[] at (1.9, -0.5) {\tiny $3$};
\node[] at (2.4, -0.3) {\tiny $2$};
\node[] at (3.1, 0.5) {\tiny $1$};
\node[] at (1.5, 1) {$\kappa_1$};

\filldraw[fill=gray!20, thick](5,0) circle (1);
\filldraw(5, -1) circle (1.5pt);
\filldraw(5, 1) circle (1.5pt);
\node[] at (4.6, 0) {$\star$};
\node[] at (5.4, 0) {$\star$};
\draw[] (5,-1) -- (5,1);
\draw[] (5,-1) .. controls (3.5, 0.7) and (5, 1) .. (5, -1);
\draw[] (5,-1) .. controls (6.5, 0.7) and (5, 1) .. (5, -1);
\node[] at (5.6, 0.5) {\tiny $1$};
\node[] at (4.9, 0.6) {\tiny $2$};
\node[] at (4.4, 0.5) {\tiny $3$};
\node[] at (4, 1) {$\kappa_2$};

\filldraw[fill=gray!20, thick](7.5,0) circle (1);
\filldraw(7.5, -1) circle (1.5pt);
\filldraw(7.5, 1) circle (1.5pt);
\node[] at (7.1, 0) {$\star$};
\node[] at (7.9, 0) {$\star$};
\draw[] (7.5,-1) .. controls (6, 0.7) and (7.5, 1) .. (7.5, -1);
\draw[] (7.5,-1) .. controls (9, 0.7) and (7.5, 1) .. (7.5, -1);
\draw[] (7.5, -0.05) ellipse (0.9 and 0.95);
\node[] at (8.1, 0.5) {\tiny $1$};
\node[] at (7.5, 0.8) {\tiny $2$};
\node[] at (6.9, 0.5) {\tiny $3$};
\node[] at (6.5, 1) {$\kappa_3$};

\filldraw[fill=gray!20, thick](10,0) circle (1);
\filldraw(10, -1) circle (1.5pt);
\filldraw(10, 1) circle (1.5pt);
\node[] at (9.6, 0) {$\star$};
\node[] at (10.4, 0) {$\star$};
\draw[] (10,-1) .. controls (11.5, 0.7) and (10, 1) .. (10, -1);
\draw[] (10, -0.05) ellipse (0.9 and 0.95);
\draw[] plot [smooth] coordinates {(10, -1) (10.6, -0.6) (10.78, -0.25) (10.73, 0.4) (10, 0.65) 
(9.5, 0.15) (9.4, -0.15) (9.6, -0.25) (10, 0.5) (10.67, 0.38) (10.7, -0.2) (10.5, -0.58) (10, -1)};
\node[] at (9.9, -0.6) {\tiny $1$};
\node[] at (10, 0.8) {\tiny $2$};
\node[] at (9.4, -0.35) {\tiny $3$};
\node[] at (9, 1) {$\kappa_4$};

\end{tikzpicture}
\caption{Five triangulations of the digon with two orbifold points of order 3. Adjacent triangulations are related by a flip.}
\label{Fig_ExTriangsC2tilde_GoodExample}
\end{figure}

\begin{table}[ht]
\centering
    \begin{tabular}{|c|c|c|c|}
    \hline
         & $Q(\kappa_j)$ & $B(\kappa_j)$ & $\mathbf{r}_1^j,\mathbf{r}_2^j,\mathbf{r}_3^j$ \\
    \hline
            &&& \\
    $\kappa_4$ & { $\xymatrix{&  3 \ar@(ld,lu) \ar[rr]  & & 1 \ar[dl] \ar@(ru,rd)  & \\ & & 2 \ar[ul] & & }$} & { $\left[\begin{array}{ccc} 0 & 1  & -2 \\ -2 & 0 & 2 \\ 2 & -1 & 0 \end{array}\right]$ } & { $\left[\begin{array}{ccc} 1 &  0 & 0 \\ 0 & 1 & 0\\ 0 & 0 & 1 \end{array}\right]$ }\\
        &&& \\
    $\kappa_3$ & { $\xymatrix{& 3 \ar@(ld,lu) \ar[dr] & & 1 \ar@(ru,rd) \ar[ll]  & \\ & &  2 \ar[ur] & & }$} & { $\left[\begin{array}{ccc} 0 & -1 & 2 \\ 2 & 0 & -2 \\ -2 & 1 & 0 \end{array}\right]$ } & { $\left[\begin{array}{ccc} 1 & 0  & 0 \\ 0 & 1 & 2 \\ 0 & 0 & -1 \end{array}\right]$ } \\
        &&& \\
    $\kappa_2$    & { $\xymatrix{ & 3 \ar@(ld,lu) & & 1 \ar@(ru,rd) \ar[dl] & \\ & & 2 \ar[ul] & & }$}  & {$\left[\begin{array}{ccc} 0 & 1 & 0\\ -2 & 0 & 2 \\ 0 & -1 & 0 \end{array}\right]$ } & { $\left[\begin{array}{ccc} 1 & 0  & 0 \\ 0 & -1 & -2 \\ 0 & 1 & 1\end{array}\right]$ } \\
        &&& \\
    $\kappa_1$     & {  $\xymatrix{ & 3  \ar@(ld,lu) \ar[dr] & & 1  \ar@(ru,rd) \ar[dl] & \\ & & 2   & & }$}   & { $\left[\begin{array}{ccc} 0 & 1 & 0 \\ -2 & 0 & -2 \\ 0 & 1 & 0 \end{array}\right]$}  & { $\left[\begin{array}{ccc} 1 & 0  & 0 \\ 0 & 1 & 0 \\ 0 & -1 & -1\end{array}\right]$ } \\
        &&& \\
    $\kappa_0$ & { $\xymatrix{ & 3  \ar@(ld,lu) \ar[dr] & & 1 \ar@(ru,rd) &\\ & & 2 \ar[ur] & & }$} & { $\left[\begin{array}{ccc} 0 & -1 & 0\\ 2 & 0 & -2 \\ 0 & 1 & 0\end{array}\right]$} & { $\left[\begin{array}{ccc}  -1 & 0  & 0 \\ 0 & 1 & 0 \\ 0 & -1 & -1 \end{array}\right]$ } \\
    \hline     
    \end{tabular}
    \caption{}
    \label{table:r-vectors}
\end{table}

In \Cref{table:r-vectors} we can see the quivers and skew-symmetrizable matrices of the triangulations, as well as the images $\mathbf{r}_1^j$, $\mathbf{r}_2^j$ and $\mathbf{r}_3^j$ of the standard basis vectors under the transformations
\begin{align*}T^+_{k_1;\kappa_1}\circ T^+_{k_2;\kappa_2}\circ T^+_{k_3;\kappa_3}\circ T^+_{k_4;\kappa_4}, & \\  T^+_{k_2;\kappa_2}\circ T^+_{k_3;\kappa_3}\circ T^+_{k_4;\kappa_4}, & \\  T^+_{k_3;\kappa_3}\circ T^+_{k_4;\kappa_4} &\qquad \text{and}\\ T^+_{k_4;\kappa_4}.
\end{align*}
Thus, with the notation of \Cref{prop: chamber}, \Cref{prop: isomorphism of graphs of chambers} tells us that the cone $\mathcal{C}_j=\{\lambda_1\mathbf{r}_1^j+\lambda_2\mathbf{r}_2^j+\lambda_3\mathbf{r}_3^j\suchthat \lambda_1,\lambda_2,\lambda_3\geq 0\}$ should be regarded as sitting inside the $\tau$-tilting fan of the gentle algebra $\mathcal{P}(\kappa_j)$.

According to \Cref{thm: g-vectors are rays of stability chambers,cor: g-vector equality}, there exist indecomposable $\tau$-rigid pairs $\mathcal{M}_{1;j},\mathcal{M}_{2;j},\mathcal{M}_{3;j}$, $0\leq j\leq 4$ (beware of our slight misuse of the subindex $j$), such that $$\mathcal{M}_{1;j}\oplus\mathcal{M}_{2;j}\oplus\mathcal{M}_{3;j}\in\rstau{\mathcal{P}(\kappa_j)} \qquad \text{and}$$
$$
\mathbf{g}^{\mathcal{P}(\kappa_j)}(\mathcal{M}_{1;j})=\mathbf{r}_1^j, \ \mathbf{g}^{\mathcal{P}(\kappa_j)}(\mathcal{M}_{2;j})=\mathbf{r}_2^j, \quad \text{and} \quad \mathbf{g}^{\mathcal{P}(\kappa_j)}(\mathcal{M}_{3;j})=\mathbf{r}_3^j.
$$
Furthermore, again by \Cref{thm: g-vectors are rays of stability chambers,cor: g-vector equality}, for each $j\in \{0,1,2,3,4\}$, the basic support $\tau$-tilting pair $\mathcal{M}_{1;j}\oplus\mathcal{M}_{2;j}\oplus\mathcal{M}_{3;j}\in\rstau{\mathcal{P}(\kappa_j)}$ is actually the one obtained by applying to $(0,\mathcal{P}(\kappa_j))\in \rstau{\mathcal{P}(\kappa_j)}$ the sequence of Adachi--Iyama--Reiten $\tau$-tilting mutations given as follows:
\begin{align*}
    \mathcal{M}_{1;4}\oplus\mathcal{M}_{2;4}\oplus\mathcal{M}_{3;4} &=(0,\mathcal{P}(\kappa_4)) &  \text{in} \quad \rstau{\mathcal{P}(\kappa_4)} \\
    \mathcal{M}_{1;3}\oplus\mathcal{M}_{2;3}\oplus\mathcal{M}_{3;3} &=\mu_{3}^{\operatorname{AIR}}(0,\mathcal{P}(\kappa_3)) &  \text{in} \quad \rstau{\mathcal{P}(\kappa_3)} \\
    \mathcal{M}_{1;2}\oplus\mathcal{M}_{2;2}\oplus\mathcal{M}_{3;2} &=\mu_{3}^{\operatorname{AIR}}\mu_{2}^{\operatorname{AIR}}(0,\mathcal{P}(\kappa_2)) &  \text{in} \quad \rstau{\mathcal{P}(\kappa_2)}\\
    \mathcal{M}_{1;1}\oplus\mathcal{M}_{2;1}\oplus\mathcal{M}_{3;1} &=\mu_{3}^{\operatorname{AIR}}\mu_{2}^{\operatorname{AIR}}\mu_{3}^{\operatorname{AIR}}(0,\mathcal{P}(\kappa_1)) &  \text{in} \quad \rstau{\mathcal{P}(\kappa_1)}\\
    \mathcal{M}_{1;0}\oplus\mathcal{M}_{2;0}\oplus\mathcal{M}_{3;0} &=\mu_{3}^{\operatorname{AIR}}\mu_{2}^{\operatorname{AIR}}\mu_{3}^{\operatorname{AIR}}\mu_{1}^{\operatorname{AIR}}(0,\mathcal{P}(\kappa_0)) &  \text{in} \quad \rstau{\mathcal{P}(\kappa_0)}
\end{align*}
These objects are explicitly computed in \Cref{table: tau rigid pairs} below, where a pair $(0, P)$ is denoted as $P[1]$.

\begin{table}[ht]
    \centering
    \begin{tabular}{|c|c|c|c|}
    \hline
         &  $\mathcal{M}_{1;j}$ & $\mathcal{M}_{2;j}$ & $\mathcal{M}_{3;j}$ \\
    \hline 
         $j = 4$ & $P_1(\kappa_4)[1]$ & $P_2(\kappa_4)[1]$ & $P_3(\kappa_4)[1]$ \\
    
         $j = 3$ & $P_1(\kappa_3)[1]$ & $P_2(\kappa_3)[1]$ & \begin{tikzcd}[row sep = scriptsize, column sep = scriptsize, ampersand replacement=\&]
         \mathbb C^2 \ar[loop above, "{\begin{bsmallmatrix} 0 & 0\\ 1 & 0 \end{bsmallmatrix}}"]\ar[dr] \& \& 0\ar[ll]\ar[loop above]\\
         \& 0 \ar[ur] \&
         \end{tikzcd} \\
    
         $j = 2$ & $P_1(\kappa_2)[1]$ & \begin{tikzcd}[row sep = scriptsize, column sep = scriptsize, ampersand replacement=\&]
         0 \ar[loop above] \& \& 0 \ar[dl] \ar[loop above]\\
         \& \mathbb C \ar[ul] \&
         \end{tikzcd} & \begin{tikzcd}[row sep = scriptsize, column sep = scriptsize, ampersand replacement=\&]
         \mathbb C^2 \ar[loop above, "{\begin{bsmallmatrix} 0 & 0 \\ 1 & 0 \end{bsmallmatrix}}"] \& \& 0 \ar[dl] \ar[loop above]\\
         \& \mathbb C^2 \ar[ul, "{\begin{bsmallmatrix} 1 & 0 \\ 0 & 1 \end{bsmallmatrix}}"] \&
         \end{tikzcd} \\
    
         $j = 1$ & $P_1(\kappa_1)[1]$ & \begin{tikzcd}[row sep = scriptsize, column sep = scriptsize, ampersand replacement=\&]
         \mathbb C^2 \ar[loop above, "{\begin{bsmallmatrix} 0 & 0\\ 1 & 0 \end{bsmallmatrix}}"]\ar[dr, "{\begin{bsmallmatrix} 0 & 1 \end{bsmallmatrix}}", swap] \& \& 0 \ar[dl] \ar[loop above]\\
         \& \mathbb C \&
         \end{tikzcd} & \begin{tikzcd}[row sep = scriptsize, column sep = scriptsize, ampersand replacement=\&]
         \mathbb C^2 \ar[loop above, "{\begin{bsmallmatrix} 0 & 0 \\ 1 & 0 \end{bsmallmatrix}}"] \ar[dr, "{\begin{bsmallmatrix} 1 & 0 \\ 0 & 1 \end{bsmallmatrix}}", swap]  \& \& 0 \ar[dl] \ar[loop above]\\
         \& \mathbb C^2 \&
         \end{tikzcd}\\
    
         $j = 0$ & \begin{tikzcd}[row sep = scriptsize, column sep = scriptsize, ampersand replacement=\&]
         0 \ar[loop above]\ar[dr] \& \& \mathbb C^2 \ar[loop above, "{\begin{bsmallmatrix} 0 & 0\\ 1 & 0 \end{bsmallmatrix}}"]\\
         \& 0 \ar[ur] \&
         \end{tikzcd} & \begin{tikzcd}[row sep = scriptsize, column sep = scriptsize, ampersand replacement=\&]
         \mathbb C^2 \ar[loop above, "{\begin{bsmallmatrix} 0 & 0\\ 1 & 0 \end{bsmallmatrix}}"]\ar[dr, "{\begin{bsmallmatrix} 0 & 1 \end{bsmallmatrix}}", swap] \& \& \mathbb C^2 \ar[loop above, "{\begin{bsmallmatrix} 0 & 0\\ 1 & 0 \end{bsmallmatrix}}"]\\
         \& \mathbb C \ar[ur, "\begin{bsmallmatrix} 1 \\ 0 \end{bsmallmatrix}", swap] \&
         \end{tikzcd} & \begin{tikzcd}[row sep = scriptsize, column sep = scriptsize, ampersand replacement=\&]
         \mathbb C^2 \ar[loop above, "{\begin{bsmallmatrix} 0 & 0\\ 1 & 0 \end{bsmallmatrix}}"]\ar[dr, "{\begin{bsmallmatrix} 1 & 0 \\ 0 & 1 \end{bsmallmatrix}}", swap] \& \& \mathbb C^4 \ar[loop above, "{\begin{bsmallmatrix} 0 & 1 & 0 & 0 \\ 0 & 0 & 0 & 0 \\ 0 & 0 & 0 & 0 \\ 0 & 0 & 1 & 0 \end{bsmallmatrix}}"]\\
         \& \mathbb C^2 \ar[ur, "{\begin{bsmallmatrix} 0 & 0 \\ 1 & 0 \\ 0 & 1 \\ 0 & 0 \end{bsmallmatrix}}", swap] \&
         \end{tikzcd}\\
    \hline
                
    \end{tabular}
    \caption{Support $\tau$-tilting pairs}
    \label{table: tau rigid pairs}
\end{table}

Notice that in $\rstau{\mathcal{P}(\kappa_0)}$, we have
\[
\mu_3^{\operatorname{AIR}}(\mathcal{M}_{1;0}\oplus\mathcal{M}_{2;0}\oplus\mathcal{M}_{3;0}) =  \mu_2^\mathrm{AIR}\mu_3^\mathrm{AIR}\mu_1^\mathrm{AIR}(0, \mathcal P(\kappa_0)) =\mathcal{M}_{1;0}\oplus\mathcal{M}_{2;0}\oplus\mathcal{N},
\]
where $\mathcal N = \begin{tikzcd}[column sep = small, ampersand replacement=\&]
\mathbb C^2 \ar[loop left, "{\begin{bsmallmatrix} 0 & 0 \\ 1 & 0\end{bsmallmatrix}}"] \ar[r] \& 0 \ar[r] \& 0 \ar[loop right]
\end{tikzcd}$. In view of \Cref{thm: FST and CS} and \Cref{thm: cc function induces bijection and isomorphism}, the two support $\tau$-tilting pairs $\mathcal M_{1;0}\oplus\mathcal M_{2;0} \oplus \mathcal M_{3;0} $ and $\mathcal M_{1;0}\oplus \mathcal M_{2;0} \oplus \mathcal N$ correspond to the triangulations $\kappa_4$ and $\kappa_3$ respectively.

Since
$$
0\rightarrow P_3(\kappa_0) \rightarrow \mathcal{M}_{3;0}\rightarrow 0 \quad \text{and} \quad
0\rightarrow P_2(\kappa_0)^2 \rightarrow P_3(\kappa_0) \rightarrow \mathcal{N} \rightarrow 0
$$
are the minimal projective resolutions of $\mathcal{M}_{3;0}$ and $\mathcal{N}$,
we have
$$
\mathbf{g}^{\mathcal{P}(\kappa_0)}(\mathcal{M}_{3;0})
=\left[\begin{array}{c}0\\ 0\\ -1\end{array}\right] \quad \text{and} \quad
\mathbf{g}^{\mathcal{P}(\kappa_0)}(\mathcal{N})=
\left[\begin{array}{c}0\\ 2\\ -1\end{array}\right].
$$
Furthermore, the $F$-polynomials of $\mathcal{M}_{3;0}$ and $\mathcal{N}$ are
\begin{align*}
    F_{\mathcal{M}_{3;0}}&=
    y_1^4y_2^2y_3^2+
\chi(\mathbb{P}^1(\mathbb{C}))y_1^3y_2^2y_3^2+
\chi(\{\star\}\sqcup(\mathbb{P}^1(\mathbb{C})\times\mathbb{C}^2))y_1^2y_2^2y_3^2+
\chi(\mathbb{P}^1(\mathbb{C}))y_1y_2^2y_3^2 
\\
& \quad 
+
y_2^2y_3^2  
+
\chi(\mathbb{P}^1(\mathbb{C}))y_1^2y_2y_3^2
+
\chi(\mathbb{P}^1(\mathbb{C}))y_1y_2y_3^2 +
\chi(\mathbb{P}^1(\mathbb{C}))y_2y_3^2 +
y_3^2+
y_1^2y_2y_3 
\\
& \quad +
y_1y_2y_3 
+
y_2y_3 
+
y_3 +
1\\
&=
    y_1^4y_2^2y_3^2+
2y_1^3y_2^2y_3^2+
3y_1^2y_2^2y_3^2+
2y_1y_2^2y_3^2 
\\
& \quad 
+
y_2^2y_3^2  
+
2y_1^2y_2y_3^2
+
2y_1y_2y_3^2 +
2y_2y_3^2 +
y_3^2+
y_1^2y_2y_3 
\\
& \quad +
y_1y_2y_3 
+
y_2y_3 
+
y_3 +
1,\\
    F_{\mathcal{N}}&= y_3^2+y_3+1.
\end{align*}
Hence, their Caldero--Chapoton functions are
\begin{align*}
    CC_{k_0}(\mathcal{M}_{3;0}) &=
x_1^{-2}x_2^2x_3+ 2x_1^{-2}x_2x_3 +3 x_1^{-2}x_3 + 2 x_1^{-2}x_2^{-1}x_3\\
& \quad 
+x_1^{-2}x_2^{-2}x_3 + 3x_1^{-1} + 2 x_1^{-1}x_2^{-1} + 2x_1^{-1}x_2^{-2}+x_2^{-2}x_3^{-1}+x_1^{-1}x_2\\
& \quad
+x_1^{-1}x_2^{-1}+ x_2^{-1}x_3^{-1}+x_3^{-1},
\\
CC_{\kappa_0}(\mathcal{N})&=x_3^{-1}+x_2x_3^{-1}+x_2^2x_3^{-1}.
\end{align*}

As for the $\tau$-rigid modules shared by both $\kappa_3$ and $\kappa_4$, we have
\begin{align*}
\mathbf{g}^{\mathcal{P}(\kappa_0)}(\mathcal{M}_{1;0})
&=\left[\begin{array}{c}-1\\ 0\\ 0\end{array}\right], & 
\mathbf{g}^{\mathcal{P}(\kappa_0)}(\mathcal{M}_{2;0})&=
\left[\begin{array}{c}0\\ 1\\ -1\end{array}\right],\\
    F_{\mathcal{M}_{1;0}}&=
    y_1^2+y_1+1, & 
    F_{\mathcal{M}_{2;0}}&=
    y_1^2y_2y_3^2 + y_1y_2y_3^2 + y_2y_3^2 +y_3^2+ y_3 +1.
\end{align*}
Hence, the Caldero--Chapoton functions are
\begin{align*}
    CC_{k_0}(\mathcal{M}_{1;0}) &= x_1^{-1}x_2^2+x_1^{-1}x_2+x_1^{-1},
\\
CC_{\kappa_0}(\mathcal{M}_{2;0})&=
x_1^{-1}x_2 + x_1^{-1} + x_1^{-1}x_2^{-1} + x_2^{-1}x_3^{-1} + x_3^{-1} + x_2x_3^{-1}.
\end{align*}

Direct computation shows that
$$
CC_{\kappa_0}(\mathcal{M}_{3;0})CC_{\kappa_0}(\mathcal{N})=
CC_{\kappa_0}(\mathcal{M}_{1;0})^2+CC_{\kappa_0}(\mathcal{M}_{1;0})CC_{\kappa_0}(\mathcal{M}_{2;0})+CC_{\kappa_0}(\mathcal{M}_{2;0})^2,
$$
which is precisely the equality predicted by the second row of \eqref{eq:CC-functions-satisfy-Chekhov-Shapiro-equation}.

With the aid of \Cref{table:r-vectors} if necessary, the reader can directly verify that the $\mathbf{g}$-vectors of $\mathcal{M}_{1;0}$, $\mathcal{M}_{2;0}$ and $\mathcal{M}_{3;0}$ coincide with the vectors $\mathbf{r}_1^{0}$, $\mathbf{r}_2^{0}$ and $\mathbf{r}_3^{0}$, as they should by Theorems \ref{thm: g-vectors are rays of stability chambers} and \ref{cor: g-vector equality}.

\section{Final considerations and future work}\label{section: final}

The \emph{$\tau$-tilting complex} of a finite-dimensional algebra $A$ over an algebraically closed field arose from the seminal work of Adachi--Iyama--Reiten \cite{adachi2014tau} as the abstract simplicial complex of $\tau$-rigid pairs. Later on, Demonet--Iyama--Jasso \cite{demonet2019tau} gave a concrete geometric realization of the $\tau$-tilting complex as a simplicial fan, and made the observation that the rays of this \emph{$\tau$-tilting fan} are precisely the rays spanned by the $\mathbf{g}$-vectors of the indecomposable $\tau$-rigid pairs of $A$. More recently, Br\"{u}stle--Smith--Treffinger \cite{brustle2019wall} showed that the stratification of the $\tau$-tilting fan is part of the stratification of the space of stability conditions of $\modu A$.

For the class of gentle algebras $\mathcal{P}(\kappa)$ arising from triangulations of surfaces with marked points on the boundary and orbifold points of order 3, in this paper we have shown that the reachable component of the $\tau$-tilting fan of $\mathcal{P}(\kappa)$ perfectly matches the $\mathbf{g}$-vector fan arising from the (generalized) cluster theory of the skew-symmetrizable matrix $B(\kappa)$. 
Furthermore, we have shown that the power series that dictate the wall-crossing automorphisms in the Bridgeland stability scattering diagram at the reachable facets of the $\tau$-tilting fan, are precisely Chekhov--Shapiro's generalized exchange polynomials, thus enhancing Br\"{u}stle--Smith--Treffinger's description of the wall-and-chamber structure. As a consequence, the reachable part of the stability scattering diagram coincides with the generalized cluster scattering diagram developed in \cite{mou2020wall, mou2021scattering} and \cite{cheung2021cluster}, providing the first class of them realized via representation theory.

In \Cref{section: tau tilting} and \Cref{section: caldero chapoton}, the ability to compare the $\tau$-tilting theory of $\mathcal{P}(\kappa)$ with that of $\mathcal{P}(\sigma)$ for different triangulations, and to interpret this comparison as the result of simply choosing different initial seeds in the same generalized cluster algebra, began to play an essential role in our arguments, even though, for instance, in \Cref{thm: cc function induces bijection and isomorphism} the statement of the desired result involves only a single triangulation~$\kappa$.

In the case of Jacobian algebras of arbitrary loop-free quivers with potential, if two of these are related by a QP-mutation of Derksen--Weyman--Zelevinsky \cite{derksen2008quivers}, a systematic 
comparison between their $\tau$-tilting theories is provided by mutations of decorated representations \cite{derksen2010quivers}. In the upcoming work \cite{LMpart2}, we will define such DWZ-type mutations of decorated representations of the gentle algebras $\mathcal{P}(\kappa)$ considered in this paper, and use them to parametrize $\tau$-reduced components by integral measured laminations of $\surf$, as well as to prove that the generic Caldero--Chapoton functions on $\tau$-reduced components form a $\mathbb{Z}$-linearly independent subset of the Caldero--Chapoton algebra.

\bibliographystyle{amsalpha-fi-arxlast}
\bibliography{reference.bib}

\end{document}